\documentclass{amsart}
\usepackage{ifpdf}
\usepackage{amsmath, amsfonts, amssymb, amscd}
\usepackage[active]{srcltx}
\usepackage[pdftex]{graphicx}

\newcommand{\supp}{\operatorname{Supp}}
\newcommand{\fil}{\operatorname{Fill}}
\newcommand{\sing}{\operatorname{Sing}}


\newcommand{\abs}[1]{\left\lvert{#1}\right\rvert}
\newcommand{\norm}[1]{\left\|{#1}\right\|}


\DeclareMathOperator{\pr}{\rm{pr}}

\DeclareMathOperator{\SL}{\rm{SL}}
\DeclareMathOperator{\inter}{\rm{int}}
\DeclareMathOperator{\bd}{\partial}
\DeclareMathOperator{\cl}{cl}
\DeclareMathOperator{\diam}{\rm{diam}}
\DeclareMathOperator{\conv}{\rm{Conv}}

\DeclareMathOperator{\fix}{\rm{Fix}}


\newcommand{\mc}{\mathcal}

\newcommand{\ol}{\overline}
\renewcommand{\hat}{\widehat}
\newcommand{\til}{\widetilde}

\newcommand{\R}{\mathbb{R}}\newcommand{\N}{\mathbb{N}}
\newcommand{\Z}{\mathbb{Z}}
\newcommand{\T}{\mathbb{T}}
\newcommand{\D}{\mathbb{D}}
\renewcommand{\SS}{\mathbb{S}}

\newcommand{\sm}{\setminus}

\newcommand{\id}{\mathrm{Id}}


\newcommand{\ie}{i.e.\ }
\newcommand{\eg}{e.g.\ }

\newtheorem{theorem}{Theorem}[section]
\newtheorem{corollary}[theorem]{Corollary}
\newtheorem{lemma}[theorem]{Lemma}
\newtheorem{proposition}[theorem]{Proposition}
\newtheorem{fact}{Fact}[theorem]
\newtheorem{claim}{Claim}
\newtheorem{remark}[theorem]{Remark}

\newtheorem{definition}[theorem]{Definition}


\newtheorem{theoremain}{Theorem}

\newtheorem{propositionmain}[theoremain]{Proposition}



\title[Bounded and unbounded behavior]{Bounded and unbounded behavior for area-preserving rational pseudo-rotations}
\author{Andres Koropecki}
\address{Andres Koropecki. Universidade Federal Fluminense, Instituto de Matem\'atica e Estat\'\i stica, Rua M\'ario Santos Braga S/N, 24020-140 Niteroi, RJ, Brasil}
\email{ak@id.uff.br}
\author{Fabio Armando Tal}
\address{Fabio Armando Tal. Instituto de Matem\'atica e Estat\'\i stica, Universidade de S\~ao Paulo, Rua do Mat\~ao 1010, Cidade Universit\'aria, 05508-090 S\~ao Paulo, SP, Brazil}
\email{fabiotal@ime.usp.br}
\thanks{The first author was partially supported by CNPq-Brasil. The second author was partially supported by FAPESP and CNPq-Brasil}


\begin{document}

\begin{abstract} 
A rational pseudo-rotation $f$ of the torus is a homeomorphism homotopic to the identity with a rotation set consisting of a single vector $v$ of rational coordinates.  We give a classification for rational pseudo-rotations with an invariant measure of full support, in terms of the deviations from the constant rotation $x\mapsto x+v$ in the universal covering. For the simpler case that $v=(0,0)$, it states that either every orbit by the lifted dynamics is bounded, or the displacement of orbits in the universal covering is uniformly bounded in some rational direction (implying that the dynamics is annular) or the set of fixed points of $f$ contains a large continuum which is the complement of a disjoint union of disks (i.e. a fully essential continuum).
In the analytic setting, the latter case is ruled out. In order to prove this classification, we introduce tools that are of independent interest and can be applied in a more general setting: in particular, a geometric result about the quasi-convexity and existence of asymptotic directions for certain chains of disks, and a Poincar\'e recurrence theorem on the universal covering for irrotational measures. 
\end{abstract}

\maketitle

\setcounter{tocdepth}{1}
\tableofcontents

\section{Introduction}

If $f\colon \R/\Z =\T^1 \to \T^1$ is a homeomorphism preserving orientation and $\hat{f}\colon \R\to \R$ is a lift of $f$, there is a corresponding rotation number $\rho(\hat{f}) = \lim_{n\to \infty} (\hat{f}^n(x)-x)/n$, which was defined by Poincar\'e and shown to be is independent of $x\in \R$. The rotation number is a useful invariant for the dynamics: if $\rho(\hat{f})$ is irrational, then $f$ is monotonically semi-conjugate to a rigid irrational rotation, and if $\rho(\hat{f})$ is rational, then $f$ has a periodic point and there is a simple model for the dynamics.

One may try to generalize the notion of rotation number to dimension two, considering a homeomorphism $f\colon \T^2\to \T^2$ homotopic to the identity and a lift $\hat{f}\colon \R^2\to \R^2$. However, in this setting the limit $(\hat{f}^n(x)-x)/n$ often fails to exist, and when it does it depends on the chosen point $x\in \R^2$. This is why one usually defines a \emph{rotation set} $\rho(\hat{f})\subset \R^2$ instead of a rotation number or vector. The rotation set was defined by Misiurewicz and Ziemian \cite{m-z} as the set of all vectors $v\in \R^2$ of the form $$v=\lim_{k\to \infty} \frac{\hat{f}^{n_k}(z_{k})-z_k}{n_k},\quad z_{k}\in \R^2, \quad n_k\to \infty$$

In the special case that the rotation set $\rho(\hat{f})$ contains a unique vector $v$, the map $f$ is called a \emph{pseudo-rotation}. In this case, it is easy to see that $(\hat{f}^n(z)-z)/n\to v$ for any $z\in \R^2$, so one may expect such maps to have more similarities with one-dimensional case.

A pseudo-rotation is called irrational if the corresponding rigid rotation $x\mapsto x+v$ induces a minimal map on $\T^2$ (which is the same as saying that the coordinates of $v$, together with $1$, are rationally independent), and the pseudo-rotation is rational if both coordinates of $v$ are rational (which means that the corresponding rotation $x\mapsto x+v$ is periodic).

Irrational pseudo-rotations have been studied in many works \cite{jager-linearization,jager-bmm,kwapisz-combinatorics,denjoy-rees}, and it is known that they are  not necessarily semi-conjugate to rigid rotations. Moreover, they may exhibit dynamical properties that differ greatly from rigid rotations, like weak-mixing \cite{fayad-mixing,kk-mixing} or positive entropy \cite{rees,denjoy-rees}.

A key property that holds for circle homeorphisms is the property of uniformly bounded deviations, which means that orbits of $\hat{f}$ remain a bounded distance away from orbits of the rigid rotation. In other words, the quantity $\abs{\smash{\hat{f}^n(x)-x-n\alpha}}$ is bounded by a constant independent of $x$ and $n$.
This property often fails to hold, even pointwise, for real analytic area-preserving irrational pseudo-rotations \cite{kk-mixing}. However, J\"ager proved in \cite{jager-linearization} that if one assumes that such maps satisfy the bounded deviations property, then a Poincar\'e-like theorem holds: $f$ is semi-conjugate to the rigid rotation.

We will consider the case of area-preserving rational pseudo-rotations. 
In contrast with the irrational case, one cannot expect to obtain any local or semi-local information from the assumption that the rotation vector is a unique rational point. Indeed, any dynamics that can appear in the closed unit disk can be embedded in the torus, extending it to be the identity outside a neighborhood of the disk, thus obtaining a rational pseudo-rotation. However, one may try to obtain some information about the deviations of the orbits with respect to the rigid rotation.

For any rational pseudo-rotation $f$ there is always a power $f^n$ which has a lift to $\R^2$ with rotation vector $(0,0)$. Thus, after taking an appropriate power we are left with the problem of understanding a homeomorphism $f$ which has a lift $\hat{f}$ such that $\rho(\hat{f})=\{(0,0)\}$. Such an $f$ is called an \emph{irrotational} homeomorphism, and $\hat{f}$ is its irrotational lift. The problem of studying the deviations with respect to the rigid rotation is then reduced to studying the boundedness of the displacement $\hat{f}^n(x)-x$.

In \cite{kt-example} the authors constructed an example of a $C^\infty$ area-preserving irrotational and ergodic homeomorphism such that almost every point in the universal covering has an unbounded orbit \emph{in all directions}. Further, almost every orbit visits every fundamental domain in $\R^2$. The example has the particularity that all the nontrivial dynamics is restricted to an open topological disk $U\subset \T^2$, while the complement of $U$ (which is a large continuum) consists of fixed points. 

The main result of this article implies that this is the only way for an irrotational area-preserving homeomorphism to have orbits which are unbounded in more than one direction in the universal covering; that is, the set of fixed points has to contain a fully essential continuum (\ie the complement of a disjoint union of open topological disks in $\T^2$). 
Moreover, the latter must always be the case unless there is uniformly bounded displacement in some rational direction of $\R^2$:

\begin{theoremain}\label{th:teoremao} Let $f\colon \T^2\to \T^2$ be an irrotational homeomorphism preserving a Borel probability measure $\mu$ of full support, and let $\hat f$ be its irrotational lift. Then one of the following holds:
\begin{itemize}
\item[(i)] $\fix(f)$ is fully essential;
\item[(ii)] Every point of $\R^2$ has a bounded $\hat{f}$-orbit;
\item[(iii)] $\hat{f}$ has uniformly bounded displacement in a rational direction; \ie there is a nonzero $v\in \Z^2$ and $M>0$ such that $$\abs{\smash{\big\langle \hat{f}^n(z)-z; v\big\rangle}}\leq M$$ for all $z\in \R^2$ and $n\in \Z$.
\end{itemize}
\end{theoremain}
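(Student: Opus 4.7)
The plan is to argue by contradiction, assuming that all three alternatives fail simultaneously. Thus suppose $\fix(f)$ is not fully essential, some $\hat f$-orbit is unbounded, and for every nonzero $v\in\Z^2$ the quantities $\langle \hat f^n(z)-z,v\rangle$ are unbounded in $(z,n)\in\R^2\times\Z$.

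First I would combine the full support of $\mu$ with the Poincar\'e-type recurrence theorem on the universal cover for irrotational measures (the new tool announced in the abstract), which gives for $\mu$-almost every $x$ a lift $\hat x$ and times $n_k\to\infty$ with $\hat f^{n_k}(\hat x)\to\hat x$ in $\R^2$. Since the hypothesis negating (iii) says that in every rational direction the displacement cocycle is not cohomologous to zero, along such recurrence sequences one can moreover guarantee that an intermediate iterate $\hat f^{j_k}(\hat x)$, $j_k<n_k$, realizes arbitrarily large displacement in a prescribed rational direction $v$. A small topological disk $D\ni\hat x$ then generates a chain $\bigcup_{j=0}^{n_k}\hat f^j(D)$ in $\R^2$ whose projection to $\T^2$ is an essential closed chain of disks representing a prescribed rational homology class.

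Next I would invoke the geometric quasi-convexity and asymptotic-direction result of the paper: passing to a limit in $n_k$ yields an infinite chain of disks in $\R^2$ possessing a well-defined asymptotic direction. By construction the finite closed chains realize loops in $\T^2$ whose homology class points in the direction $v$, so the asymptotic direction of the limiting chain must lie along the rational line spanned by $v$. Selecting two chains with linearly independent rational asymptotic directions, one obtains a topological grid on $\T^2$ whose complement is a disjoint union of open topological disks.

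The final and hardest step is to show that this grid lies inside $\fix(f)$, which would contradict the standing assumption that $\fix(f)$ is not fully essential. The rough idea is that within each complementary disk $D$ of the grid the lifted dynamics has rotation set $\{0\}$, and the chains act as topological barriers swept out by orbits arbitrarily close to being fixed; a Brouwer/Le\,Calvez-type argument applied to $\T^2\setminus\fix(f)$ should then force every point along a chain to lie in $\fix(f)$. Promoting chains of \emph{approximately} recurrent disks to genuine subsets of $\fix(f)$ is where I expect the principal difficulty to lie: it requires combining the full-support hypothesis, the equivariant recurrence theorem, and the asymptotic-direction lemma at their full strength, and making precise the topological sense in which the chains converge to their limiting continua.
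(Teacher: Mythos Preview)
Your proposal diverges from the paper's argument at the decisive point, and the final step as you describe it does not work.

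The paper also argues by contradiction and also uses (a) the nonwandering property of $\hat f$ coming from the Poincar\'e recurrence theorem on the cover and (b) the geometric theorem on eventually-free chains. But the chain it studies is the single decreasing family $O_n=U_{1/n}(\hat x_0,\hat f)$ around a fixed point $\hat x_0$ with unbounded orbit. Using the invariant sets $\omega_v$ and $\omega_{-v}$ it is shown that for every $v\in\Z^2_*$ not parallel to a fixed direction $\bar w$ there is $n$ with $O_n\cap T_v(O_n)=\emptyset$; hence $(O_n)$ is eventually $\Sigma$-free for $\Sigma=\Z^2\setminus\R\bar w$. The geometric theorem then forces case~(3): $E=\bigcap_n\ol{O}_n$ lies in a strip $p_w^{-1}((-M,M))$ and separates the two half-planes. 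This already gives uniformly bounded displacement in the direction $w$. The remaining issue is that $w$ could be irrational, and this is where the real work is: using the gradient-like Brouwer foliation one proves an \emph{engulfing-finite} statement (any unbounded $\hat f$-invariant disk meeting a fixed compact $R$ must contain one of finitely many fixed points $P\subset\hat X$), and since an irrational $w$ allows arbitrarily many integer translates $O_n+jv$ to hit $R$ while remaining pairwise disjoint, a pigeonhole contradiction results.

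Your scheme is different in kind: you try to manufacture chains with \emph{prescribed} rational asymptotic directions and then argue that a resulting ``grid'' is contained in $\fix(f)$. Two problems. First, the geometric theorem does not let you choose the asymptotic direction; it only tells you that one of three structural alternatives holds, and in the paper's situation the relevant alternative is bounded deviation, not an asymptotic direction you get to aim. Second, and more seriously, the chains you build are unions of iterates of small disks around \emph{non-fixed} points; nothing forces their limit continuum to consist of fixed points, and no Brouwer/Le\,Calvez argument produces that conclusion. The contradiction in the paper is not that some large continuum lies in $\fix(f)$; it is a finiteness/counting argument via the Brouwer foliation. Your final step is therefore not merely ``hardest'' but headed in a direction that cannot close.
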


Whenever $f$ has a lift $\hat{f}$ such that case (iii) above holds, $f$ is said to be  \emph{annular}. This is because the dynamics of $f$ is essentially that of a homeomorphism of the annulus, after passing to a finite covering (see \cite{KT2012}).
Of course, one obtains a statement for arbitrary rational pseudo-rotations, after replacing $f$ with $f^n$: 

\begin{theoremain}\label{th:teoremao-rational} Let $f\colon \T^2\to \T^2$ be a rational pseudo-rotation preserving a Borel probability measure $\mu$ of full support, and $\hat{f}$ a lift of $f$. Then one of the following holds:
\begin{itemize}
\item[(i)] $\fix(f^k)$ is fully essential for some $k\in \N$;
\item[(ii)] Every orbit of $\hat{f}$ has bounded deviation from the rigid rotation $x\mapsto x+\alpha$, where $\alpha$ is the rotation vector of $\hat{f}$. That is,
$$\sup_{n\in \Z} \norm{\smash{\hat{f}^n(z)-z - n\alpha}}<\infty \text{ for all }z\in \R^2.$$
\item[(iii)] $f$ has uniformly bounded deviations from the rigid rotation in some rational direction; \ie there is a nonzero $v\in \Z^2$ and $M>0$ such that $$\abs{\smash{\big\langle \hat{f}^n(z)-z-n\alpha; v\big\rangle}}\leq M$$ for all $z\in \R^2$ and $n\in \Z$. Equivalently, $f^k$ is annular for some $k\in \N$.
\end{itemize}
\end{theoremain}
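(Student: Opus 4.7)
The plan is to deduce Theorem \ref{th:teoremao-rational} directly from Theorem \ref{th:teoremao} by passing to a suitable iterate of $f$. Let $\alpha\in\Q^2$ denote the rotation vector of the lift $\hat f$. I would choose the smallest $k\in\N$ such that $k\alpha\in\Z^2$ and set
\[
\hat g \;:=\; \hat f^k - k\alpha;
\]
because $k\alpha$ is an integer vector, $\hat g$ is a well-defined lift of $f^k$, and its rotation vector is $(0,0)$. The measure $\mu$ is automatically $f^k$-invariant with full support, so Theorem~\ref{th:teoremao} applies to $f^k$ with its irrotational lift $\hat g$.

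To translate the three alternatives back to $f$, I would write any $m\in\Z$ as $m=kn+j$ with $0\le j<k$ and set $u := \hat g^n(z) = \hat f^{kn}(z)-nk\alpha$. Using that $nk\alpha\in\Z^2$ commutes (as a translation of $\R^2$) with $\hat f^j$, a short computation gives
\[
\hat f^m(z) - z - m\alpha \;=\; \bigl(\hat f^j(u)-u-j\alpha\bigr) \;+\; \bigl(\hat g^n(z) - z\bigr).
\]
The first summand is uniformly bounded in $u\in\R^2$ and in $j\in\{0,1,\dots,k-1\}$: for each fixed $j$, the map $\hat f^j - \id$ is $\Z^2$-periodic on $\R^2$, hence bounded, and there are only finitely many $j$ to consider. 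Therefore case~(ii) of Theorem~\ref{th:teoremao} (every $\hat g$-orbit bounded) transfers to case~(ii) of Theorem~\ref{th:teoremao-rational}; pairing the displayed decomposition with a nonzero $v\in\Z^2$ shows that case~(iii) of Theorem~\ref{th:teoremao} yields $\bigl|\langle \hat f^m(z)-z-m\alpha;\,v\rangle\bigr|\le M'$ uniformly in $m$ and $z$, which is case~(iii); and case~(i) translates verbatim after renaming $k$. The equivalence with ``$f^k$ is annular for some $k$'' is then immediate from the identity $\hat g^n(z)-z = \hat f^{kn}(z)-z-kn\alpha$, which identifies bounded displacement of $\hat g$ in direction $v$ with bounded deviation of $\hat f$ from the rigid rotation in direction $v$, sampled at multiples of $k$.

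The hard part will be no part: the statement is a mechanical corollary of Theorem~\ref{th:teoremao} once one has set up the correct iterate and lift. The only point requiring care is the commutation step exploiting $k\alpha\in\Z^2$, which is what allows the decomposition to separate a piece controlled by the irrotational lift $\hat g$ from a universal remainder $\hat f^j(u)-u-j\alpha$ that is \emph{a priori} bounded for any homeomorphism of $\T^2$ homotopic to the identity.
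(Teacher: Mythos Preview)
Your proposal is correct and is exactly the approach the paper takes: it states Theorem~\ref{th:teoremao-rational} as an immediate consequence of Theorem~\ref{th:teoremao} ``after replacing $f$ with $f^n$'' and gives no further argument. Your decomposition $\hat f^m(z)-z-m\alpha = (\hat f^j(u)-u-j\alpha) + (\hat g^n(z)-z)$, using $k\alpha\in\Z^2$ to commute the integer translation past $\hat f^j$, is precisely the computation one needs to make this reduction rigorous.
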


Let us point out that the only cases where neither (ii) nor (iii) hold in Theorem \ref{th:teoremao} (and Theorem \ref{th:teoremao-rational} accordingly) are rather pathological. To illustrate this, we have the following 
\begin{propositionmain}\label{pro:non-lc} Under the hypotheses of Theorem \ref{th:teoremao}, suppose that only case (i) holds. Then  the essential connected component of $\fix(f)$ is not locally connected.
\end{propositionmain}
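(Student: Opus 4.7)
The plan is to argue by contrapositive: if the essential connected component $K$ of $\fix(f)$ is locally connected, then at least one of cases (ii) or (iii) of Theorem \ref{th:teoremao} must hold, contradicting the hypothesis that only (i) does. The first task is to classify the connected components of $\T^2\setminus K$. Each such $U$ is open, $f$-invariant (since $\partial U\subset K\subset \fix(f)$), and by Torhorst's theorem applied to the Peano continuum $K$, $\partial U$ is locally connected. An intersection-number obstruction in $\T^2$ then yields the following structural dichotomy: each $U$ is either an inessential topological disk, or an essential topological annulus whose primitive direction $u\in\Z^2$ is common to all annular components. Indeed, two disjoint essential simple closed curves in linearly independent primitive directions have nonzero algebraic intersection in $\T^2$; so the disjointness of essential loops (one in $K$, one in each annular $U$, or two in different annular components) forces their directions to agree, and in particular rules out complementary components of doubly essential type.

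Second I would control the lifts to $\R^2$. For a disk component $U$, local connectedness of $\partial U$ permits the Riemann uniformization $\D\to U$ to extend continuously to $\overline\D\to\overline U$ by Carath\'eodory's theorem. Composing with a local inverse of $\pi$ and using simple connectedness of $\overline\D$, this extension lifts to a continuous map $\overline\D\to\R^2$ whose image is the closure of a connected component $\widetilde U$ of $\pi^{-1}(U)$; compactness of $\overline\D$ forces $\widetilde U$ to be bounded. For an annular component $U$ in direction $u$, an analogous Carath\'eodory-type extension carried out in the intermediate cylindrical cover $\R^2/\Z u$ shows that each lift $\widetilde U\subset\R^2$ is a strip bounded in the $u^\perp$ direction, with a uniform bound valid across all annular components (since their $u^\perp$-widths in $\T^2$ have finite total). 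In either case, $\hat f$ is the identity on $\widetilde K:=\pi^{-1}(K)\supset\partial\widetilde U$, hence $\hat f(\widetilde U)=\widetilde U$ and every $\hat f$-orbit starting in $\widetilde U$ remains there.

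To conclude, every $z\in\R^2$ lies either in $\widetilde K$ (where it is fixed) or in some lift $\widetilde U$ (where its $\hat f$-orbit is confined to $\widetilde U$). If every complementary component of $K$ is a disk, every $\hat f$-orbit is bounded, yielding case (ii). Otherwise, setting $v:=u^\perp\in\Z^2$ primitive and $M$ a uniform bound on the strip widths, every orbit satisfies $|\langle \hat f^n(z)-z,\,v\rangle|\le M$ for all $z$ and $n$, yielding case (iii). The main obstacle is the topological content of the second step: verifying that the Carath\'eodory extension actually lifts to $\R^2$ with compact image in the disk case, and that an analogous extension in the cylindrical cover produces the required boundedness in the $u^\perp$ direction in the annular case. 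Both rely on local connectedness of $\partial U$ (supplied by Torhorst), but the controlled lifting to $\R^2$ (and, in the annular case, the behaviour at the two ends of the cylinder) must be carried out carefully.
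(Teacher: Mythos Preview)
Your approach is genuinely different from the paper's. The paper never looks at complementary domains or invokes Carath\'eodory; it works entirely inside $\hat K_0=\pi^{-1}(K_0)$. Using local connectedness, it shows that the equivalence classes under ``lying in a common compact connected subset of $\hat K_0$'' are clopen in $\hat K_0$; connectedness of $K_0$ then forces each class $E(z)$ to project onto all of $K_0$, and an easy argument shows some $v\in\Z^2_*$ satisfies $E(z)+v=E(z)$. Taking a compact connected $C\subset E(z)$ containing $z$ and $z+v$, the set $\Theta=\bigcup_{n\in\Z}(C+nv)\subset\fix(\hat f)$ separates $\R^2$ into two $\hat f$-invariant half-planes, and case~(iii) follows immediately. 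This is short and avoids all the boundary-regularity issues.

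Your route has two gaps that would need real work. First, the claim that each component $U$ of $\T^2\setminus K$ is $f$-invariant ``since $\partial U\subset K\subset\fix(f)$'' is not a proof: a homeomorphism fixing $\partial U$ pointwise can in principle permute complementary regions, and ruling this out for an orientation-preserving $f$ near a possibly wild boundary needs an argument. What actually works is that $\hat f$ is nonwandering (Theorem~\ref{th:irrotational-nw}), so each lift $\widetilde U$ is $\hat f$-\emph{periodic}; invariance is neither needed nor easily obtained from your stated reason. Second, since case~(i) gives a \emph{fully} essential $K$, every complementary component is inessential and the ``essential annulus'' branch never occurs, but inessential does not mean simply connected; to run Carath\'eodory you must either pass to $\fil(U)$ (and then verify that $\partial\fil(U)$ is locally connected) or show that $\pi^{-1}(K)$ is connected so that each $\widetilde U$ is simply connected. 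Your invocation of Torhorst also needs care: the theorem is for continua in $S^2$, while here $K$ sits in $\T^2$ and $\pi^{-1}(K)$ is neither compact nor a priori connected. None of this is fatal, but the paper's argument sidesteps all of it in a few lines.
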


\subsection{The analytic case}
If $f$ is real analytic, then its set of fixed points is an analytic set, hence it is locally connected. In particular, the case from the previous proposition is excluded:
\begin{theoremain}\label{th:teoremao-analytic} Let $f\colon \T^2\to \T^2$ be an irrotational real analytic diffeomorphism preserving a Borel probability measure $\mu$ of full support, and let $\hat f$ be its irrotational lift. Then, either every orbit of $\hat{f}$ is bounded, or every orbit of $\hat{f}$ is uniformly bounded in some rational direction (\ie $f$ is annular).
\end{theoremain}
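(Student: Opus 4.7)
The plan is to derive Theorem \ref{th:teoremao-analytic} as a direct consequence of Theorem \ref{th:teoremao} and Proposition \ref{pro:non-lc}, together with the classical fact that a real analytic subvariety of a real analytic manifold is locally connected.

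Applying Theorem \ref{th:teoremao} to $f$ (whose hypotheses hold automatically, since a diffeomorphism preserving a measure of full support is in particular such a homeomorphism) yields one of three alternatives: (i) $\fix(f)$ is fully essential, (ii) every $\hat f$-orbit in $\R^2$ is bounded, or (iii) $f$ is annular. The latter two alternatives are precisely the conclusions of Theorem \ref{th:teoremao-analytic}, so the only remaining task is to rule out the scenario in which case (i) holds but cases (ii) and (iii) both fail.

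Suppose then, for the sake of contradiction, that only case (i) holds. By Proposition \ref{pro:non-lc}, the essential connected component of $\fix(f)$ is not locally connected. On the other hand, in local analytic charts the set $\fix(f)$ coincides with the zero set of the real analytic map $f - \id$, and is thus a real analytic subvariety of $\T^2$. By Lojasiewicz's structure theorem, real analytic subvarieties of real analytic manifolds admit triangulations; in particular each connected component is locally path-connected, and therefore locally connected. This contradicts Proposition \ref{pro:non-lc}, so case (i) cannot occur in isolation, completing the reduction to cases (ii) and (iii).

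I expect no substantive obstacle, since the deep content of Theorem \ref{th:teoremao-analytic} is already packaged into Theorem \ref{th:teoremao} and Proposition \ref{pro:non-lc}. The only delicate point is the identification of $\fix(f)$ with a real analytic subvariety of $\T^2$: although $f$ itself takes values in $\T^2$ rather than $\R^2$, in any sufficiently small chart one may pass to a lift of $f$ for which $f-\id$ becomes a genuine real analytic map into $\R^2$, and the zero sets of these local expressions glue together to give the desired analytic structure on $\fix(f)$.
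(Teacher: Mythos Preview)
Your proposal is correct and follows exactly the approach sketched in the paper: the paper derives Theorem~\ref{th:teoremao-analytic} from Theorem~\ref{th:teoremao} and Proposition~\ref{pro:non-lc} by noting that the fixed point set of a real analytic diffeomorphism is an analytic set and hence locally connected, which rules out the situation where only case~(i) holds. Your write-up simply fleshes out the local-connectedness claim (via \L ojasiewicz) and the local identification of $\fix(f)$ with a zero set, which the paper leaves implicit.
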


The above theorem applied to some power of $f$ shows that in the real analytic setting, area-preserving rational pseudo-rotations necessarily have bounded deviations from the rigid rotation, at least in some direction. An analogous property for irrational pseudo-rotations does not hold, in view of the analytic examples from \cite{kk-mixing}. 
Note also that the example from \cite{kt-example} shows that Theorem \ref{th:teoremao-analytic} is false if one replaces `real analytic' by `$C^\infty$'.

\subsection{The area-preserving hypothesis} It is important to note that the hypothesis of the existence of an invariant probability measure of full support in Theorem \ref{th:teoremao} is essential, and it cannot be relaxed to a nonwandering condition. Let us briefly sketch an example, which which was communicated to us by Bassam Fayad. If $X$ is the constant vector field $X(x)=v$ on $\T^2$, where $v\in \R^2$ is some vector of irrational slope, and if $\phi\colon \T^2\to \R$ is a $C^\infty$ function such that $\phi(x_0)=0$ for some $x_0\in \T^2$ and $\phi(x)>0$ otherwise, then the time-one map $f$ of the flow induced by the vector field $\phi X$ on $\T^2$ has $x_0$ as its unique fixed point, and all other orbits are dense. Moreover, since $\phi$ is $C^\infty$ near $x_0$, a direct computation shows that $f$ is irrotational (moreover, one may show that the unique invariant probability measure is the Dirac measure at $x_0$). This example is clearly nonwandering, and none of the cases from Theorem \ref{th:teoremao} hold ($f$ does have bounded displacement, but in an irrational direction). 

\subsection{Poincar\'e recurrence on the lift} In order to prove Theorem \ref{th:teoremao}, we prove a Poincar\'e recurrence type theorem in the lifted dynamics under certain conditions, which can be applied in a more general setting than irrotational homeomorphisms. If $v\in \R^2$ is a nonzero vector, we denote by $H^+_v$ the half-plane $\{u\in\R^2 : \langle u; v\rangle \ge 0 \}$. Recall that the rotation set is always compact and convex \cite{m-z}. One can also define the rotation vector $\rho_\mu(\hat{f})$ associated to an invariant probability measure $\mu$; see Section \ref{sec:poincare} for the definition.

\begin{theoremain}\label{th:naoerrante-ext} 
Let $f\colon \T^2\to \T^2$ be a homeomorphism homotopic to the identity and $\hat{f}$ a lift of $f$ to $\R^2$. Suppose that $(0,0)$ is an extremal point of $\rho(\hat{f})$, and $\rho(\hat{f})\subset H^+_v$ for some $v\in \Z^2$, $v\neq (0,0)$. Then, for any $f$-invariant Borel probability measure $\mu$ such that $\rho_\mu(\hat{f})=(0,0)$, the set of $\hat{f}$-recurrent points projects to a set of full $\mu$-measure in $\T^2$.
\end{theoremain}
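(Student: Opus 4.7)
I would reduce the theorem to a recurrence question for the $\R^2$-valued displacement cocycle of $\hat{f}$ and then apply Atkinson-type recurrence results exploiting the half-plane hypothesis. First, I would reduce to the ergodic case: by the ergodic decomposition $\mu = \int \mu_\omega\, d\lambda(\omega)$ and the affinity of the rotation vector map, $\int \rho_{\mu_\omega}(\hat{f})\,d\lambda(\omega) = \rho_\mu(\hat{f})=(0,0)$, while each $\rho_{\mu_\omega}(\hat{f})\in \rho(\hat{f})\subset H^+_v$. Pairing with $v$ gives a non-negative integrand with vanishing $\lambda$-integral, forcing $\rho_{\mu_\omega}(\hat{f})$ to lie on the face $F:=\rho(\hat{f})\cap\partial H^+_v$ for $\lambda$-a.e.\ $\omega$. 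Extremality of $(0,0)$ in $\rho(\hat{f})$ implies that $F$ is either $\{(0,0)\}$ or a segment with $(0,0)$ as an endpoint, so each $\rho_{\mu_\omega}(\hat{f})$ is a non-negative multiple of a common vector; combined with the vanishing average, this forces $\rho_{\mu_\omega}(\hat{f})=(0,0)$ for $\lambda$-a.e.\ $\omega$. Since the desired conclusion descends through the ergodic decomposition, we may assume $\mu$ is ergodic.

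Next, I would reformulate the statement as conservativity of a skew product. Define $\phi\colon \T^2\to\R^2$ by $\phi(x)=\hat{f}(\hat{x})-\hat{x}$, a continuous $\Z^2$-periodic map (well-defined by $\Z^2$-equivariance of $\hat{f}$), and set $S_n(x)=\sum_{k=0}^{n-1}\phi(f^k x)=\hat{f}^n(\hat{x})-\hat{x}$. A lift $\hat{x}$ of $x$ is $\hat{f}$-recurrent exactly when $\liminf_n \|S_n(x)\|=0$; equivalently, the $\R^2$-valued skew product $F(x,t)=(f(x),t+\phi(x))$ on $\T^2\times\R^2$ is conservative with respect to $\mu\times\Leb_{\R^2}$. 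We have $\int \phi\,d\mu=\rho_\mu(\hat{f})=(0,0)$, and the hypothesis $\rho(\hat{f})\subset H^+_v$ is equivalent to $\int\langle\phi,v\rangle\,d\nu\geq 0$ for every $f$-invariant Borel probability $\nu$.

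The principal step, which is the main obstacle, is to prove conservativity of this skew product. Atkinson's recurrence theorem, applied to the real-valued cocycle $\langle\phi,v\rangle$ of zero $\mu$-mean, gives $\liminf_n |\langle S_n(x),v\rangle|=0$ for $\mu$-a.e.\ $x$. To upgrade this one-dimensional recurrence to the two-dimensional statement $\liminf_n \|S_n(x)\|=0$, I would pick a second integer vector $u\in\Z^2$ not parallel to $v$, apply Atkinson to the zero-mean scalar cocycle $\langle\phi,u\rangle$, and then combine the two resulting one-dimensional conservative skew products via Schmidt's essential-value theory for $\R^d$-valued cocycles to conclude that the closed subgroup of essential values of $\phi$ in $\R^2$ contains both $u$ and $v$ and hence equals $\R^2$. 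Ruling out pathological intermediate subgroups (such as $\{(0,0)\}$ or a rank-one lattice) as the essential range is the delicate point, and would exploit both the continuity of $\phi$ and its origin as the displacement cocycle of a homeomorphism isotopic to the identity. Once conservativity is obtained, for $\mu$-a.e.\ $x$ there exist $n_k\to\infty$ with $\hat{f}^{n_k}(\hat{x})\to\hat{x}$; applying the same argument to $f^{-1}$, whose lift has rotation set $-\rho(\hat{f})\subset H^+_{-v}$ with $(0,0)$ still extremal, yields backward recurrence.
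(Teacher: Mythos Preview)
Your reduction to ergodic $\mu$ via the ergodic decomposition is correct and matches what the paper does. The genuine gap is in the ``principal step'': you cannot pass from one-dimensional Atkinson recurrence in two independent directions to two-dimensional recurrence by the essential-value argument you sketch. Atkinson applied to $\langle\phi,v\rangle$ gives $\liminf_n|\langle S_n(x),v\rangle|=0$ and similarly for $u$, but the return times for the two projections need not overlap, so this does \emph{not} yield $\liminf_n\|S_n(x)\|=0$. More precisely, knowing that each scalar cocycle $\langle\phi,v\rangle$ and $\langle\phi,u\rangle$ has $0$ as an essential value does not imply that $v$ and $u$ (or even $(0,0)$) are essential values of the $\R^2$-valued cocycle $\phi$: the essential values of a projection contain the projection of the essential values, but the reverse inclusion fails in general. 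There are well-known ergodic $\R^2$-cocycles with zero mean whose skew product is dissipative, so some genuinely two-dimensional input is required; your appeal to ``continuity of $\phi$ and its origin as the displacement cocycle'' is a placeholder, not an argument. The half-plane hypothesis $\rho(\hat{f})\subset H^+_v$, which you use only for the ergodic decomposition, must play a deeper role.

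The paper's proof is not ergodic-theoretic at this step but topological. The hypothesis $\rho(\hat{f})\subset H^+_v$ with $(0,0)\in\rho(\hat{f})$ guarantees that certain closed $\hat{f}$-invariant non-separating sets $\omega_v,\omega_{-v}\subset H^+_v,H^+_{-v}$ are nonempty. One then analyzes the topology of $\R^2\setminus(\omega_v\cup\omega_{-v})$ (and translates thereof): Atkinson is used, but only in one direction at a time, and the passage to full recurrence in $\R^2$ is achieved by trapping orbits in connected components of complements of these invariant sets and exploiting planar separation properties, together with the existence of fixed points of $\hat{f}$. In short, the two-dimensional obstacle you identify is overcome by Brouwer-type plane topology specific to lifts of torus homeomorphisms, not by cocycle theory alone.
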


As an immediate consequence, we have
\begin{theoremain}\label{th:irrotational-nw} Suppose $f\colon \T^2\to \T^2$ is an irrotational homeomorphism preserving a Borel probability measure with full support $\mu$. Then its irrotational lift $\hat{f}$ is nonwandering.
\end{theoremain}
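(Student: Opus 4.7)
The plan is to deduce Theorem \ref{th:irrotational-nw} directly from Theorem \ref{th:naoerrante-ext}, which is stated immediately above it. First I would verify that the hypotheses of Theorem \ref{th:naoerrante-ext} hold trivially in the irrotational setting: since $\rho(\hat{f})=\{(0,0)\}$, the origin is an extremal point of $\rho(\hat{f})$, and for every nonzero $v\in \Z^2$ we have $\rho(\hat{f})\subset H^+_v$ (pick any such $v$). Moreover, for any $f$-invariant Borel probability measure $\mu$, the rotation vector $\rho_\mu(\hat{f})$ lies in the convex hull of $\rho(\hat{f})$ and therefore equals $(0,0)$.

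Second, applying Theorem \ref{th:naoerrante-ext} to our measure $\mu$, I get that the set $R\subset \R^2$ of $\hat{f}$-recurrent points projects onto a Borel set $\pi(R)\subset \T^2$ of full $\mu$-measure, where $\pi\colon \R^2\to \T^2$ denotes the canonical projection. Since $\mu$ has full support by hypothesis, $\pi(R)$ is dense in $\T^2$.

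Third, I would observe that $R$ is invariant under deck transformations: if $\hat{f}^{n_k}(\hat{z})\to \hat{z}$ for some $n_k\to \infty$, then for any $w\in \Z^2$ we have $\hat{f}^{n_k}(\hat{z}+w)=\hat{f}^{n_k}(\hat{z})+w\to \hat{z}+w$, so $\hat{z}+w\in R$. Hence $R=\pi^{-1}(\pi(R))$. Since $\pi(R)$ is dense in $\T^2$ and $\pi$ is a local homeomorphism, $R$ is dense in $\R^2$.

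Finally, since every $\hat{f}$-recurrent point is nonwandering and the nonwandering set $\Omega(\hat{f})$ is closed in $\R^2$, the density of $R\subset \Omega(\hat{f})$ forces $\Omega(\hat{f})=\R^2$. There is no real obstacle here, as all of the work has been packaged into Theorem \ref{th:naoerrante-ext}; the only thing one must be careful about is to use the full support of $\mu$ together with the deck-invariance of the set of recurrent points to promote the $\T^2$-density of $\pi(R)$ into $\R^2$-density of $R$ itself.
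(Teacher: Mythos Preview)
Your proposal is correct and follows exactly the approach indicated in the paper, which simply states that Theorem \ref{th:irrotational-nw} is an immediate corollary of Theorem \ref{th:naoerrante-ext}. You have spelled out the details of this implication (verifying the hypotheses, using full support to get density, deck-invariance of the recurrent set, and closedness of the nonwandering set), and there are no gaps.
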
 

\subsection{Geometric results for eventually free chains}
The other theorem that deserves to be highlighted is Theorem \ref{th:geometric}, which is a geometric result about decreasing chains of arcwise connected sets. We omit the statement from the introduction, since it is somewhat technical; we refer the reader to Section \ref{sec:geometric}. We only mention that Theorem \ref{th:geometric} is rather general (considerably more than what is needed in our proof of Theorem \ref{th:teoremao}), and it does not involve any dynamics.

\subsection{Outline of the article}
This article is organized as follows. Section \ref{sec:prelim} introduces some useful notation and some existing results that will be used in many places along this article. Section \ref{sec:geometric} is devoted to the geometric results mentioned in the previous paragraph (in particular, Theorem \ref{th:geometric}). Before proving these results, in \S\ref{sec:chain-annular} we prove as an application a technical result that plays a crucial role in the proof of Theorem \ref{th:teoremao}.

The goal of Section \ref{sec:poincare} is proving Theorem \ref{th:naoerrante-ext}, but it includes some technical results and notions that are also useful in other parts of the article. We begin defining the rotation vector of an invariant measure and recalling its main properties.  In \S\ref{sec:atkinson} we present a ``directional recurrence'' result, which is a consequence of a theorem of Atkinson \cite{atkinson}. Following \cite{Transitivo,aneltransitivo}, \S\ref{sec:omegas} introduces the sets $\omega_v$, which play a fundamental role in the proof of Theorem \ref{th:naoerrante-ext}, presented in the remaining subsections.

Section \ref{sec:brouwer} introduces some results from \cite{KT2012} which rely heavily on the equivariant Brouwer theorem of Le Calvez \cite{lecalvez-equivariant} and a recent result of Jaulent \cite{jaulent}. The main goal of the section is to show that the results about invariant and periodic topological disks that are proved in \cite{KT2012} for maps with a ``gradient-like Brouwer foliation'' remain valid in the context of an irrotational area-preserving homeomorphism. Of particular interest is Proposition \ref{pro:engulfing-finite}, which is the key for ruling out bounded displacement in an irrational direction in Theorem \ref{th:teoremao}.

Finally, Section \ref{sec:teoremao} presents the proof of Theorem \ref{th:teoremao}, and Section \ref{sec:non-lc} proves Proposition \ref{pro:non-lc}.

\section{Notation and preliminaries}\label{sec:prelim}
We denote by $\N$ the set of \emph{positive} integers. The sets $\R_*$, $\R^2_*$, $\Z_*$ and $\Z^2_*$ denote the set of all non-zero elements of the corresponding spaces, \eg $\Z^2_* =\{v\in \Z^2:v\neq (0,0)\}$ and similarly for the other spaces. 

By $\langle x;y\rangle$ we denote the canonical inner product of two vectors in $\R^2.$  Given $v\in\R^2_*$, we denote by $p_v:\R^2\to\R$ the orthogonal projection
$$p_v(x)={\big\langle x;\frac{v}{\norm{v}}\big\rangle}.$$
For any $v=(a,b)\in\R^2$, we denote by $v^{\perp}$ the orthogonal vector $v^\perp=(-b,a)$, and the translation $x\mapsto x+v$ of $\R^2$ is denoted by $T_v$. 
If $S\subset \R^2$ is a set, we will use both $S+v$ and $T_v(S)$ to denote the translated set $\{x+v : x\in S\}$.

 If $\gamma\colon[0,1]\to X$ is an arc, then $[\gamma]$ denotes its image and $-\gamma$ denotes the reversed arc $(-\gamma)(t) = \gamma(1-t)$. If $\gamma'\colon [0,1]\to X$ is another arc with $\gamma'(0) = \gamma(1)$, then $\gamma*\gamma'\colon [0,1]\to X$ denotes their concatenation.

\subsection{The boundary at infinity}
Given a set $X\subset \R^2$, we say that $X$ accumulates in the direction $v\in \SS^1$ at infinity if there is a sequence $\{x_n\}_{n\geq 0}$ in $X$ such that 
$$\lim_{n\to \infty}\norm{x_n} = \infty \quad \text{ and }\quad \lim_{n\to\infty} \frac{x_n}{\norm{x_n}}=v.$$
The boundary of $X$ at infinity is defined as the set $\bd_\infty X\subset \SS^1$ consisting of all $v\in \SS^1$ such that $X$ accumulates in the direction $v$ at infinity. 

Denoting by $\mathbb{S}^1_\infty$ a disjoint copy of $\mathbb{S}^1$, the space $\R^2_\infty = \R^2\sqcup \mathbb{S}^1_{\infty}$ can be topologized in a way that it is homeomorphic to the closed unit disk $\ol{\D}$ and $\bd_{\R^2_\infty} X = \bd_\infty X \cup \bd_{\R^2} X$ for any $X\subset \R^2$. A basis of open sets in $\R^2_\infty$ is given by the open subsets of $\R^2$ together with sets of the form $V\cup I_\infty$ where $I\subset \mathbb{S}^1$ is an open interval, $I_\infty$ is the corresponding interval in $\mathbb{S}_\infty$, and $V=\{tv : v\in I,\, t\geq M\}$.

\subsection{Essential and inessential sets}\label{sec:essential}
An open subset $U$ of a surface $S$ is said to be \emph{inessential} if every loop in $U$ is homotopically trivial in $S$; otherwise, $U$ is \emph{essential}. An arbitrary set $E\subset S$ is called inessential if it has some inessential open neighborhood. We say that $E$ is \emph{fully essential} if $S\sm E$ is inessential.


The next proposition is contained in \cite[Proposition 1.3]{KT2012}.
\begin{proposition}\label{pro:compact-ine} If $K\subset \T^2$ is compact and inessential, then any connected component of $\pi^{-1}(K)$ is bounded. Thus, if $U$ is open and fully essential then any connected component of $\R^2\sm \pi^{-1}(U)$ is bounded.
\end{proposition}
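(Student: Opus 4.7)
The plan is to reduce everything to the elementary fact that an open inessential subset of $\T^2$ admits a continuous section of the universal cover $\pi\colon\R^2\to\T^2$, and then exploit the compactness of $K$ to bound these sections uniformly. The second assertion is immediate from the first by taking $K=\T^2\sm U$, which is compact and inessential as soon as $U$ is open and fully essential, so I focus on the first assertion.

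First, I would use normality of $\T^2$ to enlarge $K$ to a relatively compact, inessential open neighborhood $W$. The connected components $\{W_j\}$ of $W$ partition $W$ into disjoint open sets, and the intersections $K_j:=K\cap W_j$ are simultaneously open and closed in $K$; compactness of $K$ then forces only finitely many of them, say $W_1,\dots,W_N$, to meet $K$, and each such $K_j$ is a compact subset of $W_j$. On each of these components I would construct a continuous section $s_j\colon W_j\to\R^2$ of $\pi$ by the standard path-lifting recipe: fix $y_j\in W_j$ and a lift $\tilde{y}_j\in\pi^{-1}(y_j)$, and for $z\in W_j$ let $s_j(z)$ be the endpoint of the $\tilde{y}_j$-lift of any path from $y_j$ to $z$ inside $W_j$. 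Inessentiality of $W_j$ guarantees that every loop in $W_j$ is nullhomotopic in $\T^2$ and hence lifts to a loop in $\R^2$, so $s_j$ is well defined and continuous; standard covering-space arguments then yield the disjoint decomposition
\[
\pi^{-1}(W_j)=\bigsqcup_{v\in\Z^2}\bigl(s_j(W_j)+v\bigr)
\]
into open connected components.

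Next, let $C$ be any connected component of $\pi^{-1}(K)$. Then $C$ is a connected subset of $\pi^{-1}(W)=\bigsqcup_{j,v}(s_j(W_j)+v)$, which is a disjoint union of open sets, so $C$ is entirely contained in a single summand $s_j(W_j)+v$ with $j\in\{1,\dots,N\}$. Combined with $\pi(C)\subset K_j$, this forces $C\subset s_j(K_j)+v$, the translate of the continuous image of a compact set, which is itself compact and in particular bounded.

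The mildly subtle point is the choice of $W$: an inessential open neighborhood of $K$ may a priori have infinitely many components and need not be relatively compact, and both issues must be addressed before the section construction gives the uniform control needed to bound $C$. This is handled precisely by the clopen-partition argument above, which reduces the problem to finitely many compact sub-pieces of $K$, on each of which a single section $s_j$ suffices.
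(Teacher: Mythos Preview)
The paper does not supply its own proof of this proposition; it is quoted from \cite[Proposition~1.3]{KT2012}. Your argument is a correct, self-contained proof along standard covering-space lines: lift each connected component of an inessential open neighborhood via a section, use compactness of $K$ to reduce to finitely many such components, and observe that each connected component of $\pi^{-1}(K)$ then sits inside a single translate of the (compact) image of a section restricted to a compact piece of $K$. The deduction of the second assertion from the first is also correct.

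Two minor remarks. First, the appeal to normality and to relative compactness of $W$ is superfluous: by the paper's definition, an inessential set already comes equipped with an inessential open neighborhood, and every subset of the compact space $\T^2$ is automatically relatively compact. Second, the construction of $s_j$ is exactly the lifting criterion for covering maps applied to the inclusion $W_j\hookrightarrow\T^2$, using that inessentiality of $W_j$ makes the induced map on fundamental groups trivial; your path-lifting description is a correct hands-on version of this.
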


We also need the following proposition, included in \cite[Proposition 1.4]{KT2012}.
\begin{proposition}\label{pro:annular-annular} Let $f\colon \T^2\to \T^2$ be a homeomorphism homotopic to the identity.
\begin{itemize}
\item[(1)] If there is an $f$-invariant connected open or closed set which is neither inessential nor fully essential, then $f$ is annular.
\item[(2)] If $f$ is non-annular and has a fixed point, then $f^n$ is non-annular for all $n\in \N$.
\end{itemize}
\end{proposition}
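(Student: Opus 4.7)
For part (1), the plan is to pass to the universal cover and extract a rational direction from the topology of $E$. Let $\til E$ be a connected component of $\pi^{-1}(E)$, with stabilizer $H=\{w\in\Z^2 : T_w(\til E)=\til E\}$. I would first argue that $H$ has rank exactly one. Essentiality of $E$ forces $H\neq 0$: lifting a nontrivial loop from a sufficiently small neighborhood of $E$ to $\R^2$ yields a path whose endpoints differ by a nonzero element of $\Z^2$, and as the neighborhood shrinks, one can arrange this element to preserve some fixed component $\til E$. On the other hand, if $H$ had rank two then finitely many translates of $\til E$ would cover $\pi^{-1}(E)$, allowing one to build an inessential open neighborhood of $\T^2\sm E$ and contradicting the failure of $E$ to be fully essential. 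Hence $H=\Z v$ for some primitive $v\in\Z^2_*$; choose $w\in\Z^2$ so that $\{v,w\}$ is a basis of $\Z^2$, and observe that the components of $\pi^{-1}(E)$ are then the pairwise disjoint sets $\til E_k=\til E+kw$, $k\in\Z$.

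The step I expect to be the main obstacle is showing that $\til E$ is contained in a strip parallel to $v$, i.e., $p_{v^\perp}(\til E)$ is bounded. Working in the cylinder $C=\R^2/\Z v$, the image $\til E^*$ of $\til E$ is a connected set whose translates by multiples of $w$ (which shift a nonzero amount in the $v^\perp$ direction) are pairwise disjoint, and together they cover the full lift of $E$ to $C$. If $\til E^*$ were unbounded toward an end of $C$, by connectedness it would accumulate there, and so would every translate $\til E^*+kw$, eventually forcing two of them to meet. The failure of $E$ to be fully essential enters to ensure that $\T^2\sm E$ contains an essential loop, which lifts to a proper arc in $C$ separating the configuration appropriately and producing the needed intersection. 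Granting this, $\til E$ lies in a strip of bounded $v^\perp$-width.

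Once the strip structure is in hand, (1) finishes cleanly. Any lift of $f$ permutes the components $\til E_k$, and by commutativity with deck transformations, if $\hat f(\til E_0)=\til E_j$ then $\hat f(\til E_k)=\til E_{k+j}$ for all $k$. Replacing $\hat f$ by $T_{-jw}\circ\hat f$, I may assume $\hat f$ fixes each $\til E_k$ setwise, and hence preserves each connected component of $\R^2\sm\pi^{-1}(E)$. Each such complementary component is trapped between two consecutive strips and is therefore bounded in $v^\perp$. Consequently $\hat f$ has uniformly bounded displacement in the rational direction $v^\perp$, so $f$ is annular.

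For part (2), the plan is direct. Pick a lift $\hat f$ with $\hat f(\hat p)=\hat p$ for some $\hat p\in\pi^{-1}(p)$, and suppose for contradiction that $f^n$ is annular: some lift $\hat g=T_c\circ\hat f^n$ of $f^n$ satisfies $|\langle \hat g^k(z)-z;w\rangle|\le M$ for all $z,k$ and some nonzero $w\in\Z^2$. Evaluating at $\hat p$ gives $\hat g^k(\hat p)=\hat p+kc$, so boundedness of $k\langle c;w\rangle$ forces $\langle c;w\rangle=0$, upgrading the bound to $|\langle\hat f^{nk}(z)-z;w\rangle|\le M$ for all $z,k$. For an arbitrary iterate $m=nk+r$ with $0\le r<n$, splitting $\hat f^m(z)-z=(\hat f^{nk}(\hat f^r z)-\hat f^r z)+(\hat f^r z-z)$ and using that $\hat f^r-\id$ is $\Z^2$-periodic (hence uniformly bounded on $\R^2$), I conclude $|\langle\hat f^m(z)-z;w\rangle|$ is uniformly bounded. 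Hence $\hat f$ itself has bounded displacement in direction $w$, making $f$ annular and contradicting the hypothesis.
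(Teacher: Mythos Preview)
The paper does not give its own proof of this proposition: it is simply quoted as being contained in \cite[Proposition 1.4]{KT2012}. So there is no argument in the present paper to compare against; you are reconstructing a proof from scratch.

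Your argument for part (2) is correct and complete. The key observation that the fixed point forces $\langle c; w\rangle = 0$, followed by the division-with-remainder decomposition, is exactly the right approach.

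For part (1), your outline is along the right lines but the step you flag as the main obstacle is genuinely not closed. The argument ``if $\til E^*$ were unbounded toward an end of $C$, \dots eventually forcing two of them to meet'' does not work as stated: disjoint connected sets in a cylinder can all accumulate at the same end without intersecting. You then mention the essential loop in $\T^2\sm E$, which is indeed the correct ingredient, but the logic connecting it to the strip bound is not spelled out. The clean route is: since $E$ is not fully essential, there is an essential simple closed curve $\gamma$ in $\T^2\sm E$ (if $E$ is closed) or in $\T^2\sm\fil(E)$ (if $E$ is open, where $\fil(E)$ is the invariant open annulus); the lift $\til\gamma$ is $T_v$-invariant (same $v$, since $\gamma$ and the core of the annulus must be homotopic), compact modulo $\Z v$, hence contained in a strip of bounded $v^\perp$-width; its $\Z w$-translates are disjoint lines, and $\til E$, being connected and disjoint from all of them, is trapped between two consecutive ones.

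There is also a secondary gap at the end: preserving each $\til E_k$ does not automatically give that $\hat f$ preserves each component of $\R^2\sm\pi^{-1}(E)$, nor that those components are bounded in $v^\perp$, unless you first know that each $\til E_k$ separates $\R^2$. Once the strip bound is established, $\til E_k$ is closed, connected, $T_v$-invariant, hence projects onto $\R$ in the $v$-direction while bounded in $v^\perp$; a short argument then shows it separates the two half-planes, and the rest follows. You should make this explicit.
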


%


\subsection{The filling of a set}
If $E\subset \R^2$ is connected, we define its \emph{filling} $\fil(E)$ as the union of $E$ with all the bounded connected components of $\R^2\sm E$. Thus $\fil(E)$ is connected and all the connected components of its complement are unbounded.  In particular if $E$ is open, then $\fil(E)$ is an open topological disk. If $g\colon \R^2\to \R^2$ is a homeomorphism then $\fil(g(E))=g(\fil(E))$, so the filling of an invariant set is invariant.

\begin{proposition}\label{pro:fill-ine} $E\cap T_v(E)=\emptyset$ if and only if $\fil(E)\cap T_v(\fil(E))=\emptyset$ ($v\in \R^2_*$)
\end{proposition}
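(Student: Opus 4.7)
The easy implication $\fil(E)\cap T_v(\fil(E))=\emptyset\Rightarrow E\cap T_v(E)=\emptyset$ is immediate from $E\subset\fil(E)$. For the converse, I would set $F:=T_v(E)$ and assume $E\cap F=\emptyset$. Since $E$ is connected and disjoint from $F$, it lies in a unique connected component $W$ of $\R^2\sm F$; symmetrically $F\subset W'$ for some component $W'$ of $\R^2\sm E$. The plan is to show that both $W$ and $W'$ must be unbounded, and then to rule out any hypothetical $x\in\fil(E)\cap\fil(F)$.

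First I would rule out $W$ being bounded. In that case $E\subset W\subset\fil(F)\subset\conv(F)=T_v(\conv(E))$, which gives $\conv(E)\subset T_v(\conv(E))$, i.e.\ $\conv(E)-v\subset\conv(E)$. Iterating yields $\conv(E)-nv\subset\conv(E)$ for all $n\ge 0$; but $\conv(E)$ is bounded (being inside $\fil(F)$) and $v\ne 0$, so picking any $y\in\conv(E)$ produces an unbounded sequence $y-nv$ inside $\conv(E)$, a contradiction. The symmetric argument rules out $W'$ bounded. Thus both $W$ and $W'$ are unbounded, giving $E\cap\fil(F)=\emptyset$ and $F\cap\fil(E)=\emptyset$.

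Now assume for contradiction that $x\in\fil(E)\cap\fil(F)$. Then $x$ cannot lie in $E\cup F$, so $x$ belongs to a bounded component $C$ of $\R^2\sm E$ and a bounded component $C'$ of $\R^2\sm F$. Since $F\cap\fil(E)=\emptyset$, we have $F\cap C=\emptyset$, so $C$ is a connected subset of $\R^2\sm F$ containing $x$; hence $C\subset C'$, and symmetrically $C=C'$.

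It remains to contradict the existence of $C=C'$. Maximality of $C$ as a connected component forces any limit point of $C$ outside $C$ to lie in $E$; the same reasoning with $F$ gives $\bar C\sm C\subset E\cap F=\emptyset$, so $C$ is closed in $\R^2$. In the case where $E$ and $F$ are closed---which is what is needed for the paper's applications---$\R^2\sm E$ is open and so its component $C$ is also open, making $C$ a nonempty bounded clopen subset of the connected space $\R^2$, a contradiction. I expect this final clopen step to be the main obstacle: it is clean for closed $E$ but needs extra care for a general connected $E$ (when $\R^2\sm E$ need not be open), where one would have to argue more delicately, for instance by passing to $\bar E$ or approximating $E$ by compact connected subsets to preserve the same filling data.
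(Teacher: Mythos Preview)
Your approach differs from the paper's and carries a genuine gap at the last step, which you correctly flag but do not close. The proposition is stated for an arbitrary connected $E$, and the paper applies it to the \emph{open} sets $U'_\epsilon(z)$, so the closed-$E$ case does not cover the intended applications. Your idea of passing to $\bar E$ can be made to work, but it needs one further observation you did not supply: since $C$ is also a component of $\R^2\sm F$ and $E\cup C\subset\R^2\sm F$, if $E\cup C$ were connected then maximality of $C$ would force $E\subset C$, which is absurd. Hence $E$ and $C$ are separated, so $C\cap\bar E=\emptyset$; then $C$ coincides with the (open) component of $\R^2\sm\bar E$ containing it, and the clopen contradiction goes through for general connected $E$. (A small slip earlier: ``$\conv(E)$ is bounded, being inside $\fil(F)$'' is not quite right since $\fil(F)$ need not be convex; the correct reason is simply that $E\subset W$ with $W$ bounded.)

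By contrast, the paper's proof is shorter and sidesteps the issue entirely. Arguing by contrapositive, it assumes $\fil(E)\cap T_v(\fil(E))\neq\emptyset$ and splits on whether $T_v(\fil(E))$ meets $E$. If not, the connected set $T_v(\fil(E))$ meets (via any point of the assumed intersection) a bounded component of $\R^2\sm E$ and is therefore contained in it; this gives $T_v(\fil(E))\subset\fil(E)$ with $\fil(E)$ bounded, and iterating $T_v$ yields a contradiction. If $T_v(\fil(E))$ does meet $E$ but $T_v(E)\cap E=\emptyset$, the symmetric argument gives $\fil(E)\subset T_v(\fil(E))$ with $\fil(E)$ bounded, and iterating $T_v^{-1}$ finishes. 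No analysis of whether components of $\R^2\sm E$ are open is ever needed, so the proof works uniformly for connected $E$; your convex-hull detour and the delicate component argument for $C$ are avoided altogether.
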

\begin{proof}
The `if' direction is trivial. To prove the `only if' part, suppose $\fil(E)\cap T_v(\fil(E))\neq \emptyset$, and assume first that $T_v(\fil(E))$ is disjoint from $E$. Then $T_v(\fil(E))$ must intersect (and thus be contained in) a bounded connected component of $\R^2\sm E$. This implies that $T_v(\fil(E))$ is bounded (so $\fil(E)$ is also bounded), and $T_v(\fil(E))\subset \fil(E)$. But the latter implies that $T_v^n(\fil(E))\subset \fil(E)$ for all $n\in \N$, contradicting the fact that $\fil(E)$ is bounded. 

Now assume that $T_v(\fil(E))$ intersects $E$. Note that $T_v(\fil(E))=\fil(T_v(E))$. If $T_v(E)$ intersects $E$, we are done. Otherwise, $E$ intersects $\fil(T_v(E))\sm T_v(E)$, which means that $E$ intersects (and is contained in) a bounded connected component of $\R^2\sm T_v(E)$. Thus $E$ is bounded, and so $\fil(E)$ is bounded. Moreover, $E\subset \fil(T_v(E))$, from which $\fil((E)\subset \fil(T_v(E)) = T_v(\fil(E))$. This means that $T_v^{-n}(\fil(E))\subset \fil(E)$, contradicting the fact that $\fil(E)$ is bounded.
\end{proof}

If $E\subset \T^2$ is open or closed, then we define $\fil(E)$ as the union of $E$ with all the inessential connected components of $\T^2\sm E$ (see \cite[\S1.4]{KT2012}). One easily verifies that the filling of invariant sets is invariant.

In the case that $U$ is open and connected, $\fil(U)$ coincides with $\pi(\fil(\hat{U}))$, where $\hat{U}$ is any connected component of $\pi^{-1}(U)$. Moreover,
\begin{itemize}
\item $U$ is inessential if and only if $\fil(U)$ is an open topological disk;
\item $U$ is essential but not fully essential if and only if $\fil(U)$ is a topological annulus;
\item $U$ is fully essential if and only if $\fil(U)=\T^2$.
\end{itemize}

\subsection{The sets $U_\epsilon(z)$} \label{sec:uepsilon}

Let $f\colon S\to S$ be a homeomorphism of an orientable surface $S$. Given $z\in S$ and $\epsilon>0$, denote by $U_\epsilon'(z,f)$ (or simply $U_\epsilon'(z)$ when there is no ambiguity) the connected component of $\bigcup_{n\in \Z} f^n(B_\epsilon(z))$ containing $z$. 
Suppose that $f^n(B_\epsilon(z))$ intersects $B_\epsilon(z)$ for some $n\in \N$ (otherwise, $U_\epsilon'(z)=B_\epsilon(z)$). Since $f$ permutes the connected components of $\bigcup_{n\in \Z} f^n(B_\epsilon(z))$, it follows that $U_\epsilon'(z) = f^n(U_\epsilon'(z))$, and if $n\in \N$ is chosen minimal with that property, then $f^k(U_\epsilon'(z))$ is disjoint from $U_\epsilon'(z))$ whenever $1\leq k < n$. In particular, if $n>1$ then $U_\epsilon'(z)$ is disjoint from its image.


If $S=\R^2$, we let $U_\epsilon(z) = U_\epsilon(z,f)$ denote the set $\fil(U_\epsilon'(z))$, \ie  the union of $U_\epsilon'(z)$ with all the bounded connected components of its complement. It follows that $U_\epsilon(z)$ is an $f^n$-invariant open topological disk. Moreover, $U_\epsilon(z)$ intersects $f(U_\epsilon(z))$ if and only if $U_\epsilon'(z)$ intersects $f(U_\epsilon'(z))$ (the proof of this fact is similar to the proof of Proposition \ref{pro:fill-ine}). This implies that $U_\epsilon(z)$ is invariant if and only if $U_\epsilon'(z)$ is invariant, and otherwise it is periodic and disjoint from its image.


Therefore, if $B_\epsilon(z)$ is not wandering, then the set $U_\epsilon(z)$ is an open topological disk which is either invariant, or periodic and free for $f$. Note also that $U_\epsilon(x)\subset U_{\epsilon'}(x)$ if $\epsilon<\epsilon'$.

\subsection{Strictly toral dynamics}\label{sec:strictly}


The following result, which is contained in Theorem B from \cite{KT2012}, is critical in this article. 
\begin{theorem}\label{th:essine}
Suppose $f\colon \T^2\to \T^2$ is a nonwandering non-annular homeomorphism homotopic to the identity such that $\fix(f)$ is not fully essential. Then any invariant open topological disk $U\subset \T^2$ is such that the connected components of $\pi^{-1}(U)$ are bounded.
\end{theorem}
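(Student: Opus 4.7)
The plan is to argue by contradiction: suppose some connected component $\hat U$ of $\pi^{-1}(U)$ is unbounded, and extract from the resulting geometry in $\R^2$ a violation of one of the three hypotheses (nonwandering, non-annular, $\fix(f)$ not fully essential).

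I would begin with preliminary reductions. Since $U$ is an open topological disk, it is simply connected, hence inessential; so $\fil(U)$ is again an open disk, $f$-invariant, and its preimage components are exactly the fillings of the preimage components of $U$, so proving the theorem for $\fil(U)$ suffices. After this reduction every component of $\T^2\sm U$ is essential. Pick a component $\hat U$ of $\pi^{-1}(U)$: because $U$ is simply connected, $\pi|_{\hat U}\colon \hat U\to U$ is a homeomorphism, so $\hat U$ is a simply connected open subset of $\R^2$ whose $\Z^2$-stabilizer is trivial, and the translates $\{T_v(\hat U)\}_{v\in \Z^2}$ are pairwise disjoint. By Proposition~\ref{pro:fill-ine} the same disjointness holds for $\fil(\hat U)$, and the fact that $U=\fil(U)$ forces $\hat U=\fil(\hat U)$, so every component of $\R^2\sm\hat U$ is unbounded. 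Finally, changing the lift by a translation in $\Z^2$, I may assume $\hat f(\hat U)=\hat U$, and then suppose for contradiction that $\hat U$ is unbounded.

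The main step is to split according to the boundary at infinity $\bd_\infty \hat U\subset \SS^1$. If $\bd_\infty\hat U\neq \SS^1$, then $\hat U$ is asymptotically contained in a proper sub-cone of directions; together with the pairwise disjointness of the $\Z^2$-translates of $\hat U$, a planar packing argument in the compactification $\R^2_\infty$ should produce a rational vector $v^*\in \Z^2_*$ such that $\pi^{-1}(U)$ has uniformly bounded $p_{v^*}$-projection modulo $\Z v^*$. This yields an $f$-invariant closed connected subset of $\T^2\sm U$ that is essential but not fully essential, whence by Proposition~\ref{pro:annular-annular}(1) $f$ would be annular, contradicting the hypothesis. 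If instead $\bd_\infty\hat U=\SS^1$, then $\hat U$ accumulates in every direction at infinity. Since $\hat U$ is homeomorphic to $\R^2$ and $\hat f|_{\hat U}$ is nonwandering, classical Brouwer theory gives fixed points of $\hat f$ in $\hat U$; more is true, and I would run an equivariant engulfing argument in the spirit of Proposition~\ref{pro:engulfing-finite} of Section~\ref{sec:brouwer}, exploiting the disjointness of the countably many translates of $\hat U$ all of which have full boundary at infinity, to force $\pi(\fix(\hat f))$ to contain a fully essential continuum, contradicting the hypothesis on $\fix(f)$.

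The main obstacle is the second case. In the first case, once the reductions and the compactification $\R^2_\infty$ are in place, the argument is essentially two-dimensional point-set topology. The second case, by contrast, is where one must combine the non-annular hypothesis and the hypothesis that $\fix(f)$ is not fully essential with planar Brouwer theory in a delicate torus-equivariant way; this is precisely the regime for which the Brouwer-foliation/engulfing technology of \cite{KT2012} (reviewed in Section~\ref{sec:brouwer}) was designed, and any self-contained proof would either have to reproduce it or find an adequate substitute.
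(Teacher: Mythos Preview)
This statement is not proved in the present paper: it is quoted as Theorem~B of \cite{KT2012} and used as a black box. There is therefore no proof here to compare your attempt against.

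On its own merits, your sketch has the right overall shape but contains genuine gaps. Two stand out. First, the engulfing tool you invoke (Proposition~\ref{pro:engulfing-finite}) is developed in Section~\ref{sec:brouwer} under the standing hypothesis that the \emph{lift} $\hat f$ is nonwandering and admits a gradient-like Brouwer foliation; Theorem~\ref{th:essine} only assumes that $f$ is nonwandering on $\T^2$, which is strictly weaker, and bridging that gap is itself nontrivial. Second, and more fundamentally, your final paragraph concedes that the case $\bd_\infty\hat U=\SS^1$ requires the equivariant Brouwer/engulfing technology of \cite{KT2012}---but Theorem~\ref{th:essine} \emph{is} one of the main outputs of that technology, so deferring to it is circular rather than a reduction. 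The first case is also not an argument as written: pairwise disjointness of the $\Z^2$-translates of $\hat U$ together with $\bd_\infty\hat U\neq\SS^1$ does not, by any packing argument I can reconstruct from your description, directly produce a \emph{rational} direction of uniformly bounded displacement; one needs a substantive step here (in \cite{KT2012} this is again handled via the Brouwer foliation).

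In short, you have correctly located the theorem inside the strictly-toral machinery of \cite{KT2012}, but what you have written is a strategy whose hard step is essentially the theorem itself.
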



%

\section{Geometric Results}\label{sec:geometric}

In this section we will prove two general technical lemmas that play a key role in the proof of Theorem \ref{th:teoremao}. The arguments are all geometric in nature and there is no dynamics involved (however, we use arguments from Brouwer theory in the proofs). The theorems will be proved in a setting which is considerably more general than what we need, as we expect that they may be useful in future works. In order to simplify the statements, we introduce some terminology.

A set $S\subset \R^2$ is $r$-quasiconvex for some $r>0$ if $S$ intersects every open ball of radius $r$ contained in the convex hull of $S$. 
We also say that $S$ is $r$-dense if it intersects every open ball of radius $r$ in $\R^2$.

\begin{remark} Our definition of $r$-quasiconvex set differs slightly from the one usually found in the literature, which requires that the convex hull of $S$ be contained in the $r$-neighborhood of $S$. However, a connected set that is $r$-quasiconvex with our definition is always $2r$-quasiconvex with the usual definition (but we do not need this fact).
\end{remark}

Let $\Sigma\subset \R^2$ be a closed subset, which we usually assume to be discrete. We say that a set $U\subset \R^2$ is \emph{$\Sigma$-free} if $T_v(U)\cap U=\emptyset$ for all $v\in \Sigma$. An \emph{chain} (of arcwise connected sets) is a sequence $\mc{C} = (U_n)_{n\in \N}$ of arcwise connected sets such that $U_{n+1}\subset U_n$ for all $n\in \N$ (\ie a decreasing sequence). We say that
\begin{itemize}
\item $\mc{C}$ is \emph{eventually $\Sigma$-free}, if for each $v\in \Sigma$ there is $n\in \N$ such that $T_v(U_n)\cap U_n=\emptyset$ (hence the same property holds for larger $n$);
\item $\mc{C}$ is \emph{eventually $r$-quasiconvex} for some $r>0$ if $\bigcap_{n\in \N} \ol{U}_n$ is $r$-quasiconvex;
\item $\mc{C}$ is \emph{eventually quasiconvex} if it is eventually $r$-quasiconvex for some $r>0$;
\item $\mc{C}$ \emph{has an asymptotic direction} if there is $v\in \mathbb{S}^1$ such that $\bigcap_{n\in \N} \bd_\infty U_n = \{v\}$;
\item $\mc{C}$ \emph{has bounded deviation} in the direction $v\in \mathbb{\R}^2_*$ if $\bigcap_{n\in \N} \ol{U}_n$ is contained in some strip of the form $\{z\in \R^2 : -M \leq \langle z;v\rangle \leq M\}$.
\end{itemize}
Note that if $\mc{C}$ is \emph{eventually $r$-quasiconvex} for some $r>0$, then for any $n\in \N$ the set $U_n$ intersects every open ball of radius $r$ contained the convex hull of $\bigcap_{n\in \N} \ol{U}_n$.

The main result of this section is the following.
\begin{theorem}\label{th:geometric} Let $\Sigma\subset \R^2$ be a closed discrete $R$-dense set for some $R>0$, and $\mc{C}=(U_n)_{n\in \N}$ an eventually $\Sigma$-free chain of arcwise connected sets. Then $\mc{C}$ is eventually $r$-quasiconvex for any $r>R$. In addition, one of the following holds:
\begin{enumerate}
\item[(1)] There is $n\in \N$ and $w\in \Sigma$ such that $U_n$ is $\Sigma\sm (\R w)$-free,
\item[(2)] $\mc{C}$ has an asymptotic direction, or
\item[(3)] $\mc{C}$ has bounded deviation in some direction $v\in \R^2_*$, and moreover, there is $M>0$ such that $E=\bigcap_{n\in \N} \ol{U}_n$ separates the half-planes $\{z\in \R^2 : \langle z;v\rangle \geq M\}$ and $\{z\in \R^2 : \langle z;v\rangle \leq -M\}$.
\end{enumerate}
\end{theorem}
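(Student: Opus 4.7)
The plan is to establish the two assertions separately, letting $E=\bigcap_{n\in\N}\ol{U_n}$ throughout.

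For the eventual $r$-quasiconvexity, fix $r>R$ and an open ball $B=B(c,r)\subset\conv(E)$. By the nested structure of the chain and compactness it suffices to show $U_n\cap B\neq\emptyset$ for all sufficiently large $n$. Using the $R$-density of $\Sigma$, I pick some $v\in\Sigma$ with $\|v\|<R$, and choose $n$ large enough that $U_n$ is $v$-free. Suppose for contradiction $U_n\cap B=\emptyset$. Since $E\subset U_n$ and $B\subset\conv(E)$, the set $U_n$ has points in every closed half-plane that contains $B$ in its interior complement, so it ``surrounds'' $B$; arcwise connectedness then produces arcs in $U_n$ winding around $B$. Because $\|v\|<r$ fits inside $B$, a Brouwer-theoretic argument, cleanly phrased by passing to the cylinder $\R^2/\gen{T_v}$ and observing that the projection of $U_n$ cannot be an essential loop in the cylinder without $U_n\cap T_v(U_n)\neq\emptyset$, forces the intersection and contradicts $v$-freeness.

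For the trichotomy, define $\Sigma_n=\{v\in\Sigma : U_n\text{ is not }v\text{-free}\}$. Monotonicity of $\mc{C}$ gives $\Sigma_{n+1}\subset\Sigma_n$, and the eventual $\Sigma$-freeness hypothesis yields $\bigcap_n\Sigma_n=\emptyset$. If some $\Sigma_n$ is contained in a single line $\R w$ with $w\in\Sigma$, then $U_n$ is $(\Sigma\sm\R w)$-free and we are in alternative (1). Otherwise every $\Sigma_n$ contains two linearly independent vectors. Note that $v\in\Sigma_n$ yields $x,y\in U_n$ with $x-y=v$, so the extent of $U_n$ in direction $v$ is at least $\|v\|$. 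Combining this with the fact that fixed $v$ eventually leaves $\Sigma_n$ forces the vectors in $\Sigma_n$ to grow unboundedly in norm, so $U_n$ elongates as $n\to\infty$.

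The split between cases (2) and (3) is then governed by $\bigcap_n\bd_\infty U_n\subset\mathbb{S}^1$. If this intersection reduces to a single direction, we are in case~(2). Otherwise quasiconvexity combined with the freeness of $U_n$ under all large-$n$ translations in $\Sigma\sm\Sigma_n$ confines $E$ to a strip transverse to its direction of spread, yielding the bounded-deviation part of case~(3); the separation of the two far half-planes by $E$ then follows from quasiconvexity together with the cylinder-quotient picture, which shows that a $v$-free arcwise connected set with unbounded spread in a transverse direction must carry a separating lifted line. The main obstacle is the Brouwer-theoretic step at the heart of the quasiconvexity proof, together with the separation assertion in case~(3): upgrading ``$E$ lies in a strip and is unbounded on both sides'' to ``$E$ separates the two far half-planes'' requires a careful topological analysis of the arcwise connected free limits, and is likely where most of the technical work resides.
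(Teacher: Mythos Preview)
Your proposal has genuine gaps in both halves.

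\textbf{Quasiconvexity.} Your argument fixes a single $v\in\Sigma$ with $\|v\|<R$ and then tries to derive a contradiction from $U_n\cap B=\emptyset$ via the cylinder $\R^2/\langle T_v\rangle$. The step ``the projection of $U_n$ cannot be an essential loop without $U_n\cap T_v(U_n)\neq\emptyset$'' is correct, but you never establish that the projection \emph{is} essential. Knowing that $B\subset\conv(\ol{U_n})$ and $U_n\cap B=\emptyset$ gives arcs in $U_n$ joining points around $B$, but these arcs need not have nonzero winding number around points of $B$, and even if they did this would not translate into essentiality in the $T_v$-cylinder. Also note $U_n\cap B=\emptyset$ does not give $\bar U_n\cap \pi(B)=\emptyset$ in the cylinder, since $U_n$ may well meet translates $T_{kv}(B)$. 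The paper's proof is genuinely different: it uses the full $R$-density of $\Sigma$ to get $\bigcup_{v\in\Sigma}T_v(\ol Q)=\R^2$ for any ball $Q$ of radius $>R$, then applies a winding-number lemma (Lemma~\ref{lem:convex-index}) together with Douady's proposition (Proposition~\ref{pro:douady}) to produce, from the arcs joining vertices of a polygon around $Q$, a specific $v\in\Sigma$ for which $U_n\cap T_v(U_n)\neq\emptyset$. A single direction does not suffice.

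\textbf{Trichotomy.} The heart of the paper's argument is Lemma~\ref{lem:90graus}: assuming case~(1) fails, for every $v_0\in\Sigma$ and every $\epsilon>0$ there is $n$ such that $\bd_\infty\ol{U_n}$ misses an open arc of length $\pi/2-\epsilon$ with one endpoint at $v_0/\|v_0\|$. This is proved using $T_{w_1}$- and $T_{w_2}$-translation arcs inside $U_{n_0}$ (for two independent $w_i\in\Sigma_n$) together with Proposition~\ref{pro:naotocaoarco}, and it is what forces $K=\bigcap_n\bd_\infty\ol{U_n}$ to be contained in a pair of antipodal points $\{v,-v\}$. Your outline jumps straight to ``if $K$ is a singleton, case~(2); otherwise, case~(3)'' without ever ruling out that $K$ might contain two non-antipodal directions or more than two directions. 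That is precisely the content you are missing.

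Finally, in case~(3) the paper still does real work: it shows $\bd_\infty E=\{v,-v\}$ by a nested-compact-set argument on slices $\ol{U_n}\cap(kv+\R v^\perp)$, deduces from quasiconvexity that $\sup p_{v^\perp}(E)<\infty$ and $\inf p_{v^\perp}(E)>-\infty$, and then proves the separation statement by contradiction using another nested-intersection argument along a hypothetical arc joining the two half-planes. Your sentence ``follows from quasiconvexity together with the cylinder-quotient picture'' does not capture this; in particular $E$ is only closed, not arcwise connected, so the ``$v$-free arcwise connected set carrying a separating lifted line'' heuristic does not apply to $E$ directly.
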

Note that if the first case holds, then $U_k$ is $\Sigma\sm (\R w)$-free for any $k\geq n$. 

\subsection{A dynamical consequence}\label{sec:chain-annular}
Before moving to the proof of the geometric results, let us state a somewhat technical dynamical consequence which is in the core of the proof of Theorem \ref{th:teoremao}.

Recall the notation $U_\epsilon'$ from \S\ref{sec:uepsilon}.
\begin{proposition}\label{pro:chain-annular} Let $\hat{f}\colon \R^2\to \R^2$ be the lift of a homeomorphism $f$ of $\T^2$ homotopic to the identity. Suppose that for some $\ol{w}\in \R^2$ and $\hat{x}_0\in \R^2$, the sets $O_n:= U'_{1/n}(\hat{x}_0,\hat{f})$ are such that 
\begin{itemize}
\item $\hat{f}(O_n)=O_n$;
\item $\pi(O_n)$ is essential in $\T^2$;
\item the decreasing chain $(O_n)_{n\in \N}$ is eventually $\Z^2\sm (\R\ol{w})$-free.
\end{itemize}
Then there exists $w\in \R^2_*$ and $M>0$ such that 
$$\abs{\smash{p_w(\hat{f}^n(z)-z)}} \leq M\quad \text{for all } n\in \Z,\, z\in \R^2.$$
Furthermore, if $f$ is not annular, then only case (3) of Theorem \ref{th:geometric} is possible for the chain $\mc{C}=(O_n)_{n\in \N}$.
\end{proposition}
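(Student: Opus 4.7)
The plan is to apply Theorem~\ref{th:geometric} to the chain $\mc{C} = (O_n)_{n \in \N}$ with $\Sigma := \Z^2 \setminus (\R \ol w)$, noting that $\Sigma$ is closed, discrete, and $R$-dense for some $R > 0$. First I would observe that essentialness of $\pi(O_n)$ in $\T^2$ combined with the connectedness of $O_n$ forces $O_n$ to be invariant under some $T_{v^*}$ with $v^* \in \Z^2_*$; by eventual $\Sigma$-freeness, this $v^*$ must lie in $\R\ol w$ for large $n$, so we may assume $\ol w = v^*$ is a primitive integer vector. The trichotomy of Theorem~\ref{th:geometric} then yields cases (1), (2), or (3), and case (2) is immediately excluded: the $T_{v^*}$-invariance places both $\pm v^*/\|v^*\|$ in $\bd_\infty O_n$, so $\bigcap_n \bd_\infty O_n$ has at least two points.

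The main conclusion will come from case (3). There, $E := \bigcap_n \ol{O_n}$ is contained in a strip $\{|p_v(z)| \leq M\}$ and separates the half-planes $\{p_v(z) \geq M\}$ and $\{p_v(z) \leq -M\}$. I would pick $u_0 \in \Z^2$ with $\{v^*, u_0\}$ a basis of $\Z^2$ and $p_v(u_0) > 0$; each translate $E + k u_0$ is then $\hat f$-invariant (since $E$ is, and $\hat f$ commutes with $T_{u_0}$) and separates the shifted half-planes $\{p_v(z) \geq M + k p_v(u_0)\}$ and $\{p_v(z) \leq -M + k p_v(u_0)\}$. Because $\hat f - \id$ is $\Z^2$-periodic, its per-iterate displacement on $\R^2$ is uniformly bounded, forcing $\hat f$ to preserve each of the two unbounded complementary components of $\R^2 \setminus (E + k u_0)$: a point far above the strip $\{|p_v(z) - k p_v(u_0)| \leq M\}$ must map to a point still above. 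Consequently the open ``slab'' $S_k$ between $E + k u_0$ and $E + (k+1) u_0$ is $\hat f$-invariant with $p_v$-width at most $p_v(u_0) + 2M$, and the slabs together with the translates of $E$ partition $\R^2$. Every $\hat f$-orbit is trapped in a single piece, yielding $|p_v(\hat f^n(z) - z)| \leq p_v(u_0) + 2M$ uniformly in $z$ and $n$, which is the desired bounded displacement with $w = v$.

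For the ``furthermore'' clause, assume $f$ is non-annular. Either case (1) or case (3) must hold (case (2) being ruled out). If case (3) holds, there is nothing more to show. If instead case (1) holds, then for large $n$ the complement $\R^2 \setminus O_n$ contains the disjoint $T_{v^*}$-invariant (hence unbounded) translates $T_u(O_n)$ for $u \in \Sigma$ (via the eventual $\Sigma$-freeness), so $\pi(O_n)$ is essential but not fully essential; Proposition~\ref{pro:annular-annular}(1) then yields that $f$ is annular, contradicting the non-annular hypothesis. Hence case (3) is the only possibility when $f$ is non-annular.

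The main obstacle is the slab argument in Step~2, specifically rigorously verifying that $\hat f$ fixes each unbounded complementary component of $\R^2 \setminus (E + k u_0)$ rather than interchanging them, and that the resulting slabs partition $\R^2$ into $\hat f$-invariant regions of bounded $p_v$-extent. The key technical input is the $\Z^2$-periodicity of $\hat f - \id$, which uniformly bounds its per-iterate displacement on $\R^2$ and thereby prevents $\hat f$ from mapping a point far above $E + k u_0$ to a point far below it; combined with the separation property of each translate, this forces side-preservation and yields the partition argument that closes the proof.
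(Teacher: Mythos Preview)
Your argument has a genuine gap at its very first step: the claim that essentialness of $\pi(O_n)$ together with connectedness of $O_n$ forces $O_n$ to be $T_{v^*}$-invariant for some $v^*\in\Z^2_*$ is not correct. What essentialness gives you (via lifting a nontrivial loop and using that the translates $O_n+v$ are open) is only that $O_n\cap(O_n+v)\neq\emptyset$ for some $v\in\Z^2_*$; it does \emph{not} give $O_n+v=O_n$. A bounded connected open set whose projection wraps around $\T^2$ already furnishes a counterexample. Since your exclusion of case~(2) rests entirely on this $T_{v^*}$-invariance (to place both $\pm v^*/\|v^*\|$ in $\bd_\infty O_n$), that step fails. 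Your treatment of case~(1) has the same defect: you use the alleged $T_{v^*}$-invariance of the translates $T_u(O_n)$ to call them unbounded, and then infer that $\pi(O_n)$ is not fully essential---but even granting unboundedness, those translates lie in $\R^2\setminus O_n$, not in $\R^2\setminus\pi^{-1}(\pi(O_n))$, so the conclusion does not follow.

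The paper proceeds differently, and the order matters. It first assumes $f$ is non-annular (otherwise the conclusion is immediate), and rules out case~(1) by the opposite implication: essentialness plus non-annularity and Proposition~\ref{pro:annular-annular}(1) force $\pi(O_n)$ to be \emph{fully} essential, which is then played against $\Sigma$-freeness to reach a contradiction. Only after case~(1) is excluded does one know that for every $n$ there is $v_n\in\Sigma$ with $O_n\cap T_{v_n}(O_n)\neq\emptyset$, and eventual $\Sigma$-freeness then forces $\|v_n\|\to\infty$. The exclusion of case~(2) now uses the special structure of $O_n=U'_{1/n}(\hat x_0,\hat f)$: from $O_n\cap T_{v_n}(O_n)\neq\emptyset$ one extracts $z_n\in B_{1/n}(\hat x_0)$ and $k_n\in\Z$ with $\hat f^{k_n}(z_n)\in B_{1/n}(\hat x_0+v_n)$, and then the $\Z^2$-equivariance of $\hat f$ gives the symmetric statement with $-v_n$ (via $z_n'=\hat f^{k_n}(z_n)-v_n$). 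This is what places both $v$ and $-v$ in $\bigcap_n\bd_\infty O_n$, and it is a property of the sets $U'_{1/n}$ rather than a consequence of any translation-invariance of $O_n$. Your slab argument for case~(3) is sound in spirit, though the paper's version is simpler: it uses a single integer translate to confine the orbit of every point of $[0,1]^2$ to one side of $E$, which already yields the uniform bound.
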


\begin{proof}
Let $\Sigma = \Z^2\sm (\R \ol{w})$. Then $(O_n)_{n\in \N}$ is an eventually $\Sigma$-free chain, and clearly $\Sigma$ is $2$-dense (\ie it intersects every ball of radius $2$). Theorem \ref{th:geometric} implies that one of the following holds:
\begin{enumerate}
\item[(1)] There is $n\in \N$ and $w\in \Sigma$ such that $O_n$ is $\Sigma\sm (\R w)$-free,
\item[(2)] $\bigcap_{n\in \N} \bd_\infty O_n$ is a single point, or
\item[(3)] the set $E=\bigcap_{n\in \N} \ol{O}_n$ is contained in a strip $p_w^{-1}((-M,M))$ for some $w\in \R^2_*$, and $E$ separates the half-plane $p_w^{-1}((-\infty,-M])$ from $p_w^{-1}([M, \infty))$.
\end{enumerate}
We will rule out the first two cases; but first, note that we may assume that $f$ is non-annular (otherwise there is nothing to be done). 

Assume that case (1) holds. Let us show that there is $n'>n$ such that $O_{n'}$ is in fact $\Sigma$-free: since $w\in \Sigma \subset \Z^2_*$, we have that $\R w \cap \Sigma \subset \R w \cap \Z^2_* \subset \Z w_0$ for some $w_0\in \Z^2_*$. Since $w\in \R^2\sm (\Z \ol{w})$, it follows that $w_0\in \Z^2\sm (\Z\ol{w})=\Sigma$, and therefore we may find $n'>n$ such that $O_{n'}\cap T_{w_0}(O_{n'})=\emptyset$. By Proposition \ref{pro:translacaodisjunta} we conclude that $O_{n'}\cap T_{kw_0}(O_{n'})=\emptyset$ for all $k\in \Z_*$. Thus $O_{n'}$ is $(\Z_* w_0)$-free. 
Since $O_{n'}\subset O_n$ is also $\Sigma\sm (\R w)$-free, and $\R w\cap \Sigma \subset \Z_* w_0$, we conclude that $O_{n'}$ is $\Sigma$-free, as claimed.

Since $\pi(O_n)$ is open, connected, invariant and essential, and since $f$ is non-annular, Proposition \ref{pro:annular-annular} implies that $\pi(O_n)$ is fully essential for each $n\in \N$. The latter fact implies there exist two non-parallel elements $v,v'\in \Z^2_*$ such that $O_{n'}$ intersects both $T_v(O_{n'})$ and $T_{v'}(O_{n'})$. Since at least one element of $\{v,v'\}$ is outside $\R \ol{w}$ (and thus belongs to $\Sigma$), we have a contradiction. This rules out case (1).  

Since case (1) is ruled out, for each $n\in \N$ there exists some $v_n\in \Sigma$ such that $O_n\cap T_{v_n}(O_n)\neq \emptyset$. By using subsequences, we may assume that $v_n/\norm{v_n}\to v\in \SS^1$. From the fact that $(O_n)_{n\in \N}$ is eventually $\Sigma$-free we easily conclude that $\norm{v_n}\to \infty$.

The choice of $v_n$ and the definition of $U'_{1/n}$ imply that, for each $n\in \N$, there is $z_n\in B_{1/n}(\hat{x}_0)$ and $k_n\in \Z$ such that $\hat{f}^{k_n}(z_n)\in B_{1/n}(\hat{x}_0+v_n)$. Thus, $O_n$ intersects $B_{1/n}(\hat{x}_0+v_n)$. But we also have that $z_n' = \hat{f}^{k_n}(z_n)-v_n\in B_{1/n}(\hat{x}_0)$ and $\hat{f}^{-k_n}(z_n')\in B_{1/n}(\hat{x}_0-v_n)$. Thus $O_n$ also intersects $B_{1/n}(\hat{x}_0-v_n)$. 

Since $O_m\subset O_n$ if $m>n$, we conclude that $O_n$ intersects $B_{1/m}(\hat{x}_0\pm v_m)$ for each $m>n$, and it follows easily that $\{v,-v\} \subset \bd_\infty O_n$. This rules out case (2), so only case (3) is possible. 

Hence there is $w\in \R^2_*$ such that $E=\bigcap_{n\in \N} \ol{O}_n\subset p_w^{-1}((-M,M))$, and $E$ separates the half-planes $$H_1 = p_w^{-1}((-\infty,-M])\, \text{ and } \, H_2=p_w^{-1}([M,\infty)).$$ 
If $W_i$ is the connected component of $\R^2\sm E$ containing $H_i$, then one easily verifies that $W_i$ is invariant for $i\in \{1,2\}$ (since $\hat{f}$ permutes the connected components of $\R^2\sm E$ and $\hat{f}(x)-x$ is uniformly bounded).

Let $u\in \Z^2$ be such that $[0,1]^2-u\subset H_1$ and $[0,1]^2+u\subset H_2$. If $z\in [0,1]^2$, then $z-u\subset H_1$ and so $$\hat{f}^n(z-u)\subset H_1\subset \R^2\sm H_2\subset p_w^{-1}((-\infty,M])$$ for all $n\in \Z$. This means that $p_w(\hat{f}^n(z))-p_w(u) = p_w(\hat{f}^n(z-u)) \leq M$ for all $n\in \Z$, and so $p_w(\hat{f}^n(z)) \leq M+p_w(u)$ for all $n\in \Z$. By a similar argument with $H_2$ one concludes that $p_w(\hat{f}^n(z))\geq -(M+p_w(u))$ for all $n\in \Z$. 

Thus $\abs{\smash{p_w(\hat{f}^n(z))}}\leq M+p_w(u)$ for all $z\in [0,1]^2$, from which we easily conclude that 
$$\abs{\smash{p_w(\hat{f}^n(z)-z)}}\leq M' \quad \text{for all } z\in \R^2,\, n\in \Z,$$
where $M' = M+p_w(u)+\sqrt{2}$.
\end{proof}


\subsection{A quasi-convexity lemma}

We begin with a general result that leads to the quasi-convexity part of Theorem \ref{th:geometric}.

\begin{lemma}\label{lem:quasiconvex} Let $\Sigma \subset \R^2$ be a closed discrete set, and $(U_n)_{n\in \N}$ an eventually $\Sigma$-free chain of arcwise connected subsets of $\R^2$. If $Q\subset \R^2$ is a bounded connected set such that
$$\ol{Q}\subset \inter \conv\bigg(\bigcap_{n\in \N} \ol{U}_n\bigg) \quad \text{ and } \quad \bigcup_{v\in \Sigma} T_v(\ol{Q}) = \R^2,$$
then $U_n\cap Q\neq \emptyset$ for each $n\in \N$. 
\end{lemma}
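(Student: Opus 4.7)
The plan is to argue by contradiction: suppose $U_N \cap Q = \emptyset$ for some $N \in \N$. Since $(U_n)$ is a decreasing chain that is eventually $\Sigma$-free, after passing to a larger index I may arrange that $U_N$ is $v$-free for every $v$ in any prescribed finite subset $F \subset \Sigma$ (which I will enlarge as needed).

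First, using the hypothesis $\ol{Q} \subset \inter \conv(E)$ with $E = \bigcap_m \ol{U}_m$, I apply Caratheodory's theorem together with the compactness of $\ol{Q}$ to produce finitely many points $p_1, \ldots, p_k \in E$ in convex position such that $\ol{Q} \subset \inter \conv(p_1, \ldots, p_k)$. Since $E \subset \ol{U}_N$, each $p_i$ can be approximated by some $p_i' \in U_N$; with approximations sufficiently close, $\ol{Q}$ remains in the interior of the convex polygon with vertices $p_i'$ and each straight edge $[p_i', p_{i+1}']$ is disjoint from $\ol{Q}$. Using arcwise connectedness of $U_N$, I connect $p_i'$ and $p_{i+1}'$ cyclically by arcs $\alpha_i \subset U_N$, yielding a closed curve $\gamma \subset U_N$ disjoint from $Q$.

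The technical heart of the argument is to show that $\gamma$ encloses $\ol{Q}$, \ie there is a bounded component $W$ of $\R^2 \sm \gamma$ containing $\ol{Q}$. A priori the arcs $\alpha_i$ could spiral around $Q$ and contribute cancelling winding numbers to $\gamma$; however, such spiralling would force $U_N$ to intersect some translate $T_v(U_N)$ with $v$ ranging in a sufficiently dense subset of $\Sigma$, contradicting the $\Sigma$-freeness. Making this precise via a Brouwer-type argument that exploits the covering property $\bigcup_{v \in \Sigma} T_v(\ol{Q}) = \R^2$ is where I expect the bulk of the technical work to lie.

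Once $\ol{Q} \subset W$ has been established, I enlarge $F$ so that $\bigcup_{v \in F} T_v(\ol{Q})$ covers a compact neighborhood of $W$, and then pick $v_0 \in F$ with $T_{v_0}(\ol{Q}) \subset W$. The $v_0$-freeness yields $\gamma \cap T_{v_0}(\gamma) = \emptyset$, so the Jordan-type bounded regions $W$ and $T_{v_0}(W)$ are either nested or disjoint. If $T_{v_0}(W) \subset W$ or $W \subset T_{v_0}(W)$, then iterating $T_{v_0}$ gives nested translates of $W$ with $\norm{k v_0} \to \infty$, contradicting the boundedness of $W$. If instead $T_{v_0}(W) \cap W = \emptyset$, then $T_{v_0}(\ol{Q}) \subset T_{v_0}(W)$ is disjoint from $W$, contradicting $T_{v_0}(\ol{Q}) \subset W$. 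The main obstacle remains the winding step in the previous paragraph.
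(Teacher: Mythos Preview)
Your setup --- contradiction, a polygon with vertices in $E=\bigcap_m \ol{U}_m$ containing $\ol Q$, perturbing the vertices into $U_N$, and connecting them cyclically by arcs $\alpha_i\subset U_N$ --- matches the paper exactly. The gap you flag in the enclosure step is real, and more serious than ``technical work'': the claim that $\gamma=\alpha_1*\cdots*\alpha_k$ encloses $\ol Q$ can simply be \emph{false}. If $U_N$ is simply connected (picture a long spiral strip winding around $Q$), then no loop in $U_N$ bounds a disk containing a point of $Q$, regardless of $\Sigma$-freeness; your proposed mechanism (``spiralling forces $U_N\cap T_v(U_N)\neq\emptyset$'') is not obviously true and is not how the paper proceeds. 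The paper's substitute is a dichotomy obtained from a winding-number computation (Lemma~\ref{lem:convex-index}): for $z\in Q$, either some simple loop in $U_N$ bounds a disk containing $z$, \emph{or} there is a simple arc $\gamma'\subset U_N$ joining the endpoints of a sub-segment $\ell$ of a polygon edge such that $[\gamma']\cup[\ell]$ bounds a disk $D$ with $z\in\ol D$. In this second case the enclosing curve uses the straight segment $\ell$ and is \emph{not} contained in $U_N$, so your nesting argument does not apply; instead one uses Douady's lemma (Proposition~\ref{pro:douady}): since $\bigcup_{v\in\Sigma}T_v(\ol Q)=\R^2$, the endpoint $z_0=\gamma'(0)$ satisfies $z_0-v\in\ol Q\subset\ol D$ for some $v\in\Sigma$, and Douady then forces $[\gamma']\cap T_{-v}[\gamma']\neq\emptyset$.

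There is also a circularity in your final step: you propose to enlarge $F$ after $W$ is determined, but $W$ depends on the arcs $\alpha_i\subset U_N$, which have no a~priori size bound, and $N$ was chosen from $F$. The paper avoids this by fixing the finite set of translations \emph{before} choosing $n$: it takes a bounded neighborhood $W'$ of the polygon and sets $V=\{x-y:x,y\in W'\}\cap\Sigma$; both branches of the dichotomy then produce a $v\in V$ because the relevant points (two points of $\ol Q$ in the loop case, or $z_0\in\bd P_n$ and a point of $\ol Q$ in the Douady case) lie in $W'$. In your loop-case ending you could likewise fix $F=\Sigma\cap\{v:\norm{v}\le\diam(\ol Q)\}$ in advance and use only that $T_{v_0}(\ol Q)\cap\ol Q\neq\emptyset$ for some $v_0\in F$ (immediate from the covering hypothesis, since $\ol Q$ itself must be covered by the $T_v(\ol Q)$); this is weaker than your stated $T_{v_0}(\ol Q)\subset W$ but already suffices for the nesting contradiction.
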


\begin{corollary}\label{cor:quasiconvex} Suppose $(U_n)_{n\in \N}$ is a decreasing sequence of arcwise connected sets such that $\conv\left(\bigcap_{n\in \N} \ol{U}_n\right) = \R^2$, and for each $v\in \Z^2_*$ there is $n$ such that $U_n\cap T_v(U_n)=\emptyset$. Then $U_n$ intersects the square $[a,a+1)\times [b,b+1)$, for any $(a,b)\in \R^2$ and $n\in \N$.
\end{corollary}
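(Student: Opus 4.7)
The plan is to derive Corollary \ref{cor:quasiconvex} as an essentially immediate application of Lemma \ref{lem:quasiconvex}. For arbitrary $(a,b) \in \R^2$ and $n \in \N$, I take $\Sigma = \Z^2_*$ and $Q = [a,a+1) \times [b,b+1)$, and check each hypothesis of the lemma.

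The set $\Sigma$ is closed and discrete. The chain $(U_n)$ is eventually $\Sigma$-free by the corollary's hypothesis: for each $v \in \Z^2_*$ there is some index after which $U_n \cap T_v(U_n) = \emptyset$, and by monotonicity this persists for all larger indices. The set $Q$ is bounded and convex, hence arcwise connected. Since $\conv(\bigcap_m \ol{U_m}) = \R^2$ by assumption, its interior is all of $\R^2$, and so $\ol Q \subset \inter \conv(\bigcap_m \ol{U_m})$ is automatic.

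The step I expect to need the most care is the covering condition $\bigcup_{v \in \Sigma} T_v(\ol Q) = \R^2$. The $\Z^2$-translates of the closed unit square $\ol Q = [a,a+1]\times[b,b+1]$ tile $\R^2$ with overlaps only along shared edges, so removing the identity translate leaves covered precisely $\R^2 \setminus \inter(\ol Q)$. The only set omitted is the open square $(a,a+1)\times(b,b+1)$, which is contained in $Q$ itself. Since the conclusion of the lemma is exactly that $U_n$ meets $Q$, this omission is harmless: any point of $U_n$ in the omitted set already witnesses $U_n \cap Q \neq \emptyset$. With this understanding of the covering condition in place, Lemma \ref{lem:quasiconvex} applies and immediately yields $U_n \cap Q \neq \emptyset$, which is the statement of the corollary.
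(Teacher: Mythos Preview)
Your approach — apply Lemma~\ref{lem:quasiconvex} with $\Sigma=\Z^2_*$ and $Q=[a,a+1)\times[b,b+1)$ — is the intended one (the paper states the corollary without proof, right after the lemma). However, your treatment of the covering hypothesis is not a valid deduction. You correctly observe that
\[
\bigcup_{v\in\Z^2_*} T_v(\ol Q)=\R^2\setminus \big((a,a+1)\times(b,b+1)\big)\neq\R^2,
\]
so the hypothesis of the lemma \emph{literally fails}. Saying the omission is ``harmless'' and then asserting that the lemma ``applies'' is not a proof: a lemma cannot be invoked when one of its hypotheses is not satisfied, regardless of what the conclusion says.

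What actually works is to go into the \emph{proof} of Lemma~\ref{lem:quasiconvex}, which argues by contradiction from $U_{n_0}\cap Q=\emptyset$. The covering condition is used only in two places: in Case~1, to produce $v\in\Sigma$ with $\ol Q\cap T_v(\ol Q)\neq\emptyset$; and in Case~2, to find $v\in\Sigma$ with $z_0-v\in\ol Q$ for a specific point $z_0\in U_n$. For the closed unit square and $\Sigma=\Z^2_*$, the first fact is immediate (take $v=(1,0)$). For the second, since $z_0\in U_n\subset U_{n_0}$ and $U_{n_0}\cap Q=\emptyset$, the point $z_0$ lies outside $Q$; as the $\Z^2$-translates of the half-open square $Q$ partition $\R^2$, the unique $v\in\Z^2$ with $z_0-v\in Q\subset\ol Q$ is nonzero, hence lies in $\Sigma$. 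So the proof of the lemma goes through verbatim in this situation. Equivalently, one may observe that Lemma~\ref{lem:quasiconvex} remains valid under the weaker hypothesis $\R^2\setminus Q\subset\bigcup_{v\in\Sigma}T_v(\ol Q)$, which \emph{is} satisfied here — but justifying this requires inspecting the proof, not merely citing the statement.
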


Let us introduce a definition before moving to the proof.
Given $z\in \R^2$ and an arc $\gamma\colon [0,1]\to \R^2$ such that $z\notin \gamma([0,1])$, we define a partial index as follows: consider the map
$$\xi\colon [0,1]\to \SS^1, \quad \xi(t) = \frac{\gamma(t)-z}{\norm{\gamma(t)-z}}$$
and let $\hat{\xi}\colon [0,1]\to \R$ be a lift to the universal covering, so that $e^{2\pi i\hat{\xi}(t)} = \xi(t)$. 
Then we define 
$$I(\gamma,z) = \hat{\xi}(1)-\hat{\xi}(0).$$
This number does not depend on the choice of the lift $\hat{\xi}$ or the parametrization of $\gamma$ (preserving orientation). If $\gamma$ is a closed curve, $I(\gamma,z)$ is an integer and  coincides with the winding number of $\gamma$ around $z$.
If $\gamma$ and $\gamma'$ are arcs with $\gamma(1)=\gamma'(0)$ and $z\notin [\gamma]\cup[\gamma']$, then
$$I(\gamma*\gamma',z) = I(\gamma,z)+I(\gamma',z).$$
Additionally, $I(\gamma,z)$ is invariant by homotopies in $\R^2\sm \{z\}$ fixing the endpoints of $\gamma$. A simple consequence of this fact is that if $I(\gamma,z)\neq 0$ and $\gamma$ is closed, then $z$ must be in a bounded connected component of $\R^2\sm [\gamma]$.

%
The following proposition  is proved in \cite{m-z} and attributed to A. Douady. 

\begin{proposition}[\cite{m-z}]\label{pro:douady} Let $\gamma$ be a simple arc joining two points $z_0$ and $z_1$ and not intersecting the line segment $\ell$ from $z_0$ to $z_1$ other than at its endpoints. If $z_0+v$ lies in the closure of the disk bounded by $[\gamma]\cup \ell$, then $[\gamma]$ intersects $[\gamma]+v$. 
\end{proposition}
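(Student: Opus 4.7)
I will argue by contradiction. Assume $[\gamma]\cap([\gamma]+v)=\emptyset$ while $z_0+v\in\ol{D}$, where $D$ is the open topological disk bounded by the Jordan curve $J=[\gamma]\cup\ell$ (the Jordan curve theorem applies since $[\gamma]$ and $\ell$ are simple arcs meeting only at their common endpoints). Two immediate reductions: if $v=0$ then $[\gamma]+v=[\gamma]$ is nonempty, a contradiction; if $z_0+v=\gamma(t_0)$ for some $t_0\in[0,1]$, then $\gamma(0)+v=\gamma(t_0)\in[\gamma]\cap([\gamma]+v)$, again a contradiction. So we may assume $v\ne 0$ and $z_0+v\in\ol{D}\setminus[\gamma]$.

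The crucial step is to force an intersection of the two Jordan curves $J$ and $J+v$ by means of the Brouwer fixed point theorem. If the translated closed disk $\ol{D}+v=T_v(\ol{D})$ were contained in $\ol{D}$, then $T_v$ restricted to $\ol{D}$ would be a continuous self-map of a topological disk, hence would have a fixed point by Brouwer; this contradicts $v\ne 0$. Thus $\ol{D}+v\not\subset\ol{D}$, and since $\ol{D}+v$ is connected and meets $\ol{D}$ at $z_0+v$, it must meet the boundary $J$. The symmetric argument with $T_{-v}$ rules out $\ol{D}\subset\ol{D}+v$, so $\ol{D}$ meets $J+v$ as well. A short topological argument (treating $J$ and $J+v$ as Jordan curves in $\R^2$ and noting that, because of the previous Brouwer-type exclusions, neither of the bounded disks is contained in the other) then forces $J\cap(J+v)\ne\emptyset$.

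The standing assumption $[\gamma]\cap([\gamma]+v)=\emptyset$ restricts intersections of $J=[\gamma]\cup\ell$ with $J+v=([\gamma]+v)\cup(\ell+v)$ to three ``mixed'' types: $[\gamma]\cap(\ell+v)$, $\ell\cap([\gamma]+v)$, or $\ell\cap(\ell+v)$. The last case forces $v$ to be a nonzero multiple of $z_1-z_0$; writing $v=t(z_1-z_0)$ and examining the subcases $t\geq 1$, $t\in(0,1)$, etc., one either finds that $z_0+v$ or $z_1+v$ lies on $[\gamma]$ directly, or reduces to a planar argument along the line containing $\ell$. In the remaining mixed cases, the contradiction is obtained via a winding-number computation: the oriented loop $\sigma=\gamma*(-\ell)$ has index $\pm 1$ around every interior point of $D$, and in particular around $z_0+v$; on the other hand, combining portions of $\gamma$, $\gamma+v$, $\ell$, and $\ell+v$ between suitable crossing points produces a closed loop homotopic in $\R^2\setminus\{z_0+v\}$ to one of vanishing index but which must also have nontrivial index because of the enclosure of $z_0+v$ by $\sigma$. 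The disjointness $[\gamma]\cap([\gamma]+v)=\emptyset$ is used precisely to guarantee that the homotopy can be performed in the complement of $z_0+v$.

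The main obstacle is carrying out the final winding-number calculation with correct bookkeeping of orientations and crossing points in the mixed cases. The proof attributed to Douady in \cite{m-z} handles this by an explicit index argument built from precisely this type of composite loop; once the Brouwer-based crossing lemma of the second paragraph is in hand, this reduces the whole statement to a controlled computation of the index $I(\cdot,z_0+v)$ introduced earlier in \S\ref{sec:geometric}.
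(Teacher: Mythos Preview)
The paper does not give its own proof of this proposition; it is quoted from \cite{m-z} and attributed there to Douady, with nothing beyond the statement and a one-line remark. So there is no in-paper argument to compare against, and I can only assess your plan on its own merits.

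The Brouwer step in your second paragraph is sound: for two disjoint Jordan curves in the plane the closed disks they bound are either nested or disjoint; $z_0+v\in\ol D\cap(J+v)\subset\ol D\cap\ol{D+v}$ rules out disjointness, and the fixed-point obstruction to $T_{\pm v}(\ol D)\subset\ol D$ rules out nesting, so indeed $J\cap(J+v)\neq\emptyset$.

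After that, however, you stop proving and start promising. The phrase ``the contradiction is obtained via a winding-number computation'' is not a computation, and you yourself call the bookkeeping ``the main obstacle.'' In the collinear case $\ell\cap(\ell+v)\neq\emptyset$ you write that ``examining the subcases\dots one either finds that $z_0+v$ or $z_1+v$ lies on $[\gamma]$ directly, or reduces to a planar argument along the line containing $\ell$,'' but for $v=t(z_1-z_0)$ with $0<t<1$ the point $z_0+v$ lies on $\ell$, not on $[\gamma]$, so neither shortcut fires and you still owe the full argument. In the mixed cases the composite loop you allude to is never written down, its homotopy in $\R^2\setminus\{z_0+v\}$ is never exhibited, and it is exactly at this homotopy that the hypothesis $[\gamma]\cap([\gamma]+v)=\emptyset$ has to be cashed in. None of this is done.

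So the architecture is plausible (force $J\cap(J+v)\neq\emptyset$ by Brouwer, then upgrade to $[\gamma]\cap([\gamma]+v)\neq\emptyset$ by an index argument), but the second half is a genuine gap rather than a routine detail: you must actually build the concatenated loop from sub-arcs of $\gamma$, $\gamma+v$, $\ell$, $\ell+v$ at the crossing points, compute $I(\cdot,z_0+v)$ on it, and justify the homotopy. Until that is carried out, this is an outline, not a proof.
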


\begin{remark} The original statement uses ``open disk'' instead of ``closed disk'', but since $\gamma$ is compact the two statements are equivalent.
\end{remark}

\begin{lemma} \label{lem:convex-index} Let $E\subset \R^2$ be an arcwise connected set, $P$ a convex polygon with vertices in $E$, and $z\in P\sm E$. Suppose additionally that there is no simple loop in $E$ bounding a disk that contains $z$. Then there is a segment $\ell$ contained in an edge of $P$ and a simple arc $\gamma$ in $E$ joining the endpoints of $\ell$ and not intersecting $\ell$ anywhere else, such that the (closed) disk bounded by $[\gamma]\cup [\ell]$ contains $z$.
\end{lemma}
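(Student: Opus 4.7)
Using that $E$ is arcwise connected and contains the vertices of $P$, I would choose, for each edge $e_i=[v_i,v_{i+1}]$ of $P$ (indices cyclic), a simple arc $\alpha_i\subset E$ from $v_i$ to $v_{i+1}$, forming the closed concatenation $\Gamma=\alpha_1*\cdots*\alpha_k$ in $E$. The first key observation is that $I(\Gamma,z)=0$: otherwise, a standard loop-shortening argument (taking a minimal sub-loop of nonzero winding via, e.g., a Zorn-type argument on sub-intervals $[s,t]\subset [0,1]$ with $\Gamma(s)=\Gamma(t)$) yields a simple closed sub-curve of $\Gamma$ with nonzero winding around $z$. This sub-curve lies in $E$ and, by the Jordan curve theorem, bounds a disk containing $z$, contradicting the hypothesis.

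Assuming first that $z$ lies in the interior of $P$, we have $I(\partial P,z)=1$ for the counterclockwise orientation, so
$$1 \;=\; I(\partial P,z) - I(\Gamma,z) \;=\; \sum_{i=1}^k I\bigl(e_i*(-\alpha_i),z\bigr),$$
forcing $I(e_i*(-\alpha_i),z)\geq 1$ for some $i$; fix this $i$. The final step is to locate $z$ inside a concrete ``lens'' of $\alpha_i\cup e_i$. After a general-position refinement ensuring $\alpha_i\cap e_i$ is finite (or by working directly with the maximal open sub-arcs of $\alpha_i$ disjoint from $e_i$), each maximal sub-arc $\tilde\alpha$ of $\alpha_i$ whose interior avoids $e_i$ has both endpoints on $e_i$, and together with the straight sub-segment $\tilde\ell\subset e_i$ between them bounds a Jordan disk $\tilde D$. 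Since $I(\alpha_i*(-e_i),z)\neq 0$, $z$ must lie in a bounded component of $\R^2\setminus(\alpha_i\cup e_i)$, and a planar combinatorial analysis places $z$ inside some lens disk $\tilde D$; then $\gamma=\tilde\alpha\subset E$ and $\ell=\tilde\ell$ are the desired arc and segment.

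The main obstacle is this last step: when the cyclic order of the points of $\alpha_i\cap e_i$ along $\alpha_i$ disagrees with their order along $e_i$, the lens disks may overlap or nest, and the winding number around $z$ does not decompose as a clean telescoping sum over lenses. I would handle this by choosing $\tilde D$ \emph{minimal} (by inclusion) among lens disks containing $z$: any sub-arc of $\alpha_i$ entering the interior of $\tilde D$ cannot cross $\tilde\alpha$ (as $\alpha_i$ is simple) and must therefore exit through $\tilde\ell$, producing a strictly smaller sub-lens and contradicting minimality; hence $\alpha_i$ avoids the interior of $\tilde D$ entirely, and the nonzero local winding forces $z$ to lie in $\tilde D$. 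The boundary case $z\in\partial P\setminus\{v_1,\ldots,v_k\}$ reduces to the same lens analysis applied to the unique edge $e_{i_0}$ containing $z$: the sub-segment of the lens decomposition of $\alpha_{i_0}\cup e_{i_0}$ containing $z$ gives the conclusion, with $z$ on $\ell$, hence in the closed lens disk.
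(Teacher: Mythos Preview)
Your overall strategy matches the paper's: choose simple arcs $\alpha_i\subset E$ between consecutive vertices, use the hypothesis to get $I(\alpha_1*\cdots*\alpha_k,z)=0$, compare with $I(\partial P,z)=1$ to find an edge $e_i$ with $I(\alpha_i*(-e_i),z)\neq 0$, and then extract a simple ``lens'' from $\alpha_i\cup e_i$ containing $z$. The boundary case $z\in\partial P$ is handled the same way in both.

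The gap is in the extraction of the lens. Your ``general-position refinement ensuring $\alpha_i\cap e_i$ is finite'' is not available: $\alpha_i$ must remain inside the arbitrary arcwise connected set $E$, and $e_i$ must remain a sub-segment of an edge of $P$, so neither can be perturbed, and $\alpha_i\cap e_i$ may well be a Cantor set. Your alternative via maximal sub-arcs and minimality is circular as written: you begin by choosing $\tilde D$ minimal among lens disks \emph{containing $z$}, but the existence of any such lens disk is exactly what must be proved; the fact that $z$ lies in a bounded component of $\R^2\setminus(\alpha_i\cup e_i)$ does not by itself place $z$ in a single lens when the lenses nest and overlap in the way you yourself flag as the main obstacle.

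The paper avoids all of this with a short continuity argument on the partial index. Writing $A=\{s\in[0,1]:\alpha_i(s)\in e_i\}$ and $t_s$ for the corresponding parameter on $e_i$, the map $s\mapsto I(\alpha_i^{0s}*(-e_i^{0t_s}),z)$ is continuous on the closed set $A$, integer-valued (each such curve is a closed loop), zero at $s=0$, and nonzero at $s=1$. Take the largest $s\in A$ where it vanishes; since an integer-valued continuous function cannot approach $0$ through nonzero values, $s$ is isolated from the right in $A$, so there is a smallest $r\in A$ with $r>s$. Then $\alpha_i|_{[s,r]}$ meets $e_i$ only at its endpoints, and the simple loop $\alpha_i|_{[s,r]}*(-e_i|_{[t_s,t_r]})$ has nonzero index around $z$, giving the required lens directly. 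This works for arbitrary closed $A\subset[0,1]$ and needs no combinatorics of lens nesting.

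A minor point: from $\sum_i I(e_i*(-\alpha_i),z)=1$ you can only conclude that some term is a \emph{nonzero} integer, not that it is $\geq 1$; this does not affect the rest of the argument.
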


\begin{figure}[ht]
\begin{minipage}[b]{0.44\linewidth}
\centering
\includegraphics[height=4cm]{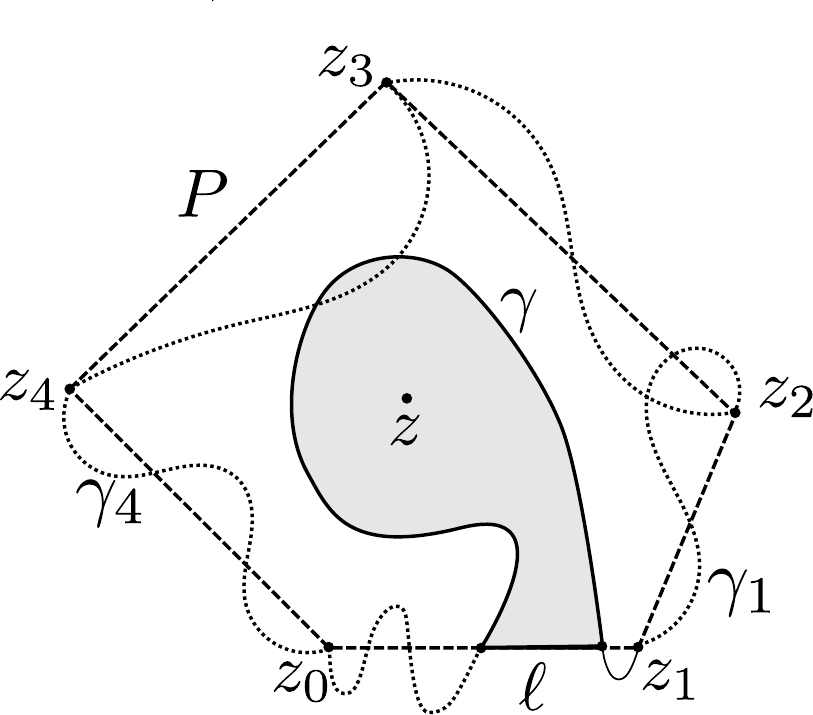}
\caption{Lemma \ref{lem:convex-index}}
\label{fig:qc1}
\end{minipage}
\hfill
\begin{minipage}[b]{0.55\linewidth}
\centering
\includegraphics[height=4cm]{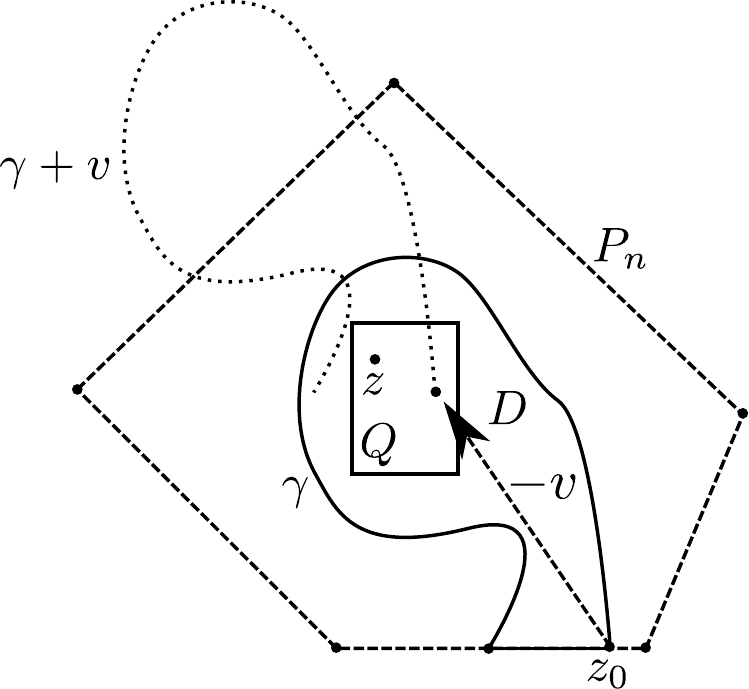}
\caption{Proof of Lemma \ref{lem:quasiconvex}}
\label{fig:qc2}
\end{minipage}
\end{figure}

\begin{proof} 
If $z$ belongs to some edge $\ell'$ of $P$, we may choose any simple arc $\gamma'$ in $E$ joining the endpoints of $\ell'$, and consider a parametrization $\ell$ of the connected component of $[\ell']\sm [\gamma']$ that contains $z$. The two endpoints of $\ell$ are joined by some sub-arc $\gamma$ of $\gamma'$, which does not intersect $\ell$ elsewhere, and since $z\in [\ell]$, the required properties hold.

Now assume $z\in \inter P$, and let $z_0,\dots, z_{n-1}$ be the (positively) cyclically ordered vertices of $P$ (see Figure \ref{fig:qc1}). For each $i\in \{0,\dots,n-1\}$ let $\gamma_i \colon [0,1]\to E$ be a simple arc joining $z_i$ to $z_{i+1\, (\text{mod }n)}$ Using the notation $I(\gamma)= I(\gamma,z)$, we first observe that
$$I(\gamma_0) +I(\gamma_1)+\cdots+ I(\gamma_{n-1})=0.$$ 
This is because $\gamma_0*\cdots*\gamma_{n-1}$ is a loop in $E$, and if $I(\gamma_0*\cdots*\gamma_{n-1})\neq 0$ then $z$ lies in some bounded component $D$ of $\R^2\sm[\gamma_0*\cdots*\gamma_{n-1}]$, and then $\bd D$ is a simple loop in $E$ bounding a disk that contains $z$, contradicting our hypotheses.
 
Denote by $\ell_i\colon [0,1]\to \R^2$ the parametrized edge of $P$ from $z_i$ to $z_{i+1\, (\text{mod }n)}$. Being a straight segment, it is clear that $I(\ell_i)<1/2$. On the other hand, from the fact that $\bd P$ is a positively oriented simple loop and $z$ is in the interior of $P$, it follows that $I(\ell_0)+I(\ell_1)\cdots+I(\ell_{n-1}) = 1$.
From these facts we see that
$$I(\gamma_0*(-\ell_0))+I(\gamma_{1}*(-\ell_{1}))+\cdots + I(\gamma_{n-1}*(-\ell_{n-1})) = -1.$$
Since each $\gamma_i*(-\ell_i)$ is closed, $I(\gamma_i*(-\ell_i))\in \Z$, and the above equation implies that there is some $k\in \{0,\dots,n-1\}$ such that $I(\gamma_k*(-\ell_k))\neq 0$. 
By a standard argument, we show that $\gamma_k*(-\ell_k)$ contains a simple loop that bounds a disk containing $z$: let $A=\{s\in [0,1]: \gamma_k(s) \in [\ell_k]\}$, and for each $s\in A$ let $t_s\in [0,1]$ be the (unique) number such that $\gamma_k(s) =  \ell_k(t_s)$. Note that $A$ is closed and $s\mapsto t_s$ is continuous, so if we
use the notation $\gamma_k^{st}$ to represent the sub-arc $\gamma_k|_{[s,t]}$ affinely reparametrized to have domain $[0,1]$, one easily verifies that the map $s\mapsto I(\gamma_k^{0s}*(-\ell_k^{0t_s}))$ is continuous as well. Thus, there is a largest $s\in A$ such that $I(\gamma_k^{0s}*(-\ell_k^{0t_s}))=0$, and a smallest $r\in A$ such that $r>s$ (for the latter, note that if $r\in A$ and $r>s$, then $I(\gamma_k^{0r}*(-\ell_k^{0t_r}))$ is a nonzero integer, and by continuity there has to be a smallest such $r$). From our choice of $s$, 
$$I(\gamma_k^{sr}*(-\ell_k^{sr})) = I(\gamma_k^{0r}*(-\ell_k^{0r})) - I(\gamma_k^{0s}*(-\ell_k^{0s})) = I(\gamma_k^{0r}*(-\ell_k^{0r})) \neq 0,$$ 
and from our choice of $r$ follows that $\gamma_k^{sr}$ does not intersect $\ell_k$ other than at its endpoints. Since both $\gamma_k$ and $\ell_k$ are simple arcs, it follows that $\gamma_k^{sr}*(-\ell_k^{sr})$ is a simple loop, and the disk it bounds contains $z$ because of the nonzero index. Thus $\gamma=\gamma_k^{sr}$ is the required arc. This completes the proof.
\end{proof}

%

\begin{proof}[Proof of Lemma \ref{lem:quasiconvex}]
Suppose for contradiction that $Q\cap U_{n_0} = \emptyset$ for some $n_0\in \N$. Since the sets $U_i$ are nested, we have $Q\cap U_n=\emptyset$ for any $n\geq n_0$. 
By Steinitz' theorem, each point of $\ol{Q}$ has a neighborhood contained in the convex hull of some finite subset of $U=\bigcap_{n\in \N} \ol{U}_n$, and so by compactness we can find a finite set $S\subset U$ such that $\ol{Q}\subset \inter P$, where $P=\conv{S}$. Let $W$ be a bounded neighborhood of $P$.

Note that the set 
$$V = \{x-y:x\in W,\, y\in W\}\cap \Sigma$$
is bounded, hence finite (because $\Sigma$ is closed and discrete). Thus we can find $n_1>n_0$ such that if $n>n_1$ then $U_n\cap T_v(U_n) = \emptyset$ for any $v\in V$. 

From now on, fix $n>n_1$ and $z\in Q$. Since $\ol{Q}\subset \inter{P}$ and the (finitely many) extremal points of $P$ are in $\ol{U}_n$, by a small perturbation of these points we obtain a new convex polygon $P_n$ with extremal points in $U_n$ such that $\ol{Q}\subset \inter{P_n}$, and $P_n\subset W$. By Lemma \ref{lem:convex-index} applied to $E=U_n$ and $P_n$ instead of $P$, there are two possibilities:

\emph{Case 1.} There is a simple loop $\alpha$ in $U_n$ bounding a disk $D$ containing $z$. Since $Q$ is connected and disjoint from $U_n$ (and so from $[\alpha]$), and $Q\cap D\neq \emptyset$,  it follows that $Q\subset D$. The fact that $\R^2=\bigcup_{v\in \Sigma} \ol{Q}+v$ implies that there is $v\in \Sigma$ such that $\ol{Q}\cap T_v(\ol{Q})\neq \emptyset$. Indeed, a classic theorem of Sierpinski \cite{sierpinski} implies that $\R^2$ is not a countable union of pairwise disjoint closed sets. But then $\ol{D}\cap T_v(\ol{D})\neq \emptyset$, which implies that $[\alpha]\cap T_v([\alpha])\neq \emptyset$ and therefore $U_n\cap T_v(U_n)\neq \emptyset$. On the other hand, since $\ol{Q}\subset \inter P_n\subset W$, we have that $v\in V$ and so $U_n\cap (U_n+v)=\emptyset$, a contradiction.

\emph{Case 2.}
There is an arc $\gamma$ in $U_n$ joining two points of a subset $\ell$ of an edge of $P_n$ such that $[\gamma]\cup [\ell]$ bounds an open disk $D$ such that $z\in \ol{D}$. Since $Q\subset \inter P_n$ is connected and disjoint from $[\gamma]\cup[\ell]$, it follows that $Q\subset D$. Letting $z_0=\gamma(0)$, from the fact that $\bigcup_{v\in \Sigma}\ol{Q}+v=\R^2$ we see that there is $v\in \Sigma$ such that $z_0-v\in \ol{Q}\subset \ol{D}$. Proposition \ref{pro:douady} implies that $[\gamma]\cap T_{-v}([\gamma])\neq \emptyset$ (see Figure \ref{fig:qc2}), and so $U_n\cap T_{-v}(U_n)\neq\emptyset$, which means that $U_n\cap T_v(U_n)\neq \emptyset$. But since $z_0\in \bd P_n\subset W$ and $z_0-v \in \ol{Q}\subset \inter P_n\subset W$, we have that $v\in V$ which implies that $U_n\cap T_v(U_n)=\emptyset$, again a contradiction.
\end{proof}

\subsection{Proof of Theorem \ref{th:geometric}}

%
%

We will use two classical properties of translations, derived from Brouwer theory.
\begin{definition} Given $v\in \R^2_*$, we say that $\gamma\colon [0,1]\to\R^2$ is a $T_v$-translation arc if $\gamma$ is a simple arc joining a point $x$ to $T_v(x)$ and $[\gamma]\cap T_v[\gamma] =\{T_v(x)\}$.
\end{definition}

\begin{proposition}[Corollary 3.3 of \cite{MortonBrown}]\label{pro:translacaodisjunta}
If $K\subset \R^2$ is an arcwise connected set and $v\in\R^2$ is such that $K\cap T_v(K)=\emptyset,$ then 
$K\cap T_v^n(K)=\emptyset$ for all $n \in \Z_*$.   
\end{proposition}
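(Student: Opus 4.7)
The plan is a proof by contradiction using minimality of a counterexample, with the final contradiction supplied by Proposition \ref{pro:douady}. Assume $K\cap T_v(K)=\emptyset$ but $K\cap T_v^m(K)\neq\emptyset$ for some $m\in\Z_*$. Replacing $v$ by $-v$ if necessary take $m\ge 2$, and let $n_0\ge 2$ be the smallest such exponent; minimality then yields $K\cap T_v^k(K)=\emptyset$ for all $1\le|k|\le n_0-1$. Pick $q\in K$ with $p:=q+n_0v\in K$ and, using arcwise connectedness, a simple arc $\beta\colon[0,1]\to K$ from $q$ to $p$. Let $\ell$ be the straight segment from $q$ to $p$; it passes through the \emph{intermediate lattice points} $q+kv=T_v^k(q)$ for $1\le k\le n_0-1$, each of which lies in $T_v^k(K)$ and is hence disjoint from $K$, so $\beta$ avoids all of them.

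I would aim to extract a simple subarc $\gamma\subset\beta$ joining two points $z_0,z_1$ such that $[\gamma]$ meets the segment $\overline{z_0z_1}$ only at $z_0,z_1$, and such that $z_0+v\in\ol D$, where $\ol D$ is the closed disk bounded by $[\gamma]\cup\overline{z_0z_1}$. Proposition \ref{pro:douady} applied to such $\gamma$ then yields $[\gamma]\cap T_v([\gamma])\neq\emptyset$; but $[\gamma]\subset K$ and $T_v([\gamma])\subset T_v(K)$, contradicting $K\cap T_v(K)=\emptyset$.

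The subarc $\gamma$ is to be obtained via a partial-index / winding-number argument in the style of the proof of Lemma \ref{lem:convex-index}. Slightly perturbing $\ell$ off each intermediate lattice point to obtain a curve $\ell_\varepsilon$, the two possible local perturbations near $q+kv$ produce closed curves $\beta*(-\ell_\varepsilon)$ whose winding numbers at $q+kv$ differ by $\pm 1$, so at least one perturbation gives a nonzero winding number. The extraction procedure from the proof of Lemma \ref{lem:convex-index} then produces (in the limit $\varepsilon\to 0$) a simple subloop of $\beta*(-\ell)$ consisting of a subarc $\gamma\subset\beta$ joining points $z_0,z_1\in[\ell]$, with $[\gamma]\cap[\ell]=\{z_0,z_1\}$, together with the subsegment $\ell_0\subset\ell$ between $z_0$ and $z_1$, bounding a disk $D$ whose closure contains $q+kv$. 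Labelling $z_0$ as the endpoint of $\ell_0$ with smaller $\ell$-parameter, the point $z_0+v$ then lies on $\ell$ just past $z_0$; provided the $\ell$-span $\|z_1-z_0\|$ is at least $\|v\|$, one has $z_0+v\in[\ell_0]\subset\ol D$, and Douady's proposition applies.

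The main obstacle is the possibility that every such extraction yields a \emph{tight} subloop of $\ell$-span strictly less than $\|v\|$, in which case $z_0+v$ sits on $\ell\setminus\ell_0$ and (since $\ell\cap\ol D=\ell_0$) falls outside $\ol D$. To handle this I would run the extraction simultaneously around all $n_0-1$ intermediate lattice points; the extracted subloops contribute pairwise disjoint subsegments of $\ell$, and if each has $\ell$-length less than $\|v\|$ their total length is strictly less than $(n_0-1)\|v\|<n_0\|v\|=\|\ell\|$, so a complementary portion of $\ell$ of length at least $\|v\|$ must remain. A careful combinatorial analysis of how $\beta$ traverses this leftover portion while still avoiding all lattice points then produces a subarc of sufficient span to which Douady's proposition applies, completing the contradiction; this combinatorial bookkeeping is the technical heart of the proof.
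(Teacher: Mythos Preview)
The paper does not prove this proposition; it is quoted as Corollary~3.3 of \cite{MortonBrown}. Brown's argument goes through Brouwer's theory of translation arcs: for any fixed-point-free orientation-preserving homeomorphism of the plane, the iterates of a translation arc meet only at the forced shared endpoints, and the statement for arcwise connected free sets is then a short corollary. Your route via Proposition~\ref{pro:douady} is genuinely different (and specific to honest translations, which is fine for the statement at hand).

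The proposal, however, has a real gap, and it sits exactly where you say it does. You correctly isolate the obstruction: for an extracted subarc $\gamma\subset\beta$ with endpoints $z_0,z_1\in\ell$, the closed disk $\ol D$ meets the line carrying $\ell$ precisely in the subsegment $[z_0,z_1]$, so $z_0+v$ (equivalently $z_1-v$) lies in $\ol D$ \emph{only} when $\|z_1-z_0\|\ge\|v\|$. If every extraction is tight, Proposition~\ref{pro:douady} yields nothing, and your rescue is not carried out. Two concrete problems with the sketch: (a) the subsegments of $\ell$ produced by extractions around distinct lattice points need not be pairwise disjoint --- for adjacent points $q+kv$ and $q+(k{+}1)v$, a tight subsegment around each lies in $(q+(k{-}1)v,\,q+(k{+}1)v)$ and $(q+kv,\,q+(k{+}2)v)$ respectively, and these can overlap in $(q+kv,\,q+(k{+}1)v)$ --- so the total-length bound $(n_0-1)\|v\|$ is unjustified; (b) even granting that bound, the leftover portion of $\ell$ of total length exceeding $\|v\|$ is typically broken by the extracted subsegments into several pieces each shorter than $\|v\|$, and nothing in ``how $\beta$ traverses this leftover portion'' produces a subarc satisfying the hypothesis of Proposition~\ref{pro:douady}. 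Labelling this the technical heart and leaving it unwritten is precisely where the argument stops being a proof; if you want to push this line through, that step needs an actual construction, not a promissory note.
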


The next proposition is a direct consequence of Theorem 4.6 of \cite{MortonBrown}.
\begin{proposition}\label{pro:naotocaoarco} 
If $K\subset\R^2$ is an arcwise connected set such that $K\cap T_v(K)=\emptyset$ for some $v\in \R^2_*$, and $\alpha$ is any $T_v$-translation arc disjoint from $K$, then
$$K\cap\left(\bigcup_{i=0}^{\infty} T_v^{i}[\alpha]\right)=\emptyset \quad \text {or} \quad K\cap\left(\bigcup_{i=0}^{\infty} T_v^{-i}[\alpha]\right)=\emptyset$$
\end{proposition}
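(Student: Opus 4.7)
My plan for proving this proposition is to recognize it as essentially a direct application of Brown's theorem together with the geometric setup already developed. Since $v\neq 0$, the translation $T_v\colon\R^2\to\R^2$ is a fixed-point-free orientation-preserving homeomorphism, i.e., a Brouwer homeomorphism, so Theorem 4.6 of \cite{MortonBrown} applies with $T_v$ in the role of the Brouwer homeomorphism and $\alpha$ as the translation arc, yielding exactly the stated dichotomy.

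To sketch the idea underlying Brown's result in this context, I would proceed as follows. First, using the defining property $[\alpha]\cap T_v[\alpha]=\{T_v(x)\}$ combined with Proposition \ref{pro:translacaodisjunta} applied to suitable sub-arcs, one checks that the iterates $\{T_v^i[\alpha]\}_{i\in\Z}$ concatenate into a properly embedded topological line $L\subset\R^2$: consecutive arcs share a single endpoint and non-consecutive ones are disjoint, with properness following from $\norm{T_v^i(x)}\to\infty$. Suppose now for contradiction that $K$ meets both $T_v^i[\alpha]$ for some $i>0$ and $T_v^{-j}[\alpha]$ for some $j>0$ (only such indices matter, since $K\cap[\alpha]=\emptyset$). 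Choose $p,q\in K$ with $p\in T_v^i[\alpha]$ and $q\in T_v^{-j}[\alpha]$, and an arc $\beta\subset K$ joining them; note $\beta$ is disjoint from $[\alpha]$. Closing up $\beta$ with the subarc $\eta$ of $L$ from $q$ to $p$ through $[\alpha]$ yields a closed curve in $\R^2$ whose bounded complementary region contains at least one endpoint of $[\alpha]$.

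Applying Proposition \ref{pro:douady} to a suitable sub-arc of $\beta$ (in the role of $\gamma$) together with the segment of $L$ containing $[\alpha]$ would then force $\beta\cap T_v^m(\beta)\neq\emptyset$ for some $m\in\Z_*$, contradicting $K\cap T_v^m(K)=\emptyset$, which is itself a consequence of Proposition \ref{pro:translacaodisjunta}. The main obstacle I anticipate is the technical step of extracting from the concatenation of $\beta$ and $\eta$ an appropriate simple closed sub-loop: after removing possible ``bubbles'' where $\beta$ crosses $L$ outside of $[\alpha]$, one must ensure a simple sub-loop still enclosing an endpoint of $\alpha$ remains. This combinatorial argument is the technical heart of Brown's Theorem 4.6, and once it is in place, the conclusion follows directly from the tools already at hand.
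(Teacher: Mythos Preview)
Your approach matches the paper's exactly: the paper does not give an independent proof but simply states that the proposition is a direct consequence of Theorem 4.6 of \cite{MortonBrown}, which is precisely your opening observation. One minor caveat about your supplementary sketch: Proposition \ref{pro:douady} as stated requires $\ell$ to be a straight line segment, not an arbitrary topological arc, so it does not apply directly with a sub-arc of the embedded line $L$ in the role of $\ell$; but since you explicitly defer the technical core of the argument to Brown's paper, this does not constitute a gap in your proposal.
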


We will also need the following
\begin{proposition}\label{pr:temarcodetranslacao}
If $K\subset\R^2$ is arcwise connected and $K\cap T_v(K)\neq \emptyset$ for some $v\in \R^2_*$, then $K$ contains a $T_v$-translation arc. 
\end{proposition}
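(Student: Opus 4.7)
The plan is to start from the hypothesis $K\cap T_v(K)\neq\emptyset$, extract a simple arc in $K$ joining some $x$ to $T_v(x)$, and then trim it to a $T_v$-translation arc via a minimization argument on the set of parameter pairs where the arc meets its translate. Concretely, I would first pick $x\in K$ with $T_v(x)\in K$ and use arcwise connectedness of $K$ together with the classical fact that any path in a Hausdorff space contains a simple arc with the same endpoints to obtain a simple arc $\alpha\colon[0,1]\to K$ with $\alpha(0)=x$ and $\alpha(1)=T_v(x)$.

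The key object is the closed set
$$L=\{(s,t)\in[0,1]^2 : \alpha(t)=T_v(\alpha(s))\},$$
which contains $(0,1)$ and is disjoint from the diagonal, since $v\neq 0$ forbids $\alpha(t)=T_v(\alpha(t))$. The continuous map $m(s,t)=|t-s|$ is therefore strictly positive on $L$ and, by compactness, attains a minimum at some $(s^*,t^*)\in L$. For definiteness I would assume $s^*<t^*$ (the case $s^*>t^*$ is analogous, using the reverse parameterization of the relevant sub-arc). Set $\gamma=\alpha|_{[s^*,t^*]}$, which is a simple sub-arc of $\alpha$ from $y=\alpha(s^*)$ to $\alpha(t^*)=T_v(y)$.

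To conclude, I would check that $[\gamma]\cap T_v[\gamma]=\{\gamma(1)\}$. Any point of this intersection corresponds to a pair $(s',t')\in L$ with $s',t'\in[s^*,t^*]$; then $|t'-s'|\leq t^*-s^*$, and minimality of $m$ forces $\{s',t'\}=\{s^*,t^*\}$. The alternative $(s',t')=(t^*,s^*)$, together with $(s^*,t^*)\in L$, would give $\alpha(s^*)=T_{2v}(\alpha(s^*))$, contradicting $v\neq 0$. Hence $(s',t')=(s^*,t^*)$, so the only intersection is $\gamma(1)=T_v(\gamma(0))$, as required.

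I do not expect any serious obstacles: the argument is essentially a compactness-based minimization, and the only non-elementary ingredient is the classical extraction of a simple arc from a path in a Hausdorff space.
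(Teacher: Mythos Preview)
Your proposal is correct and follows essentially the same approach as the paper: both pick a simple arc in $K$ from a point to its $T_v$-translate, then minimize $|s-t|$ over the closed set of parameter pairs $(s,t)$ with $\alpha(t)=T_v(\alpha(s))$ to extract the translation sub-arc. Your write-up is in fact slightly more detailed than the paper's, since you explicitly verify the intersection condition $[\gamma]\cap T_v[\gamma]=\{\gamma(1)\}$, which the paper leaves implicit.
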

\proof
By hypothesis, there is $y\in K$ be such that $T_v(y)\in K$. Since $K$ is arcwise connected, there is an arc $\gamma\colon [0,1]\to K$, which we may assume simple, joining $y$ to $T_v(y)$. Define $F\colon[0,1]\times[0,1]\to\R^2$ as $F(s,t)=\gamma(t)-\gamma(s),$ and let $(s_0,t_0)$ be a point in the closed set $F^{-1}(v)$ that minimizes the map 
$$F^{-1}(v)\ni (s,t)\mapsto \abs{s-t},$$
Then, the restriction of $\gamma$ to the interval between $s_0$ and $t_0$ is a $T_v$-translation arc in $K$.
\endproof

To prove Theorem \ref{th:geometric}, we begin with an auxiliary result.
\begin{lemma}\label{lem:90graus}
Let $(U_n)_{n\in \N}$ be as in the hypotheses of Theorem \ref{th:geometric}, and let $v_0\in \Sigma$.
Then one of the following holds:
\begin{enumerate}
\item There is $n\in \N$ and $w\in \Sigma$ such that $U_{n}$ is $\Sigma\sm (\R w)$-free, or 
\item For every $\epsilon>0$, there is an open arc $I_\epsilon$ in $\SS^1_\infty$ of length $\frac{\pi}{2}-\epsilon$ with an endpoint $v_0/\norm{v_0}$ such that  $\partial_{\infty} \overline{U}_{n} \cap I_\epsilon =\emptyset$ for some $n\in \N$.
\end{enumerate}
\end{lemma}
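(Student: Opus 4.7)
I would argue by contradiction: assume that case (1) fails and that case (2) also fails for some $\epsilon_0>0$, and derive a contradiction. From the failure of (1) applied with $w=v_0$, for each $n\in\N$ there is $v_n\in\Sigma\setminus(\R v_0)$ with $U_n\cap T_{v_n}(U_n)\neq\emptyset$. Since $\Sigma$ is closed and discrete and the chain is eventually $\Sigma$-free, $\|v_n\|\to\infty$, and after extracting a subsequence $v_n/\|v_n\|\to v^*\in\SS^1$. Proposition~\ref{pr:temarcodetranslacao} provides a $T_{v_n}$-translation arc $\alpha_n\subset U_n$ joining some $y_n$ to $y_n+v_n$, and the classical Brouwer lemma turns its full orbit $\Lambda_n=\bigcup_{k\in\Z}T_{v_n}^k(\alpha_n)$ into a proper simple arc accumulating at infinity only in the antipodal directions $\pm v_n/\|v_n\|$.

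Since $v_0\in\Sigma$, I can fix $N$ large enough that $U_N$ is $T_{v_0}$-free; by Proposition~\ref{pro:translacaodisjunta}, $U_N$ is then $T_{v_0}^k$-free for every $k\in\Z_*$. In particular, the intersection of $U_N$ with any line parallel to $v_0$ has $v_0$-diameter strictly less than $\|v_0\|$. On the other hand, the failure of (2) with constant $\epsilon_0$ means that for every $n\ge N$ there exist directions $u_n^+,u_n^-\in\partial_\infty\overline U_n$ lying on the two sides of $v_0/\|v_0\|$ at angle strictly less than $\pi/2-\epsilon_0$. Choosing corresponding points $p_n^\pm\in U_n\subseteq U_N$ with $\|p_n^\pm\|$ arbitrarily large, both have $v_0$-components of order $\|p_n^\pm\|\sin\epsilon_0\to\infty$ with strictly positive sign, and $v_0^\perp$-components of opposite signs with magnitudes comparable to $\|p_n^\pm\|$.

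The plan to force a contradiction is to use the arc-connectedness of $U_N$ to produce a simple arc $\gamma\subset U_N$ joining $p_n^+$ to $p_n^-$ which, together with the straight segment $\ell=[p_n^+,p_n^-]$, bounds a closed disk $D\subset\R^2$ containing a translate $p_n^++kv_0$ for some $k\in\N$. Once such a $\gamma$ is in hand, Douady's Proposition~\ref{pro:douady} applied to $\gamma$ with $v=kv_0$ yields $\gamma\cap T_{kv_0}(\gamma)\neq\emptyset$, hence $U_N\cap T_{v_0}^k(U_N)\neq\emptyset$, contradicting Proposition~\ref{pro:translacaodisjunta}. The directional estimates ensure that any arc in $U_N$ between $p_n^+$ and $p_n^-$, being confined by the $T_{v_0}^k$-freeness to have uniformly bounded $v_0$-extent on each vertical line in direction $v_0$, must ``dip'' at some intermediate horizontal coordinate and thereby enclose a region whose $v_0$-height grows at least as $\|p_n^\pm\|\sin\epsilon_0-\|v_0\|$, eventually accommodating the required translate $p_n^++kv_0$ for any fixed $k$.

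The main technical obstacle I anticipate is the rigorous selection of the simple arc $\gamma$: the set $U_N$ may be very pathological (far from simply connected), and arbitrary paths between $p_n^+$ and $p_n^-$ may wind around in ways that prevent a clean application of Douady's proposition. The role of the $T_{v_n}$-translation arcs $\alpha_n\subset U_n$ guaranteed by the failure of (1) is precisely to provide a canonical ``long'' arc in $U_N$ that can be used to guide the construction of $\gamma$, and the proper simple arc $\Lambda_n$ provides further structural information about how $U_N$ sits in the plane. A case analysis based on whether $v^*$ is nearly parallel to $\pm v_0/\|v_0\|$ or transverse to it may be needed, with the angular constant $\pi/2-\epsilon$ emerging sharply from the fact that angles exceeding this threshold yield enclosed regions too thin to contain $T_{v_0}$-translates.
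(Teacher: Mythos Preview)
Your proposal identifies some correct ingredients (translation arcs from the failure of (1), $T_{v_0}^k$-freeness of $U_N$, the role of Douady's Proposition~\ref{pro:douady}), but the central step is a genuine gap, and the approach diverges substantially from what makes the paper's proof work.

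\textbf{The gap.} Your ``dipping'' claim does not follow from the hypotheses you have set up. It is true (via Douady, as you implicitly use) that for an arcwise connected $T_{v_0}$-free set the intersection with any single line parallel to $v_0$ has $v_0$-diameter below $\|v_0\|$. But this constrains only pairs of points on the \emph{same} such line; it says nothing about how the $v_0$-component of an arc $\gamma\subset U_N$ varies as $\gamma$ sweeps across different $v_0^\perp$-levels from $p_n^+$ to $p_n^-$. In particular, $\gamma$ could move monotonically in the $v_0^\perp$-coordinate while keeping its $v_0$-coordinate arbitrarily large throughout, never producing a disk $D$ with $p_n^+ + kv_0\in D$. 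You acknowledge the difficulty of selecting $\gamma$, but the real issue is upstream: there is no mechanism forcing a dip. The translation arcs $\alpha_n$ and the limiting direction $v^*$ are introduced but never fed back into the argument in a way that constrains~$\gamma$.

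\textbf{What the paper does instead.} The paper's proof does not attempt to enclose a $T_{v_0}$-translate via Douady. It extracts \emph{two} linearly independent directions $w_1,w_2\in\Sigma$ witnessing the failure of (1), and builds translation arcs $\alpha$ (for $T_{w_1}$) and $\beta$ (for $T_{w_2}$) inside $U_{n_0}$, joined by an arc $\gamma$. The four half-lines $\alpha^\pm,\beta^\pm$ together with $\gamma$ form a cross that separates the plane. For $n_2$ large, $U_{n_2}$ is free for both $T_{w_1}$ and $T_{w_2}$, and Proposition~\ref{pro:naotocaoarco} (not Douady) forces each translate $T_{v_0}^{-k}(U_{n_2})$ to miss a ball of fixed radius $r$ at uniformly bounded distance from the origin. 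Translating back, this produces balls $B_r(y_k)\subset\R^2\setminus U_{n_2}$ with $y_k\approx kv_0$. These balls supply $T_{v_0}$- and $T_{v_0'}$-translation arcs disjoint from $U_{n_2}$ (here $v_0'$ is a direction chosen nearly perpendicular to $v_0$ using $R$-density), and a second application of Proposition~\ref{pro:naotocaoarco} shows $U_{n_2}$ avoids a half-plane or a quadrant of aperture close to $\pi/2$. The angle $\pi/2-\epsilon$ arises precisely from the freedom in choosing $v_0'$.

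The two key ideas you are missing are: (i) using \emph{two} independent translation directions to build a separating cross, and (ii) using Proposition~\ref{pro:naotocaoarco} to convert freeness into avoidance of half-lines and then quadrants. Without these, the Douady-based enclosure strategy does not close.
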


\proof
Given $\epsilon>0$, from the fact that $\Sigma$ is $R$-dense we may find $v_0'\in \Sigma$ such that $$\frac{\pi}{2}-\epsilon<\mathrm{angle}(v_0, v_0')<\frac{\pi}{2}+\epsilon.$$ 
Let $n_0\in \N$ be such that both $U_{n_0}\cap T_{v_0}\left(U_{n_0}\right)$ and $U_{n_0}\cap T_{v_0'}\left(U_{n_0}\right)$ are empty.

If $U_{n_0}$ is $\Sigma$-free, then case (1) of the lemma holds and we are done. Otherwise, we may choose $w_1\in \Sigma$ such that $U_{n_0}$ intersects $T_{w_1}(U_{n_0})$. By Proposition \ref{pr:temarcodetranslacao}, there exists a $T_{w_1}$-translation arc $\alpha$ in $U_{n_0}$, joining a point $\hat{y}$ to $T_{w_1}(\hat{y})$. Moreover, since $w_1\in \Sigma$, we may choose $n_1>n_0$ such that $U_{n_1}\cap (T_{w_1}(U_{n_1}))=\emptyset$. If $U_{n_1}$ is $\Sigma\sm(\R w_1)$-free then again case (1) holds and we are done; otherwise, there is $w_2\in \Sigma \sm (\R w_1)$ such that $U_{n_1}\cap T_{w_2}(U_{n_1})\neq \emptyset$. Since $U_{n_1}$ is a subset of $U_{n_0}$, by Proposition \ref{pr:temarcodetranslacao} there is a $T_{w_2}$-translation arc $\beta$ in $U_{n_0}$ joining a point $\hat z$ to $T_{w_2}\hat{z}$.

Let $\gamma$ be an arc in $U_{n_0}$ joining $\hat y$ to $\hat z$, define 
$$\alpha^+=\bigcup_{i=0}^{\infty} T_{w_1}^{i}[\alpha],\quad \alpha^-=\bigcup_{i=0}^{\infty} T_{w_1}^{-i}[\alpha],\quad \beta^+=\bigcup_{i=0}^{\infty} T_{w_2}^{i}[\beta],\quad \beta^-=\bigcup_{i=0}^{\infty} T_{w_2}^{-i}[\beta],$$
and consider the four connected sets
$$C_1=\alpha^+\cup\beta^{+}\cup\gamma, \quad C_2=\alpha^+\cup\beta^{-}\cup\gamma,\quad C_3=\alpha^-\cup\beta^{+}\cup\gamma,\quad C_4=\alpha^-\cup\beta^{-}\cup\gamma.$$

\begin{claim}\label{cl:Bolanocone}
Given $r>0$ and $i\in\{1,2,3,4\}$, there exist two points $z_i$ and $z_i'$ such that $B_r(z_i)$ and $B_r(z_i')$ lie on different connected components of $\R^2\sm C_i$.
\end{claim}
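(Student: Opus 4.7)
For each $i\in\{1,2,3,4\}$, write $C_i = A\cup\gamma\cup B$, where $A\in\{\alpha^+,\alpha^-\}$ is a proper arcwise connected set tending to infinity in the unique direction $u_A=\pm w_1/\norm{w_1}$ and $B\in\{\beta^+,\beta^-\}$ is likewise proper, with unique accumulation direction $u_B=\pm w_2/\norm{w_2}$; properness of $A$ and $B$ holds because $T_{w_1}$ and $T_{w_2}$ are translations of $\R^2$, so every compact set meets only finitely many translates of the compact arcs $\alpha$ or $\beta$. Since $w_1,w_2$ are non-parallel, $u_A\neq\pm u_B$. The plan is to extract from $C_i$ a properly embedded simple arc $\sigma_i$ whose two ends approach $u_A$ and $u_B$ at infinity; then, applying the Jordan curve theorem to $\sigma_i\cup\{\infty\}$ in $\SS^2$, we get that $\sigma_i$ separates $\R^2$ into two unbounded open components $D_1,D_2$, and since $C_i\supseteq\sigma_i$, the task reduces to exhibiting a ball of radius $r$ disjoint from $C_i$ in each $D_j$.

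To produce $\sigma_i$, I first extract from each of $A$ and $B$ a simple proper sub-arc going from its basepoint ($\hat y$ or $\hat z$) out to infinity, using the standard fact that any continuous proper ray in a Hausdorff space contains a simple sub-ray with the same end at infinity. Concatenating the reverse of the first with $\gamma$ and then the second yields a proper continuous path in $C_i$ from infinity to infinity, and a second application of the simple-subarc extraction principle produces $\sigma_i\subseteq C_i$. The persistence of the directional ends is automatic: any unbounded subset of $A$ lies in a fixed-width strip around the ray $\hat y+\R_{\geq 0}w_1$ and therefore accumulates only at $u_A$ on $\SS^1_\infty$ (and similarly for $B$). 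Viewing $\sigma_i$ in $\R^2_\infty\cong\overline{\D}$, it is a Jordan arc joining two distinct boundary points, so $\R^2\setminus\sigma_i$ has exactly two open components $D_1,D_2$, which accumulate on the two complementary open arcs $I_1,I_2$ of $\SS^1_\infty\setminus\{u_A,u_B\}$.

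For the placement of the balls, set $d_0=\max(\diam\alpha,\diam\beta)$; outside a sufficiently large disk centered at the origin, $C_i$ is contained in the union of the tubular $d_0$-neighborhoods of the two rays $\hat y+\R_{\geq 0}(\pm w_1)$ and $\hat z+\R_{\geq 0}(\pm w_2)$. Fix unit vectors $v_j\in I_j$ for $j\in\{1,2\}$; since $v_j\notin\{\pm u_A,\pm u_B\}$, the distance from $Rv_j$ to each of these tubular neighborhoods grows linearly in $R$. Hence, for $R$ sufficiently large, the balls $B_r(Rv_1)$ and $B_r(Rv_2)$ are disjoint from $C_i$. Setting $z_i=Rv_1$ and $z_i'=Rv_2$, each ball lies entirely in the component $D_j$ of $\R^2\setminus\sigma_i$ whose accumulation arc at infinity contains $v_j$, so $z_i$ and $z_i'$ lie in different components of $\R^2\setminus\sigma_i$, and a fortiori of $\R^2\setminus C_i$. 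The main technical hurdle is the arc-extraction step: one must ensure that the simple sub-arc produced from the (potentially self-intersecting) concatenation still has well-defined, distinct ends on $\SS^1_\infty$, and this is guaranteed by the strip-confinement of each of $A$ and $B$ discussed above.
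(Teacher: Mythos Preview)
Your proof is correct, but it takes a more elaborate route than the paper's. The paper argues directly with the half-strips: since $\alpha^+\cup\gamma$ sits in a half-strip $S_1$ parallel to $w_1$ and $\beta^+\cup\gamma$ in a half-strip $S_2$ parallel to $w_2$, and since $w_1,w_2$ are non-parallel, the complement $\R^2\sm(S_1\cup S_2)$ has two unbounded components $O_1,O_2$, each containing an open cone (hence balls of arbitrary radius). One then checks that $O_1$ and $O_2$ lie in different components of $\R^2\sm C_1$. Your argument makes this last separation step fully explicit via arc-extraction and the Jordan theorem on $\R^2_\infty$, whereas the paper leaves it as ``easy to verify''; conversely, the paper's half-strip picture makes the existence of the large balls immediate and bypasses the simple-subarc extraction and the bookkeeping about directional ends. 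The two arguments ultimately use the same geometric content (non-parallel directions, strip confinement of the translated arcs), but the paper's packaging is shorter.
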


\begin{proof}\setcounter{claim}{1}
We consider the case $i=1$; the other cases are analogous. Note first that $\alpha^+\cup \gamma$ is contained in a half-strip $S_1$ with the direction $w_1$ (\ie a set of the form $\{z : a \leq p_{w_1^\perp}(z)\leq b, \, p_{w_1}(z)\geq c\}$). Similarly, $\beta^+\cup \gamma$ is contained in a half-strip $S_2$ with direction $w_2$ (see Figure \ref{fig:claim1}). Let $O_1$ and $O_2$ be the two connected components of $\R^2\sm (S_1\cup S_2)$. It is easy to verify that $O_1$ and $O_2$ lie in different connected components of $\R^2\sm C_1$, and since each $O_i$ contains a cone, one may find a ball of arbitrarily large radius in each of the two sets.
\begin{figure}[ht]
\includegraphics[height=4cm]{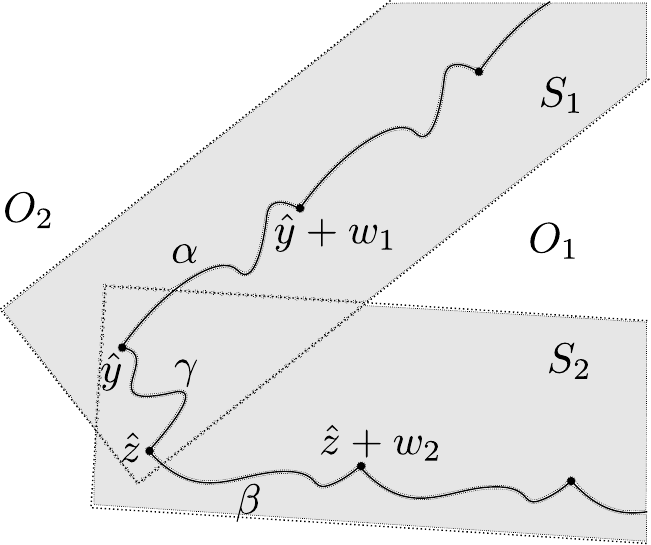}
\caption{The sets $O_1$ and $O_2$.}
\label{fig:claim1}
\end{figure}
\end{proof}

\begin{claim}\label{cl:Bolanocone2}
Given $r>0$, there is $R>0$ such that for any arcwise connected set $K\subset \R^2\sm ([\alpha]\cup[\beta]\cup[\gamma])$ such that $K$ is disjoint from $T_{w_1}(K)$ and $T_{w_2}(K)$ there is $x$ such that $\norm{x}\leq R$ and $B_r(x)$ is also disjoint from $K$.
\end{claim}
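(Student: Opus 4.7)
The plan is to use Proposition \ref{pro:naotocaoarco} to confine $K$ to the complement of one of the four sets $C_i$ constructed in Claim \ref{cl:Bolanocone}, and then exhibit a ball of radius $r$ at bounded distance from the origin lying in a different connected component of that complement.

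First, since $K$ is arcwise connected, disjoint from the $T_{w_1}$-translation arc $\alpha$, and disjoint from $T_{w_1}(K)$, Proposition \ref{pro:naotocaoarco} implies that $K$ is disjoint from either $\alpha^+$ or $\alpha^-$. The same argument applied to the $T_{w_2}$-translation arc $\beta$ shows that $K$ is disjoint from either $\beta^+$ or $\beta^-$. Combined with the hypothesis $K\cap[\gamma]=\emptyset$, this yields an index $i\in\{1,2,3,4\}$ such that $K\cap C_i=\emptyset$, and arcwise connectedness of $K$ forces $K$ to lie inside a single connected component of $\R^2\sm C_i$.

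Next, I claim that for each $i\in\{1,2,3,4\}$ there exist points $z_i^1, z_i^2\in\R^2$ and a constant $R_i>0$ (depending only on $r$, $w_1$, $w_2$, and the fixed compact arcs $\alpha,\beta,\gamma$) such that $\norm{z_i^j}\le R_i$ for $j\in\{1,2\}$, and the balls $B_r(z_i^1)$ and $B_r(z_i^2)$ lie in different connected components of $\R^2\sm C_i$. This is a direct strengthening of Claim \ref{cl:Bolanocone}: the two unbounded components $O_1,O_2$ of $\R^2\sm(S_1\cup S_2)$ exhibited there each contain a full open cone whose aperture is bounded below by a positive constant (since $w_1$ and $w_2$ are not parallel) and whose apex lies within bounded distance of the compact set $[\alpha]\cup[\beta]\cup[\gamma]$. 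A ball of radius $r$ can then be inscribed in each such cone at distance at most $r/\sin(\theta/2)$ from the apex, where $\theta$ is the aperture.

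Setting $R:=r+\max_i R_i$, the conclusion follows: since $K$ is contained in a single component of $\R^2\sm C_i$, at least one of the balls $B_r(z_i^1), B_r(z_i^2)$ lies in a different component of $\R^2\sm C_i$ and is therefore disjoint from $K$. The only delicate step is the bounded-distance claim for the inscribed balls, but this reduces to elementary planar geometry once the half-strip picture of Claim \ref{cl:Bolanocone} is in place; the fact that $w_1$ and $w_2$ are non-parallel (so the cone apertures are bounded below) is what makes the step work uniformly over $i$.
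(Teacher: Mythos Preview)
Your proof is correct and follows essentially the same route as the paper: use Proposition~\ref{pro:naotocaoarco} twice to place $K$ in a single component of $\R^2\sm C_i$ for some $i$, then invoke Claim~\ref{cl:Bolanocone} to find a ball in a different component. The only difference is cosmetic: you frame the bounded-norm requirement on $z_i^1,z_i^2$ as a ``strengthening'' of Claim~\ref{cl:Bolanocone} and spend a paragraph justifying it via cone geometry, whereas the paper simply observes that the points $z_i,z_i'$ produced by Claim~\ref{cl:Bolanocone} are \emph{fixed} (they depend only on $r$ and on the already-constructed sets $C_1,\dots,C_4$, not on $K$), so one can just set $R=\max_i\{\norm{z_i},\norm{z_i'}\}$ directly. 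Your extra geometric argument is harmless but unnecessary.
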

\begin{proof}
Fix $r>0$ and let $z_i$ and $z_i'$ be the points from Claim \ref{cl:Bolanocone}, for $i\in \{1,2,3,4\}$. We choose $R$ such that $R>\norm{z_i}$ and $R>\norm{z_i'}$ for $i\in \{1,2,3,4\}$. 

The fact that $K$ is disjoint from $T_{w_1}(K)$ and from $\alpha$ implies, by Proposition \ref{pro:naotocaoarco}, that $K$ is disjoint from one of the sets $\alpha^+$ or $\alpha^-$. Similarly, since $K$ is disjoint from $T_{w_2}(K)$ and from $\beta$, it must be disjoint from one of the sets $\beta^+$ or $\beta^-$. Since $K$ is also disjoint from $\gamma$, it follows that $K$ is disjoint from $C_i$ for some $i\in \{1,2,3,4\}$. Since $K$ is connected, it lies entirely in one connected component of $\R^2\sm C_i$, so Claim \ref{cl:Bolanocone} implies that $K$ is disjoint from $B_r(x)$ where $x$ is either $z_i$ or $z_i'$. 
\end{proof}

We now fix $n_2>n_1$ such that $U_{n_2}$ is disjoint from both $T_{w_1}(U_{n_2})$ and $T_{w_2}(U_{n_2})$. Recall from the beginning of the proof that $U_{n_0}$ is disjoint from $T_{v_0}(U_{n_0})$ and $T_{v_0'}(U_{n_0})$. Since it is arcwise connected,  Proposition \ref{pro:translacaodisjunta} implies that $U_{n_0}$ is also disjoint from $T^k_{v_0}(U_{n_0})$ and $T^k_{v_0'}(U_{n_0})$ for any given $k\in \Z_*$. 

Fix $r=2\max\{\norm{v_0},\norm{v_0'}\}$, and let $R$ be as in Claim \ref{cl:Bolanocone2}. Given $k\in \N$, we have that $T^{-k}_{v_0}(U_{n_2})$ is disjoint from $U_{n_0}$ (because $U_{n_2}\subset U_{n_0}$). In particular, $T^{-k}_{v_0}(U_{n_2})$ is disjoint from $[\alpha]\cup[\beta]\cup[\gamma]$. By Claim \ref{cl:Bolanocone2} applied to $K=T^{-k}_{v_0}(U_{n_2})$ we conclude that there is $x_k$ such that $\norm{x_k}\leq R$ and $B_r(x_k)$ is disjoint from $T^{-k}_{v_0}(U_{n_2})$. This means that $U_{n_2}$ is disjoint from $B_r(y_k)$, where $y_k=x_k+kv_0$. 

Since $U_{n_2}$ is disjoint from $B_r(y_k)$, it is disjoint also from the straight line segment joining $y_k$ to $y_k+v_0$ (which is a $T_{v_0}$-translation arc). Thus, recalling that $U_{n_2}$ is disjoint from $T_{v_0}(U_{n_2})$, Proposition \ref{pro:naotocaoarco} implies that $U_{n_2}$ is disjoint from either $y_k+\R^{+} v_0$ or from $y_k+\R^{-} v_0$. 

We examine two possibilities. First, assume that for all $k\in \N$, the set $U_{n_2}$ is disjoint from $y_k+\R^{-} v_0$.  
We claim that in this case $U_{n_2}$ is disjoint from one of the two half-planes $S_1=\{x:p_{v_0^\perp}(x)>R\}$ or $S_2=\{x:p_{v^\perp}(x)<-R\}$ (see Figure \ref{fig:90graus1}). In fact, if $U_{n_2}$ intersects both $S_1$ and $S_2$, it contains an compact arc $\sigma$ joining a point of $S_1$ to a point of $S_2$. Since $y_k \in B_R(kv_0)$, it follows that $y_k+\R^{-} v_0$ intersects $\sigma$ if $k$ is chosen large enough, contradicting the fact that $U_{n_2}$ is disjoint from $y_k+\R^{-}v_0$. Thus $U_{n_2}$ is disjoint from $S_1$ or $S_2$, and this implies that $\bd_{\infty} {U_{n_2}}$ is disjoint from an open interval of length $\pi$ with one endpoint in $v_0/\norm{v_0}$, concluding the proof of Lemma \ref{lem:90graus} in this case.

\begin{figure}[ht]
\begin{minipage}[b]{0.49\linewidth}
\centering
\includegraphics[height=4cm]{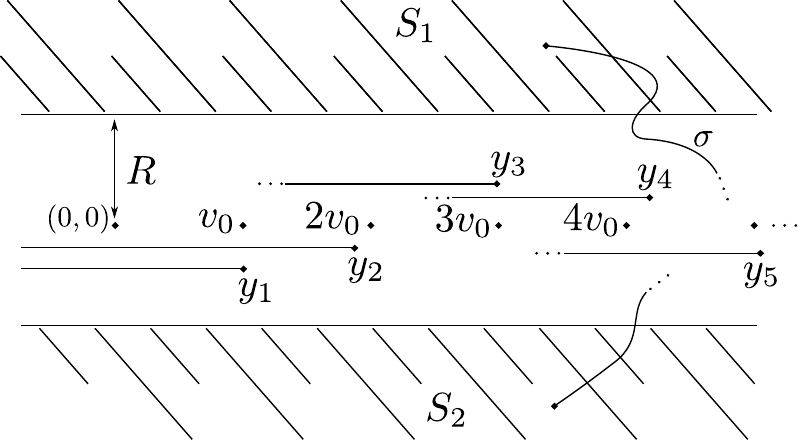}
\caption{-}
\label{fig:90graus1}
\end{minipage}
\hfill
\begin{minipage}[b]{0.4\linewidth}
\centering
\includegraphics[height=4cm]{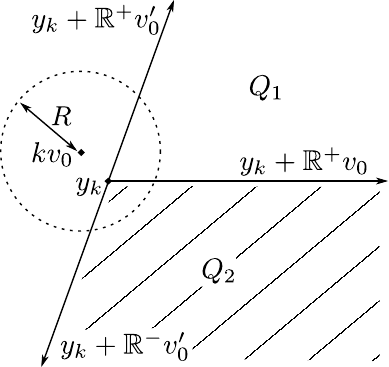}
\caption{}
\label{fig:90graus2}
\end{minipage}
\end{figure}

The second possibility is that, for some $k\in \N$, the set $U_{n_2}$ is disjoint from $y_k + \R^{+} v_0$. Since $U_{n_2}$ is disjoint from $B_{r}(y_k)$ with $r\geq 2\norm{v_0}$, it follows that $U_{n_2}$ is also disjoint from the line segment joining $y_k$ to  $y_k+v_0'$, which is a $T_{v_0'}¡$-translation arc. Since $U_{n_2}\subset U_{n_0}$ and $U_{n_0}$ is disjoint from $T_{v_0'}^k(U_{n_0})$, we also have that $U_{n_0}$ is disjoint from $T_{v_0'}^k(U_{n_2})$. Thus Proposition \ref{pro:naotocaoarco} again implies that $U_{n_2}$ is disjoint from either
$y_k+\R^{+} v_0'$ or $y_k+\R^{-} v_0'$. See figure \ref{fig:90graus2}.

We claim that $U_{n_2}$ is disjoint from one of the two ``quadrants'' 
$$Q_1 = \{y_k+xv_0+yv_0': x\geq 0, y\geq 0\}, \quad  Q_2 = \{y_k+xv_0+yv_0' : x\geq 0, y\leq 0\}.$$ In fact, in the case that $U_{n_2}$ is disjoint from $y_k+\R^{+} v_0'$, since it is also disjoint from $y_k+\R^+ v_0$, it follows that $U_{n_2}$ is disjoint from $\bd Q_1$, so that either $U_{n_2}\subset Q_1$ (in which case it is disjoint from $Q_2$) or $U_{n_2}$ is disjoint from $Q_1$. Similarly, in the case that $U_{n_2}$ is disjoint from $y_k+\R^-v_0'$, it follows that $U_{n_2}$ is disjoint from $\bd Q_2$, so either $U_{n_2}$ is contained in $Q_2$ (hence disjoint from $Q_1$) or $U_{n_2}$ is disjoint from $Q_2$, proving our claim.

Since $U_{n_2}$ is disjoint form one of $Q_1$ or $Q_2$ and $\pi/2-\epsilon<\mathrm{angle}(v_0, v_0')<\pi/2+\epsilon$, it follows that one of the two open intervals of length $\frac{\pi}{2}-\epsilon$ in $\bd_\infty U_{n_2}$ with an endpoint $v_0/\norm{v_0}$ is disjoint from $\bd_{\infty} U_{n_2}$, completing the proof of Lemma \ref{lem:90graus}.
\endproof

%
We are now ready to prove Theorem \ref{th:geometric}.
\begin{proof}[Proof of Theorem \ref{th:geometric}]
Let $$K=\bigcap_{n\in\N}\partial_{\infty}\overline{U}_n\subset \SS^1.$$ 
Assume that case (1) of the theorem does not hold. 
Let us first show that $K\subset \{-v, v\}$ for some $v\in \SS^1$. Indeed, if this is not the case, then there are two different directions $v_1$ and $v_2$ in $K$ such that $\mathrm{angle}(v_1, v_2)<\pi$. Using the fact that $\Sigma$ is $R$-dense, we may find $v_0\in \Sigma$ and $\epsilon>0$ such that $\mathrm{angle}(v_1, v_0) < \frac{\pi}{2}-\epsilon$ and $\mathrm{angle}(v_0,v_2)<\frac{\pi}{2}-\epsilon$ (it suffices to choose $v_0$ such that $v_0/\norm{v_0}$ is close enough to the midpoint of the smaller interval between $v_1$ and $v_2$ in $\SS^1$). 
Since both $v_1$ and $v_2$ belong to $\bd_\infty \ol{U}_n$ for each $n\in \N$, case (2) of Lemma \ref{lem:90graus} cannot hold. Thus case (1) of Lemma \ref{lem:90graus} holds, and this contradicts our assumption that case (1) of the theorem does not hold.

Thus $K\subset \{-v,v\}$ for some $v\in \SS^1$. To see that $K$ is nonempty, it suffices to show that $U_n$ is unbounded for each $n\in \N$. Suppose on the contrary that $U_{n_0}$ is bounded for some $n_0\in \N$. Since $(U_n)_{n\in \N}$ is a decreasing chain, the sets $W_n=\{v\in \Sigma: T_v(U_n)\cap U_n\neq\emptyset\}$ define a decreasing chain of sets as well, and our assumption that $U_{n_0}$ is bounded implies that $W_{n_0}$ is bounded as well. Being a bounded subset of the closed discrete set $\Sigma$, it follows that $W_{n_0}$ is finite. Since $(U_n)_{n\in \N}$ is eventually $\Sigma$-free, we may choose $n\geq n_0$ so large that $U_n\cap T_v(U_n)=\emptyset$ for all $v\in W_{n_0}$, and since $W_n\subset W_{n_0}$ it follows that $W_n=\emptyset$. This means that $U_n$ is $\Sigma$-free, again contradicting our assumption that case (1) of the theorem does not hold.

Thus $K$ is nonempty. If $K$ has a single element, then case (2) of the Theorem holds, and we are done.  We are left with the case where $K= \{-v, v\}$.  Let
$$E = \bigcap_{n\in \N} \ol{U}_n.$$
Let us first show that $\bd_\infty E = K$. To do this, fix $k\in \Z$ and consider the closed sets $A_n = \ol{U}_n\cap (kv + \R v^\perp)$. Note that the fact that $\bd_\infty \ol{U}_n$ contains both $v$ and $-v$ implies that $A_n$ is nonempty. Moreover, $A_n$ is bounded if $n$ is chosen large enough: indeed, if $A_n$ is unbounded, then $\bd_\infty \ol{U}_n$ contains either $v^\perp$ or $-v^\perp$. But if $n$ is large enough, then $\bd_\infty \ol{U}_n$ cannot contain $v^\perp$; otherwise, since the sets $\ol{U}_n$ are nested, it would follow that $v^\perp\in K$, which is a contradiction (and similarly, $-v^\perp$ is not in $\bd_\infty \ol{U}_n$ if $n$ is large enough). The boundedness of $A_n$ for large $n$ implies that $\bigcap_{n\in \N} A_n \subset E\cap(kv+\R v^\perp)$ is a nested intersection of compact sets, hence nonempty. 
Thus we can choose a sequence of points $x_k\in E\cap(kv + \R v^\perp)$ for each $k\in \Z$. Choosing an appropriate subsequence $(k_i)_{i\in \N}$ with $k_i\to \pm\infty$ when $i\to \pm \infty$, we may assume that $x_{k_i}/\norm{x_{k_i}}\to u^{\pm}$ as $i\to \pm\infty$, where $u^+$ and $u^-$ are elements of $\SS^1$ with $p_v(u^-)\leq 0\leq p_v(u^+)$. Since $u^\pm\in  \bd_\infty E \subset K = \{-v,v\}$, it follows at once that $u^+=v$ and $u^-=-v$. Thus $\bd_\infty E = \{-v,v\}=K$, as claimed.

To prove the quasiconvexity, observe that since $\Sigma$ is $R$-dense, if $Q\subset \R^2$ is an open ball of radius greater than $R$, then $\bigcup_{v\in \Sigma} T_v(Q) = \R^2$. In particular, if $B\subset \conv(E)$ is an open ball of radius $r>R$, then $B$ contains the closure of some open ball $Q$ of radius greater than $R$, and so Lemma \ref{lem:quasiconvex} implies that $U_n$ intersects $Q$ for each $n\in \N$. Since $\ol{Q}\subset B$ is compact and $E$ is a decreasing intersection of closed sets, we conclude that $E\cap \ol{Q}\neq \emptyset$ and therefore $E$ intersects $B$. This proves that $E$ is $r$-quasiconvex for any $r>R$, as required.

Now fix $x_0\in E$, and recall that the closed convex hull $\ol{\conv(E)}$ is the intersection of all closed half-planes containing $E$, i.e. all sets of the form $\{x\in \R^2 : p_w(x)\geq t\}$ or $\{x\in \R^2 : p_w(x)\leq t\}$ containing $E$, for $w\in \R^2_*$ and $t\in \R$. The fact that $\bd_\infty E=\{-v,v\}$ implies that any half-plane containing $E$ must be bounded by a line parallel to $\R v$, \ie it must have the form 
$$S_t^+=\{x\in \R^2:p_{v^\perp}(x) \geq t\}\quad \text{ or }\quad S_t^-= \{x\in \R^2 : p_{v^\perp}(x)\leq t\},$$
for some $t\in \R$. 
We claim that $\sup p_{v^\perp}(E) < \infty$. To see this, suppose for contradiction that $\sup p_{v^\perp}(E)=\infty$. Then $S_t^+$ does not contain $E$ when $t>t_0:=p_{v^\perp}(x_0)$, and $S_s^-$ does not contain $E$ for any $s\in \R$. Thus $\ol{\conv(E)}$ is an intersection of sets of the form $S_t^+$ with $t\leq t_0$, which implies that $\ol{\conv(E)}$ contains the half-plane $S_{t_0}^+$, from which follows that $\conv(E)$ contains the half-plane $\{x\in \R^2:p_{v^\perp}(x)>t_0\}$. The quasiconvexity of $E$ then implies that $E$ intersects any ball of radius greater than $R$ contained in $S_{t_0}^+$, from which follows that $\bd_\infty E$ contains a whole interval of length $\pi$, a contradiction.

By a similar argument $\inf p_{v^\perp}(E)>-\infty$, proving that $E\subset S:=p_{v^\perp}^{-1}((-M,M))$ for some $M>0$. To show that the two connected components $O^+$ and $O^-$ of $\R^2\sm S$ are contained in different connected components of $\R^2\sm E$, suppose that this is  not the case. 

\begin{figure}[ht]
\includegraphics[height=4cm]{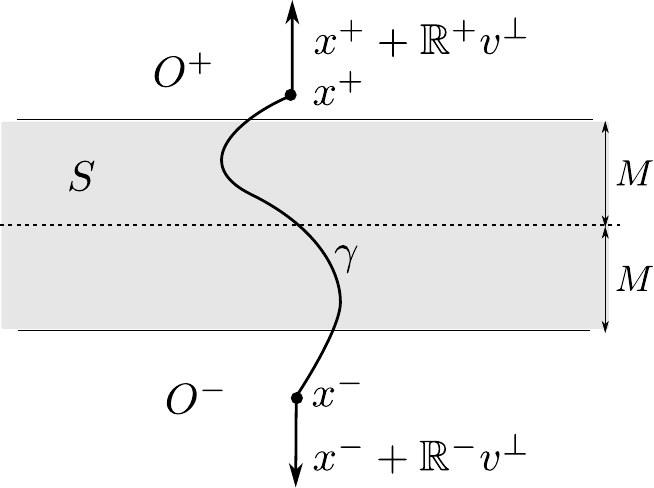}
\caption{}
\label{fig:geometric}
\end{figure}

Then, since $E$ is closed, there is an arc $\gamma\subset \R^2\sm E$ joining a point $x^-\in O^-$ to a point $x^+\in O^+$. We assume that $O^+$ is the component such that $p_{v^\perp}(x^+) > M$ (see Figure \ref{fig:geometric}). Consider the set $\Theta = (x^++\R^+ v^\perp)\cup [\gamma] \cup (x^-+\R^- v^\perp)$, which is disjoint from $E$. Clearly $\ol{U}_n\cap \Theta$ is nonempty for all $n\in \N$, because $\{-v,v\}\subset \bd_\infty \ol{U}_n$ and $\ol{U}_n$ is connected. Moreover, $\ol{U}_n\cap \Theta$ is bounded if $n$ is large enough (because $\ol{U}_n$ does not contain $v^\perp$ or $-v^\perp$), so $E\cap \Theta$ contains a nested intersection of compact nonempty sets, contradicting the fact that $E$ is disjoint from $\Theta$. 

This completes the proof of Theorem~\ref{th:geometric}. 
\end{proof}

\section{Poincar\'e recurrence in the lift for irrotational measures}\label{sec:poincare}

Throughout this section we assume that  $f\colon \T^2\to \T^2$ is a homeomorphism homotopic to the identity and $\hat{f}\colon \R^2\to \R^2$ is a lift of $f$. 

\subsection{Invariant measures and rotation vectors}

If $\mathcal{C}$ is any set of Borel probability measures on $\T^2$, we write 
$$\supp(\mathcal{C}) = \ol{\bigcup_{\mu\in \mathcal{C}} \supp(\mu)},$$
where $\supp(\mu)$ denotes the support of $\mu$. Equivalently, $x\in \supp(\mathcal{C})$ if every neighborhood of $x$ has positive $\mu$-measure for some $\mu\in \mc{C}$.

\begin{remark} Note that if $\mathcal{C}$ is convex, then it is not necessary to take the closure in the previous definition. In fact, if $x\in \supp(\mathcal{C})$, then for each $n\in \N$ there is $\mu_n\in \mathcal{C}$ such that $B_{1/n}(x)$ intersects $\supp(\mu_n)$, and therefore $\mu_n(B_{1/n}(x))>0$. Letting $\mu = \sum_{k=1}^\infty \frac{1}{2^n}\mu_n$, it follows that $\mu(B_{1/n}(x))>0$ for all $n\in \N$, so that $x\in \supp(\mu)$. The convexity implies that $\mu\in \mathcal{C}$.
\end{remark}

Denote by $\mathcal{M}(f)$ the set of all $f$-invariant Borel probability measures. 
For $\mu\in \mathcal{M}(f)$, the rotation vector of $\mu$ is defined as 
$$\rho_{\mu}(\hat{f}) = \int_{\T^2}\phi\, d\mu$$
where $\phi\colon \T^2\to \R^2$ is the ``displacement function'', defined for each $x\in \T^2$ as $\phi(x) = \hat{f}(\hat{x})-\hat{x}$ for some (hence any) $\hat{x}\in \pi^{-1}(x)$. 
If $\rho_\mu(\hat{f})=(0,0)$, we say that $\mu$ is an irrotational measure.

For any $v\in \R^2$, we denote by $\mathcal{M}_v(\hat{f})$ the set of all $\mu\in \mathcal{M}(f)$ such that $\rho_\mu(\hat{f})=v$. Note that $\mathcal{M}_v(\hat{f})$ is convex. Finally, we write $\mathcal{M}^e(f)$ and $\mathcal{M}^e_v(\hat{f})$ for the ergodic elements of $\mathcal{M}(f)$ and $\mathcal{M}_v(\hat{f})$, respectively.

Let us recall some classic facts:
\begin{proposition}[\cite{m-z}]\label{pro:m-z} The following properties hold:
\begin{itemize}
\item $\rho(\hat{f}) = \{\rho_\mu(\hat{f}): \mu\in \mathcal{M}(f) \}$
\item Any extremal point of $\rho(\hat{f})$ is the rotation vector of some ergodic measure.
\item If $\mu \in \mathcal{M}^e_v(\hat{f})$, then $\mu$-almost every $z\in \T^2$ is such that $(\hat{f}^n(\hat{z})-\hat{z})/n\to v$ as $n\to \infty$ for all $\hat{z}\in \pi^{-1}(z)$.
\end{itemize}
\end{proposition}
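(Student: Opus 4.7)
The plan is to handle the three items via the classical Krylov-Bogolyubov / Birkhoff framework applied to the displacement function $\phi$. The linchpin is the telescoping identity
$$\sum_{k=0}^{n-1} \phi(f^k(x)) = \hat{f}^n(\hat{x}) - \hat{x}$$
valid for any $\hat{x}\in\pi^{-1}(x)$, which follows directly from $\phi(x) = \hat{f}(\hat{x}) - \hat{x}$ (the right-hand side being independent of the choice of lift because $\hat{f}$ commutes with integer translations). Since $\phi$ is continuous on the compact space $\T^2$, it belongs to $L^1(\mu)$ for every $\mu \in \mc{M}(f)$, and Birkhoff's ergodic theorem applies.

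For the first item, the inclusion $\{\rho_\mu(\hat{f}) : \mu \in \mc{M}(f)\} \subseteq \rho(\hat{f})$ is immediate: applying Birkhoff to $\phi$ and integrating the pointwise limit against $\mu$ yields a $\mu$-full-measure set on which $\frac{1}{n}(\hat{f}^n(\hat{x})-\hat{x})$ converges, with $\mu$-average equal to $\int\phi\, d\mu = \rho_\mu(\hat{f})$. Selecting a single such $x$ already exhibits $\rho_\mu(\hat{f})$ as an element of $\rho(\hat{f})$. For the reverse inclusion, use empirical measures: given $v = \lim_k (\hat{f}^{n_k}(z_k) - z_k)/n_k$, form $\mu_k = \frac{1}{n_k}\sum_{j=0}^{n_k - 1}\delta_{f^j(\pi(z_k))}$, extract a weak-$*$ convergent subsequence $\mu_{k_i}\to \mu$ (which is $f$-invariant by the standard Krylov-Bogolyubov argument), and use continuity of $\phi$ to compute
$$\int\phi\, d\mu = \lim_{i\to\infty}\int \phi\, d\mu_{k_i} = \lim_{i\to\infty}\frac{\hat{f}^{n_{k_i}}(z_{k_i})-z_{k_i}}{n_{k_i}} = v.$$

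For the second item, fix an extremal point $v\in \rho(\hat{f})$. By the first item, $\mc{M}_v(\hat{f})$ is nonempty; pick any $\mu$ in it and take its ergodic decomposition $\mu = \int \mu_\xi\, d\hat{\mu}(\xi)$. Integrating $\phi$ and using Fubini gives $v = \int \rho_{\mu_\xi}(\hat{f})\, d\hat{\mu}(\xi)$, and since each $\rho_{\mu_\xi}(\hat{f})$ belongs to the convex set $\rho(\hat{f})$ (again by the first item), extremality of $v$ forces $\rho_{\mu_\xi}(\hat{f}) = v$ for $\hat{\mu}$-a.e.\ $\xi$; any such $\mu_\xi$ is the desired ergodic measure. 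The third item is then a direct application of Birkhoff to $\phi$ under an ergodic $\mu \in \mc{M}^e_v(\hat{f})$: the ergodic averages converge $\mu$-a.e.\ to $\int \phi\, d\mu = v$, and the telescoping identity translates this to $(\hat{f}^n(\hat{z}) - \hat{z})/n \to v$ for any lift $\hat{z}$ of such a $z$.

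No serious obstacle arises; the only subtlety is the logical ordering, as item (2) invokes item (1) twice -- once to guarantee $\mc{M}_v(\hat{f})\neq\emptyset$ and once to ensure rotation vectors of ergodic components lie in $\rho(\hat{f})$, so that extremality can be applied. The measurability of $\xi \mapsto \rho_{\mu_\xi}(\hat{f})$, needed to integrate the ergodic decomposition, is a standard consequence of the measurable structure of the decomposition together with continuity of $\phi$.
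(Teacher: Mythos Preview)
The paper does not prove this proposition: it is stated as a collection of classical facts attributed to Misiurewicz--Ziemian \cite{m-z}, so there is no proof in the paper to compare against. Your argument is essentially the standard one, and items (2) and (3) are carried out correctly (indeed, your argument for (2) is exactly the one the paper uses later in Proposition~\ref{pro:temmedidaergodica}).

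There is, however, a genuine slip in your proof of the inclusion $\{\rho_\mu(\hat{f}):\mu\in\mc{M}(f)\}\subseteq\rho(\hat{f})$. You apply Birkhoff to get a full-measure set on which $(\hat f^n(\hat x)-\hat x)/n$ converges, correctly note that the $\mu$-average of these pointwise limits equals $\rho_\mu(\hat f)$, and then assert that ``selecting a single such $x$ already exhibits $\rho_\mu(\hat f)$ as an element of $\rho(\hat f)$.'' This last step is wrong when $\mu$ is not ergodic: the pointwise Birkhoff limit at a given $x$ is $E[\phi\mid\mc{I}](x)$, which need not equal $\int\phi\,d\mu$. Selecting a single point only shows that \emph{some} rotation vector (namely the Birkhoff limit at that point) lies in $\rho(\hat f)$, not that $\rho_\mu(\hat f)$ does.

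The fix is easy but must be made explicit: either restrict first to ergodic $\mu$ (where your argument is valid) and then use the ergodic decomposition together with the convexity of $\rho(\hat f)$---which you are entitled to invoke from \cite{m-z}---to conclude for general $\mu$; or observe directly that $\rho_\mu(\hat f)=\frac{1}{n}\int(\hat f^n-\id)\,d\mu$ for every $n$, so $\rho_\mu(\hat f)$ lies in the convex hull of $\{(\hat f^n(z)-z)/n:z\in\R^2\}$ for every $n$, and hence in $\rho(\hat f)$ by convexity and compactness.
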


The next  proposition says that if $v\in \rho(\hat{f})$ is extremal, then the support of the set of measures with  rotation vector $v$ coincides with the support of the subset of all ergodic measures with the same rotation vector.

\begin{proposition}\label{pro:temmedidaergodica}
If $v$ is an extremal point of $\rho(\hat{f})$, and $B$ is a Borel set such that $\mu(B)>0$ for some $\mu\in \mathcal{M}_v(\hat{f})$, then there is an ergodic $\nu\in \mathcal{M}_v^e(\hat{f})$ such that $\nu(B)>0$. In particular,
$$\supp\left(\mathcal{M}_v(\hat{f})\right) = \supp\left(\mathcal{M}_v^e(\hat{f})\right).$$
\end{proposition}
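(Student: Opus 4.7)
The plan is to use the ergodic decomposition theorem, together with the fact that the rotation vector of a measure averages the rotation vectors of its ergodic components. Concretely, given $\mu\in \mathcal{M}_v(\hat{f})$, the ergodic decomposition provides a Borel probability measure $\hat{\mu}$ on $\mathcal{M}^e(f)$ such that
$$\mu = \int_{\mathcal{M}^e(f)} \nu \, d\hat{\mu}(\nu).$$
Applying this to the displacement function $\phi$ (which is continuous on $\T^2$, hence bounded and Borel) and using Fubini gives
$$v=\rho_\mu(\hat{f}) = \int_{\T^2}\phi\,d\mu = \int_{\mathcal{M}^e(f)} \Bigl(\int_{\T^2}\phi\, d\nu\Bigr) d\hat{\mu}(\nu) = \int_{\mathcal{M}^e(f)}\rho_\nu(\hat{f})\,d\hat{\mu}(\nu).$$

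Thus $v$ is realized as the $\hat{\mu}$-barycenter of the map $\nu\mapsto \rho_\nu(\hat f)$, which takes values in the compact convex set $\rho(\hat f)$ (by Proposition \ref{pro:m-z}). Here is where extremality enters: since $v$ is an extremal point of $\rho(\hat f)$, a standard fact about barycenters of probability measures in compact convex sets (essentially a consequence of the Hahn--Banach separation theorem, or of a direct argument using an affine functional maximized at $v$) forces $\rho_\nu(\hat f) = v$ for $\hat{\mu}$-almost every $\nu$. That is, $\hat{\mu}$-almost every ergodic component of $\mu$ belongs to $\mathcal{M}_v^e(\hat f)$.

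From this the first assertion follows immediately: if $\mu(B)>0$, then
$$0 < \mu(B) = \int_{\mathcal{M}^e(f)} \nu(B)\, d\hat{\mu}(\nu),$$
so the set of ergodic components with $\nu(B)>0$ has positive $\hat{\mu}$-measure; combined with the previous paragraph, there exists $\nu\in \mathcal{M}_v^e(\hat f)$ with $\nu(B)>0$. For the ``in particular'' statement, the inclusion $\supp(\mathcal{M}_v^e(\hat f))\subset \supp(\mathcal{M}_v(\hat f))$ is trivial. For the reverse, since $\mathcal{M}_v(\hat f)$ is convex, the remark preceding the proposition gives, for each $x\in \supp(\mathcal{M}_v(\hat f))$, a single $\mu\in \mathcal{M}_v(\hat f)$ with $x\in \supp(\mu)$; applying the first assertion to the balls $B_{1/n}(x)$ produces, for each $n$, a measure $\nu_n\in \mathcal{M}_v^e(\hat f)$ with $\nu_n(B_{1/n}(x))>0$, whence $x\in \supp(\mathcal{M}_v^e(\hat f))$.

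The only delicate point is the extremality argument that forces $\rho_\nu(\hat f)=v$ $\hat{\mu}$-a.e.\ I expect this to be the main (but minor) obstacle in the write-up: one must justify that if $v$ is extremal in a compact convex $K\subset\R^2$ and $v=\int_{K} u\, d\lambda(u)$ for some probability $\lambda$ on $K$, then $\lambda=\delta_v$. This is classical and can be proved either via a supporting hyperplane (pick an affine $\ell$ with $\ell(u)\le\ell(v)$ on $K$, with equality only at $v$; then $\int(\ell(v)-\ell)\,d\lambda=0$ forces $\lambda$-a.e.\ $u$ to satisfy $\ell(u)=\ell(v)$, hence $u=v$) or by viewing $\rho_\mu$ as an affine continuous map and applying a standard result from Choquet theory.
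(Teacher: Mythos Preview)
Your proof is correct and follows exactly the paper's approach: ergodic decomposition, extremality of $v$ forcing $\hat\mu$-a.e.\ ergodic component to have rotation vector $v$, then integrating $\nu(B)$ to find the desired $\nu$. One small caveat on the point you flagged as delicate: a supporting affine functional $\ell$ with $\ell(u)\le\ell(v)$ and equality \emph{only} at $v$ exists when $v$ is an \emph{exposed} point, which is stronger than extremal; to make the hyperplane sketch work for a merely extremal $v$, first use a supporting $\ell$ to reduce $\lambda$ to the face $F=\{\ell=\ell(v)\}\cap K$, then note that $v$, being extremal in $K$, is an endpoint of the segment $F$ and hence exposed there---or simply invoke the Choquet-theoretic fact you mention.
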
 


\proof 
Since the map
$$\mathcal{M}(f)\ni \mu \mapsto \rho_\mu(\hat{f})$$
is affine, it follows from the ergodic decomposition  theorem that there is a probability measure $\til{\mu}$ defined on $\mathcal{M}^e(f)$ such that 
$$v=\rho_\mu(\hat{f}) = \int \rho_\nu(\hat{f})\, d\til{\mu}(\nu).$$
Since $v$ is extremal in $\rho(\hat{f})$, this easily implies that $\rho_\nu(\hat{f})=v$ for $\til{\mu}$-almost every $\nu$. From the ergodic decomposition we also have
$$\int \nu(B)\, d\til{\mu}(\nu) = \mu(B)>0,$$
thus $\nu(B)>0$ for a set of positive $\til{\mu}$-measure of elements $\nu\in \mathcal{M}^e_v(\hat{f})$. In particular, there exists one such $\nu$. The claim about the support of $\mathcal{M}_v(\hat{f})$ follows immediately considering $B=B_\epsilon(x)$ for $\epsilon>0$ and $x\in \mathcal{M}_v(\hat{f})$.
\endproof
%
%
%

\subsection{Directional recurrence for ergodic irrotational measures}\label{sec:atkinson}

We will use several times the following lemma, which provides a sort of ``directional'' recurrence when an ergodic measure has nonzero rotation vector.

\begin{lemma}\label{lem:Atkinsonnossocaso} 
Let $\mu\in \mathcal{M}^e_{(0,0)}(\hat{f})$. Then, given $v_0\in \R^2_*$, there is a set $E_{v_0}\subset \T^2$ such that $\mu(E_{v_0})=1$ with the following property: for all $x\in E_{v_0}$ there is a sequence $(n_k)_{k\in \N}$ of integers such that $n_k\to \infty$ as $k\to \infty$ and, for any $\hat{x}\in \pi^{-1}(x)$, 
\begin{itemize}
\item $f^{n_k}(x) \to x$,
\item $(\hat{f}^{k}(\hat{x})-\hat{x})/k \to (0,0)$, and
\item $\big\langle \hat{f}^{n_k}(\hat{x})-\hat{x}\,;v_0\big\rangle\to 0$,
\end{itemize}
as $k\to \infty$. In particular, if $v_0\in \Z^2_*$ and $v_0$ is not a multiple of a different element of $\Z^2_*$, there is a sequence $(m_k)_{k\in \N}$ of integers such that $m_k/n_k\to 0$  and $\hat{f}^{n_k}(\hat{x})-\hat{x}-m_k v_0^\perp \to (0,0)$ as $k\to \infty$, for any $\hat{x}\in \pi^{-1}(x)$.
\end{lemma}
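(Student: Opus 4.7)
The plan is to apply Atkinson's recurrence theorem to a suitable scalar cocycle built from the displacement. Let $\phi\colon \T^2\to \R^2$ be the displacement function $\phi(x) = \hat f(\hat x) - \hat x$ (which is independent of the lift $\hat x$ chosen) and set $\psi(x) = \langle \phi(x), v_0\rangle$. Then $\psi$ is continuous, hence $L^1(\mu)$-integrable, and
$$\int_{\T^2}\psi\,d\mu \;=\; \big\langle \rho_\mu(\hat f),\,v_0\big\rangle \;=\; 0$$
by the hypothesis $\rho_\mu(\hat f) = (0,0)$. A direct telescoping via the cocycle identity for $\phi$ gives, for any lift $\hat x$ of $x$,
$$S_n\psi(x) \;:=\; \sum_{j=0}^{n-1}\psi(f^j x) \;=\; \big\langle \hat f^n(\hat x) - \hat x,\; v_0 \big\rangle,$$
so the third bullet is equivalent to the assertion $S_{n_k}\psi(x)\to 0$.

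Next I would invoke Atkinson's theorem in its skew-product formulation: since $f$ is ergodic for $\mu$ and $\int\psi\,d\mu = 0$, the skew product
$F(x,t) = (f(x),\; t + \psi(x))$ on $\T^2\times \R$ is conservative with respect to $\mu\times\Leb$. Applying Poincar\'e recurrence to $F$ and unravelling via Fubini yields a full $\mu$-measure set of points $x\in \T^2$ admitting a sequence $n_k\to \infty$ with simultaneously $f^{n_k}(x)\to x$ and $S_{n_k}\psi(x)\to 0$. This produces a \emph{common} subsequence realising the first and third bullets. The second bullet is supplied by Proposition~\ref{pro:m-z}: the ergodic Birkhoff-type statement gives a full $\mu$-measure set on which $(\hat f^k(\hat x)-\hat x)/k \to (0,0)$ for every lift $\hat x$. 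Taking $E_{v_0}$ to be the intersection of these two full-measure sets together with the set of Poincar\'e-recurrent points, all three bulleted conditions hold for every $x\in E_{v_0}$.

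For the ``In particular'' clause, assume $v_0\in \Z^2_*$ is primitive. Since $f^{n_k}(x)\to x$, for each $k$ there exists $w_k\in \Z^2$ with $\hat f^{n_k}(\hat x) - \hat x - w_k \to 0$. Pairing with $v_0$ and using the third bullet gives $\langle w_k, v_0\rangle \to 0$; but $\langle w_k, v_0\rangle \in \Z$, so it equals $0$ for all large $k$. Hence $w_k\in \Z^2\cap \R v_0^\perp$, and because $v_0^\perp$ is primitive whenever $v_0$ is, this intersection equals $\Z v_0^\perp$. Writing $w_k = m_k v_0^\perp$ with $m_k\in \Z$ yields $\hat f^{n_k}(\hat x) - \hat x - m_k v_0^\perp \to 0$; dividing by $n_k$ and using the second bullet gives $m_k/n_k\to 0$.

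The only non-routine input is invoking Atkinson's theorem in the joint-recurrence form, which is strictly stronger than the original $\liminf_n |S_n\psi(x)| = 0$ statement; this strengthening is precisely the conservativity of the skew product and is the form presented in the subsection. Once that statement is in hand, the remainder of the argument reduces to the Misiurewicz--Ziemian ergodic convergence theorem plus elementary integer arithmetic, so I do not expect a serious obstacle beyond correctly formulating the version of Atkinson's theorem to quote.
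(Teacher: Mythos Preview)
Your approach is essentially the same as the paper's: apply Atkinson's recurrence theorem to the scalar cocycle $\psi = \langle \phi, v_0\rangle$ to obtain the first and third bullets simultaneously, invoke the Birkhoff-type convergence from Proposition~\ref{pro:m-z} for the second bullet, and then use elementary lattice arithmetic for the ``In particular'' clause. The paper packages the Atkinson step slightly differently, deriving the joint-recurrence conclusion directly as Corollary~\ref{coro:atkinson} rather than passing through skew-product conservativity, but the content is identical and your treatment of the final clause is if anything more explicit than the paper's.

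There is, however, one genuine gap: the version of Atkinson's lemma quoted in the subsection (and its Corollary~\ref{coro:atkinson}) explicitly requires $\mu$ to be \emph{non-atomic}, and you do not address the atomic case. The paper handles this separately: if $\mu\in\mathcal{M}^e_{(0,0)}(\hat f)$ is atomic, then by ergodicity it is supported on a single periodic orbit, and the fact that the rotation vector vanishes forces the lifted orbit to be genuinely periodic (i.e.\ $\hat f^n(\hat p)=\hat p$ rather than $\hat p + w$ for some nonzero $w\in\Z^2$). All three bullets are then trivially satisfied along $n_k = kn$. This is a two-line fix, but without it your invocation of Atkinson is not justified as stated.
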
  

To prove the lemma, let us recall a classical result from ergodic theory:
\begin{lemma}[Atkinson's Lemma, \cite{atkinson}]
Let $(X,\mathcal{B},\mu)$ be a non-atomic probability space, and let $T:X\to X$ be an ergodic automorphism. If $\phi:X\to\R$ belongs to $L_1(\mu)$ and $\int \phi d\mu=0.$ Then, for all $B\in\mathcal{B}$ and all $\epsilon>0,$
$$\mu\Bigg(\bigcup_{n\in\N}B\cap T^{-n}(B)\cap\Big\{x\in X : \Big\vert\sum_{i=0}^{n-1} \phi(T^i(x))\Big\vert<\epsilon\Big\}\Bigg)=\mu(B)$$
\end{lemma}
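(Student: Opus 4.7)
The plan is to apply Atkinson's Lemma to the scalar cocycle $\phi_{v_0}\colon \T^2\to \R$ defined by $\phi_{v_0}(x) = \langle \hat{f}(\hat{x})-\hat{x};\, v_0\rangle$, which descends to $\T^2$ since the displacement $\hat{f}(\hat{x})-\hat{x}$ only depends on $x=\pi(\hat{x})$. The telescoping identity
$$\sum_{i=0}^{n-1}\phi_{v_0}(f^i(x)) = \langle \hat{f}^n(\hat{x})-\hat{x};\,v_0\rangle$$
identifies the Birkhoff sums of $\phi_{v_0}$ with the inner product we must control. Since $\mu$ is irrotational, $\int \phi_{v_0}\, d\mu = \langle \rho_\mu(\hat{f});\, v_0\rangle = 0$, so Atkinson's Lemma applies.

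First I would fix a countable basis $\{V_j\}_{j\in \N}$ of open sets of $\T^2$. For each pair $(j,\ell)\in \N^2$ with $\mu(V_j)>0$, Atkinson's Lemma yields a subset $A_{j,\ell}\subset V_j$ with $\mu(A_{j,\ell})=\mu(V_j)$, consisting of those $x\in V_j$ for which some $n\in \N$ satisfies $f^n(x)\in V_j$ and $|\langle \hat{f}^n(\hat{x})-\hat{x};\,v_0\rangle|<1/\ell$. Let $E_{\mathrm{Birk}}$ be the full-$\mu$-measure set on which Birkhoff's theorem applied to the $\R^2$-valued displacement gives $(\hat{f}^n(\hat{x})-\hat{x})/n \to (0,0)$, and set
$$E_{v_0} = E_{\mathrm{Birk}} \cap \bigcap_{j,\ell}\bigl(A_{j,\ell}\cup (\T^2\sm V_j)\bigr),$$
which has full measure. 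For $x\in E_{v_0}$, choose a nested sequence $(V_{j_k})$ of neighborhoods of $x$ with $\diam(V_{j_k})\to 0$; since $x\in A_{j_k,k}$, there is $n_k$ with $f^{n_k}(x)\in V_{j_k}$ and $|\langle \hat{f}^{n_k}(\hat{x})-\hat{x};v_0\rangle|<1/k$. This gives $f^{n_k}(x)\to x$ and the inner product condition. If a subsequence of $(n_k)$ were bounded, a further subsequence would be constant equal to some $N$, forcing $f^N(x)=x$; then $w:=\hat{f}^N(\hat{x})-\hat{x}\in\Z^2$, and Birkhoff gives $w/N = \lim(\hat{f}^{mN}(\hat{x})-\hat{x})/(mN) = 0$, so $\hat{f}^N(\hat{x})=\hat{x}$. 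In that degenerate case we simply replace $(n_k)$ by $(kN)$, for which every condition holds trivially; otherwise $n_k\to\infty$ automatically. The middle conclusion $(\hat{f}^k(\hat{x})-\hat{x})/k\to(0,0)$ holds by membership in $E_{\mathrm{Birk}}$.

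For the final ``in particular'' part, assume $v_0\in \Z^2_*$ is primitive. Then $v_0^\perp\in\Z^2$ is also primitive, so $\R v_0^\perp \cap \Z^2 = \Z v_0^\perp$. The convergence $f^{n_k}(x)\to x$ lifts to $\hat{f}^{n_k}(\hat{x})-\hat{x}-q_k\to (0,0)$ for suitable $q_k\in\Z^2$. Pairing with $v_0$,
$$|\langle q_k;v_0\rangle| \leq |\langle \hat{f}^{n_k}(\hat{x})-\hat{x}-q_k;v_0\rangle| + |\langle \hat{f}^{n_k}(\hat{x})-\hat{x};v_0\rangle| \longrightarrow 0,$$
and since $\langle q_k;v_0\rangle \in \Z$, it vanishes for large $k$, forcing $q_k = m_k v_0^\perp$ with $m_k\in \Z$. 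Hence $\hat{f}^{n_k}(\hat{x})-\hat{x}-m_k v_0^\perp \to (0,0)$. Pairing with $v_0^\perp$ and dividing by $n_k$ yields $m_k/n_k = \langle\hat{f}^{n_k}(\hat{x})-\hat{x};v_0^\perp\rangle/(n_k\|v_0^\perp\|^2) + o(1) \to 0$ by Birkhoff. The main conceptual point is just recognizing that the irrotationality of $\mu$ is exactly the hypothesis that converts the partial sum of $\phi_{v_0}$ into a mean-zero cocycle amenable to Atkinson; the rest is bookkeeping with a countable basis and integrality.
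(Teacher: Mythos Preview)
Your proposal is not a proof of the stated lemma. Atkinson's Lemma is a classical result in ergodic theory which the paper simply quotes from \cite{atkinson} without proof; there is nothing in the paper to compare against. What you have written is instead a proof of Lemma~\ref{lem:Atkinsonnossocaso}, the application of Atkinson's Lemma to the displacement cocycle. You have \emph{assumed} Atkinson's Lemma (you invoke it to produce the sets $A_{j,\ell}$) rather than proved it.

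Viewed as a proof of Lemma~\ref{lem:Atkinsonnossocaso}, your argument is essentially the same as the paper's: the paper first packages the countable-basis argument into Corollary~\ref{coro:atkinson} and then applies it to $\phi_{v_0}$, whereas you inline the two steps. One difference worth noting: the paper separates the atomic case (where $\mu$ is supported on a periodic orbit) and treats it directly, because Atkinson's Lemma as stated requires $\mu$ to be non-atomic. You apply Atkinson's Lemma unconditionally to build $A_{j,\ell}$ and only later, inside the bounded-subsequence discussion, discover that $x$ might be $\hat{f}$-periodic. Strictly speaking your construction of $E_{v_0}$ is not justified when $\mu$ has atoms, so you should split off that case at the outset as the paper does. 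Apart from this, your treatment of the ``in particular'' clause via the primitivity of $v_0$ and integrality of $\langle q_k; v_0\rangle$ matches the paper's final paragraph.
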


\begin{corollary}\label{coro:atkinson} Let $X$ be a separable metric space, $f\colon X\to X$ a homeomorphism, and $\mu$ an $f$-invariant ergodic non-atomic Borel probability measure. If $\phi\in L_1(\mu)$ is such that $\int \phi\, d\mu = 0$, then for $\mu$-almost every $x\in X$ there is an increasing sequence $(n_i)_{i\in \N}$ of integers such that $$f^{n_i}(x)\to x \quad \text{ and } \quad \sum_{k=0}^{n_i-1} \phi(f^k(x)) \to 0 \quad \text{ as } \quad i\to \infty.$$
\end{corollary}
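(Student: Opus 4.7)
The plan is to combine Atkinson's lemma with the separability of $X$, after first upgrading its single-return conclusion to an infinite-return statement; a countable-basis argument and a diagonal construction then deliver the sequence $(n_i)$.

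The first step is the following strengthening: for any Borel $B\subset X$ with $\mu(B)>0$ and any $\epsilon>0$, $\mu$-a.e.\ $x\in B$ admits infinitely many $n\in\N$ with $f^n(x)\in B$ and $|S_n(x)|<\epsilon$, where $S_n:=\sum_{i=0}^{n-1}\phi\circ f^i$. Apply Atkinson's lemma with tolerance $\epsilon/2^j$ to obtain, for each $j\geq 1$, a full-measure subset $G^{(j)}\subset B$ together with a Borel function $m^{(j)}\colon G^{(j)}\to\N$, $m^{(j)}(y)=\min\{n\geq 1:f^n(y)\in B,\ |S_n(y)|<\epsilon/2^j\}$. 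Starting from $x_0=x$, define recursively $x_j=f^{m^{(j)}(x_{j-1})}(x_{j-1})$ whenever $x_{j-1}\in G^{(j)}$. The map $x_0\mapsto x_1$ decomposes on the level sets of $m^{(1)}$ into ordinary iterates of $f$, and therefore preserves the $\mu$-null ideal, so $\{x_0\in G^{(1)}:x_1\notin G^{(2)}\}$ has $\mu$-measure zero; iterating this remark, the set $H\subset B$ on which the recursion continues indefinitely is the intersection of countably many full-measure sets, hence $\mu(H)=\mu(B)$. For $x\in H$, the partial times $n_j:=\sum_{k=1}^{j}m^{(k)}(x_{k-1})$ are strictly increasing, satisfy $f^{n_j}(x)=x_j\in B$, and using $S_{a+b}(y)=S_a(y)+S_b(f^a(y))$,
\[|S_{n_j}(x)|\leq\sum_{k=1}^{j}|S_{m^{(k)}(x_{k-1})}(x_{k-1})|<\sum_{k=1}^{j}\epsilon/2^k<\epsilon,\]
proving the claim.

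With the strengthening in hand, fix a countable basis $\{B_j\}_{j\in\N}$ of $X$ (which exists by separability), and for each $(j,m)\in\N^2$ let $F_{j,m}$ be the $\mu$-full-measure set produced by the claim for the pair $(B_j,1/m)$. Then $F:=\bigcap_{j,m}F_{j,m}$ satisfies $\mu(F)=1$, and for every $x\in F$ and every $(j,m)$ with $x\in B_j$ there are infinitely many $n$ with $f^n(x)\in B_j$ and $|S_n(x)|<1/m$. For $x\in F$ choose a nested sequence of basis balls $B_{j_i}\ni x$ with $\diam(B_{j_i})<1/i$; selecting $n_i>n_{i-1}$ among the infinitely many good $n$ for $(B_{j_i},1/i)$ yields $n_i\to\infty$, $f^{n_i}(x)\to x$, and $S_{n_i}(x)\to 0$, as required.

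The main obstacle is the iteration step. Atkinson's lemma delivers only one good return time per pair $(B,\epsilon)$, whereas the diagonal construction requires the ability to push each $n_i$ past $n_{i-1}$, i.e.\ infinitely many good returns per pair. The crucial point is that although the Borel map $x_0\mapsto x_1$ is not a single iterate of $f$, decomposing it over the level sets of $m^{(1)}$ shows that it preserves $\mu$-null sets, which lets the iteration survive countably many applications; the geometric tolerances $\epsilon/2^j$ simultaneously keep the cumulative displacement below $\epsilon$, so no accuracy is lost along the way.
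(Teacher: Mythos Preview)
Your argument is correct, but it works harder than necessary. The paper avoids the strengthening step entirely by a contradiction trick: it defines $E_i=\{x:\exists n\in\N,\ f^n(x)\in B_{1/i}(x),\ |S_n(x)|<1/i\}$ and, assuming $\mu(X\setminus E_i)>0$, covers $X$ by balls of radius $1/(2i)$ to find some $B=B_{1/(2i)}(x_0)\setminus E_i$ with $\mu(B)>0$; applying Atkinson's lemma directly to \emph{this bad set} $B$ produces $x'\in B$ and $n$ with $f^n(x')\in B$ and $|S_n(x')|<1/i$, whence $f^n(x')\in B_{1/i}(x')$, contradicting $x'\notin E_i$. Thus a single invocation of Atkinson per scale $i$ suffices, and no ``infinitely many returns'' upgrade is needed.

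What your approach buys is an explicit, constructive iteration that makes the existence of an \emph{increasing} sequence $(n_i)$ transparent; the paper's argument leaves this implicit (if the $n_i$ obtained from $\bigcap_i E_i$ were bounded, one would have $f^n(x)=x$ and $S_n(x)=0$ for some $n$, and multiples of $n$ would do). Your induced-map argument, noting that $x_0\mapsto x_1$ preserves the null ideal by decomposing over the level sets of $m^{(1)}$, is the natural route if one wants the strengthened statement for its own sake. But for the corollary as stated, the paper's ``apply Atkinson to the complement'' device is considerably shorter.
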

\begin{proof}
It suffices to show that the set $E_i$ of all $x\in X$ for which there is $n\in \N$ such that $\big|\sum_{k=0}^{n-1}\phi(f^k(x))\big|<i^{-1}$ and $f^n(x)\in B_{1/i}(x)$ has full measure for each $i\in \N$. Suppose on the contrary that $\mu(X\sm E_i)>0$. Since $X$ is separable, $X$ is covered by countably many balls of radius $1/(2i)$. Thus, there is $x\in X$ such that $\mu(B_{1/(2i)}(x)\sm E_i)>0$. But Atkinson's Lemma applied to $B=B_{1/(2i)}(x)\sm E_i$ and $\epsilon = 1/i$ implies that there is $n\in \N$ and $x'\in B_{1/(2i)}(x)\sm E_i$ such that $f^n(x')\in B_{1/(2i)}(x)\sm E_i$ and $\big|\sum_{k=0}^{n-1}\phi(f^k(x'))\big|<i^{-1}$. In particular, $f^n(x')\in B_{1/i}(x')$, so by definition $x'\in E_i$, which is a contradiction.
\end{proof}

\begin{proof}[Proof of Lemma \ref{lem:Atkinsonnossocaso}]
Let $E = E_{v_0}$ be the set of all $x\in \T^2$ such that the three items of the lemma hold. 

Suppose first that $\mu$ is atomic. Then $\mu$ is supported in the orbit of some periodic point $p$. If $\hat{p}\in \pi^{-1}(p)$ and $n\in \N$ is the period of $p$, then $\hat{f}^n(\hat{p}) = \hat{p}+w$ for some $w\in \Z^2$. Since $\mu$ is ergodic and has rotation vector $(0,0)$, the fact that $\mu(\{p\})>0$ implies that $w=(0,0)$ (by Proposition \ref{pro:m-z}). Thus $\hat{f}^n(\hat{p}) = \hat{p}$, and it follows easily from this fact that $p\in E$. Since this can be done for any iterate of $p$, the orbit of $p$ is contained in $E$ and so $\mu(E)=1$ as we wanted.

Now suppose that $\mu$ is non-atomic. Then the last item of Proposition \ref{pro:m-z} implies that the second item of the lemma holds for $\mu$-almost every point, and applying Corollary \ref{coro:atkinson} to the displacement function in the direction $v_0$ defined by $\phi(x) = \big\langle \hat{f}(\hat{x})-\hat{x}\,;v_0\big\rangle$ for any $\hat{x}\in \pi^{-1}(x)$, we see that the first and third items of the lemma hold for $\mu$-almost every point as well. Thus $\mu(E)=1$, completing the proof.

Note that the final claim of the lemma is an immediate consequence of the three items and the fact that, setting $\epsilon_0 = \inf\{|\langle w\, ; v_0\rangle| : {w\in \Z^2\sm (\Z v_0^\perp)}\}$ (which is positive thanks to our assumption on $v_0$), if $\pi(\hat{y})\in B_\epsilon(x)$ for some $\epsilon<\epsilon_0$ and $\abs{\smash \langle \hat{y}-\hat{x}\,; v_0\rangle} < \epsilon_0$ then  $\hat{y}\in B_\epsilon(x+mv_0^\perp)$ for some $m\in \Z$. 
\end{proof}
%
%
%
%

\subsection{The sets $\omega_v$}\label{sec:omegas}

Let us recall some sets and constructions from \cite{Transitivo,aneltransitivo} and their relevant properties. 

Given $v\in \R^{2}_*$,  denote by 
$$H_v^{+}:=\{u\in\R^2 : \langle u; v\rangle \ge 0 \}, \quad H_v^{-}:=\{u\in\R^2 : \langle u; v\rangle \le 0 \}$$
the closed half planes determined by $v$. Fix a homeomorphism $f\colon \T^2\to \T^2$ homotopic to the identity, and a lift $\hat{f}\colon \R^2\to \R^2$, and define the set $B_{v,\hat f}$ as the union of the unbounded connected components of 
$$\bigcap_{i=0}^{\infty}\hat f^{-i}(H^{+}_v),$$  
and $\omega_{v,\hat f}$ as the union of the unbounded connected components of 
$$\bigcap_{i=-\infty}^{\infty}\hat f^{i}(H^{+}_v).$$ 
Whenever the context is clear, we will simplify the notation and just write $B_v$ and  $\omega_v$ for these sets.

The following properties are easy consequences of the definitions (see \cite[\S2]{Transitivo}):
\begin{proposition}\label{pro:omegapro} The sets $B_v$ and $\omega_v$ are closed, and
\begin{enumerate}
\item[(1)] $\hat{f}(B_v)\subset B_v$, and $\hat{f}(\omega_v)=\omega_v$;
\item[(2)] if $u\in\Z^2$ and $\langle u; v\rangle \ge 0$, then $T_u(B_v) \subset B_v$ and $T_u(\omega_v)\subset \omega_v$;
\item[(3)] $\omega_v$ is non-separating, and its complement is simply connected.
\end{enumerate}
\end{proposition}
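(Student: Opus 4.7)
My plan is to derive (1) and (2) directly from the definitions and invariance properties, and to establish (3) by realizing $\R^2 \sm \omega_v$ as the filling of an explicit connected open set.

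For (1) and (2), I would set $A := \bigcap_{i \geq 0} \hat{f}^{-i}(H_v^+)$ and observe the identity $A = H_v^+ \cap \hat{f}^{-1}(A)$, which gives $\hat{f}(A) \subset A$; similarly $K := \bigcap_{i \in \Z} \hat{f}^i(H_v^+)$ satisfies $\hat{f}(K) = K$. Since $\hat{f}$ lifts a torus homeomorphism, the displacement $\hat{f}(z) - z$ is $\Z^2$-periodic and hence uniformly bounded, so $\hat{f}$ preserves unboundedness of connected sets. It follows that unbounded components of $A$ are sent into unbounded components of $A$, and that $\hat{f}$ permutes the unbounded components of $K$, yielding $\hat{f}(B_v) \subset B_v$ and $\hat{f}(\omega_v) = \omega_v$. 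For (2), the condition $\langle u; v\rangle \geq 0$ gives $T_u(H_v^+) \subset H_v^+$, and since $T_u$ commutes with $\hat{f}^i$ (by the lift property) and is an isometry, the same reasoning gives $T_u(B_v) \subset B_v$ and $T_u(\omega_v) \subset \omega_v$.

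The main content is (3). Here I would let $U := \R^2 \sm K = \bigcup_{i\in\Z}\hat{f}^i(\inter(H_v^-))$, which is open. To see $U$ is connected, note that each $\hat{f}^i(\inter(H_v^-))$ is connected as the homeomorphic image of $\inter(H_v^-)$, and consecutive pieces intersect: choosing $C$ to bound $\norm{\hat{f}(z)-z}$, any $x \in \inter(H_v^-)$ with $\langle x; v/\norm{v}\rangle < -C$ satisfies $\hat{f}(x) \in \inter(H_v^-) \cap \hat{f}(\inter(H_v^-))$, and applying $\hat{f}^i$ then gives $\hat{f}^i(\inter(H_v^-)) \cap \hat{f}^{i+1}(\inter(H_v^-)) \neq \emptyset$ for every $i\in\Z$. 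Once $U$ is open and connected, I observe that $\R^2 \sm \omega_v$ equals $U$ together with the bounded connected components of $K = \R^2 \sm U$; this is by definition $\fil(U)$. By the filling properties recalled in Section \ref{sec:prelim}, $\fil(U)$ is an open topological disk, which gives simultaneously that $\omega_v$ is non-separating and that its complement is simply connected.

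Closedness of $B_v$ and $\omega_v$ requires a separate, more delicate argument, since the union of unbounded components of an arbitrary closed subset of $\R^2$ need not be closed. In the present setting, it can be derived from the invariance under the half-lattice of translations given by (2), which forces the bounded components to sit in a locally finite configuration isolated from the unbounded ones. Apart from this technical point, the main obstacle of the proof is concentrated in (3); the plan outlined above reduces it cleanly to the connectedness of $U$, which is itself a transparent consequence of the bounded displacement of $\hat{f}$.
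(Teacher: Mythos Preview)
Your argument is correct, and since the paper does not prove this proposition at all (it simply declares the properties ``easy consequences of the definitions'' and refers to \cite[\S2]{Transitivo}), there is nothing to compare against beyond noting that your filling argument for (3) is exactly in the spirit of the paper's preliminaries.

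One comment: your final paragraph about closedness is unnecessary, and the half-lattice sketch you offer there is vaguer than what you have already done. You have shown that $\R^2\sm\omega_v=\fil(U)$ is an open topological disk; in particular it is open, so $\omega_v$ is closed with no further work. For $B_v$ the identical argument applies: writing $U':=\R^2\sm A=\bigcup_{i\ge 0}\hat f^{-i}(\inter H_v^-)$, the same bounded-displacement computation you gave shows that consecutive terms intersect, so $U'$ is open and connected, and $\R^2\sm B_v=\fil(U')$ is open. Thus closedness of both sets is a byproduct of the filling argument rather than a separate ``more delicate'' issue.
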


These sets are particularly useful whenever the origin belongs to $\rho(\hat{f})$. In this case, Lemma 3 of \cite{borto-tal} implies that $B_v$ and $B_{-v}$ are nonempty for any $v\in \Z^2_*$.
Also of interest is the case where the origin lies in the boundary of the rotation set. For these cases, we have the following result, which is a direct consequence of Lemma 3 of \cite{borto-tal} and Corollary 1 of \cite{Transitivo}:
\begin{proposition}\label{pr:omeganaovazio}
Suppose that $\rho(\hat f)\subset H^{+}_v$ for some $v\in \R^2_*$, and $(0,0)\in \rho(\hat f)$. Then both $\omega_v$ and $\omega_{-v}$ are non-empty. 
\end{proposition}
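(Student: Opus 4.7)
My plan is to derive the proposition directly from the two cited results, as the author indicates. First, I would apply Lemma~3 of \cite{borto-tal}, which the authors themselves already invoked in the paragraph preceding this proposition: whenever $(0,0)\in\rho(\hat f)$, both $B_v$ and $B_{-v}$ are non-empty for every $v\in\R^2_*$. This step produces points whose forward $\hat f$-orbits are trapped in $H_v^+$ and in $H_{-v}^+$, respectively. What remains is to upgrade these forward-trapping witnesses into full-orbit-trapping witnesses, i.e.\ into points of $\omega_{\pm v}$.

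For that upgrade I would invoke Corollary~1 of \cite{Transitivo}. Here the added hypothesis $\rho(\hat f)\subset H_v^+$ plays the decisive role: it forbids rotation vectors with strictly negative $v$-component, so the past of any point in $B_v$ cannot have lingered in the open half-plane $\{u:\langle u;v\rangle<0\}$ without producing such a forbidden rotation vector; dually, a forward $\hat f$-orbit that stays in $H_{-v}^+$ is compatible with $\rho(\hat f)\subset H_v^+$ only through rotation vectors on the boundary hyperplane $\{u:\langle u;v\rangle=0\}$, and among those the vector $(0,0)$ is available precisely because of the second half of the hypothesis. Corollary~1 of \cite{Transitivo}, built on equivariant Brouwer theory, packages these two observations and asserts that every unbounded connected component of $B_{\pm v}$ is in fact contained in $\omega_{\pm v}$. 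Combined with the first step, this simultaneously yields $\omega_v\neq\emptyset$ and $\omega_{-v}\neq\emptyset$.

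The main obstacle I anticipate, were one to attempt a self-contained proof without these citations, would be the $\omega_{-v}$ half: a point trapped forwards \emph{and} backwards in $H_{-v}^+$ lies against the preferred ``positive'' $v$-direction encoded by $\rho(\hat f)\subset H_v^+$, and it is the delicate interplay between this half-plane condition and the boundary position of $(0,0)$ inside $\rho(\hat f)$ that forces the corresponding $\omega$-set to be non-empty. Both parts of the hypothesis are genuinely used in this direction, and neither can be dropped without losing the conclusion.
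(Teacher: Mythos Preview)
Your proposal is correct and follows exactly the approach indicated in the paper: the proposition is stated there as a direct consequence of Lemma~3 of \cite{borto-tal} (giving $B_{\pm v}\neq\emptyset$) together with Corollary~1 of \cite{Transitivo} (upgrading to $\omega_{\pm v}\neq\emptyset$ under the half-plane hypothesis on $\rho(\hat f)$). Your additional commentary on why both hypotheses are genuinely needed is accurate and helpful, though the paper itself omits any such discussion.
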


We will also need the following technical fact.

\begin{fact}\label{fact481} Let $\gamma\subset \R^2$ be an arc joining $x\in \R^2$ to $x+v^\perp$ and $\Gamma = \bigcup_{k\in \Z} [\gamma] + kv^\perp$. If $\omega_v\cap \Gamma=\emptyset$ and $W_+$ is the connected component of $\R^2\sm \Gamma$ such that $\sup \pr_v W_+ = \infty$, then $\omega_v \subset W_+$.
\end{fact}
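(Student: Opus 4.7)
The plan is to argue by contradiction, combining the translation invariance of $\omega_v$ with the topological properties from Proposition \ref{pro:omegapro}. First, I would establish the basic geometry: $\Gamma$ is closed (a locally finite union of translates of the compact arc $[\gamma]$), connected (since consecutive translates $[\gamma]+kv^\perp$ and $[\gamma]+(k+1)v^\perp$ share the point $x+(k+1)v^\perp$), and contained in the strip $\pr_v^{-1}([a,b])$ where $a=\min\pr_v[\gamma]$ and $b=\max\pr_v[\gamma]$. The open half-plane $\{\pr_v>b\}$ is connected and disjoint from $\Gamma$, so it lies in a single component of $\R^2\setminus\Gamma$; this component must be $W_+$, since any component with $\sup\pr_v=\infty$ must meet $\{\pr_v>b\}$.

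Next, suppose for contradiction that some $z_0\in\omega_v$ fails to lie in $W_+$. Let $C$ be the unbounded connected component of $X:=\bigcap_{i\in\Z}\hat{f}^i(H_v^+)$ containing $z_0$. Since $C$ is connected and $C\cap\Gamma=\emptyset$, $C$ lies in a single component $W\neq W_+$ of $\R^2\setminus\Gamma$, which forces $\sup_C\pr_v\le b$. Combined with $C\subset H_v^+$, we obtain $C\subset\pr_v^{-1}([0,b])$, so $C$ is an unbounded connected set contained in a bounded horizontal strip, and therefore must extend to infinity in the $v^\perp$ direction. On the other hand, by Proposition \ref{pro:omegapro}(2), for any $u\in\Z^2$ with $\langle u;v\rangle\geq 0$ the translation $T_u$ preserves $X$ (using that $\hat{f}$ commutes with integer translations and $T_u(H_v^+)\subset H_v^+$). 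Choosing $u\in\Z^2$ with $\langle u;v\rangle>0$ large enough that $T_u(C)\subset\{\pr_v>b\}\subset W_+$, the connected set $T_u(C)\subset X$ lies in an unbounded component $C'\subset W_+$ of $X$, necessarily disjoint from $C$. Thus $\omega_v$ contains two distinct unbounded connected components, one in $W$ and one in $W_+$.

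The contradiction will come from combining this with Proposition \ref{pro:omegapro}(3) (non-separating, simply connected complement of $\omega_v$). The idea is that the horizontal, strip-shaped $C$, together with its iterated translates $T_u^k(C)\subset\omega_v$ at heights $k\langle u;v\rangle/\|v\|$ in $\pr_v$, produces a ``stack'' of disjoint horizontal barriers in $\omega_v$; this stack partitions $\R^2\setminus\omega_v$ into disjoint open slabs, contradicting the fact that $\R^2\setminus\omega_v$ is connected. The principal obstacle is making this separation argument rigorous: one must verify that $C$ truly acts as a barrier, i.e.\ that $C$ extends to infinity in \emph{both} $+v^\perp$ and $-v^\perp$ directions rather than just one. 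I expect this to require exploiting the full family of translations $\{T_u:u\in\Z^2,\,\langle u;v\rangle\geq 0\}$ together with the fact (derivable from these translations) that $\omega_v$ accumulates at every point of the closed upper half-circle $\{w\in\SS^1:\langle w;v\rangle\geq 0\}$ at infinity, to extract the required bi-infiniteness and ultimately the topological contradiction with simple-connectivity of the complement.
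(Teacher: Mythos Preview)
Your setup is correct up to the point where you confine the errant component $C$ to the strip $\pr_v^{-1}([0,b])$, but the separation argument you propose has a genuine gap that is not repaired by the fix you suggest. The problem is exactly the one you flag: for the stack $\{T_u^k(C)\}_k$ to cut $\R^2\setminus\omega_v$ into slabs, each $T_u^k(C)$ would have to extend to infinity in \emph{both} $\pm v^\perp$ directions, and nothing you have written forces this. Your proposed remedy --- that $\omega_v$ accumulates at every point of the upper half-circle at infinity --- is a statement about the whole set $\omega_v$, not about any single component. It is entirely possible that $C$ limits only at $+v^\perp/\|v\|$ while other components of $\omega_v$ account for the accumulation elsewhere; in that case $C$ is a ``half-line'' in the strip and separates nothing. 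So the argument, as it stands, does not close.

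The paper's proof avoids this difficulty by a different and much shorter route. Rather than trying to make $C$ (or its translates by $u$ with $\langle u;v\rangle>0$) act as a barrier, it exploits the $T_{v^\perp}$-invariance of $\omega_v$ (which follows from Proposition~\ref{pro:omegapro}(2) since $\langle v^\perp;v\rangle=0$). One picks a point $p\in\inter H_v^-\cap W_-$ and uses the connectedness of $\R^2\setminus\omega_v$ to produce a compact arc $\sigma$ in $\R^2\setminus\omega_v$ from $p$ to some $q\in\Gamma$. The contradiction is then that some translate $\theta+mv^\perp$ (which lies in $\omega_v$) must meet $\sigma$. The point is that the $v^\perp$-translates of $\theta$ collectively sweep out the entire $v^\perp$-direction while remaining in the fixed strip $\pr_v^{-1}([0,b])$; so one only needs $\theta$ to be unbounded in \emph{one} $v^\perp$-direction, not both. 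Concretely, since $\theta$ is disjoint from $\Gamma$, from every $[\sigma]+mv^\perp$, and from the horizontal rays $p+mv^\perp-\R^+v\subset\inter H_v^-$, it lies in a component of the complement of the union of these sets; but that union is closed, connected, $T_{v^\perp}$-periodic, and accumulates on the entire closed left half-circle at infinity, so every component of its complement that meets $W_-$ is bounded --- contradicting the unboundedness of $\theta$. This is the idea you were reaching for, but applied to the \emph{arc} side rather than to $\theta$, which is what makes the one-sided unboundedness of $\theta$ sufficient.
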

\begin{proof} Let $W_-$ and $W_+$ be the two unbounded connected components of $\R^2\sm \Gamma$, being $W_+$ the one that satisfies $\sup \pr_v W_+=\infty$. Assume $\omega_v\neq \emptyset$. Since $\omega_v+v \subset \omega_v$, we know that $\omega_v$ intersects $W_+$. Suppose for a contradiction that $\omega_v$ intersects a connected component of $\R^2\sm \Gamma$ other than $W_+$. Let $\theta$ be a connected component of $\omega_v$ not contained in $W_+$.
Since $\theta$ is an unbounded subset of $\R^2\sm \Gamma$, it follows that $\theta \subset W_-$. 

Let $p\in \inter H_v^-\cap W_-$. Since $\R^2\sm \omega_v$ is connected, there is an arc $\sigma$ in $\R^2\sm \omega_v$ joining $p$ to some point $q\in \Gamma$. The fact that $\theta$ is unbounded and contained $H_v^+\cap W_-$ implies that there is $m\in \Z$ such that $\theta+mv^\perp$ intersects $\sigma$, which is a contradiction.
\end{proof}

\subsection{Poincar\'e recurrence on the lift: Theorems \ref{th:naoerrante-ext} and \ref{th:irrotational-nw}}\label{sec:poincare-proof}

Theorem \ref{th:irrotational-nw} is an immediate corollary of Theorem \ref{th:naoerrante-ext}. The latter, in turn, follows from the next result, the proof of which is the focus of the remainder of this section.


\begin{theorem}\label{th:poincare-omega}
Let $\hat{f}\colon \R^2\to \R^2$ be a lift of a homeomorphism $f$ of $\T^2$ homotopic to the identity. Suppose that $\omega_v\neq \emptyset\neq \omega_{-v}$ for some $v\in \Z^2_*$. Then, for any $\mu\in \mathcal{M}_{(0,0)}^e(\hat{f})$, the set of $\hat{f}$-recurrent points projects to a set of full $\mu$-measure. 
Moreover, the nonwandering set of $\hat{f}$ contains $\pi^{-1}(\supp(\mathcal{M}_{(0,0)}^e(\hat{f})))$.
\end{theorem}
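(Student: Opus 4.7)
First, I would reduce the ``moreover'' claim to the recurrence statement by a density argument: the set $R$ of $\hat f$-recurrent points is $\Z^2$-invariant (since $\hat f$ commutes with integer translations) and trivially contained in the nonwandering set of $\hat f$, which is closed. If $\mu(\pi(R))=1$ for every $\mu\in\mathcal M^e_{(0,0)}(\hat f)$, then $R$ is dense in $\pi^{-1}(\supp\mu)$, so the nonwandering set contains $\pi^{-1}(\supp\mu)$, and union over $\mu$ followed by closure gives the claim. The remainder of the plan targets the recurrence statement.

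Fix $\mu\in\mathcal M^e_{(0,0)}(\hat f)$ and, after replacing $v$ by its primitive divisor in $\Z^2_*$ (which does not change $\omega_v$), apply Lemma \ref{lem:Atkinsonnossocaso} with $v_0=v$. This produces a $\mu$-full-measure set on which there exist $n_k\to\infty$ and integers $m_k$ with $m_k/n_k\to 0$ such that $f^{n_k}(x)\to x$ and $\hat f^{n_k}(\hat x)-\hat x-m_kv^\perp\to 0$. The goal is to show that for $\mu$-a.e.\ such $x$, one can extract a subsequence with $m_k=0$, which then gives $\hat f^{n_k}(\hat x)\to\hat x$. Arguing by contradiction, suppose this fails on a set $A$ of positive measure; after passing to a positive-measure subset $A^+\subset A$ and to subsequences, assume $m_k\to +\infty$ on $A^+$. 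Fix $x_0\in A^+\cap\supp\mu$, a lift $\hat x_0$, and a small $\epsilon>0$. Since $\hat f$ commutes with $T_{m_kv^\perp}$ and $\hat f^{n_k}(\hat x_0)$ eventually enters $T_{m_kv^\perp}(B_\epsilon(\hat x_0))$, the balls $B_\epsilon(\hat x_0)$ and $T_{m_kv^\perp}(B_\epsilon(\hat x_0))$ lie in the same connected component of $\bigcup_n\hat f^n(B_\epsilon(\hat x_0))$. Hence $T_{m_kv^\perp}U'_\epsilon(\hat x_0,\hat f)=U'_\epsilon(\hat x_0,\hat f)$ for all large $k$, and taking greatest common divisors of such $m_k$ yields an integer $j\ge 1$ such that $U:=U'_\epsilon(\hat x_0,\hat f)$ is simultaneously $\hat f$-invariant, $T_{jv^\perp}$-invariant, open, arcwise connected, and unbounded in the $v^\perp$ direction.

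The contradiction should then come from the barriers $\omega_v$, $\omega_{-v}$. By arcwise connectedness and $T_{jv^\perp}$-invariance of $U$, one can pick a simple arc $\gamma\subset U$ from some $\hat y$ to $\hat y+jv^\perp$; the union $\Gamma:=\bigcup_{k\in\Z}\gamma+kjv^\perp$ is then a $T_{jv^\perp}$-periodic simple curve contained in $U$ with $\bd_\infty \Gamma=\{\pm v^\perp/\norm{v^\perp}\}$, splitting $\R^2$ into two unbounded components $W_\pm$ with $\sup p_v(W_+)=+\infty$ and $\inf p_v(W_-)=-\infty$. A straightforward adaptation of Fact \ref{fact481} (and its $-v$ analogue) will place $\omega_v\subset W_+$ and $\omega_{-v}\subset W_-$, provided $\omega_{\pm v}\cap\Gamma=\emptyset$.

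The main obstacle I expect is verifying this last disjointness and turning the separation into a genuine contradiction. For the disjointness: any $\hat z\in\omega_v\cap U$ would be a point of $U'_\epsilon(\hat x_0,\hat f)$ whose full $\hat f$-orbit lies in $H^+_v$, so some iterate of it lies in $B_\epsilon(\hat x_0)$, forcing $p_v(\hat x_0)\geq -\epsilon\norm{v}$; I would arrange this to fail by choosing $\hat x_0$ on the $\hat f$-orbit of $x_0$ with $p_v(\hat x_0)$ very negative, which is possible for a $\mu$-generic $x_0$ since $\mu(\pi(\omega_v))=0$ by ergodicity together with $\omega_v\neq\R^2$ (the excluded case where the orbit of $x_0$ remains in a fixed strip $\{p_v\geq -M\}$ being handled separately by observing that then $\hat x_0$ itself is already nearly recurrent and the bounded invariant region allows direct Poincar\'e). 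Once $\omega_v$ and $\omega_{-v}$ sit in opposite components of $\R^2\sm\Gamma$, I would pass to the cylinder $\R^2/\Z jv^\perp$, where $U$ projects to a $\bar f$-invariant essential open annular region strictly separating the two $\bar f$-invariant sets $\bar\omega_{\pm v}$; the existence of such an $\bar f$-invariant annulus together with the two invariant ``boundary'' sets is incompatible with $\rho_\mu=(0,0)$ and with both $\omega_v$ and $\omega_{-v}$ being nonempty, yielding the desired contradiction.
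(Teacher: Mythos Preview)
Your reduction of the ``moreover'' clause to the recurrence statement is fine, and starting from Lemma~\ref{lem:Atkinsonnossocaso} with $v_0=v$ is the right first move. However, the core of your argument has two genuine gaps.

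\textbf{First gap: the $T_{m_kv^\perp}$-invariance of $U'_\epsilon(\hat x_0,\hat f)$ is not established.} From $\hat f^{n_k}(\hat x_0)\in B_\epsilon(\hat x_0+m_kv^\perp)$ you only know that the single point $\hat f^{n_k}(\hat x_0)$ lies in both $\hat f^{n_k}(B_\epsilon(\hat x_0))$ and $B_\epsilon(\hat x_0+m_kv^\perp)$. The translated ball $B_\epsilon(\hat x_0+m_kv^\perp)$ is \emph{not} a subset of $\bigcup_n\hat f^n(B_\epsilon(\hat x_0))$ in general, so it makes no sense to say it lies in a connected component of that union. Moreover, $U'_\epsilon(\hat x_0)$ and $U'_\epsilon(\hat x_0)+m_kv^\perp$ are connected components of two \emph{different} unions, so intersecting them does not give equality. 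In the very situation you are analysing ($\hat x_0$ non-recurrent, $m_k\to\infty$), for small $\epsilon$ the set $U'_\epsilon(\hat x_0)$ may well reduce to $B_\epsilon(\hat x_0)$ itself, in which case your $T_{jv^\perp}$-invariant set $U$ and the curve $\Gamma$ simply do not exist. The paper avoids this by working with $U'_\delta(x,f)$ on the torus (see Lemma~\ref{lem:tresopcoes}) and then lifting, which is what guarantees the translation invariance you need.

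\textbf{Second gap: the final ``contradiction'' is not an argument.} Even granting a $T_{jv^\perp}$-periodic curve $\Gamma\subset U$ with $\omega_v$ and $\omega_{-v}$ on opposite sides, there is nothing contradictory about an $\hat f$-invariant open set separating $\omega_v$ from $\omega_{-v}$; for $f=\mathrm{Id}$ this happens with $\omega_{\pm v}=H^{\pm}_v$. Your appeal to ``incompatibility with $\rho_\mu=(0,0)$'' is not explained. Relatedly, the claim $\mu(\pi(\omega_v))=0$ does not follow from $\omega_v\neq\R^2$: one may have $\pi(\omega_v)=\T^2$, and indeed the paper must treat the case $\supp(\mu)\subset\overline{\pi(\omega_v)}$ separately (Lemma~\ref{lem:tresopcoes}, case (1)).

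For comparison, the paper's proof proceeds by a dichotomy you do not make: either $\pi(\omega_v)\cap\pi(\omega_{-v})\neq\emptyset$, in which case Proposition~\ref{pro:omegasetocam} shows that the components of $\R^2\setminus F$ are disjoint from their $T_{v^\perp}$-translates and recurrence follows directly; or the projections are disjoint, in which case a much more delicate argument using Proposition~\ref{pro:omeganaotoca} and the trapping regions $\Omega_1,\Omega_2$ is required (this is where the real work lies, and it occupies the bulk of \S\ref{sec:poincare-proof}). Your outline does not supply a substitute for either branch.
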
 	

Before moving to the proof, let us show how Theorem \ref{th:naoerrante-ext} follows from the above. 

\begin{proof}[Proof of Theorem \ref{th:naoerrante-ext}]
If $(0,0)$ is an extremal point of $\rho(\hat{f})$ and $\rho(\hat{f})\subset H^+_v$ where $v\in \Z^2_*$, then by Proposition \ref{pr:omeganaovazio} we know that $\omega_v$ and $\omega_{-v}$ are both nonempty. Let $\mathrm{Rec}(\hat{f})$ denote the set of $\hat{f}$-recurrent points, and suppose for contradiction that there is $\mu\in \mathcal{M}_{(0,0)}(\hat{f})$ such that $\mu(\T^2\sm \pi(\mathrm{Rec}(\hat{f})))>0$.  Then, by Proposition \ref{pro:temmedidaergodica}, there is $\nu\in \mathcal{M}^e_{(0,0)}(\hat{f})$ such that $\nu(\T^2\sm \pi(\mathrm{Rec}(\hat{f})))>0$, which contradicts Theorem \ref{th:poincare-omega}.
Thus we conclude that $\pi(\mathrm{Rec}(\hat{f}))$ has full $\mu$-measure, proving Theorem \ref{th:naoerrante-ext}.
\end{proof}

The proof of Theorem \ref{th:poincare-omega} will be divided into several independent propositions, some of which will be useful for other purposes.

\setcounter{claim}{0}
\subsection{The case where $\pi(\omega_v)$ and $\pi(\omega_{-v})$ are disjoint}\label{sec:omeganaotoca}
In this subsection, our only assumption is that $\pi(\omega_{v})$ and $\pi(\omega_{-v})$ are nonempty and disjoint (where $v\in \Z^2_*$).
Fix $p_1,p_2\in \Z$, and write 
$$\omega_- = \omega_{-v}+p_1v, \quad \omega_+=\omega_{v}+p_2v.$$
Note that from the definitions, both sets are closed, $\hat{f}$-invariant and non-separating. Moreover, $T_{v^\perp}^k(\omega_-)=\omega_-$ and $T_{v^\perp}^k(\omega_+)=\omega_+$ for all $k\in \Z$.

Let $\ol{\ell}\subset \R^2$ be the image of a compact arc joining $x_1 \in \omega_-$ to $x_2\in \omega_+$, such that $\ell = \ol{\ell}\sm \{x_1,x_2\}$ is disjoint form $\omega_-\cup \omega_+$, and assume further that $\ell$ is disjoint from $T_{v^\perp}^k(\ell)$ for all $k\in \Z_*$ (the latter holds, for instance, if $\diam(\ell)<1$).
\begin{proposition}\label{pro:omeganaotoca} Suppose $\hat{f}(\ell)\cap \ell=\emptyset$, and let $F = \omega_+\cup \ell\cup \omega_-$. Also assume that $\hat{f}$ has a fixed point. Then
\begin{enumerate}
\item[(1)] $\R^2\sm F$ has exactly two connected components $\Omega_1$ and $\Omega_2$ such that $\ell\subset \bd \Omega_2\cap \bd \Omega_1$, $T_{v^\perp}(\Omega_2)\subset \Omega_2$ and $T_{v^\perp}^{-1}(\Omega_1)\subset \Omega_1$. 
\item[(2)] For each $z\in \R^2\sm (\omega_+\cup\omega_-)$ there is $n_0\in \Z$ such that $T_{v^\perp}^n(z)\in \Omega_2$ if $n>n_0$ and $T_{v^\perp}^n(z)\in \Omega_1$ if $n<n_0$.
\item[(3)] Both $\Omega_2$ and $\Omega_1$ contain some fixed point of $\hat{f}$.
\item[(4)] Either $f(\Omega_2)\subset \Omega_2$ or $f^{-1}(\Omega_2)\subset\Omega_2$. 
\item[(5)] If $f(\Omega_2)\subset \Omega_2$, then for each $x\in \R^2\sm (\omega_+\cup \omega_-)$ there exists $\delta>0$ and $m_0\in \Z$ such that $f^k(B_\delta(x))\cap T_{v^\perp}^n(B_\delta(x))=\emptyset$ for all integers $k>0$ and $n\leq m_0$ (and in the case that $f^{-1}(\Omega_2)\subset \Omega_2$, a similar property holds with $k<0$).
\item[(6)] There is a wandering open set $W$ containing $\ell$.
\end{enumerate}
\end{proposition}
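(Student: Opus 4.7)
The plan is to prove the six items in order. Throughout I exploit that $\omega_+$ and $\omega_-$ are disjoint in $\R^2$ (since $\pi(\omega_v)\cap\pi(\omega_{-v})=\emptyset$), closed, $T_{v^\perp}$-invariant, non-separating with simply connected complements (Proposition \ref{pro:omegapro}), and that $\hat f$ preserves each and commutes with $T_{v^\perp}$ (the latter because $v\in\Z^2$ implies $v^\perp\in\Z^2$).

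For item (1), I would count components of $\R^2\setminus F$ via the one-point compactification $S^2=\R^2\cup\{\infty\}$: the set $K'=\omega_+\cup\omega_-\cup\ell\cup\{\infty\}$ is compact, with $\omega_+\cup\{\infty\}$ and $\omega_-\cup\{\infty\}$ two connected continua meeting only at $\infty$, each having simply connected complement in $S^2$ (hence trivial $\check H^1$ by Alexander duality). A Mayer--Vietoris computation (first decomposing $\omega_+\cup\omega_-\cup\{\infty\}$ along $\{\infty\}$, then attaching $\ell$ along its two endpoints $\{x_1,x_2\}$) yields $\check H^1(K')=\Z$; by Alexander duality, $\tilde H_0(\R^2\setminus F)=\Z$, so $\R^2\setminus F$ has exactly two components $\Omega_1,\Omega_2$ with $\ell\subset\bd\Omega_1\cap\bd\Omega_2$. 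I label $\Omega_2$ as the component containing (the interior of) $T_{v^\perp}(\ell)$, which is disjoint from $F$ by the hypothesis $\ell\cap T_{v^\perp}^k(\ell)=\emptyset$ for $k\neq 0$ together with the invariance of $\omega_\pm$. The inclusion $T_{v^\perp}(\Omega_2)\subset\Omega_2$ then follows because $T_{v^\perp}(\Omega_2)$ is connected, disjoint from $T_{v^\perp}(F)=\omega_+\cup T_{v^\perp}(\ell)\cup\omega_-$, and contains $T_{v^\perp}^2(\ell)$, which sits on the $\Omega_2$-side of $\ell$; symmetric reasoning gives $T_{v^\perp}^{-1}(\Omega_1)\subset\Omega_1$.

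Item (2) follows by monotonicity: since $\ell$ is compact, for $|n|$ large the translate $T_{v^\perp}^n(z)$ lies far above (resp.\ below) $\ell$ in the $v^\perp$-direction, so by iterating the inclusion from (1), $T_{v^\perp}^n(z)\in\Omega_2$ for $n\gg 0$ and $T_{v^\perp}^n(z)\in\Omega_1$ for $n\ll 0$. For item (3), by commutation, if some fixed point $p$ of $\hat f$ lies outside $\omega_+\cup\omega_-$ then the $T_{v^\perp}$-translates $T_{v^\perp}^k(p)$ are fixed points populating both $\Omega_1$ and $\Omega_2$ by (2); the case where the entire $\Z^2$-orbit of $p$ is contained in $\omega_+\cup\omega_-$ is the main obstacle of this item and would be handled using item (4) together with a Brouwer plane translation argument applied to the $\hat f$-invariant open set $\Omega_2$. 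For item (4), $\hat f(\ell)$ is a connected arc with endpoints in $\omega_\pm$ (by invariance) and interior disjoint from $F$ (its interior avoids $\omega_\pm$ by invariance and $\ell$ by the freeness hypothesis), so $\hat f(\ell)$ lies in exactly one of $\Omega_1,\Omega_2$; orientation-preservation of $\hat f$ forces $\hat f(\Omega_2)$ to lie on the matching side of $\hat f(\ell)$, giving $\hat f(\Omega_2)\subset\Omega_2$ when $\hat f(\ell)\subset\Omega_2$ and $\hat f^{-1}(\Omega_2)\subset\Omega_2$ in the opposite case.

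Finally, for items (5) and (6) (assuming $\hat f(\Omega_2)\subset\Omega_2$), the crucial observation is that by conjugation with $T_{v^\perp}^m$ (which commutes with $\hat f$), the region $T_{v^\perp}^m(\Omega_2)$ lying "above the rung" $T_{v^\perp}^m(\ell)$ is also forward invariant under $\hat f$. For (5), given $x\in\R^2\setminus(\omega_+\cup\omega_-)$, I would choose $m_0$ sufficiently negative that $B_\delta(x)\subset T_{v^\perp}^{m_0}(\Omega_2)$ (possible because these nested regions exhaust $\R^2\setminus(\omega_+\cup\omega_-)$ as $m_0\to-\infty$); then $\hat f^k(B_\delta(x))\subset T_{v^\perp}^{m_0}(\Omega_2)$ for $k\ge 0$, while $T_{v^\perp}^n(B_\delta(x))$ for $n$ sufficiently below $m_0$ lies in the complementary component $T_{v^\perp}^{m_0}(\Omega_1)$, yielding the required disjointness. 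The main technical obstacle here is rigorously confirming that a ball translated far enough in $-v^\perp$ really does land in the "below" component despite $\omega_\pm$ extending to infinity in both $v^\perp$-directions; this uses compactness of $\ell$ to bound the $v^\perp$-extent of $\ell_{m_0}$ together with the connectedness structure from (1). For (6), I would take $W$ to be a thin bicollared tubular neighborhood of $\ell$, chosen so that $\hat f(W)\cap W=\emptyset$ (possible since $\hat f(\ell)$ and $\ell$ are disjoint compact sets), and then cover $\ell$ by finitely many balls from (5) to show that forward iterates of $W$ remain trapped in forward-invariant "above-rung" regions that stay bounded away from $\ell$, yielding the wandering property.
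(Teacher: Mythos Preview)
Your Alexander duality / Mayer--Vietoris approach to counting the components in item (1) is a legitimate alternative to the paper's more hands-on argument (which shows directly that $\omega_+\cup\omega_-$ is non-separating and then that $\ell$ locally separates). It is heavier machinery but certainly works. Note, however, that your nesting argument is circular as written: you claim $T_{v^\perp}^2(\ell)$ ``sits on the $\Omega_2$-side of $\ell$,'' but that is essentially the statement $T_{v^\perp}(\Omega_2)\subset\Omega_2$ you are trying to prove. The paper handles this by first constructing, for any $z\notin\omega_+\cup\omega_-$, an arc $\gamma$ from $z$ to $T_{v^\perp}(z)$ avoiding $\omega_+\cup\omega_-$, and showing via Fact~\ref{fact481} that the half-lines $\Gamma^+=\bigcup_{n>n_2}T_{v^\perp}^n[\gamma]$ and $\Gamma^-=\bigcup_{n<n_1}T_{v^\perp}^n[\gamma]$ land in \emph{different} components of $\R^2\setminus F$; this gives the crucial fact that each $\Omega_i$ meets its own $T_{v^\perp}$-image, which then rules out the ``swap'' $\Omega_1\subset T_{v^\perp}(\Omega_2)$.

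The genuine gap is in items (3) and (4), where your plan is circular. You propose to handle the hard case of (3) using (4), but your argument for (4) via ``orientation-preservation forces $\hat f(\Omega_2)$ to lie on the matching side'' does not work: once $\hat f(\ell)\subset\Omega_2$, both local sides of $\hat f(\ell)$ lie inside $\Omega_2$, so orientation alone cannot decide whether $\Omega_1\subset\hat f(\Omega_1)$ or $\Omega_1\subset\hat f(\Omega_2)$. The paper does the opposite: it proves (3) first, by a direct argument you overlooked, and then uses the fixed point in $\Omega_1$ to rule out the swap in (4). The point for (3) is that the ``obstacle'' you identify does not occur: since $\pi(\omega_v)\cap\pi(\omega_{-v})=\emptyset$, the fixed point $z_0$ has (say) $\pi(z_0)\notin\pi(\omega_-)$, so \emph{no} integer translate of $z_0$ lies in $\omega_-$; and since $\omega_+=\omega_v+p_2v\subset H_v^++p_2v$ is contained in a half-plane, a translate $z_0+w$ with $p_v(z_0+w)$ small enough lies outside $\omega_+$ as well. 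Now $z_0+w\in\R^2\setminus(\omega_+\cup\omega_-)$ is a fixed point, and item (2) puts its $T_{v^\perp}$-orbit in both $\Omega_1$ and $\Omega_2$. With (3) in hand, (4) follows because $\Omega_1\subset\hat f(\Omega_2)$ would force $\hat f^{-1}(\Omega_1)\cap\Omega_1=\emptyset$, contradicting the fixed point in $\Omega_1$.
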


\begin{proof}
Most of the claims in this propositions are contained in \cite{Transitivo}, although some not explicitly. We include them here for the sake of completeness. We will assume $v=(1,0)$, so $T_{v^\perp}$ is the translation $(x,y)\mapsto (x,y+1)$. The same proof works for any $v\in \Z^2_*$ after a change of coordinates in $\SL(2,\Z)$ (or after minimal modifications). To simplify the notation, we write $T=T_{v^\perp}$. 

First, since $\omega_-$ and $\omega^+$ are disjoint, non-separating, and have only unbounded connected components, the set $\omega_-\cup \omega_+$ is also non-separating (see \cite[Proposition 8]{Transitivo}). Moreover, since one may choose a neighborhood of any point $x\in \ell$ that is disjoint from $\omega_-\cup \omega_+$ and locally separated by $\ell$ into exactly two connected components, one easily concludes that $\R^2\sm F$ has exactly two connected components (see \cite[Lemma 8]{Transitivo}).
%

Since $\omega_-\cup \omega_+$ is non-separating, given $z\in \R^2\sm (\omega_+\cup\omega_-)$ there is a compact arc $\gamma$ disjoint from $\omega_+\cup\omega_-$ joining $z$ to $T(z)$. 
Let $\Gamma = \bigcup_{n\in \Z} T^n[\gamma]$. Note $\R^2\sm \Gamma$ has exactly two unbounded connected components: one unbounded to the left, which we call $W_-$  and one unbounded to the right which we denote $W_+$.
It follows from Fact \ref{fact481} that $\omega_-\subset W_-$ and $\omega_+\subset W_+$, and so $[\Gamma]\cap \ell\neq \emptyset$.

Let $n_1\in \Z$ be the smallest integer such that $T^{n_1}[\gamma]$ intersects $\ell$, and let $n_2\geq n_1$ be the largest integer with the same property. 
Then the sets $\Gamma^+ = \cup_{n> n_2} T^n[\gamma]$ and $\Gamma^- = \cup_{n<n_1} T^{n}[\gamma]$ are both contained in $\R^2\sm F$. We claim that $\Gamma^-$ and $\Gamma^+$ are in different connected components of $\R^2\sm F$. Indeed, suppose otherwise. Then we can find a compact arc $\sigma$ in $\R^2\sm F$ joining a point of $\Gamma^-$ to a point of $\Gamma^+$. Let $\Theta = \Gamma^+\cup [\sigma] \cup \Gamma^-$. Then $\R^2\sm \Theta$ has two unbounded connected components $W_-'$ and $W_+'$, which are unbounded to the left and to the right, respectively. 
Let $\Theta' = \Gamma\cup \Theta$. Again, $\R^2\sm \Theta'$ has exactly two unbounded connected components, $W_-''\subset W_-\cap W_-'$ and $W_+''\subset W_+\cap W_+'$. Since $\omega_+$ is disjoint from $\Theta'$ and its components are unbounded, each connected component of $\omega_+$ is contained in either $W_-''$ or $W_+''$, and since $\omega_+\subset W_+$ we conclude that $\omega_+\subset W_+''$. Therefore, $\omega_+\subset W_+'$. Likewise, $\omega_-\subset W_-'$. Since $\ell$ joins $\omega_-$ to $\omega_+$, it must therefore intersect $\Theta$, which is a contradiction.

Thus the sets $\Gamma^-$ and $\Gamma^+$ are contained in different connected components of $\R^2\sm F$. We remark that from the definitions, $T(\Gamma^-)\cap \Gamma^-\neq \emptyset$ and $T(\Gamma^+)\cap \Gamma^+\neq \emptyset$. In particular, each connected component of $\R^2\sm F$ intersects its image by $T$.

\begin{figure}[ht]
\includegraphics[height=4cm]{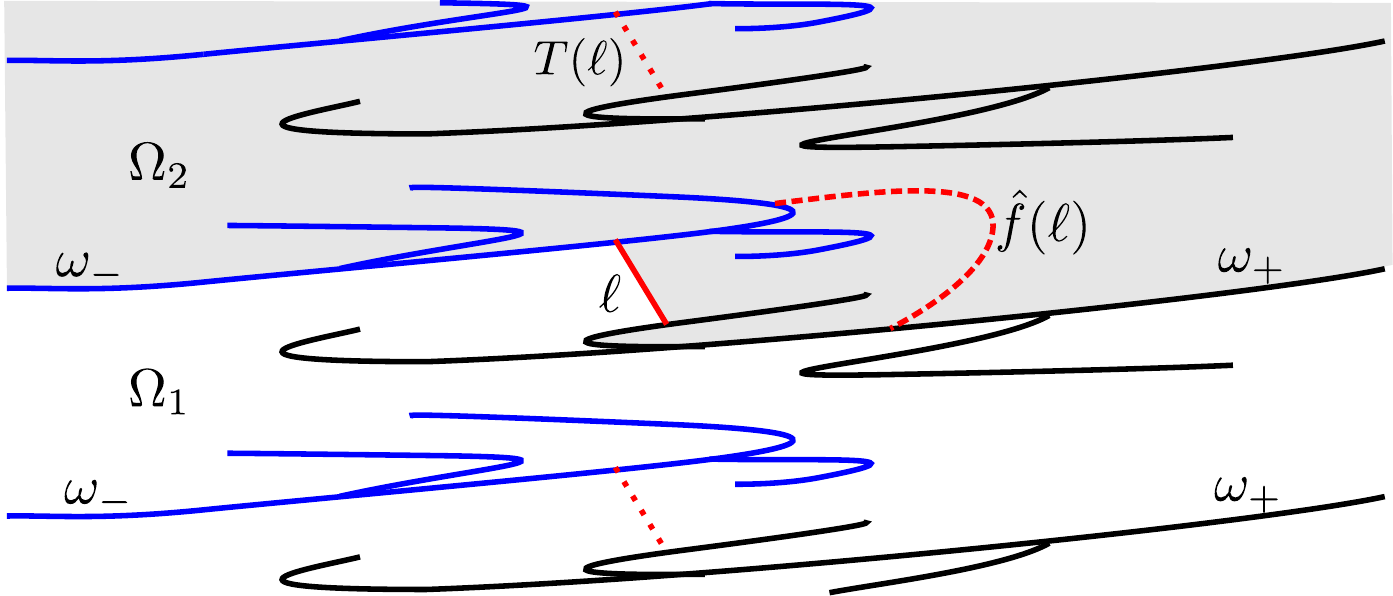}
\caption{The sets $\Omega_1$, $\Omega_2$ and $F$.}
\label{fig:omega-ell}
\end{figure}

Note that $T(\ell)$ is disjoint from $\ell$ and $T(F) = \omega_-\cup \omega_+\cup T(\ell)\subset F\cup T(\ell)$. Letting $\Omega_2$ be the connected component of $\R^2\sm F$ containing $T(\ell)$, we have that $T(F)$ is disjoint from $\Omega_1$. This implies that one of the two connected components of $\R^2\sm T(F)$ contains $\Omega_1$, \ie either $\Omega_1\subset T(\Omega_1)$ or $\Omega_1\subset T(\Omega_2)$ (see Figure \ref{fig:omega-ell}). But the latter case implies that $T(\Omega_1)$ is disjoint from $\Omega_1$, contradicting the fact that $\Omega_1$ contains either $\Gamma^+$ or $\Gamma^-$. Thus the only possible case is $\Omega_1\subset T(\Omega_1)$. 
 It follows from this fact that $T(\Omega_2)$ is disjoint from $\Omega_1$. Note that this also implies that $T(\Omega_2)$ is disjoint from $\ell$, because $\ell\subset \bd \Omega_1$ and $\Omega_2$ is open. Since $T(\Omega_2)$ is disjoint from $\omega_-\cup \omega_+$ as well, we conclude that $T(\Omega_2)$ is disjoint from $\Omega_1\cup F$, so that $T(\Omega_2)\subset \Omega_2$. This proves (1).

To prove (2), recall that $z$ in the previous argument was assumed to be any point in $\R^2\sm (\omega_-\cup\omega_+)$. Since one of the sets $\Gamma^+$ or $\Gamma^-$ is contained in $\Omega_2$, and we showed that $T(\Omega_2)\subset \Omega_2$, we see that the only possibility is $\Gamma^+\subset \Omega_2$, and $\Gamma^-\subset \Omega_1$. In particular, $T^n(z)$ belongs to $\Omega_2$ if $n> n_2$ and to $\Omega_1$ if $n<n_1$. Let $n_0$ be the smallest integer such that $T^{n_0}(z)\notin \Omega_1$. Since $z\in \R^2\sm (\omega_-\cup\omega_+)$, we have that $T^{n_0}(z)\in \Omega_2\cup \ell$. Since $T(\Omega_2\cup \ell)\subset \Omega_2$, we see that $T^{n}(z)\in \Omega_2$ for any $n>n_0$. The definition of $n_0$ also implies that $T^n(z)\in \Omega_1$ for $n<n_0$, completing the proof of (2).

To prove (3), recall that we assumed that $\hat{f}$ has some fixed point $z_0$. Moreover, since we are assuming that $\pi(\omega_+)=\pi(\omega_{(1,0)})$ is disjoint from $\pi(\omega_-)= \pi(\omega_{(-1,0)})$, we have that either $\pi(z_0)\notin \pi(\omega_-)$ or $\pi(z_0)\notin \pi(\omega_+)$. Suppose that $\pi(z_0)\notin \pi(\omega_-)$ (the other case is analogous). Then $z_0+w\notin \omega_-$ for any $w\in \Z^2$. Since $\omega_{(1,0)}\subset H_{(1,0)}^+$, the definition of $\omega_+$ implies that the first coordinate of a point of $\omega_+$ is at least $p_2$. In particular, if we choose $w\in \Z^2$ such that the first coordinate of $z_1=z_0+w$ is smaller than $p_2$ we have that $z_1\notin \omega^+\cup \omega^-$. Part (2) of the theorem implies that there is $n_0$ such that $T^n(z_1)$ belongs to $\Omega_2$ if $n>n_0$ and to $\Omega_1$ if $n<n_0$. Since $T^n(z_1)$ is a fixed point of $\hat{f}$ for any $n\in \Z$, we conclude that there are fixed points in both $\Omega_2$ and $\Omega_1$.

To prove (4), note that since $\hat{f}(\ell)$ is disjoint from $\ell$ and both are disjoint from $\omega_-\cup \omega_+$, we have that $\hat{f}(\ell)\subset \R^2\sm F$. Thus, $\hat{f}(\ell)\subset \Omega_i$ for some $i\in \{1,2\}$. Assume without loss of generality that $\hat{f}(\ell)\subset \Omega_2$ (the same argument applies to $\hat{f}^{-1}$ otherwise).
The facts that $\hat{f}(\omega_+\cup\omega_-)=\omega_+\cup \omega_-$ and $\hat{f}(\ell)\subset \Omega_2$ imply that $\hat{f}(F)$ is disjoint from $\Omega_1$, so that $\Omega_1$ is contained in one of the two connected components of $\R^2\sm \hat{f}(F)$, \ie either $\Omega_1\subset \hat{f}(\Omega_2)$ or $\Omega_1\subset \hat{f}(\Omega_1)$. But if $\Omega_1\subset \hat{f}(\Omega_2)$, then $\hat{f}^{-1}(\Omega_1)\subset \Omega_2$ and in particular $\hat{f}^{-1}(\Omega_1)$ is disjoint from $\Omega_1$. This is not possible because, by (3), $\Omega_1$ contains a fixed point of $\hat{f}$. Thus $\Omega_1\subset \hat{f}(\Omega_1)$, and it follows that $\hat{f}(\Omega_2)$ is disjoint from $\Omega_1$. Since $\hat{f}(\Omega_2)$ is also disjoint from $\omega_+\cup \omega_-$, we see that $\hat{f}(\Omega_2)\subset \Omega_2\cup \ell$. Since $\ell\subset \bd \Omega_1$ and $\hat{f}(\Omega_2)$ is open, we also have that $\hat{f}(\Omega_2)$ is disjoint from $\ell$, so $\hat{f}(\Omega_2)\subset \Omega_2$ as claimed.

For part (5), let $x\in \R^2\sm (\omega_-\cup \omega_+)$ and let us first prove the following fact: there is $\delta>0$ and $k_1\leq k_2\in \Z$ such that $T^j(B_\delta(x))$ is contained in $\Omega_2$ if $j> k_2$ and in $\Omega_1$ if $j<k_1$.
To see this, note that since $\omega_+\cup \omega_-$ is closed and does not contain $x$, there is $\delta>0$ such that $B_\delta(x)$ is disjoint from $\omega_+\cup \omega_-$. Recalling that $\omega_+\cup \omega_-$ is $T$-invariant, it follows that $T^n(B_\delta(x))\cap (\omega_+\cup \omega_-)=\emptyset$ for all $n\in \Z$. Since $\ol{\ell}$ is bounded, there is $r_0>0$ such that $T^j(B_\delta(x))$ is disjoint from $\ell$ if $\abs{j}>r_0$. Thus $T^j(B_\delta(x))$ is disjoint from $F$ for all $j\in \Z$ with $\abs{j}>r_0$. By part (2) (using $z=x$) there is $n_0$ such that $T^j(x)$ lies in $\Omega_2$ if $j>n_0$ and in $\Omega_1$ if $j<n_0$. Choosing $k_2=\max\{r_0, n_0\}$ and $k_1 =\min\{-r_0, n_0\}$, we have that if $j>k_2$ then $T^j(x)\in \Omega_2$. Since $j\geq r_0$, we also have $T^j(B_\delta(x))\subset \R^2\sm F$, and since it intersects $\Omega_2$ we conclude that $T^j(B_\delta(x))\subset \Omega_2$ for all $j>k_2$. Similarly, for $j<k_1$ we have that $T^j(B_\delta(x))\subset \Omega_1$, as claimed.

To finish the proof of (5), note that $T^{k_2+1}(B_\delta(x))\subset \Omega_2$, and so $\hat{f}^k(T^{k_2+1}(B_\delta(x)))\subset \Omega_2$ for all $k>0$. Setting $m_0 = k_1-k_2-2$, we see that, if $k>0$ and $n\leq m_0$, then 
$$T^{k_2+1}(\hat{f}^k(B_\delta(x))\cap T^n(B_\delta(x)))\subset \hat{f}^k(T^{k_2+1}(B_\delta(x)))\cap T^{n+k_2+1}(B_\delta(x)) \subset \Omega_2\cap \Omega_1 = \emptyset,$$
so that $\hat{f}^k(B_\delta(x))\cap T^n(B_\delta(x))=\emptyset$ as required.

Finally, to prove (6) assume $\hat{f}(\ell)\subset \Omega_2$ (otherwise use $\hat{f}^{-1}$ instead of $\hat{f}$). We have from (4) that $\hat{f}(\Omega_2)\subset \Omega_2$. Since $\ell$ and $\hat{f}(\ell)$ are disjoint closed subsets of $\R^2\sm (\omega_-\cup \omega_+)$ (which is an open subset of $\R^2$ and a normal topological space) there are open sets $W_0\supset \ell$ and $W_1\supset \hat{f}(\ell)$ in $\R^2\sm (\omega_-\cup \omega_+)$ such that $W_0\cap W_1=\emptyset$. Note that $\ell\subset \R^2\sm \cl{\hat{f}(\Omega_2)}$, since $$\cl{\hat{f}(\Omega_2)}=\hat{f}(\cl(\Omega_2))\subset \hat{f}(\Omega_2\cup F) \subset \Omega_2\cup \omega_-\cup \omega_+\cup \hat{f}(\ell),$$ which is disjoint from $\ell$. Thus,
$$W = \hat{f}^{-1}(W_1)\cap W_0\cap \hat{f}^{-1}(\Omega_2)\sm \cl{\hat{f}(\Omega_2)}$$ is an open neighborhood of $\ell$ in $\R^2$.
Note that $\hat{f}^k(W)\subset \hat{f}^{k-1}(\Omega_2) \subset \hat{f}(\hat{f}^{k-2}(\Omega_2))\subset \hat{f}(\Omega_2)$ if $k\geq 2$, and so $\hat{f}^k(W)\cap W=\emptyset$ whenever $k\geq 2$. In addition $\hat{f}(W)\cap W\subset W_1\cap W_0= \emptyset$, so $\hat{f}^k(W)\cap W=\emptyset$ for all $k\in \N$ as we wanted.
\end{proof}

\subsection{When $\pi(\omega_v)$ intersects $\pi(\omega_{-v})$}

In this section we assume that $\pi(\omega_v)\cap \pi(\omega_{-v})\neq \emptyset$. In this case, there exist $p_1, p_2\in \Z$ such that $(\omega_v + p_1v)\cap (\omega_{-v}+p_2v) \neq \emptyset$. 

\begin{proposition}\label{pro:omegasetocam} Suppose $(\omega_v + p_1v)\cap (\omega_{-v}+p_2v) \neq \emptyset$ for some $p_1,p_2\in \Z$, and let
$$F=(\omega_v + p_1v)\cup (\omega_{-v}+p_2v).$$
Given $z\in \R^2\sm F$, denote by $O(z)$ the connected component of $\R^2\sm F$ containing $z$. Then the following properties hold:
\begin{enumerate}
\item[(1)] $O(z)$ is an open topological disk and $O(z)\cap T_{v^\perp}^k(O(z))=\emptyset$ for any $k\in \Z_*$.
\item[(2)] If $\mu \in \mathcal{M}^e_{(0,0)}(\hat{f})$, then for $\mu$-almost every $x\in \T^2$ such that $x\notin \pi(\omega_v)\cap \pi(\omega_{-v})$, any $\hat{x}\in \pi^{-1}(x)$ is $\hat{f}$-recurrent.
\item[(3)] If $\hat{x}\in \R^2$ is $\hat{f}$-nonwandering and  $\pi(\hat{x}) \notin \pi(\omega_v)\cap \pi(\omega_{-v})$, then there is $\epsilon>0$ such that $U_\epsilon(\hat{x}) := U_\epsilon(\hat{x},\hat{f})$ is disjoint from $T_{v^\perp}^k(U_\epsilon(\hat{x}))$ for any $k\in \Z_*$ (recall the definition of $U_\epsilon$ from \S\ref{sec:uepsilon}).
\end{enumerate}
\end{proposition}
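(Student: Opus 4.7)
The proof splits into the three parts, with (2) and (3) both reducing to the case handled by part (1) via an equivariance argument that replaces $\hat x$ by a convenient lift of the same point which avoids $F$.

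For part (1), the strategy is to first establish the $T_{v^\perp}^k$-disjointness, then deduce simple connectedness. Since Proposition \ref{pro:omegapro}(2) applied to $u=\pm v^\perp$ yields $T_{v^\perp}\omega_{\pm v}=\omega_{\pm v}$, both $F_1,F_2$ and hence $F$ are $T_{v^\perp}$-invariant, so $T_{v^\perp}^k$ permutes the components of $\R^2\setminus F$. If $T_{v^\perp}^k(O(z))\cap O(z)\neq\emptyset$ for some $k\in\Z_*$ then necessarily $T_{v^\perp}^k(O(z))=O(z)$; picking the minimal such $k\geq 1$ and using arcwise connectedness of $O(z)$, one obtains an arc $\gamma\subset O(z)$ from some $y$ to $y+kv^\perp$. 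Its $jkv^\perp$-translates concatenate into a connected curve $\Gamma$ disjoint from $F$, and a rerun of Fact \ref{fact481} applied to the translate $F_1=\omega_v+p_1v$ (whose relevant structural properties---$T_{v^\perp}$-invariance, unbounded components, $T_vF_1\subset F_1$, and connectedness of the complement---survive both translation and the replacement of $v^\perp$ by $kv^\perp$) forces $F_1\subset W_+$, where $W_+$ denotes the component of $\R^2\setminus\Gamma$ with $\sup\pr_v W_+=\infty$; dually $F_2\subset W_-$, contradicting $F_1\cap F_2\neq\emptyset$. For simple connectedness, let $\gamma\subset O(z)$ be a Jordan curve bounding a disk $D$: any component of $F_i$ meeting $D$ is disjoint from $\gamma$ and so would be trapped in the bounded set $D$, contradicting its unboundedness; therefore $D\cap F=\emptyset$, and since $D$ is a connected subset of $\R^2\setminus F$ with $\gamma=\bd D\subset O(z)$, in fact $D\subset O(z)$, making $\gamma$ contractible.

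For part (2), fix $\mu\in\mathcal{M}^e_{(0,0)}(\hat f)$ and $x$ with $x\notin\pi(\omega_v)\cap\pi(\omega_{-v})$; by the symmetry $v\leftrightarrow -v$ one may assume $x\notin\pi(\omega_{-v})$, so no lift of $x$ lies in $\omega_{-v}+\Z^2$, and in particular none lies in $F_2$. Since $\hat f$-recurrence transfers to every lift by $\Z^2$-equivariance of $\hat f$, it suffices to handle one lift: take $\hat x'=\hat x-Nv$ with $N$ so large that $\langle \hat x',v\rangle<p_1|v|^2$, whence $\hat x'\notin F_1\cup F_2$ and so $\hat x'\in O(\hat x')$, the component from part (1). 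Letting $v_0$ be the primitive integer vector parallel to $v$ (so $T_{v_0^\perp}$ also preserves $F$, and the same argument as in part (1) gives $T_{v_0^\perp}^k$-disjointness of $O(\hat x')$), Lemma \ref{lem:Atkinsonnossocaso} provides $n_k\to\infty$ and $m_k\in\Z$ with $m_k/n_k\to 0$ and $\hat f^{n_k}(\hat x')-\hat x'-m_k v_0^\perp\to 0$. For large $k$, the open set $T_{v_0^\perp}^{m_k}(O(\hat x'))$ contains both $\hat x'+m_k v_0^\perp$ and $\hat f^{n_k}(\hat x')$, forcing $\hat f^{n_k}(O(\hat x'))=T_{v_0^\perp}^{m_k}(O(\hat x'))$. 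Projecting to the cylinder $C_0=\R^2/\langle T_{v_0^\perp}\rangle$, the image $\bar O$ satisfies $\bar f^{n_k}(\bar O)=\bar O$, so $\bar O$ is $\bar f$-periodic of some period $p$; writing $\hat f^p(O(\hat x'))=T_{v_0^\perp}^{m_0}(O(\hat x'))$ and iterating gives $m_k=(n_k/p)m_0$, and then the Atkinson bound $m_k/n_k\to 0$ forces $m_0=0$, so $m_k=0$ for all large $k$ and $\hat f^{n_k}(\hat x')\to \hat x'$.

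Part (3) uses the same change of lift: given $\hat x$ nonwandering with $\pi(\hat x)\notin\pi(\omega_v)\cap\pi(\omega_{-v})$, choose $\hat x'=\hat x+w$ (with $w\in\Z v$ chosen as in part (2)) satisfying $\hat x'\notin F$; then $\hat x'$ remains nonwandering by equivariance. For small $\epsilon>0$, $B_\epsilon(\hat x')\subset\R^2\setminus F$, and the $\hat f$-invariance of $F$ yields $U'_\epsilon(\hat x')\subset O(\hat x')$. Part (1) then gives $U'_\epsilon(\hat x')\cap T_{v^\perp}^k(U'_\epsilon(\hat x'))=\emptyset$ for all $k\in\Z_*$, and Proposition \ref{pro:fill-ine} transfers this to $U_\epsilon(\hat x')=\fil(U'_\epsilon(\hat x'))$; the equivariance $U_\epsilon(\hat x)=U_\epsilon(\hat x')-w$ transfers the conclusion back to $\hat x$. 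The main technical point I expect to require care is the adaptation of Fact \ref{fact481} in part (1), where the original statement must be rerun with $\omega_v$ replaced by its translate $F_1$ and with period $kv^\perp$ in place of $v^\perp$; since the key structural inputs (invariance, unbounded components, connectedness of the complement, and sub-invariance under $T_v$) all survive these changes, the adaptation is essentially a verification exercise, after which the Atkinson-plus-cylinder step of part (2) and the $U_\epsilon$-transfer of part (3) are largely mechanical.
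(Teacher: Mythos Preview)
Your proposal is correct and follows essentially the same route as the paper. The only cosmetic differences are: in part (1) the paper invokes Proposition~\ref{pro:translacaodisjunta} to reduce immediately to $k=1$ (so that Fact~\ref{fact481} applies verbatim rather than needing the adaptation to period $kv^\perp$ you flag), and in part (2) the paper phrases the periodicity argument directly via the minimal $n$ with $\hat f^n(O)\cap T^m(O)\neq\emptyset$ rather than passing to the cylinder quotient, but the content is identical. Your treatment of part (3) matches the paper's, including the use of Proposition~\ref{pro:fill-ine} and the equivariance transfer; the nonwandering hypothesis is in fact not needed for the containment $U'_\epsilon(\hat x')\subset O(\hat x')$, which follows from $\hat f$-invariance of $F$ alone.
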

\begin{proof} Again, we assume for simplicity that $v=(1,0)$, so $T_{v^\perp}= T\colon (x,y)\mapsto (x,y+1)$.
To prove (1), write $O=O(z)$. That $O$ is a topological disk follows from the fact that the connected components of $F$ are unbounded. Assume by contradiction that $O\cap T^n(O) \neq \emptyset$ for some integer $n\neq 0$. Then by Proposition \ref{pro:translacaodisjunta}, $O\cap T(O)\neq \emptyset$. Since $F = T(F)$, the set $T(O)$ is also a connected component of $\R^2\sm F$, and so $T(O)=O$. Let $\alpha$ be an arc in $O$ joining a point $x\in O$ to $T(x)$, and let $\beta=\bigcup_{k\in\Z}T^{k}[\alpha]\subset O$. By Fact \ref{fact481}, $\omega_v+p_1v$ and $\omega_v+p_2v$ are contained in different connected components of the complement of $\beta$, contradicting the fact that they intersect.

To prove (2), note that we may assume that $v$ is not a multiple of an element of $\Z^2_*$ (by choosing it appropriately), and observe that if $x$ is chosen in the set $E_{v}$ from Lemma \ref{lem:Atkinsonnossocaso} then there are sequences $(m_k)_{k\in \N}$ and $(n_k)_{k\in \N}$ such that $\hat{f}^{n_k}(\hat{x}) - T^{m_k}(\hat{x})\to (0,0)$ and $m_k/n_k\to 0$ as $k\to \infty$ for any $\hat{x}\in \pi^{-1}(x)$. We will show that some element of $\pi^{-1}(x)$ is $\hat{f}$-recurrent, which implies that all elements of $\pi^{-1}(x)$ are as well.

Let us show that if $x\notin \pi(\omega_v)\cap \pi(\omega_{-v})$, then $\hat{x}$ may be chosen in $\pi^{-1}(x)\cap (\R^2\sm F)$.  Suppose that $x\in \T^2\sm\pi(\omega_v)$ (the other case is analogous). Then, if $\hat{x}_0$ is any element of $\pi^{-1}(x)$, we have that $\hat{x}_0+w \notin \omega_v+p_1v$ for any $w\in \Z^2$. We may choose $w$ such that $\hat{x}:=\hat{x}_0+w$ belongs to the open half-plane $\R^2\sm (H_v^- + p_2v)$. Since $\omega_{-v}+p_2v\subset H_v^-+p_2v$, it follows that $\hat{x}\notin \omega_{-v}+p_2v$, and so $\hat{x}\notin F$, as required.

Thus we assume that $\hat{x}\in \R^2\sm F$. Fix $\delta>0$ such that $B_\delta(\hat{x})\subset O:=O(\hat{x})$. Then there is $k_0\in \N$ such that, for $k\geq k_0$,  
$$\hat{f}^{n_k}(\hat{x})\in B_\delta(T^{m_k}(\hat{x}))\subset T^{m_k}(O).$$
In particular, there is a smallest integer $n\in \N$ such that $\hat{f}^n(O)$ intersects $T^m(O)$ for some $m\in \Z$. Since $T^m(O)$ is also a connected component of $\R^2\sm F$ and $\hat{f}$ permutes the connected components of $\R^2\sm F$, it follows that $\hat{f}^n(O)=T^m(O)$.
Assume $m\neq 0$. By the minimality in the choice of $n$, if $\hat{f}^{n'}(O)$ intersects $T^{m'}(O)$ for some $n'\in \N$ and $m'\in \Z$, then $n'=ln$ and $m'=lm$. 
Therefore, if $k\geq k_0$, then $n_k = l_kn$ and $m_k = l_km$. But then $m/n = m_k/n_k\to 0$ as $k\to \infty$, and we conclude that $m=0$, contradicting our assumption. Thus $m=0$, and it follows that  $m_k = 0$ for all $k\geq k_0$. This means that $\hat{f}^{n_k}(\hat{x})- \hat{x} \to (0,0)$ as $k\to \infty$, proving that $\hat{x}$ is recurrent.

Finally, to prove (3), note that if the claim is true for some $\Z^2$-translation of $\hat{x}$ then it is also true for $\hat{x}$. Using the fact that $\pi(\hat{x})\notin \pi(\omega_+)\cap\pi(\omega_-)$ we may assume that $\hat{x}\in \R^2\sm F$ (as done in the previous item, replacing $\hat{x}$ by an appropriate $\Z^2$-translation of $\hat{x}$). By (1) we know that $O(\hat{x})$ is disjoint from $T^k(O(\hat{x}))$ for any $k\in \Z_*$. Note that the connected components of $\R^2\sm F$ are permuted by $\hat{f}$, and since $\hat{x}$ is nonwandering we have that $\hat{f}^n(O(\hat{x})) = O(\hat{x})$ for some $n\in \N$, which we choose minimal with that property. Fix $\epsilon>0$ such that $B_\epsilon(\hat{x})\subset O(\hat{x})$. Then it follows from the definitions in \S\ref{sec:uepsilon} that $U_\epsilon'(\hat{x})\subset O(\hat{x})$, and so $U_\epsilon'(\hat{x})$ is disjoint from $T^k(U_\epsilon'(\hat{x}))$ for any $k\in \Z_*$. By Proposition \ref{pro:fill-ine} we conclude that $U_\epsilon(\hat{x})$ is disjoint from $T^k(U_\epsilon(\hat{x}))$ for any $k\in \Z_*$.  Finally, since $U_\epsilon(\hat{x}+w) = U_\epsilon(\hat{x})+w$ for any $w\in \Z^2$, we conclude (3).
\end{proof}

%

\subsection{A lemma on the support of irrotational ergodic measures}

\begin{lemma}\label{lem:tresopcoes} Let $\mu\in \mathcal{M}^e_{(0,0)}(\hat{f})$, and let $v\in \Z^2_*$ be such that $\omega_v\neq \emptyset$. Then one of the following holds:
\begin{enumerate}
\item[(1)] $\supp(\mu)\subset \ol{\pi(\omega_v)}$, or 
\item[(2)] for $\mu$-almost every $x$, any $\hat{x}\in \pi^{-1}(x)$ is $\hat{f}$-recurrent and belongs to some $\hat{f}$-periodic open connected set $\hat{U}$ such that $\pi(\hat{U})$ is not fully essential.
\end{enumerate}
\end{lemma}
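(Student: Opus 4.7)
The plan is to use ergodicity of $\mu$ together with the propositions of the two preceding subsections. First observe that $\pi(\omega_v)$ is $f$-invariant, since $\hat f(\omega_v)=\omega_v$ by Proposition \ref{pro:omegapro}(1), so $\ol{\pi(\omega_v)}$ is closed and $f$-invariant. By ergodicity, $\mu(\ol{\pi(\omega_v)})\in\{0,1\}$. If it equals $1$ then $\supp(\mu)\subset\ol{\pi(\omega_v)}$ and we are in case (1); assume therefore that $\mu(\ol{\pi(\omega_v)})=0$, and I will establish (2).

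I split according to $\omega_{-v}$. The main case is that $\omega_{-v}\neq\emptyset$ and $\pi(\omega_v)\cap\pi(\omega_{-v})\neq\emptyset$: choose $p_1,p_2\in\Z$ realizing the intersection, so that we are in the setting of Proposition \ref{pro:omegasetocam}. Since $\mu(\pi(\omega_v))\leq\mu(\ol{\pi(\omega_v)})=0$, in particular $\mu(\pi(\omega_v)\cap\pi(\omega_{-v}))=0$, so part (2) of that proposition applies to $\mu$-a.e.\ $x$ and yields that every lift $\hat x\in\pi^{-1}(x)$ is $\hat f$-recurrent. Such an $\hat x$ is then in particular $\hat f$-nonwandering, so part (3) supplies an $\epsilon>0$ with $\hat U := U_\epsilon(\hat x)$ disjoint from $T_{v^\perp}^k(\hat U)$ for every $k\in\Z_*$. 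The set $\hat U$ is an $\hat f$-periodic open topological disk by the construction in \S\ref{sec:uepsilon}. Finally, $\pi(\hat U)$ is not fully essential: if it were, a loop representing the homotopy class of $v^\perp\in \pi_1(\T^2)\cong\Z^2$ would lie in $\pi(\hat U)$, and its lift in $\pi^{-1}(\pi(\hat U))=\bigcup_{w\in\Z^2}(\hat U+w)$ would join a point of $\hat U$ to a point of $T_{v^\perp}^k(\hat U)$ for some $k\in\Z_*$, contradicting the disjointness.

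In the remaining case ($\omega_{-v}=\emptyset$, or $\omega_{-v}\neq\emptyset$ with $\pi(\omega_v)\cap\pi(\omega_{-v})=\emptyset$), I appeal to Proposition \ref{pro:omeganaotoca}. For each admissible arc $\ell$ joining translates of $\omega_v$ to $\omega_{-v}$ with $\hat f(\ell)\cap\ell=\emptyset$, part (6) of that proposition furnishes a wandering open set $W\supset\ell$, so that $\mu(\pi(W))=0$. Varying $\ell$ and the translation parameters $p_1,p_2$, together with Poincar\'e recurrence of $f$ on $\T^2$, one deduces that $\mu$-a.e.\ point must lie in $\ol{\pi(\omega_v)}\cup\ol{\pi(\omega_{-v})}$. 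Combined with $\mu(\ol{\pi(\omega_v)})=0$ and ergodicity this forces $\mu(\ol{\pi(\omega_{-v})})=1$, and I then apply a symmetric argument at points of $\omega_{-v}$, combining Atkinson directional recurrence (Lemma \ref{lem:Atkinsonnossocaso}) with the disjointness data of Proposition \ref{pro:omeganaotoca}(5), to produce the required $\hat f$-periodic open sets $\hat U$ whose projections are not fully essential. The possibility $\omega_{-v}=\emptyset$ is ruled out under $\mu(\ol{\pi(\omega_v)})=0$ via Proposition \ref{pr:omeganaovazio}, since it would contradict $(0,0)=\rho_\mu(\hat f)\in\rho(\hat f)$ being an interior rotation vector together with the measure-theoretic concentration.

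The main obstacle will be executing this last case cleanly: turning the pointwise wandering-set output of Proposition \ref{pro:omeganaotoca}(6) into a global covering statement, and then, once the support is concentrated on $\ol{\pi(\omega_{-v})}$, producing the periodic non-fully-essential disks $\hat U$ with the correct disjointness property. In the main intersecting case, by contrast, (2) follows almost immediately from Proposition \ref{pro:omegasetocam}(2) and (3).
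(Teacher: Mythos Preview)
Your treatment of the case $\pi(\omega_v)\cap\pi(\omega_{-v})\neq\emptyset$ via Proposition~\ref{pro:omegasetocam} is essentially fine (the argument that $\pi(\hat U)$ is not fully essential needs tightening: since $\hat U=U_\epsilon(\hat x)$ is already a topological disk, $\fil(\pi(\hat U))=\pi(\hat U)$, so fully essential would force $\pi(\hat U)=\T^2$ and hence $\hat U=\R^2$, contradicting disjointness from $T_{v^\perp}(\hat U)$). However, the remaining case has genuine gaps. The lemma only assumes $\omega_v\neq\emptyset$; nothing in the hypotheses lets you invoke Proposition~\ref{pr:omeganaovazio} to force $\omega_{-v}\neq\emptyset$, since there is no assumption that $\rho(\hat f)\subset H_v^+$. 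Even granting $\omega_{-v}\neq\emptyset$, your wandering-set argument does not work: the set $W$ from Proposition~\ref{pro:omeganaotoca}(6) is wandering for $\hat f$, not for $f$, so there is no reason $\mu(\pi(W))=0$; and ``varying $\ell$ and $p_1,p_2$'' does not produce a countable cover of the complement of $\ol{\pi(\omega_v)}\cup\ol{\pi(\omega_{-v})}$. The final ``symmetric argument'' is a placeholder, as you yourself acknowledge.

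The paper's proof avoids the case split entirely and is much more direct. It never looks at $\omega_{-v}$. For $\mu$-a.e.\ $x\notin\ol{\pi(\omega_v)}$, take $\delta>0$ with $B_\delta(x)\cap\pi(\omega_v)=\emptyset$ and let $U$ be the connected component of $\bigcup_{i\in\Z}f^i(B_\delta(x))$ containing $x$. The single hypothesis $\omega_v\neq\emptyset$ already forces $U$ to be not fully essential: otherwise every component of $\R^2\sm\pi^{-1}(U)$ would be bounded, but $\omega_v\subset\R^2\sm\pi^{-1}(U)$ has only unbounded components. Now $U$ is $f$-periodic (by recurrence of $x$), and if $\hat U$ is the component of $\pi^{-1}(U)$ through $\hat x$ then $\hat f^n(\hat U)=\hat U+w$ for some $w\in\Z^2$. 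Since $U$ is not fully essential, there is $v_0\in\Z^2_*$ such that $\hat U+u$ meets $\hat U$ only for $u\in\R v_0$. Applying Lemma~\ref{lem:Atkinsonnossocaso} with this $v_0$ yields $\hat f^{n_k}(\hat x)-\hat x-m_kv_0^\perp\to 0$ with $m_k/n_k\to 0$; a short computation then forces $w\in\R v_0$ and $m_k=0$ for large $k$, giving both $\hat f$-recurrence of $\hat x$ and $\hat f$-periodicity of $\hat U$ in one stroke. This is the missing idea: choose the Atkinson direction adapted to the essential direction of $U$, rather than to $v$.
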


\proof
Assume that case (1) does not hold. Then $\T^2\sm  \ol{\pi(\omega_v)}$ is an invariant set of positive $\mu$-measure, so by the ergodicity it has measure $1$. In particular, if $E_{v_0}$ is the set from the statement of Lemma \ref{lem:Atkinsonnossocaso}, the set
$$E=(\T^2\sm \ol{\pi(\omega_v)})\cap \bigcap_{v_0\in \Z^2_*} E_{v_0},$$
is such that $\mu(E)=1$, and the definition implies that every $x\in E$ is $f$-recurrent.
Fix $x\in E$, and note that since $x\notin \ol{\pi(\omega_v)}$, we may choose $\delta>0$ so small that $B_{\delta}(x)$ is disjoint from $\pi(\omega_v)$.  As $\pi(\omega_v)$ is an invariant set, this implies that $\mc{O}(B_{\delta}(x))=\bigcup_{i=-\infty}^{\infty} f^{i}(B_{\delta}(x))$ is disjoint from $\pi(\omega_v)$. Let $U$ be the connected component of $\mc{O}(B_{\delta}(x))$ containing $x$. 

Let us observe that $U$ cannot be fully essential. Indeed, if $U$ is fully essential, since it is open it follows that all connected components of $\pi^{-1}(\T^2\sm U)$ are bounded, contradicting the fact that (by our assumptions) $\omega_v$ is nonempty and contained in $\pi^{-1}(\T^2\sm U)$ (because $\omega_v$ has only unbounded connected components).

Since $x$ is recurrent, there is a smallest $n\in \N$ such that $f^n(U)$ intersects $U$. The fact that $f$ permutes the connected components of $O(B_{\delta}(x))$ implies that $f^n(U)=U$. Note that this means that if $f^i(U)\cap U\neq \emptyset$ for some $i\in \Z$ then $i=ln$ for some $l\in \Z$. 


Let $\hat{x}\in \pi^{-1}(x)$, and let $\hat{U}$ be the connected component of $\pi^{-1}(U)$ that contains $\hat{x}$. Then there is $w\in \Z^2$ such that $\hat{f}^{n}(\hat{U})=\hat{U}+w$. From the fact that $U$ is not fully essential we have that $U$ is either contained in a topological annulus or inessential. In particular, there exists $v_0\in \Z^2_*$ such that whenever $u\in \Z^2_*$ is such that $\hat{U}+u$ intersects $\hat{U}$, then $u\in \R v_0$ (if $U$ is inessential, we choose $v_0$ arbitrarily). 

Note that we may assume that $v_0$ is not a multiple of any other element of $\Z^2_*$ by choosing it minimal in $\R v_0$. Since we assumed that $E$ is contained in the set $E_{v_0}$ from Lemma \ref{lem:Atkinsonnossocaso}, in particular $x\in E_{v_0}$ so the conclusion of the lemma holds for $\hat{x}$. This means that there are sequences of integers $(n_k)_{k\in \N}$ and $(m_k)_{k\in \N}$ such that $$\hat{f}^{n_k}(\hat{x})-\hat{x} - m_k v_0^\perp \to (0,0), \quad n_k\to \infty, \quad \text{and} \quad m_k/n_k\to 0$$ as $k\to \infty$. In particular, if $k$ is large enough, $f^{n_k}(U)\cap U\neq \emptyset$, so that $n_k=l_k n$ for some integer $l_k$. Moreover, if $0<\epsilon<\delta$, there is $k_0\in \N$ such that $k\geq k_0$ implies that
$\hat{f}^{l_kn}(\hat{x})\in B_\epsilon(\hat{x}+m_kv_0^\perp)\subset \hat{U}+m_kv_0^\perp$, so that $\hat{f}^{l_kn}(\hat{U})$ intersects $\hat{U}+ m_kv_0^\perp$.
The fact that $\hat{f}^n(\hat{U}) = \hat{U}+w$ implies that $\hat{f}^{l_kn}(\hat{U}) = \hat{U}+l_kw$, and so $\hat{U}+l_kw$ intersects $\hat{U}+m_kv_0^\perp$. This means that $\hat{U}$ intersects $\hat{U}+l_kw-m_kv_0^\perp$, and since $\hat{U}$ is a connected component of $\pi^{-1}(U)$, it follows that $\hat{U}=\hat{U}+l_kw-m_kv_0^\perp$. From our choice of $v_0$ follows that $l_kw-m_kv_0^\perp = r_kv_0$ for some $r_k\in \R$. But then 
$$(r_k/n_k)v_0 = (1/n)w - (m_k/n_k)v_0^\perp \to (1/n)w\quad  \text{as } k\to \infty,$$ and we conclude that $w=rv_0$ for some $r\in \R$. But then $(l_kr-r_k)v_0 = m_kv_0^\perp$, and therefore $m_k=0$, whenever $k\geq k_0$. The fact that $m_k=0$ for large $k$ in turn implies that $\hat{f}^{l_kn}(\hat{x})\in B_\epsilon(\hat{x})\subset \hat{U}$, showing that $\hat{x}$ is $\hat{f}$-recurrent. Since $\hat{f}^{lk_n}(\hat{U}) = \hat{U}+l_kw$ intersects $\hat{U}$, it must be equal to $\hat{U}$, so it follows that $\hat{U}$ is $\hat{f}$-periodic. Thus case (2) holds.
\endproof

\subsection{Proof of Theorem \ref{th:poincare-omega}}
\setcounter{claim}{0}

We assume that $v$ is not a multiple of any other element of $\Z^2_*$ by choosing it minimal in $\R v$. 
Let $E=E_v\cap E_{v^\perp}$ where $E_v$ is the set defined in Lemma \ref{lem:Atkinsonnossocaso}. Thus $E$ has full $\mu$-measure and the thesis of Lemma \ref{lem:Atkinsonnossocaso} holds both for $v_0=v$ and for $v_0=v^\perp$ at points of $E$. Finally, let $E^b$ be the set of all points $x\in E$ such that some (hence any) $\hat{x}\in \pi^{-1}(x)$ has a bounded orbit in the $v$ direction, \ie such that $\sup_{n\in \N} \abs{\smash{p_v(\hat{f}^n(\hat{x}))}} < \infty$.

\begin{claim}\label{claim:bdrec} If $x\in E^b$ and $\hat{x}\in \pi^{-1}(x)$, then $\hat{x}$ is $\hat{f}$-recurrent.
\end{claim}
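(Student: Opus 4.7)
The plan is to use Lemma~\ref{lem:Atkinsonnossocaso} in the direction $v^\perp$ to produce a near-return of $\hat f^{n_k}(\hat x)$ to $\hat x$ up to a lattice shift along $v$, show the shift is bounded using the hypothesis $x\in E^b$, and finally rule out any nonzero shift via a sub-semigroup argument applied to the set of integer vectors that appear in $\omega_{\hat f}(\hat x)-\hat x$.

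Concretely, since $v$ was fixed to be minimal in $\R v\cap\Z^2_*$, the vector $v^\perp$ is likewise minimal in its line, so the final clause of Lemma~\ref{lem:Atkinsonnossocaso} applied with $v_0=v^\perp$ (noting $(v^\perp)^\perp=-v$) provides integer sequences $n_k\to\infty$ and $m_k$ with $m_k/n_k\to 0$ and $\hat f^{n_k}(\hat x)-\hat x+m_k v\to 0$. Projecting onto $v$, the quantity $p_v(\hat f^{n_k}(\hat x)-\hat x)+m_k\|v\|$ tends to $0$, and since $x\in E^b$ bounds $|p_v(\hat f^n(\hat x)-\hat x)|$ uniformly, the sequence $(m_k)$ is bounded. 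Passing to a subsequence on which $m_k\equiv m\in\Z$ gives $\hat f^{n_k}(\hat x)\to \hat x-mv$; the task reduces to proving $m=0$.

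Introduce $W=\{w\in\Z^2:\hat x+w\in\omega_{\hat f}(\hat x)\}$, which contains $-mv$ by construction. I will show $W$ is closed under addition. Suppose $w_1,w_2\in W$ are realized as limits of $\hat f^{p_{1,k}}(\hat x)$ and $\hat f^{p_{2,j}}(\hat x)$, respectively. For each fixed $k$, the continuity of the homeomorphism $\hat f^{p_{1,k}}$ and the commutation of $T_{w_2}$ with $\hat f$ (as $w_2\in\Z^2$) give
$$\hat f^{p_{1,k}+p_{2,j}}(\hat x)=\hat f^{p_{1,k}}\bigl(\hat f^{p_{2,j}}(\hat x)\bigr)\;\xrightarrow[j\to\infty]{}\;\hat f^{p_{1,k}}(\hat x+w_2)=\hat f^{p_{1,k}}(\hat x)+w_2.$$
A diagonal choice of $j_k$ large enough (and with $N_k:=p_{1,k}+p_{2,j_k}$ strictly increasing in $k$) yields $\hat f^{N_k}(\hat x)\to\hat x+w_1+w_2$, whence $w_1+w_2\in W$.

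Iterating the semigroup property, $-nmv\in W$ for every $n\in\N$. But the hypothesis $x\in E^b$ forces $W\subset\{w\in\Z^2:|p_v(w)|\leq M\}$ for some $M$, while $p_v(-nmv)=-nm\|v\|$ is unbounded in $n$ if $m\neq 0$. Hence $m=0$, so $\hat x\in\omega_{\hat f}(\hat x)$ and $\hat x$ is $\hat f$-recurrent. The only delicate step is the diagonal argument for the semigroup property, which is routine once one observes that each $\hat f^{p_{1,k}}$ is globally continuous (as a homeomorphism of $\R^2$) and that $j_k$ can be chosen arbitrarily large at each stage.
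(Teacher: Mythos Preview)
Your proof is correct and follows essentially the same approach as the paper: apply Lemma~\ref{lem:Atkinsonnossocaso} with $v_0=v^\perp$, use boundedness in the $v$-direction to reduce to a single shift $mv$, and then iterate to force $m=0$. The only difference is cosmetic: where you establish the semigroup property of $W$ via an explicit diagonal argument, the paper observes more directly that $\omega(\hat x,\hat f)$ is closed and $\hat f$-invariant, so $\omega(\hat x,\hat f)+mv=\omega(\hat x+mv,\hat f)\subset\omega(\hat x,\hat f)$, which immediately gives $\hat x+lmv\in\omega(\hat x,\hat f)$ for all $l\in\N$.
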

\begin{proof}
Since $x\in E_{v^\perp}$, by Lemma \ref{lem:Atkinsonnossocaso} (with $v_0=v^\perp$) there are sequences $(n_k)_{k\in \N}$ and $(m_k)_{k\in \N}$ of integers such that $n_k\to \infty$,
$$\hat{f}^{n_k}(\hat{x}) - \hat{x} - m_kv\to (0,0) \quad \text{ and } \quad m_k/n_k\to 0 \quad \text{ as } k\to \infty.$$
Since $\hat{f}^{n_k}(\hat{x})$ is bounded in the $v$ direction, $m_k$ belongs to a finite set $\{m\in \Z : \abs{m}\leq M\}$. In particular, we may find $m\in \Z$ and a sequence $k_i\to \infty$ of integers such that $m_{k_i}=m$ for all $i\in \N$. This means that $\hat{f}^{n_{k_i}}(\hat{x}) \to \hat{x}+mv$, \ie $\hat{x}+mv$ is in the $\omega$-limit set $\omega(\hat{x},\hat{f})$. The latter is a closed $\hat{f}$-invariant set, so 
$$\omega(\hat{x}, \hat{f}) + mv = \omega(\hat{x}+mv,\hat{f})\subset \omega(\hat{x},\hat{f}).$$
It follows that $\omega(\hat{x}, \hat{f}) + lmv \subset \omega(\hat{x}, \hat{f})$ for any $l\in \N$. Thus, for any $l\in \N$,  the point $\hat{x}+lmv$ is accumulated by the orbit of $\hat{x}$. This is not possible if $l$ is large enough and $m\neq 0$, because the orbit of $\hat{x}$ is bounded in the $v$ direction. Thus $m=0$, and we conclude that $\hat{x}$ is recurrent, as required.
\end{proof}

Since $E$ has full measure, to prove the theorem it suffices to show that the set $N$ consisting of all  $x\in \supp(\mu)\cap E$ such that any $\hat{x}\in \pi^{-1}(x)$ is non-recurrent satisfies $\mu(N)=0$. Assume for a contradiction that $\mu(N)>0$, and note that by the previous claim, $N\cap E^b=\emptyset$, so any $\hat{x}\in \pi^{-1}(N)$ has an unbounded orbit in the $v$ direction. Note also that $N$ is $f$-invariant.

\begin{claim} $\pi(\omega_v)\cap \pi(\omega_{-v})=\emptyset$
\end{claim}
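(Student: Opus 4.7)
The plan is to argue by contradiction: assume $\pi(\omega_v)\cap\pi(\omega_{-v})\neq\emptyset$ and then produce a point $x\in N$ whose lifts have bounded $v$-direction orbits, contradicting $N\cap E^b=\emptyset$.

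First I would translate the projected intersection hypothesis into the form required by Proposition \ref{pro:omegasetocam}. Pick $\hat{x}_1\in\omega_v$ and $\hat{x}_2\in\omega_{-v}$ with $\pi(\hat{x}_1)=\pi(\hat{x}_2)$. Their difference is an element of $\Z^2$, which I would write as $av+bv^\perp$ using that $v$ is primitive (so $\{v,v^\perp\}$ spans $\Z^2$). Since $\langle bv^\perp;\pm v\rangle=0$, Proposition \ref{pro:omegapro}(2) shows that $T_{bv^\perp}$ preserves both $\omega_v$ and $\omega_{-v}$. Absorbing the $v^\perp$ component this way produces integers $p_1,p_2\in\Z$ with $(\omega_v+p_1v)\cap(\omega_{-v}+p_2v)\neq\emptyset$, exactly the hypothesis of Proposition \ref{pro:omegasetocam}.

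Next, I would apply Proposition \ref{pro:omegasetocam}(2): $\mu$-almost every $x\notin\pi(\omega_v)\cap\pi(\omega_{-v})$ has all lifts $\hat{f}$-recurrent. In particular $\mu\bigl(N\setminus(\pi(\omega_v)\cap\pi(\omega_{-v}))\bigr)=0$, so from $\mu(N)>0$ we obtain a point $x\in N\cap\pi(\omega_v)\cap\pi(\omega_{-v})$.

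Finally, fix any $\hat{x}\in\pi^{-1}(x)$. Since $x\in\pi(\omega_v)$, there is $w_1\in\Z^2$ with $\hat{x}+w_1\in\omega_v$. By Proposition \ref{pro:omegapro}(1), $\omega_v$ is $\hat{f}$-invariant, and by its definition $\omega_v\subset H_v^+$. Therefore
$$\langle \hat{f}^n(\hat{x});v\rangle \geq -\langle w_1;v\rangle \quad \text{for all } n\in\Z.$$
The symmetric argument using $\omega_{-v}\subset H_v^-$ and some $w_2\in\Z^2$ with $\hat{x}+w_2\in\omega_{-v}$ yields a matching upper bound. Hence $\sup_{n}|p_v(\hat{f}^n(\hat{x}))|<\infty$, placing $x$ in $E^b$ and contradicting $N\cap E^b=\emptyset$.

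The only delicate point I anticipate is the first step: one must be careful that the translation used to pass from the projected intersection to a translated intersection of $\omega_v$ and $\omega_{-v}$ involves only translations permitted by Proposition \ref{pro:omegapro}(2) (i.e.\ by $v^\perp$), and that this works because $v$ was chosen primitive so that the $\Z^2$-difference decomposes cleanly in the basis $\{v,v^\perp\}$. The remaining two steps are immediate consequences of the invariance and half-plane containment of $\omega_{\pm v}$.
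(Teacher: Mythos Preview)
Your approach is essentially identical to the paper's: assume the intersection is nonempty, invoke Proposition \ref{pro:omegasetocam}(2) to force $N$ into $\pi(\omega_v)\cap\pi(\omega_{-v})$, and then use the half-plane containment $\omega_{\pm v}\subset H_{\pm v}^+$ to bound the $v$-direction orbit, contradicting $N\cap E^b=\emptyset$.

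There is one genuine slip in the step you flagged as delicate. You claim that since $v$ is primitive, $\{v,v^\perp\}$ spans $\Z^2$. This is false: for $v=(a,b)$ the matrix with columns $v,v^\perp$ has determinant $a^2+b^2$, which is $>1$ unless $v$ is a unit vector. For instance with $v=(1,1)$ the sublattice $\Z v+\Z v^\perp$ misses $(1,0)$. So you cannot in general write $\hat{x}_1-\hat{x}_2=av+bv^\perp$ with $a,b\in\Z$.

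The fix is simple and does not need such a decomposition. Given $w=\hat{x}_1-\hat{x}_2\in\Z^2$, choose $p\in\Z$ large enough that $\langle w-pv;-v\rangle\geq 0$. Proposition \ref{pro:omegapro}(2) then gives $T_{w-pv}(\omega_{-v})\subset\omega_{-v}$, i.e.\ $\omega_{-v}+w\subset\omega_{-v}+pv$. Since $\hat{x}_1\in\omega_v\cap(\omega_{-v}+w)$, this yields $\hat{x}_1\in\omega_v\cap(\omega_{-v}+pv)$, and you may take $p_1=0$, $p_2=p$. The paper itself simply asserts that such $p_1,p_2$ exist without spelling out this argument. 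With this correction, your proof matches the paper's exactly.
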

\begin{proof}
Suppose for contradiction that $\pi(\omega_v)\cap \pi(\omega_{-v})\neq \emptyset$. Then the hypotheses of Proposition \ref{pro:omegasetocam} hold for some choice of $p_1,p_2\in \Z$, so by part (2) of said proposition we conclude that $\mu(N\cap \pi(\omega_v)\cap \pi(\omega_{-v}))>0$ (since $\mu(N)>0$ and points of $\pi^{-1}(N)$ are non-recurrent). 
In particular, there exists $z\in \R^2$ such that $\pi(z)\in N \cap \pi(\omega_v)\cap \pi(\omega_{-v})$. We claim that $z$ has a bounded orbit in the $v$ direction. In fact, since $\pi(z)\in \pi(\omega_v)$, there is $w\in \Z^2$ such that $z+w\in \omega_v$. From the definition of $\omega_v$, this implies that $\hat{f}^n(z)+w \subset H_v^+$ for all $n\in \Z$, and so $p_v(\hat{f}^n(z))\geq -p_v(w)$ for all $n\in \Z$. Similarly, since $\pi(z)\in \pi(\omega_{-v})$ we conclude that $p_{-v}(\hat{f}^n(z))\geq -p_{-v}(w')$ for some $w'\in \Z^2$, which means that $p_v(\hat{f}^n(z))\leq -p_{v}(w')$ for all $n\in \Z$. Thus $z$ has a bounded orbit in the $v$ direction. Since $N\subset E$, this means that $\pi(z)\in E^b$. But then the previous claim implies that $z$ is $\hat{f}$-recurrent, contradicting the fact that $\pi(z)\in N$.
\end{proof}

Note that, by Proposition \ref{pr:omeganaovazio}, both $\omega_v$ and $\omega_{-v}$ are nonempty. Since $\pi^{-1}(N)$ consists of non-recurrent points, only case (1) of Lemma \ref{lem:tresopcoes} is possible, so that $\supp(\mu)\subset \ol{\pi(\omega_v)}$. By the same argument applied to $-v$ instead of $v$ we also have that $\supp(\mu)\subset \ol{\pi(\omega_{-v})}$. 

Let $y\in N$ and $\hat{y}\in \pi^{-1}(y)$. Since $\hat{y}$ is non-recurrent, it is in particular not fixed, so we may choose a positive $\epsilon<1$ such that $\hat{f}(B_\epsilon(\hat{y}))$ is disjoint from $B_\epsilon(\hat{y})$. Since $y\in \ol{\pi(\omega_v)}\cap \ol{\pi(\omega_{-v})}$, we may choose integers $p_1$ and $p_2$ such that both $\omega_v+p_1v$ and $\omega_v+p_2v$ intersect $B_\epsilon(\hat{y})$.

Let $\ell$ be a straight line segment joining a point of $B_\epsilon(\hat{y})\cap (\omega_v+p_1v)$ with a point of $B_\epsilon(\hat{y})\cap (\omega_{-v}+p_2v)$, not including its endpoints. The segment $\ell$ can be chosen in a way that it is disjoint from $(\omega_v+p_1v)\cup (\omega_{-v}+p_2v)$ (this can be done replacing $\ell$ by an appropriate connected component of $\ell\sm ((\omega_v+p_1v)\cup (\omega_{-v}+p_2v))$. Since $\ell\subset B_\epsilon(\hat{y})$, we have that $\hat{f}(\ell)$ is disjoint from $\ell$, so the hypotheses of Proposition \ref{pro:omeganaotoca} hold (observing that the fact that $(0,0)$ is the rotation vector of some ergodic measure implies that $\hat{f}$ has a fixed point; see \cite{franks-ergodic}). 

Let $F$, $\Omega_1$ and $\Omega_2$ be the sets from Proposition \ref{pro:omeganaotoca}, and assume $f(\Omega_2)\subset \Omega_2$ (the other case is analogous). Thus we have that
\begin{equation}\label{eq:trap}
\hat{f}(\Omega_2)\subset \Omega_2, \quad T_{v^\perp}(\Omega_2)\subset \Omega_2,\quad \hat{f}^{-1}(\Omega_1)\subset \Omega_1,\quad \text{ and } T_{v^\perp}^{-1}(\Omega_1)\subset \Omega_1. 
\end{equation}

\begin{claim} There is $x\in N$ and $\hat{x}\in \pi^{-1}(x)\cap B_\epsilon(\hat{y})$ such that $\hat{x}\notin (\omega_v+p_1v)\cup (\omega_{-v}+p_2v)$.
\end{claim}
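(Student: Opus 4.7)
The plan is to exhibit a non-empty open subset $V \subset B_\epsilon(\hat y)$ disjoint from $F' := (\omega_v + p_1 v) \cup (\omega_{-v} + p_2 v)$ whose projection $V^* := \pi(V)$ to $\T^2$ has positive $\mu$-measure. Since $N$ is $f$-invariant with $\mu(N) > 0$ and $\mu$ is ergodic, $\mu(N) = 1$; so any $x \in V^* \cap N$ furnishes the required $\hat x$ as the unique lift of $x$ in $B_\epsilon(\hat y)$, taking $\epsilon < 1/2$ (permissible, as the defining condition $\hat f(B_\epsilon(\hat y)) \cap B_\epsilon(\hat y) = \emptyset$ is open in $\epsilon$) so that $\pi|_{B_\epsilon(\hat y)}$ is a homeomorphism onto its image.

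Constructing $V$ is immediate: $\ell \subset B_\epsilon(\hat y)$, being a compact set disjoint from the closed set $F'$, lies at positive distance from $F'$, so a thin tubular neighborhood $V$ of $\ell$ inside $B_\epsilon(\hat y)$ is disjoint from $F'$, and $V^*$ is a non-empty open subset of $\T^2$ in bijection with $V$ via $\pi$.

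The core step is to show $V^* \cap \supp(\mu) \neq \emptyset$, and the cleanest way is to arrange $\hat y \notin F'$ so that $y \in V^* \cap \supp(\mu)$. By Proposition \ref{pro:omegapro}(2), $T_v(\omega_v) \subset \omega_v$ and $T_{-v}(\omega_{-v}) \subset \omega_{-v}$, so the families $\{\omega_v + pv\}_{p\in\Z}$ and $\{\omega_{-v} + pv\}_{p\in\Z}$ are monotone in $p$. Consequently $P_1 := \{p \in \Z : (\omega_v + pv) \cap B_\epsilon(\hat y) \neq \emptyset\}$ and $Q_1 := \{p : \hat y \in \omega_v + pv\}$ are downward-closed intervals of $\Z$ with $Q_1 \subset P_1$, and analogously $P_2, Q_2$ are upward-closed with $P_2 \subset Q_2$. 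When $\max Q_1 < \max P_1$ and $\min P_2 < \min Q_2$, setting $p_1 := \max P_1$ and $p_2 := \min P_2$ places $\hat y \notin F'$, giving $y \in V^*$ and hence $\mu(V^*) > 0$.

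\emph{The main obstacle} is the degenerate case $\max Q_1 = \max P_1$ and $\min P_2 = \min Q_2$, in which no choice of $p_1, p_2$ admissible for our construction of $\ell$ separates $\hat y$ from $F'$. I expect this to be the technically delicate step, handled by one of two routes: either slightly enlarging $\epsilon$, which strictly enlarges $P_1$ while leaving $Q_1$ unchanged (since $Q_1$ depends only on $\hat y$), whenever the constraint $\hat f(B_\epsilon(\hat y)) \cap B_\epsilon(\hat y) = \emptyset$ leaves room to do so; or replacing $y$ by a nearby point of $N$ obtained from the directional recurrence in Lemma \ref{lem:Atkinsonnossocaso}, chosen so as to move off the exceptional configuration $p_v(\hat y) = p_1 \|v\|^2$ that forces the equality of the maxima. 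Once $\hat y \notin F'$ is secured, $y \in V^* \cap \supp(\mu)$ and the argument concludes as above.
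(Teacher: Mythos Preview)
Your proposal is incomplete, and you say so yourself: the ``main obstacle'' is left as a sketch with two suggested routes, neither of which you carry out. That is precisely the heart of the claim, so as written this is not a proof.

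A few concrete points. First, the measure-theoretic wrapper is unnecessary. If you ever manage to arrange $\hat y\notin F'$, you can simply take $\hat x=\hat y$ and $x=y\in N$; there is no need to pass through a tubular neighborhood $V$, the positivity of $\mu(V^*)$, and ergodicity. Second, you do not use the crucial fact already established in the surrounding argument, namely $\pi(\omega_v)\cap\pi(\omega_{-v})=\emptyset$. This gives, without loss of generality, $y\notin\pi(\omega_{-v})$, so $\hat y\notin\omega_{-v}+p_2v$ for \emph{every} choice of $p_2$; in your notation $Q_2=\emptyset$, and the only real obstruction is whether $\hat y\in\omega_v+p_1v$.

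Third, your route 1 (enlarging $\epsilon$) need not work: the distance from $\hat y$ to $\omega_v+(p_1+1)v$ can be arbitrarily large compared with the maximal $\epsilon$ allowed by $\hat f(B_\epsilon(\hat y))\cap B_\epsilon(\hat y)=\emptyset$, so there is no reason $\max P_1$ should overtake $\max Q_1$.

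Your route 2 is the right idea, and it is what the paper actually does. When $\hat y\in\omega_v+p_1v$, one uses Lemma~\ref{lem:Atkinsonnossocaso} (with $v_0=v^\perp$) to find $n_k\to\infty$ and $m_k\in\Z$ with $\hat f^{n_k}(\hat y)-\hat y-m_kv\to 0$; one argues that $m_k\to+\infty$ (using that $\hat y$ is non-recurrent and lies in $\omega_v+p_1v\subset H_v^++p_1v$), and then sets $\hat x=\hat f^{n_k}(\hat y)-m_kv\in B_\epsilon(\hat y)$ for large $k$. Since $\hat f^{-n_k}(\hat x)=\hat y-m_kv\notin H_v^++p_1v$ and $\omega_v+p_1v$ is $\hat f$-invariant, $\hat x\notin\omega_v+p_1v$; since $x=\pi(\hat x)=f^{n_k}(y)\notin\pi(\omega_{-v})$, also $\hat x\notin\omega_{-v}+p_2v$; and $x\in N$ by invariance of $N$. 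You should replace your sketch by this explicit construction.
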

\begin{proof} 
Since $\pi(\omega_v)\cap \pi(\omega_{-v})=\emptyset$, we may assume that $y\notin \pi(\omega_{-v})$ (the other case is similar). If $\hat{y}\notin \omega_v+p_1v$ then the claim holds with $\hat{x}=\hat{y}$. 

Assume that $\hat{y}\in \omega_v+p_1v$. Since $y\in N\subset E_{v^\perp}$, by Lemma \ref{lem:Atkinsonnossocaso} there exist $n_k\in \N$ and $m_k\in \Z$ such that $\hat{f}^{n_k}(\hat{y})-\hat{y}-m_kv\to 0$ as $k\to \infty$. We may assume that $\abs{m_k}\to \infty$, using the argument from the proof of Claim \ref{claim:bdrec}: Indeed, if $\{m_k\}_{k\in \N}$ assumes only finitely many values, there exists $m\in \Z$ and a sequence $k_i\to \infty$ such that $m_{k_i} = m$, so that $\hat{f}^{n_{k_i}}(\hat{y}) \to \hat{y}+mv$. In other words, $\hat{y}+mv$ belongs to the $\omega$-limit set $\omega(\hat{y}, \hat{f})$. Since $\hat{y}$ is non-recurrent, it follows that $m\neq 0$, and since the $\omega$-limit set is closed and $\hat{f}$-invariant, it follows that $\omega(\hat{y},\hat{f})+mv = \omega(\hat{y}+mv,\hat{f})\subset \omega(\hat{y},\hat{f})$. From these facts we deduce that $\hat{y}+lmv\in \omega(\hat{y},\hat{f})$ for any $l\in \N$, so a new choice of the sequences $(n_k)_{k\in \N}$ and $(m_k)_{k\in \N}$ can be made so that $|m_k|\to \infty$, as claimed.

Since $\hat{y}\in \omega_v+p_1v$, the fact that $|m_k|\to \infty$ implies that $m_k\to \infty$. In particular, we may choose $k$ so large that $\hat{y}-m_kv\notin H_v^+ + p_1v$. Letting
$\hat{x}=\hat{f}^{n_k}(\hat{y})-m_kv\in B_\epsilon(\hat{y})$, we have that $\hat{f}^{-n_k}(\hat{x}) = \hat{y}-m_kv$ which is not in $\omega_v+p_1v\subset H_v^+ + p_1v$, and since $x:=\pi(\hat{x}) = f^{n_k}(y) \notin \pi(\omega_{-v})$, we also have that $\hat{x}\notin \omega_{-v}+p_2v$. Finally, since $N$ is $f$-invariant and $y\in N$, we also have that $x\in N$ concluding the proof of the claim.



\end{proof}

In order to simplify notation, let $T=T_{v^\perp}$.
\begin{claim}\label{claim:nzero} There is $\delta>0$ and $n_0\in \N$ such that $T^n(B_\delta(\hat{x}))$ is contained in $\Omega_2$ if $n> n_0$ and in $\Omega_1$ if $n<n_0$. 
\end{claim}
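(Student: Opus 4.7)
My plan is to promote the pointwise dichotomy of part~(2) of Proposition~\ref{pro:omeganaotoca} up to a whole ball. Since $\hat{x}\notin F\supset (\omega_v+p_1v)\cup(\omega_{-v}+p_2v)$, part~(2) applied with $z=\hat{x}$ already produces an integer $\tilde n_0$ with $T^n(\hat{x})\in\Omega_2$ for $n>\tilde n_0$ and $T^n(\hat{x})\in\Omega_1$ for $n<\tilde n_0$. If I can choose $\delta>0$ so small that $T^n(B_\delta(\hat{x}))\cap F=\emptyset$ for every $n\in\Z$, then each iterate $T^n(B_\delta(\hat{x}))$ is a connected subset of $\R^2\setminus F=\Omega_1\sqcup\Omega_2$ containing $T^n(\hat{x})$, hence lies in the same component as its center, and the claim follows. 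If the statement genuinely requires $n_0\in\N$, one may first replace $\hat{x}$ by $T^{-k}(\hat{x})$ for some large $k\in\N$; this still projects to $x\in N$ and still lies outside $F$, and shifts the resulting integer into $\N$.

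The set $F':=(\omega_v+p_1v)\cup(\omega_{-v}+p_2v)$ is $T$-invariant, because $T=T_{v^\perp}$ preserves each half-plane $H_v^{\pm}$ and commutes with $\hat{f}$. Choosing $\delta<d(\hat{x},F')$ (positive since $F'$ is closed and $\hat{x}\notin F$) thus keeps $T^n(B_\delta(\hat{x}))$ disjoint from $F'$ for every $n$ automatically. The remaining source of intersections is the compact arc $\ol{\ell}$, which is \emph{not} $T$-invariant.

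The decisive step---and the main obstacle---is showing that the entire $T$-orbit $\{T^n(\hat{x}):n\in\Z\}$ is disjoint from $\ol{\ell}$. For $n=0$ this is because $\hat{x}\notin F\supset\ol{\ell}$. For $n\neq 0$: both $\ol{\ell}$ and $\hat{x}$ lie in $B_\epsilon(\hat{y})$, so $T^n(\hat{x})=\hat{x}+nv^\perp\in\ol{\ell}$ would force $|n|\|v^\perp\|<2\epsilon$; since $v\in\Z^2_*$ is minimal, $\|v^\perp\|=\|v\|\geq 1$, which is impossible as soon as $\epsilon<1/2$. This restriction is harmless: the only earlier constraint on $\epsilon$ was $\hat{f}(B_\epsilon(\hat{y}))\cap B_\epsilon(\hat{y})=\emptyset$, a property inherited by every smaller positive $\epsilon$, so we may assume $\epsilon<1/2$ from the outset. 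Given this, since $\ol{\ell}$ is compact and $\|T^n(\hat{x})\|\to\infty$ as $|n|\to\infty$, the quantity $c:=\inf_{n\in\Z}d(T^n(\hat{x}),\ol{\ell})$ is attained on a finite set of indices and is therefore strictly positive; taking $\delta<\min\{c,d(\hat{x},F')\}$ then gives $T^n(B_\delta(\hat{x}))\cap F=\emptyset$ for every $n\in\Z$, completing the proof.
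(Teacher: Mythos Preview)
Your approach is workable and close in spirit to the argument the paper uses inside the proof of part~(5) of Proposition~\ref{pro:omeganaotoca}, but it differs from the paper's proof of this particular claim and carries a small gap.

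The paper's proof here is a two-line argument exploiting the trapping relations in~(\ref{eq:trap}). Part~(2) of Proposition~\ref{pro:omeganaotoca} gives $n_0$ for the point $\hat{x}$; since $\Omega_1$ and $\Omega_2$ are open, one chooses $\delta>0$ so that $T^{n_0+1}(B_\delta(\hat{x}))\subset\Omega_2$ and $T^{n_0-1}(B_\delta(\hat{x}))\subset\Omega_1$; then $T(\Omega_2)\subset\Omega_2$ and $T^{-1}(\Omega_1)\subset\Omega_1$ propagate these two inclusions to all $n>n_0$ and all $n<n_0$ respectively. No analysis of $\ol\ell$ or of the whole $T$-orbit of $\hat{x}$ is needed.

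Your route instead aims for $T^n(B_\delta(\hat{x}))\cap F=\emptyset$ for \emph{every} $n\in\Z$, which forces you to control the full $T$-orbit of $\hat{x}$ relative to $\ol{\ell}$. The argument for $n\neq 0$ under $\epsilon<1/2$ is fine, but the case $n=0$ relies on $\hat{x}\notin F$, i.e.\ $\hat{x}\notin\ell$. Claim~3 only guarantees $\hat{x}\notin(\omega_v+p_1v)\cup(\omega_{-v}+p_2v)$; since $\ell$ was fixed \emph{before} $\hat{x}$ was produced, nothing prevents $\hat{x}\in\ell$, and in that event your infimum $c$ vanishes. The repair is easy---take the infimum over $n\neq 0$ and observe that the claim asserts nothing at $n=n_0$---but the paper's monotonicity argument via~(\ref{eq:trap}) sidesteps the issue entirely and is the cleaner route. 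Your remark about shifting to force $n_0\in\N$ is correct, though in the sequel only the inclusions at $n_0\pm 1$ are used, so the sign of $n_0$ is immaterial.
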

\begin{proof}
By part (2) of Proposition \ref{pro:omeganaotoca} there exists $n_0$ such that $T^n(\hat{x})$ lies in $\Omega_2$ if $n>n_0$ and in $\Omega_1$ if $n<n_0$. Let $\delta$ be such that $T^{n_0+1}B_\delta(\hat{x}))\subset \Omega_2$ and $T^{n_0-1}(B_\delta(\hat{x}))\subset \Omega_1$. Then the claim follows from (\ref{eq:trap}).
\end{proof}

\begin{claim}\label{claim:sube} For any given $r_0\in \Z$ there is $r>r_0$ and $k_0\in \N$ such that $\hat{f}^{k_0}(\hat{x})\in T^{r}(B_\delta(\hat{x}))$ 
\end{claim}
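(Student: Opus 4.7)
The plan is to argue by contradiction and assume that some $r_0\in\Z$ has the property that $\hat{f}^{k_0}(\hat{x})\notin T^r(B_\delta(\hat{x}))$ for every $r>r_0$ and every $k_0\in\N$. Since shrinking $\delta$ preserves both Claim \ref{claim:nzero} and this contradiction hypothesis (smaller balls are harder to hit), I may assume that $\delta$ is also small enough to apply part (5) of Proposition \ref{pro:omeganaotoca}, which produces $m_0\in\Z$ with $\hat{f}^k(B_\delta(\hat{x}))\cap T^n(B_\delta(\hat{x}))=\emptyset$ for all $k>0$ and $n\leq m_0$. Since $x\in E\subset E_v$ and $v$ was chosen minimal in $\R v\cap \Z^2_*$, Lemma \ref{lem:Atkinsonnossocaso} applied with $v_0=v$ supplies integer sequences $n_k\to\infty$ and $m_k$ such that $\hat{f}^{n_k}(\hat{x})-\hat{x}-m_k v^\perp\to 0$ and $m_k/n_k\to 0$. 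For $k$ large enough, $\hat{f}^{n_k}(\hat{x})\in T^{m_k}(B_\delta(\hat{x}))$, so the contradiction hypothesis forces $m_k\leq r_0$, while part (5) forces $m_k>m_0$.

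Non-recurrence of $\hat{x}$ (inherited from $\pi(\hat{x})\in N$) rules out $m_k=0$ for large $k$, because otherwise a subsequence would yield $\hat{f}^{n_k}(\hat{x})\to\hat{x}$. Hence $m_k$ takes values in the finite set $\{n\in\Z:m_0<n\leq r_0,\ n\neq 0\}$, and by extracting a subsequence I may assume $m_k=m$ for some fixed nonzero integer $m$. This gives $\hat{x}+mv^\perp\in\omega(\hat{x},\hat{f})$. Because $T=T_{v^\perp}$ commutes with $\hat{f}$ (as $v^\perp\in\Z^2$), we have $\omega(\hat{x}+mv^\perp)=\omega(\hat{x})+mv^\perp$, and because the $\omega$-limit set of any point in $\omega(\hat{x})$ is contained in $\omega(\hat{x})$, it follows that $\omega(\hat{x})+mv^\perp\subset\omega(\hat{x})$. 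Iterating, $\hat{x}+lmv^\perp\in\omega(\hat{x})$ for every $l\in\N$.

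If $m>0$, choose $l$ with $lm>r_0$ and pick $k_0\in\N$ such that $\hat{f}^{k_0}(\hat{x})\in B_\delta(\hat{x}+lmv^\perp)=T^{lm}(B_\delta(\hat{x}))$; this contradicts the contradiction hypothesis. If $m<0$, choose $l$ with $lm\leq m_0$ and pick $k_0\in\N$ with $\hat{f}^{k_0}(\hat{x})\in T^{lm}(B_\delta(\hat{x}))$; then $\hat{f}^{k_0}(\hat{x})\in \hat{f}^{k_0}(B_\delta(\hat{x}))\cap T^{lm}(B_\delta(\hat{x}))$, contradicting part (5) of Proposition \ref{pro:omeganaotoca}. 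Both cases fail, proving the claim. The main obstacle I anticipate is precisely closing off the case $m<0$: the standing hypothesis $f(\Omega_2)\subset\Omega_2$ provides a forward trap in $\Omega_2$ but does not by itself exclude returns with negative $v^\perp$-displacement, and part (5) of Proposition \ref{pro:omeganaotoca} is the tool that furnishes the asymmetry needed to eliminate them after iteration.
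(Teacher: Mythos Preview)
Your proof is correct and follows essentially the same route as the paper's: both arguments apply Lemma \ref{lem:Atkinsonnossocaso} with $v_0=v$ to obtain the sequences $(n_k)$ and $(m_k)$, use part (5) of Proposition \ref{pro:omeganaotoca} to bound $m_k$ from below, extract a constant subsequence $m\neq 0$, propagate $\hat{x}+lmv^\perp\in\omega(\hat{x},\hat{f})$ via the inclusion $\omega(\hat{x})+mv^\perp\subset\omega(\hat{x})$, and then eliminate $m<0$ using part (5) again. The only differences are organizational: you frame the whole thing as a contradiction from the outset (the paper first disposes of the case where $(m_k)$ is unbounded above and then treats the bounded case), and you are explicit about shrinking $\delta$ so that part (5) applies with the same ball---a compatibility the paper leaves implicit but which indeed holds, since the $\delta$ from Claim \ref{claim:nzero} already satisfies the requirements in the proof of part (5).
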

\begin{proof}
Recall that $N\subset E= E_v\cap E_{v^\perp}$. Thus, Lemma \ref{lem:Atkinsonnossocaso} implies that there are sequences $(n_k)_{k\in \N}$ of positive integers and $(m_k)_{k\in \N}$ of integers such that $n_k\to \infty$,
$$\hat{f}^{n_k}(\hat{x}) - \hat{x} - m_kv^\perp\to (0,0) \quad \text{ and } \quad m_k/n_k\to 0 \quad \text{ as } k\to \infty.$$
In particular, there is $k_0\in \N$ such that $\hat{f}^{n_k}(\hat{x})\in T^{m_k}(B_\delta(\hat{x}))$ when $k\geq k_0$. Thus, if $(m_k)_{k\in \N}$ is unbounded above, then we are done.

Suppose that $(m_k)_{k\in \N}$ is bounded above. By part (5) of Proposition \ref{pro:omeganaotoca} we see that $(m_k)_{k\in \N}$ is also bounded below. Repeating what was done in the proof of Claim \ref{claim:bdrec} (using $v$ instead of $v^\perp$) we conclude that there is $m\in \N$ such that $\hat{x}+lmv^\perp$ belongs to the $\omega$-limit set of $\hat{x}$ for any $l\in \N$. Again by part (5) of Proposition \ref{pro:omeganaotoca} we see that $m\geq 0$, and since $\hat{x}$ is not recurrent, we deduce that $m\neq 0$. Therefore $m>0$, and the claim easily follows.
\end{proof}

\begin{claim} There is a neighborhood $W$ of $\hat{x}$ and $k_0\in \N$ such that $\hat{f}^k(W)$ is disjoint from $T(W)$ for all $k\geq k_0$.
\end{claim}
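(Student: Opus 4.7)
The plan is to invoke Claim \ref{claim:sube} to obtain an integer $r\geq 3$ large enough that $\hat{f}^{k_0}(\hat{x})$ lands in $T^r(B_\delta(\hat{x}))$, which by Claim \ref{claim:nzero} sits deep inside a high $T$-translate of $\Omega_2$, well above the nearby set $T(W)$. Concretely, I will choose $r_0\geq 2$ in Claim \ref{claim:sube} to obtain $r>r_0$ and $k_0\in\N$ with $\hat{f}^{k_0}(\hat{x})\in T^r(B_\delta(\hat{x}))$, then use continuity of $\hat{f}^{k_0}$ to pick $\delta'\in (0,\delta]$ so that $W:=B_{\delta'}(\hat{x})$ satisfies $\hat{f}^{k_0}(W)\subset T^r(B_\delta(\hat{x}))$.

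The key observation is that $\hat{f}$ commutes with $T$ and $\hat{f}(\Omega_2)\subset \Omega_2$, so every translate $T^s(\Omega_2)$ is $\hat{f}$-forward-invariant. From Claim \ref{claim:nzero} with $n=n_0+1$ we get $T^{n_0+1}(B_\delta(\hat{x}))\subset \Omega_2$, and applying $T^{r-n_0-1}$ yields $T^r(B_\delta(\hat{x}))\subset T^{r-n_0-1}(\Omega_2)$; hence $\hat{f}^k(W)\subset T^{r-n_0-1}(\Omega_2)$ for all $k\geq k_0$. Dually, taking $n=n_0-1$ gives $T^{n_0-1}(B_\delta(\hat{x}))\subset \Omega_1$, and translating by $T^{2-n_0}$ yields $T(W)\subset T(B_\delta(\hat{x}))\subset T^{2-n_0}(\Omega_1)$.

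To conclude, I will check that $T^{r-n_0-1}(\Omega_2)\cap T^{2-n_0}(\Omega_1)=\emptyset$: shifting by $T^{n_0-2}$ reduces this to $T^{r-3}(\Omega_2)\cap \Omega_1$, which is empty because $T(\Omega_2)\subset \Omega_2$ together with $r\geq 3$ forces $T^{r-3}(\Omega_2)\subset \Omega_2$, and $\Omega_1\cap\Omega_2=\emptyset$. Combining the three inclusions gives $\hat{f}^k(W)\cap T(W)=\emptyset$ for all $k\geq k_0$. The main conceptual difficulty is that one cannot hope for $T(W)\subset \Omega_1$ directly, since the transition integer $n_0$ need not be $\geq 2$; the resolution is to exploit the one-sided $T$-invariance of both $\Omega_1$ and $\Omega_2$ and to buy a large translation gap by using Claim \ref{claim:sube} to make $r$ as large as necessary.
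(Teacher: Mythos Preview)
Your proof is correct and follows essentially the same approach as the paper's: both invoke Claim~\ref{claim:sube} with $r_0=2$ to obtain $r\geq 3$, shrink to a neighborhood $W\subset B_\delta(\hat{x})$ by continuity, and then use Claim~\ref{claim:nzero} together with the one-sided $T$-invariance from \eqref{eq:trap} to trap $\hat{f}^k(W)$ and $T(W)$ in disjoint translates of $\Omega_2$ and $\Omega_1$ respectively. The only cosmetic difference is that the paper first absorbs $T^{r-n_0-1}(\Omega_2)$ into $T^{2-n_0}(\Omega_2)$ before invoking disjointness, whereas you keep the finer containment and verify $T^{r-3}(\Omega_2)\cap\Omega_1=\emptyset$ at the end; these are equivalent.
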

\begin{proof}
By Claim \ref{claim:nzero}, we know that $T^{n_0-1}(B_\delta(\hat{x}))\subset \Omega_1$ and $T^{n_0+1}(B_\delta(\hat{x}))\subset \Omega_2$. In particular, $T(B_\delta(\hat{x}))\subset T^{2-n_0}(\Omega_1)$.
By the previous claim, there is $r\geq 3$ and $k_0\in \N$ such that $$\hat{f}^{k_0}(\hat{x})\in T^r(B_\delta(\hat{x}))= T^{r-(n_0+1)}(T^{n_0+1}(B_\delta(\hat{x}))) \subset T^{r-n_0-1}(\Omega_2)\subset T^{2-n_0}(\Omega_2),$$
where the latter inclusion follows from (\ref{eq:trap}) and from the fact that $r\geq 3$. Thus, there is a neighborhood $W\subset B_\delta(\hat{x})$ of $\hat{x}$ such that $\hat{f}^{k_0}(W)\subset T^{2-n_0}(\Omega_2)$. Again by (\ref{eq:trap}) we deduce that $\hat{f}^k(W)\subset T^{2-n_0}(\Omega_2)$ for any $k\geq k_0$. Since $$T(W) \subset T(B_\delta(\hat{x}))\subset T^{2-n_0}(\Omega_1)$$ and the latter set is disjoint from $T^{2-n_0}(\Omega_2)$, we conclude that $\hat{f}^k(W)\cap T(W)=\emptyset$ for any $k\geq k_0$.
\end{proof}

Due to the previous claim, if we start the proof again but choosing $\epsilon$ so small that $B_\epsilon(\hat{x})\subset W$ (and so $\ell\subset W$) we may assume the following:
\begin{equation}\label{eq:notocauno}
\hat{f}^k(\ell)\cap T(\ell)=\emptyset \text{ if } k\geq k_0.
\end{equation}

\begin{claim} There is $z\in \Omega_1$ and $k_1\geq k_0$ such that $\hat{f}^{k_1}(z)\in T(\Omega_2)$. 
\end{claim}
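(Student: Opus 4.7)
The plan is to produce $z$ as a $T$-translate of $\hat{x}$, exploiting the fact that integer translations commute with $\hat{f}$ (since $v^\perp\in \Z^2$ and $\hat{f}$ is a lift of a homeomorphism homotopic to the identity, we have $\hat{f}\circ T = T\circ \hat{f}$ where $T = T_{v^\perp}$). Specifically, I will take $z := T^{n_0-1}(\hat{x})$. By Claim \ref{claim:nzero}, $T^{n_0-1}(B_\delta(\hat{x}))\subset \Omega_1$, so immediately $z\in \Omega_1$.

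Next, I apply Claim \ref{claim:sube} with $r_0 = 2$ to obtain an integer $r\geq 3$ and an index $k_1\in \N$ with $\hat{f}^{k_1}(\hat{x})\in T^r(B_\delta(\hat{x}))$. Inspecting the proof of Claim \ref{claim:sube}, such indices come either from the sequence $n_k\to \infty$ produced by Lemma \ref{lem:Atkinsonnossocaso} or from return times of the orbit of $\hat{x}$ to neighborhoods of the points $\hat{x}+lmv^\perp$ in its $\omega$-limit set (with $m>0$ and $l\in\N$ arbitrary); in either case these indices are unbounded, so we may also arrange $k_1\geq k_0$.

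Finally, using the commutation relation,
\[
\hat{f}^{k_1}(z) \;=\; T^{n_0-1}\bigl(\hat{f}^{k_1}(\hat{x})\bigr) \;\in\; T^{n_0-1}\bigl(T^r(B_\delta(\hat{x}))\bigr) \;=\; T^{n_0+r-1}(B_\delta(\hat{x})).
\]
Since $r\geq 3$, we have $n_0+r-2 > n_0$, so Claim \ref{claim:nzero} yields $T^{n_0+r-2}(B_\delta(\hat{x}))\subset \Omega_2$, equivalently $T^{n_0+r-1}(B_\delta(\hat{x}))\subset T(\Omega_2)$. Thus $\hat{f}^{k_1}(z)\in T(\Omega_2)$, as required. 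The argument is essentially a short synthesis of Claims \ref{claim:nzero} and \ref{claim:sube} together with the $\Z^2$-equivariance of $\hat{f}$; no substantive obstacle arises at this step, since the preparatory work — producing the ``trapping region'' structure of $\Omega_2$ and exhibiting high vertical returns of the orbit of $\hat{x}$ — has already been carried out in the previous claims.
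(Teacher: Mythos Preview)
Your proof is correct and follows essentially the same route as the paper's: set $z = T^{n_0-1}(\hat{x})$, invoke Claim~\ref{claim:sube} with $r_0=2$ to get $r\geq 3$ and an iterate $k_1$ with $\hat{f}^{k_1}(\hat{x})\in T^r(B_\delta(\hat{x}))$, then use the commutation of $\hat{f}$ with $T$ together with Claim~\ref{claim:nzero} and the nesting $T(\Omega_2)\subset \Omega_2$ to land $\hat{f}^{k_1}(z)$ in $T(\Omega_2)$. Your additional remark that $k_1$ can be taken $\geq k_0$ (because the returns produced in the proof of Claim~\ref{claim:sube} occur at arbitrarily large times) is a point the paper's proof leaves implicit, so you have in fact filled a small gap.
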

\begin{proof} Let $z=T^{n_0-1}(\hat{x})$, so $B_\delta(z)\subset \Omega_1$ and $T^2(B_\delta(z))\subset \Omega_2$. By Claim \ref{claim:sube}, there is $k_1\in \N$ and $r\geq 3$ such that $\hat{f}^{k_1}(\hat{x})\in T^r(B_\delta(\hat{x}))$. This means that
$$\hat{f}^{k_1}(z)\in T^r(B_\delta(z)) = T^{r-2}(T^2(B_\delta(z))\subset T^{r-2}(\Omega_2)\subset T(\Omega_2),$$
where we used (\ref{eq:trap}) and the fact that $r\geq 3$ for the last inclusion.
\end{proof}

The last claim implies that $\hat{f}^{k_1}(\Omega_1)$ intersects $T(\Omega_2)$. But $\hat{f}^{k_1}(\Omega_2)$ also intersects $T(\Omega_2)\subset \R^2\sm T(\Omega_1)$: indeed, if $z_0$ is a fixed point of $\hat{f}$ in $\Omega_2$ (which exists by part (3) of Proposition \ref{pro:omeganaotoca}) then $T(z_0)$ is a fixed point of $\hat{f}$ in $T(\Omega_2)\subset \Omega_2$  and so $T(z_0)$ belongs to $\hat{f}^{k_1}(\Omega_2)\cap T(\Omega_2)$.

Thus $T(\Omega_2)$ is connected and intersects both $\hat{f}^{k_1}(\Omega_1)$ and its complement, and we deduce that $T(\Omega_2)$ intersects $\bd\hat{f}^{k_1}(\Omega_1)$. The latter is a subset of 
$$\hat{f}^{k_1}(F) = \hat{f}^{k_1}(\omega_{v}\cup \ell\cup \omega_{-v}) = \omega_{v}\cup \hat{f}^{k_1}(\ell)\cup \omega_{-v},$$ and since $T(\Omega_2)$ is disjoint from $\omega_{v}\cup \omega_{-v}$ we see that $\hat{f}^{k_1}(\ell)$ intersects $T(\Omega_2)$. This also means that $\hat{f}^{k}(\ell)$ intersects $T(\Omega_2)$ for all $k\geq k_1$, because $\hat{f}(T(\Omega_2))\subset T(\Omega_2)$.

Suppose that $\hat{f}^{k_1}(\ell)\cap (\R^2\sm T(\Omega_2))\neq \emptyset$. Then $\hat{f}^{k_1}(\ell)$ is a connected set intersecting $T(\Omega_2)$ and its complement, so it intersects the boundary of $T(\Omega_2)$, which is a subset of
$$T(F) = T(\omega_v\cup \ell \cup \omega_{-v}) = \omega_v\cup T(\ell)\cup \omega_{-v}.$$ 
Since $\ell$ (and thus $\hat{f}^{k_1}(\ell)$ as well) is disjoint from $\omega_v\cup \omega_{-v}$, we see that $\hat{f}^{k_1}(\ell)$ intersects $T(\ell)$, contradicting the fact that $k_1\geq k_0$.

Thus $\hat{f}^{k_1}(\ell)\cap (\R^2\sm T(\Omega_2))=\emptyset$, \ie $\hat{f}^{k_1}(\ell)\subset T(\Omega_2)$. This implies that $T(\Omega_1)$ is disjoint from $\hat{f}^{k_1}(\ell)$, and being also disjoint from $\hat{f}^{k_1}(\omega_{v}\cup \omega_{-v}) = \omega_v\cup\omega_{-v}$ we deduce that $T(\Omega_1)$ is disjoint from $\hat{f}^{k_1}(F)$. Therefore $T(\Omega_1)$ is contained in one of the two connected components of $\R^2\sm \hat{f}^{k_1}(F)$, which are $\hat{f}^{k_1}(\Omega_2)$ and $\hat{f}^{k_1}(\Omega_1)$. Since $\emptyset\neq \Omega_1\subset T(\Omega_1)\cap \hat{f}^{k_1}(\Omega_1)$ due to (\ref{eq:trap}), the only possibility is $T(\Omega_1)\subset \hat{f}^{k_1}(\Omega_1)$. 

The last fact implies that $T^m(\Omega_1)\subset \hat{f}^{mk_1}(\Omega_1)$ for all $m\in \N$. In particular, if $\hat{z_0}$ is a fixed point of $\hat{f}$ in $\Omega_1$, then $T^m(z) = \hat{f}^{-mk_1}(T^{m}(z))\in \Omega_1$ for all $m \in \N$. But part (2) of Proposition \ref{pro:omeganaotoca} implies that there is $T^m(z)\in \Omega_2$ if $m$ is large enough. This contradiction completes the proof of the theorem.

\section{Some results relying on equivariant Brouwer theory}\label{sec:brouwer}

In this section we recall the results and definitions from \cite{KT2012} and we use them to prove some properties of homeomorphisms with a nonwandering lift. The main concepts behind these results is the equivariant Brouwer theory developed by Le Calvez \cite{lecalvez-equivariant} and a recent result of Jaulent on maximal unlinked sets \cite{jaulent}. We do not intend to explain how these results are used in this context; for that, the reader is directed to Section 3 of \cite{KT2012}.

\subsection{Gradient-like Brouwer foliations for nonwandering lifts}

Let $S$ be an orientable surface (not necessarily compact), and  $\pi\colon \hat{S}\to S$ is the universal covering of $S$. Let $\mc{I}=(f_t)_{t\in [0,1]}$ be an isotopy from $f_0=\id_S$ to some homeomorphism $f_1=f$, and $\hat{\mc{I}}=(\hat{f}_t)_{t\in [0,1]}$ the lift of the isotopy $\mc{I}$ such that $\hat{f}_0=\id_{\hat{S}}$. Define $\hat{f}=\hat{f}_1$, so that $\hat{f}$ is a lift of $f$ which commutes with every covering transformation.

Suppose $X\subset S$ is a totally disconnected set of fixed points of $f$. We regard an oriented topological foliation $\mc{F}$ of $S\sm X$ as a foliation with singularities of $S$. Suppose that every point of $X$ is fixed by the isotopy $\mc{I}$ (\ie $f_t(x)=x$ for all $t\in [0,1]$ and $x\in X$). We say that an arc $\gamma\colon [0,1]\to S\sm X$ is positively transverse to $\mc{F}$ if $\gamma$ crosses the leaves of the foliation locally from left to right. 
We say that the isotopy $\mc{I}$ is transverse to $\mc{F}$ if for each $x\in S$, the arc $(f_t(x))_{x\in [0,1]}$ is homotopic, with fixed endpoints in $S\sm X$, to an arc that is positively transverse to $\mc{F}$. In this case, it is also said that $\mc{F}$ is dynamically transverse to $\mc{I}$. If $\hat{X}=\pi^{-1}(X)$, then the isotopy $\hat{\mc{I}}$ fixes $\hat{X}$ pointwise. If $\mc{F}$ is dynamically transverse to $\mc{I}$, then the lifted foliation $\hat{\mc{F}}$ (with singularities in $\hat{X}$) of $\hat{S}$ is also dynamically transverse to $\hat{\mc{I}}$. 

Until the end of this section, we fix a homeomorphism $f\colon \T^2\to \T^2$ isotopic to the identity, and a lift $\hat{f}\colon \R^2\to \R^2$. The main existence result that we will use, which is a consequence of \cite{lecalvez-equivariant} and \cite{jaulent}, is stated as Proposition 3.10 in \cite{KT2012} (we include some of the preceding comments in the statement here)

\begin{proposition}\label{pro:kt-brouwer} If $\fix(\hat{f})$ is totally disconnected, then there exists a compact set $X\subset \pi(\fix(\hat{f}))$, an oriented foliation $\mc{F}$ of $\T^2$ with singularities in $X$, and an isotopy $\mc{I}=(f_t)_{t\in [0,1]}$ from the identity to $f$ such that 
\begin{itemize}
\item $\mc{I}$ lifts to an isotopy $\hat{\mc{I}} = (\hat{f}_t)_{t\in [0,1]}$ from $\id_{\R^2}$ to $\hat{f}$, 
\item $\mc{I}$ fixes $X$ pointwise, and $\hat{\mc{I}}$ fixes $\hat{X} = \pi^{-1}(X)$ pointwise,
\item $\mc{F}$ is dynamically transverse to $\mc{I}$ and the lifted foliation $\hat{\mc{F}}$ on $\R^2$ with singularities in $\hat{X}$ is dynamically transverse to $\hat{\mc{I}}$.
\end{itemize}
\end{proposition}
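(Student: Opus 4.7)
The statement packages together two deep inputs from equivariant Brouwer theory, so my plan is to invoke each of them in turn rather than reprove either.

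First, I would apply Jaulent's theorem on maximal unlinked sets of fixed points \cite{jaulent} to $f$ together with the prescribed lift $\hat{f}$. This yields a compact subset $X\subset\fix(f)$ and an isotopy $\mc{I}=(f_t)_{t\in[0,1]}$ from $\id_{\T^2}$ to $f$ which fixes $X$ pointwise, and such that the lifted isotopy $\hat{\mc{I}}=(\hat{f}_t)_{t\in[0,1]}$ starting at $\id_{\R^2}$ ends precisely at $\hat{f}$ and fixes $\hat{X}:=\pi^{-1}(X)$ pointwise. In particular $X\subset\pi(\fix(\hat{f}))$, which combined with the hypothesis that $\fix(\hat{f})$ is totally disconnected forces $\hat{X}$ (and hence $X$) to be totally disconnected as well. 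This produces the first two bullet points.

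Second, with the totally disconnected set $X$ and the isotopy $\mc{I}$ in hand, I would apply Le Calvez's equivariant foliated version of the Brouwer plane translation theorem \cite{lecalvez-equivariant} to the restriction of $\mc{I}$ to the open surface $\T^2\sm X$. This produces an oriented nonsingular topological foliation $\mc{F}$ of $\T^2\sm X$, which we regard as a singular foliation of $\T^2$ with singular set $X$, that is dynamically transverse to $\mc{I}$ in the sense required. Because dynamical transversality is a condition that can be verified locally on the base via homotopies of arcs in the complement of the singular set, the pulled-back foliation $\hat{\mc{F}}$ of $\R^2\sm \hat{X}$ is automatically dynamically transverse to $\hat{\mc{I}}$, yielding the last part of the third bullet.

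The only delicate point in assembling these ingredients is a compatibility condition: Jaulent's construction must be applied in the version that prescribes the lift, so that the isotopy one obtains lifts to an isotopy ending at the specified $\hat{f}$ rather than at some other lift $\hat{f}\circ T_v$. Once this has been arranged, no additional work beyond the two citations is required. The main obstacle, which is what is actually handled inside \cite{jaulent}, is the Zorn-type maximality argument producing the unlinked set $X$ together with a compatible transverse foliation of its complement; this is precisely the step that refines Le Calvez's original equivariant foliation construction to work with an arbitrary totally disconnected fixed-point set of a lift.
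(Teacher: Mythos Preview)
Your proposal is correct and matches the paper's approach: the paper does not prove this proposition directly but cites it as Proposition~3.10 of \cite{KT2012}, noting explicitly that it is a consequence of Le~Calvez's equivariant Brouwer theorem \cite{lecalvez-equivariant} combined with Jaulent's maximal unlinked set construction \cite{jaulent}. Your sketch of how these two ingredients assemble---Jaulent producing the set $X$ and the isotopy in the correct homotopy class, then Le~Calvez producing the transverse foliation on the complement---is exactly the intended argument.
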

\begin{remark} Any oriented foliation with singularities such as $\mc{F}$ and $\hat{\mc{F}}$ is the orbit space of a continuous flow \cite{whitney,whitney2}.
\end{remark}

Let $\mc{F}$ be the foliation from Proposition \ref{pro:kt-brouwer}. For a loop $\gamma$ in $\T^2$, we denote by $\gamma^*$ its homology class in $H_1(\T^2, \Z)\simeq \Z^2$. Fix $z\in \T^2\sm X$, and consider the set $\mc{C}(z)$ of all homology classes $\kappa\in H^1(\T^2,\Z)$ such that there is a positively transverse loop $\gamma$ with basepoint $z$ such that $\gamma^* = \kappa$. Identifying $H^1(\T^2,\Z)$ with $\Z^2$ naturally and choosing $\hat{z}\in \pi^{-1}(z)$, we see that $\mc{C}(z)$ coincides with the set of all $v\in \Z^2$ such that there is an arc in $\R^2$ positively transverse to the lifted foliation $\hat{\mc{F}}$ joining $\hat{z}$ to $\hat{z}+v$. Note that $\mc{C}(z)$ is closed under addition: if $v,w\in \mc{C}(z)$ then $v+w\in \mc{C}(z)$. By part (4) of Proposition 3.6 of \cite{KT2012}, any pair of points lying in a connected subset of the nonwandering set of $\hat{f}$ can be joined by a positively transverse arc. This implies that $\mc{C}(z) = \Z^2$ for all $z\in \T^2\sm X$. Thus, putting together Proposition 3.11 and Lemma 3.8 of \cite{KT2012} we have the following result.

\begin{proposition}\label{pro:gradient} Under the hypotheses of Proposition \ref{pro:kt-brouwer}, if the nonwandering set of $\hat{f}$ is $\R^2$, then $\mc{F}$ is a \emph{gradient-like} foliation, \ie the following properties hold:
\begin{itemize} 
\item[(1)] every regular leaf of $\mc{F}$ is a connection, and so is every regular leaf of $\hat{\mc{F}}$,
\item[(2)] $\mc{F}$ and $\hat{\mc{F}}$ have no generalized cycles, and
\item[(3)] there is a constant $M$ such that $\diam(\Gamma)<M$ for each regular leaf $\Gamma$ of $\hat{\mc{F}}$.
\end{itemize}
\end{proposition}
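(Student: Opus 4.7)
The plan is straightforward: the statement is an immediate packaging of Proposition~3.11 and Lemma~3.8 of \cite{KT2012}, so the proof reduces to verifying the hypothesis that unifies them, namely that every integer homology class can be realized by a positively transverse loop through any regular point of the foliation. That verification has in fact already been sketched in the discussion preceding the statement, and I would only need to organize it explicitly.

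First, I fix $z\in \T^2\setminus X$ and $v\in\Z^2$, and pick any $\hat{z}\in \pi^{-1}(z)$. Since the nonwandering set of $\hat{f}$ is all of $\R^2$, both $\hat{z}$ and $\hat{z}+v$ lie in a connected subset of that nonwandering set. Part (4) of Proposition~3.6 of \cite{KT2012} then yields an arc in $\R^2\setminus \hat{X}$ from $\hat{z}$ to $\hat{z}+v$ that is positively transverse to $\hat{\mc{F}}$; projecting this arc to $\T^2$ produces a loop based at $z$ which is positively transverse to $\mc{F}$ and whose homology class equals $v$. Hence $\mc{C}(z)=\Z^2$ for every such $z$.

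With this hypothesis in hand, I would invoke Proposition~3.11 of \cite{KT2012}, which asserts that if $\mc{C}(z)=\Z^2$ then every regular leaf of $\mc{F}$ is a connection and $\mc{F}$ has no generalized cycles; these conclusions automatically transfer to the lifted foliation $\hat{\mc{F}}$ because both properties are preserved under covering maps. This yields items (1) and (2). Then Lemma~3.8 of \cite{KT2012}, applied to the gradient-like foliation $\mc{F}$ obtained, furnishes a uniform constant $M$ bounding the diameter of every regular leaf of $\hat{\mc{F}}$, giving item~(3).

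The main point of care, and the one I would examine first, is whether Proposition~3.6(4) of \cite{KT2012} can genuinely be applied to arbitrary $\Z^2$-translates: the statement presumes a connected subset of the nonwandering set, and here the connectedness is supplied trivially by $\R^2$ itself together with the $\Z^2$-equivariance of $\hat{f}$ (which guarantees $\hat{z}+v$ is nonwandering whenever $\hat{z}$ is). The diameter bound in item~(3) is the one genuinely dynamical ingredient: a priori a connection in $\hat{\mc{F}}$ between two points of the discrete set $\hat{X}$ could be arbitrarily long, so the content of Lemma~3.8 is that the finite combinatorial complexity of the quotient foliation on the compact torus, combined with equivariance, produces the uniform global bound. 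Once that lemma is granted, the argument is complete.
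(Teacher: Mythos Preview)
Your proposal is correct and matches the paper's own argument essentially verbatim: the paper also reduces the statement to verifying $\mc{C}(z)=\Z^2$ via Proposition~3.6(4) of \cite{KT2012} (using that the nonwandering set of $\hat{f}$ is all of $\R^2$), and then invokes Proposition~3.11 and Lemma~3.8 of \cite{KT2012} for the three conclusions.
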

Let us recall that a regular leaf of $\mc{F}$ is any element of $\mc{F}$ that is not a singularity. A leaf $\Gamma$ of $\mc{F}$ is a \emph{connection} if both its $\omega$-limit and its $\alpha$-limit are one-element subsets of $\sing(\mc{F})$. By a \emph{generalized cycle of connections} of $\mc{F}$ we mean a loop $\gamma$ such that $[\gamma]\sm \sing(\mc{F})$ is a disjoint union of regular leaves of $\mc{F}$ that are traversed positively by $\gamma$.

\subsection{Boundedness of periodic free disks}\label{sec:uepsilon-invariant}

A version of the next result was proved in \cite{KT2012} under the  assumption that there is a gradient-like Brouwer foliation.
\begin{theorem}\label{th:free-disk} Let $\hat{f}\colon \R^2\to \R^2$ be a lift of a homeomorphism $f\colon \T^2\to \T^2$ isotopic to the identity, and suppose that $\hat{f}$ is nonwandering and $\pi(\fix(\hat{f}))$ is inessential. Then every periodic open topological disk in $\R^2$ that is disjoint from its image by $\hat{f}$ is bounded.
\end{theorem}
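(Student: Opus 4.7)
The plan is to reduce Theorem \ref{th:free-disk} to the version already established in \cite{KT2012} in the presence of a gradient-like Brouwer foliation. The key link is Proposition \ref{pro:gradient}, which upgrades any Brouwer foliation coming from Proposition \ref{pro:kt-brouwer} to a gradient-like one as soon as the nonwandering set of $\hat f$ is all of $\R^2$, and this is part of the hypotheses.

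The first step is to arrange that Proposition \ref{pro:kt-brouwer} can be applied, which in the form quoted requires $\fix(\hat f)$ to be totally disconnected. The set $\fix(\hat f)$ is closed and $\Z^2$-equivariant, so $\pi(\fix(\hat f))$ is a compact subset of $\T^2$; combined with the hypothesis that it is inessential, Proposition \ref{pro:compact-ine} forces every connected component of $\fix(\hat f)$ to be bounded. I would then perform an equivariant collapsing: each connected component of $\fix(\hat f)$ is contracted to a point, yielding a torus homeomorphism $\bar f$ isotopic to the identity, with lift $\bar{\hat f}$, for which $\fix(\bar{\hat f})$ is totally disconnected. Crucially, any periodic free disk $D$ for $\hat f$ is automatically disjoint from $\fix(\hat f)$ (a fixed point in $D$ would contradict $\hat f(D)\cap D=\emptyset$), so $D$ is unaffected by the collapsing and descends to a periodic free disk $\bar D$ for $\bar{\hat f}$, which is bounded if and only if $D$ is. Nonwanderingness also passes to $\bar{\hat f}$.

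Once in the totally disconnected setting, Propositions \ref{pro:kt-brouwer} and \ref{pro:gradient} together deliver a gradient-like Brouwer foliation transverse to some isotopy from the identity to $\bar f$. Invoking the \cite{KT2012} version of Theorem \ref{th:free-disk} for gradient-like foliations then yields that $\bar D$ is bounded, and transferring back, $D$ is bounded as well.

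The main obstacle I expect is the equivariant quotient construction in the first step: one has to verify that the collapsing produces a genuine homeomorphism of $\T^2$ isotopic to the identity, that a chosen lift $\bar{\hat f}$ agrees with $\hat f$ away from the $\Z^2$-saturation of $\fix(\hat f)$, and that the nonwandering property is preserved. A potentially cleaner alternative would be to bypass the quotient and instead apply a more general version of Proposition \ref{pro:kt-brouwer} stemming from \cite{jaulent}, valid for arbitrary closed $\Z^2$-equivariant fixed sets with bounded components: this extracts a totally disconnected unlinked subset of $\fix(\hat f)$ on which the equivariant Brouwer machinery can be run, and the rest of the argument goes through unchanged.
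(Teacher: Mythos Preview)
Your proposal is essentially the paper's own proof: reduce to the totally disconnected fixed-point case by an equivariant collapsing, then invoke Propositions \ref{pro:kt-brouwer} and \ref{pro:gradient} together with Corollary 4.7 of \cite{KT2012}. The obstacle you flag is exactly the one the paper addresses, and it does so via Proposition \ref{pro:collapse} (Proposition 1.6 of \cite{KT2012}): one collapses not the components of $\fix(\hat f)$ themselves but the components of the \emph{filled} set $K=\fil(\pi(\fix(\hat f)))$, which is what guarantees the quotient is again $\T^2$; the paper then checks that an unbounded periodic free disk $U$ is disjoint from all of $\pi^{-1}(K)$ (not just from $\fix(\hat f)$) because $\bd\pi^{-1}(K)\subset\fix(\hat f)$ and the components of $\pi^{-1}(K)$ are bounded, so $\hat h(U)$ remains an unbounded periodic free disk for the collapsed map.
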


Before proving \ref{th:free-disk} let us state the consequence that will be useful in our setting. Recall the notation introduced in \S\ref{sec:uepsilon}.

\begin{corollary}\label{coro:uepsilon-invariant} Under the hypotheses of Theorem \ref{th:free-disk}, if $U_\epsilon(z)=U_\epsilon(z,\hat{f})$ is unbounded for some $z\in \R^2$ and $\epsilon>0$, then $\hat{f}(U_\epsilon(z))=U_\epsilon(z)$.
\end{corollary}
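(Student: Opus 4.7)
The plan is to invoke directly the dichotomy for the sets $U_\epsilon(z)$ established in \S\ref{sec:uepsilon} and then rule out the non-invariant case by Theorem \ref{th:free-disk}. Recall that, by construction, $U_\epsilon(z) = \fil(U_\epsilon'(z))$ is an open topological disk, and that one of the following two mutually exclusive alternatives holds: either $U_\epsilon(z)$ is $\hat{f}$-invariant, or $U_\epsilon(z)$ is $\hat{f}$-periodic and disjoint from its image under $\hat{f}$ (i.e.\ free for $\hat{f}$). This dichotomy was derived using Proposition \ref{pro:fill-ine} in \S\ref{sec:uepsilon}.

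Suppose for contradiction that $U_\epsilon(z)$ is not $\hat{f}$-invariant. Then we are in the second case of the dichotomy, so $U_\epsilon(z)$ is a periodic open topological disk in $\R^2$ which is disjoint from its image by $\hat{f}$. The hypotheses of the corollary are precisely those of Theorem \ref{th:free-disk}: $\hat{f}$ is nonwandering and $\pi(\fix(\hat{f}))$ is inessential in $\T^2$. Applying Theorem \ref{th:free-disk} to $U_\epsilon(z)$ forces $U_\epsilon(z)$ to be bounded, contradicting the standing assumption that $U_\epsilon(z)$ is unbounded. Consequently the first case of the dichotomy must hold, i.e.\ $\hat{f}(U_\epsilon(z)) = U_\epsilon(z)$, as claimed.

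There is no genuine obstacle here; the entire content of the corollary is that the dichotomy of \S\ref{sec:uepsilon} combined with Theorem \ref{th:free-disk} immediately excludes the periodic free alternative whenever $U_\epsilon(z)$ is unbounded. The only point that requires a moment of care is checking that the second alternative of the dichotomy really does produce a \emph{periodic} open topological disk that is free for $\hat{f}$ (not merely for some power of $\hat{f}$), which is exactly how Theorem \ref{th:free-disk} is formulated; this is built into the conclusions recorded in \S\ref{sec:uepsilon}.
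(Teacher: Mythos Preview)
Your proof is correct and matches the paper's intended argument: the paper in fact gives no explicit proof of this corollary, treating it as an immediate consequence of Theorem \ref{th:free-disk} together with the dichotomy recorded in \S\ref{sec:uepsilon}, and your write-up supplies precisely those details. One small remark: the dichotomy in \S\ref{sec:uepsilon} is stated under the assumption that $B_\epsilon(z)$ is not wandering, which here follows either from the standing hypothesis that $\hat f$ is nonwandering or, alternatively, from the unboundedness of $U_\epsilon(z)$ (since otherwise $U_\epsilon'(z)=B_\epsilon(z)$ would be bounded); it would be slightly cleaner to make this explicit.
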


\begin{proof}[Proof of Theorem \ref{th:free-disk}]
In the case that $\fix(\hat{f})$ is totally disconnected, this is a direct consequence of Propositions \ref{pro:kt-brouwer} and \ref{pro:gradient}, together with Corollary 4.7 of \cite{KT2012}. We will show how to reduce the case that $\fix(\hat{f})$ is not totally disconnected to this case.

First recall that a compact set $K$ is \emph{filled} if $\T^2\sm K$ has no inessential connected components (i.e. $K=\fil(K)$). If $K$ is inessential, this is the same as saying that $\T^2\sm K$ is connected. Under the hypotheses of Theorem \ref{th:free-disk}, we know that the set $K_0=\pi(\fix(\hat{f}))$ is inessential, so it's filling $K=\fil(K_0)$ is a compact filled set. Thus, we may apply the following \cite[Proposition 1.6]{KT2012}:
\begin{proposition}\label{pro:collapse} Let $K\subset \T^2$ be a compact inessential filled set, and $f\colon \T^2\to \T^2$ a homeomorphism such that $f(K)=K$. Then there is a continuous surjection $h\colon \T^2\to \T^2$ and a homeomorphism $f'\colon \T^2 \to \T^2$ such that
\begin{itemize}
\item $h$ is homotopic to the identity;
\item $hf = f'h$;
\item $K' = h(K)$ is totally disconnected;
\item $h|_{\T^2\sm K}\colon \T^2\sm K \to \T^2\sm K'$ is a homeomorphism.
\end{itemize}
\end{proposition}
The fact that $K_0\subset \fix(f)$ implies that the connected components of $K$ are $f$-invariant, which in turn implies that $K'\subset \fix(f')$. Moreover, if $\hat{h}$ and $\hat{f}'$ are lifts of $h$ and $f$ to $\R^2$, then $\hat{h}\hat{f}=\hat{f}'\hat{h}$ and $\hat{h}|_{\R^2\sm \pi^{-1}(K)}$ is a homeomorphism onto $\R^2\sm \pi^{-1}(K')$. 
Assume for contradiction that there is some unbounded $\hat{f}$-periodic free topological disk $U\subset \R^2$. 
It is easy to see from the definition that $\bd\pi^{-1}(K)$ consists of fixed points of $\hat{f}$. Moreover, since $K$ is compact and inessential, Proposition \ref{pro:compact-ine} implies that every connected component of $\pi^{-1}(K)$ is bounded. Since $U$ is disjoint from $\fix(\hat{f})$ and unbounded, we deduce that $U$ is disjoint from $\pi^{-1}(K)$. This implies that $\hat{h}$ is injective on $U$, and so $U' = \hat{h}(U)$ is an $\hat{f}'$-periodic and $\hat{f}'$-free topological disk. Furthermore, since $h$ is homotopic to the identity, there is $M'$ such that $\norm{\smash{\hat{h}(z)-z}}\leq M'$ for all $z\in \R^2$, and therefore $U'=\hat{h}(U)$ is unbounded as well. Finally, since the nonwandering set of $\hat{f}|_{\R^2\sm \pi^{-1}(K)}$ is $\R^2\sm \pi^{-1}(K)$, we have that the nonwandering set of $\hat{f}'|_{\R^2\sm \pi^{-1}(K')}$ is $\R^2\sm \pi^{-1}(K')$, and the latter set is dense in $\R^2$ because $\pi^{-1}(K')$ is totally disconnected (since $K'$ is totally disconnected). Since the nonwandering set is closed, it follows that $f'$ is nonwandering. 

Therefore $\hat{f'}$ is nonwandering, it has a totally disconnected set of fixed points, and it has a periodic unbounded free topological disk $U'$. But we already explained at the beginning of the proof that this is not possible, hence we obtain a contradiction.
\end{proof}

\subsection{Engulfing and $(\hat{f},\hat{\mc{F}})$-arcs}

In this section we assume that $f$ and its corresponding lift $\hat{f}$ satisfies the hypotheses of Proposition \ref{pro:gradient} (i.e. $\hat{f}$ is nonwandering and there is a gradient-like Brouwer foliation).

Let us state an immediate consequence of \cite[Propositions 4.6 and 4.8]{KT2012}: 
\begin{proposition}[Engulfing]\label{pro:engulfing} If $U$ is an unbounded $\hat{f}$-invariant open topological disk, then every leaf of $\hat{\mc{F}}$ that intersects $\ol{U}$ has one endpoint in $U$. 
\end{proposition}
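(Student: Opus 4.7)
The plan is to leverage the gradient-like Brouwer foliation $\hat{\mc{F}}$ supplied by Propositions \ref{pro:kt-brouwer} and \ref{pro:gradient}, and then reduce to the engulfing statements already established in \cite{KT2012} (Propositions 4.6 and 4.8 there). Concretely, given a regular leaf $\Gamma$ of $\hat{\mc{F}}$ whose closure meets $\ol{U}$, I would parametrize it as an orbit of the flow associated to $\hat{\mc{F}}$, and denote its $\alpha$-limit by $p$ and its $\omega$-limit by $q$. By the gradient-like property both are singular points of $\hat{\mc{F}}$, hence fixed points of $\hat{f}$ contained in $\hat{X}$, and the closure $\Gamma \cup \{p,q\}$ has diameter at most $M$. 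The goal is to show that $p \in U$ or $q \in U$.

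The first step would be to exploit the dynamical transversality of $\hat{\mc{F}}$ with respect to $\hat{\mc{I}}$. For any $x \in U$, the isotopy arc from $x$ to $\hat{f}(x)$ is homotopic, with fixed endpoints in $\R^2 \sm \hat{X}$, to a positively transverse arc. Concatenating over iterates and using the $\hat{f}$-invariance of $U$, one obtains a positively transverse half-infinite trajectory shadowing the $\hat{f}$-orbit. The absence of generalized cycles and the fact that every regular leaf is a connection force such a transverse trajectory to accumulate on singular points, and more importantly, they constrain how positively transverse arcs can interact with $\Gamma$: a positively transverse arc can cross $\Gamma$ at most once and only from its negative side to its positive side.

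The second step would be to pick a sequence of points $x_n \in U$ converging to some $z \in \Gamma \cap \ol{U}$, use the continuity of the foliation on $\R^2 \sm \hat{X}$ to compare the leaves $\Gamma_n$ through $x_n$ with $\Gamma$, and conclude via a limiting argument that the endpoint behavior of these nearby leaves transfers to $\Gamma$. The uniform diameter bound $M$ makes these limits well-behaved, and the boundedness of leaves contrasted with the unboundedness of $U$ provides the rigidity needed to pin down that at least one endpoint of $\Gamma$ lies in $U$ rather than merely in $\ol{U}$.

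The main obstacle is this last upgrade, from ``endpoint in $\ol{U}$'' to ``endpoint in $U$'': an endpoint sitting in $\partial U$ would at first glance be consistent with the local foliation picture, and ruling it out requires a careful local analysis around the singularity, using essentially that $U$ is an \emph{invariant} topological disk and that its complement meets every closed half-plane around the singularity in a way incompatible with the transverse flow. This is precisely what Propositions 4.6 and 4.8 of \cite{KT2012} accomplish in the abstract gradient-like Brouwer foliation setting. To finish, I would verify that the hypotheses of those two propositions are exactly the conclusions of Propositions \ref{pro:kt-brouwer} and \ref{pro:gradient} together with the hypotheses on $U$; once this is checked, the engulfing conclusion follows as an immediate translation of the cited results.
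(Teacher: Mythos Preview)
Your proposal is correct and ultimately takes the same approach as the paper: the paper states Proposition~\ref{pro:engulfing} as an \emph{immediate consequence} of \cite[Propositions~4.6 and~4.8]{KT2012} with no further argument, and your final paragraph lands on exactly this citation after verifying that the hypotheses match. The intermediate sketch you give (transverse trajectories, limiting leaves, the ``upgrade from $\ol{U}$ to $U$'') is not needed here, since the section already assumes the gradient-like setting of Propositions~\ref{pro:kt-brouwer} and~\ref{pro:gradient}, so the reduction to \cite{KT2012} is direct.
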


We will need the following improvement.

\begin{proposition}\label{pro:engulfing-finite} Given a nonempty compact set $R\subset \R^2$, there is a \emph{finite} set $P\subset \hat{X}$ such that any unbounded open $\hat{f}$-invariant topological disk intersecting $R$ also intersects $P$.
\end{proposition}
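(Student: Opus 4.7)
The plan is to combine the Engulfing Proposition with a compactness argument. If $U$ is an unbounded $\hat f$-invariant open topological disk with $U\cap R\neq\emptyset$, pick any $z\in U\cap R$: either $z\in\hat X$ already, or the leaf $\Gamma_z$ of $\hat{\mc F}$ through $z$ intersects $\overline U$ and so, by Proposition \ref{pro:engulfing}, has one of its two endpoints in $U$. In either case $U$ contains a singularity lying in the set
$$E(R) := (R\cap\hat X)\cup\{\text{endpoints of leaves of }\hat{\mc F}\text{ meeting }R\}.$$
Because regular leaves of $\hat{\mc F}$ have diameter at most $M$ (part~(3) of Proposition \ref{pro:gradient}), every such candidate singularity lies inside $\overline{B_M(R)}\cap\hat X$, which is a compact subset of $\hat X$ since $R$ is bounded and $\hat X$ is closed. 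So the task reduces to extracting a \emph{finite} $P\subset E(R)$ which still meets every $U$ as above.

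I would attempt this by contradiction. Assume no finite $P$ works; then one can build inductively a sequence $(U_n,p_n)$ where $U_n$ is an unbounded $\hat f$-invariant open topological disk meeting $R$, $p_n\in U_n\cap\hat X$ is the singularity provided by engulfing, and $U_n\cap\{p_1,\dots,p_{n-1}\}=\emptyset$. In particular the $p_n$ are pairwise distinct points of the compact set $\overline{B_M(R)}\cap\hat X$, so after passing to a subsequence $p_n\to p_\infty\in\hat X$, with $p_n\neq p_\infty$, and (since $p_\infty$ equals at most one of the $p_k$) $p_\infty\notin U_n$ for all sufficiently large $n$. Thus each $\partial U_n$ separates $p_n$ from $p_\infty$ while $p_n\to p_\infty$.

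The hardest step, and the one I expect to be the true obstacle, is closing the contradiction from this configuration. The strategy is to use the local structure of $\hat{\mc F}$ at $p_\infty$ together with the engulfing proposition applied to each $U_n$ individually. Because $\hat{\mc F}$ is gradient-like with bounded diameter and no generalized cycles, the leaves with $p_\infty$ as an endpoint organize the flow near $p_\infty$ into finitely many local ``sectors''; infinitely many of the $U_n$ must therefore interact with $p_\infty$ through a common sector. For any such $n$, a leaf touching $\overline{U_n}$ and having $p_\infty$ as an endpoint must have its \emph{other} endpoint in $U_n$ by engulfing; iterating this with leaves close to the accumulating $p_n$ one should force $p_\infty\in U_n$ as well, contradicting our choice. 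Making the local sector analysis rigorous in the generality of Proposition \ref{pro:gradient}, where singularities need not be isolated, is the delicate point; it will likely require using the finer information built into the construction of $\hat{\mc F}$ from \cite{lecalvez-equivariant,jaulent} as exploited in \cite{KT2012}, together with the topological-disk rigidity of the $U_n$ (each of which already carries a fixed point of $\hat f$ by engulfing, giving a Brouwer-type handle on its local dynamics near $p_\infty$).
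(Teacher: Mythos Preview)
Your argument has a genuine gap that you yourself flag: the contradiction step via a ``local sector analysis'' at the accumulation point $p_\infty$ is not carried out, and in this generality it is not clear it can be. The singular set $\hat X$ is only totally disconnected, so $p_\infty$ may be a genuine accumulation point of singularities, and there is no local picture of finitely many sectors to appeal to. More concretely, from $p_n\in U_n$, $p_n\to p_\infty$, $p_\infty\notin U_n$ you only know that $\partial U_n$ passes arbitrarily close to $p_\infty$; you have no leaf with endpoint $p_\infty$ that is guaranteed to meet $\overline{U_n}$, and even if you found one, engulfing tells you \emph{one} of its endpoints lies in $U_n$, which could always be the endpoint away from $p_\infty$. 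So the mechanism by which you hope to force $p_\infty\in U_n$ is missing.

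The paper avoids this compactness/contradiction route entirely and gives a direct construction. Using Proposition~\ref{pro:fF-arc} (any two points of $\R^2\setminus\hat X$ are joined by an $(\hat{\mc F},\hat f)$-arc), one builds $(\hat{\mc F},\hat f)$-arcs from a base point $x_0$ to $x_0+(1,0)$ and to $x_0+(0,1)$, and assembles integer translates of these into a closed curve $C$ bounding a large bounded region $Q\supset R$. The boundary $\partial Q$ is then contained in finitely many $\hat f$-iterates of finitely many leaves $\Gamma_1,\dots,\Gamma_m$ together with some points of $\hat X$; the finite set $P$ is just the set of endpoints of these $\Gamma_i$. Any unbounded $\hat f$-invariant open disk $U$ meeting $R\subset Q$ must cross $\partial Q$, hence (being open) meet some iterate of some $\Gamma_i$; by invariance $U$ meets $\Gamma_i$ itself, and engulfing then puts an endpoint of $\Gamma_i$---an element of $P$---inside $U$. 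The key idea you are missing is this ``fence'' made of finitely many leaf-iterates that every unbounded invariant disk must cross; it replaces your attempted compactness argument by something finite from the start.
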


To prove the previous proposition, we need a definition: let us say that a compact arc $\gamma$ in $\R^2$ is a $(\hat{\mc{F}},\hat{f})$-arc if $[\gamma]$ is contained in the union of finitely many $\hat{f}$-iterates of leaves of $\hat{\mc{F}}$ and elements of $\hat{X}$ (the orientation of these arcs is irrelevant).

\begin{proposition}\label{pro:fF-arc} Any two points of $\R^2\sm \hat{X}$ can be joined by an $(\hat{\mc{F}},\hat{f})$-arc. 
\end{proposition}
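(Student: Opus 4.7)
I would organize the proof around the relation $\sim$ on $\R^2\sm \hat X$ defined by $z_1\sim z_2$ iff $z_1$ and $z_2$ can be joined by an $(\hat{\mc{F}},\hat f)$-arc, and show that $\sim$ has a single equivalence class on $\R^2 \sm \hat X$.

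The first ingredient is a \emph{basic move}: for every $z\in \R^2\sm \hat X$ and every $n\in \Z$, one has $z\sim \hat f^n(z)$. Indeed, by Proposition \ref{pro:gradient} the leaf $L_z$ of $\hat{\mc{F}}$ through $z$ is a connection with $\alpha$- and $\omega$-limits $p,q \in \hat X$, and by Proposition \ref{pro:kt-brouwer} the lifted isotopy $\hat{\mc{I}}$ fixes $\hat X$ pointwise, so $\hat f$ fixes $\hat X$ pointwise. Consequently $\hat f(L_z)$ is an arc with the same two endpoints $p,q$, and it passes through $\hat f(z)$. Concatenating the sub-arc of $L_z$ from $z$ to $p$ with the sub-arc of $\hat f(L_z)$ from $p$ to $\hat f(z)$ produces an $(\hat{\mc{F}},\hat f)$-arc joining $z$ and $\hat f(z)$; iterating yields $z\sim \hat f^n(z)$ for all $n\in \Z$. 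Since any two points on the same leaf are joined by a sub-arc of that leaf, each $\sim$-class is a union of whole leaves of $\hat{\mc{F}}$ and, by the basic move, is $\hat f$-invariant.

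The second ingredient is a connectedness argument. Because $\hat X$ is a closed totally disconnected subset of $\R^2$, the complement $\R^2\sm \hat X$ is connected, so it suffices to prove that every $\sim$-class is open in $\R^2\sm \hat X$. To this end, given $z$ and a foliation box $V$ around $z$ contained in $\R^2\sm \hat X$, I would argue that every $z'\in V$ satisfies $z\sim z'$. By \cite[Proposition~3.6(4)]{KT2012} applied to the (connected) nonwandering set $\R^2$ (the hypothesis $\mc{C}(z)=\Z^2$ holds for every $z\in \R^2\sm\hat X$), there is a positively transverse arc $\alpha$ from $z$ to $z'$. I would then shadow $\alpha$ by a finite concatenation of iterated leaves: each leaf crossed by $\alpha$ is bridged to a neighboring one by invoking the basic move at a point whose $\hat f$-orbit returns near the relevant sub-box of $V$, which exists because $\hat f$ is nonwandering. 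The uniform bound on leaf diameters and the absence of generalized cycles in $\hat{\mc{F}}$, both given by Proposition \ref{pro:gradient}, are the geometric inputs that guarantee this chain terminates in finitely many pieces.

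The main obstacle will be this last conversion step. The existence of positively transverse arcs between any two points of $\R^2\sm\hat X$ is purely qualitative, whereas an $(\hat{\mc{F}},\hat f)$-arc must lie in only \emph{finitely many} iterated leaves and singularities; bridging this gap needs the boundedness of leaves and, crucially, the absence of generalized cycles — without the latter, shadowing a transverse arc could a priori require infinitely many iterated leaves, and the finite concatenation required by the definition of an $(\hat{\mc{F}},\hat f)$-arc would fail.
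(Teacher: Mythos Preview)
Your overall strategy---define $\sim$, show classes are open, use connectedness of $\R^2\sm\hat X$---matches the paper exactly, and your ``basic move'' $z\sim\hat f^n(z)$ is correct and useful. The gap is in the second ingredient: the openness argument you sketch does not go through as written.

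The problem is the ``shadowing'' step. You want to show that for $z'$ in a foliation box around $z$, the leaves $\Gamma_z$ and $\Gamma_{z'}$ are linked by an $(\hat{\mc F},\hat f)$-arc. Your basic move connects $z$ to points on iterates $\hat f^n(\Gamma_z)$, so what you actually need is that some $\hat f^n(\Gamma_z)$ intersects $\Gamma_{z'}$. The nonwandering hypothesis does not give this: it says some nearby point returns nearby under iteration, but gives no control over which leaf the return lands on. Invoking a positively transverse arc from $z$ to $z'$ does not help either, since such an arc may cross infinitely many leaves even within a small box, and nothing in your outline explains why finitely many iterated leaves suffice to bridge each crossing. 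The ``uniform bound on leaf diameters'' and ``absence of generalized cycles'' you cite are genuine ingredients, but you have not shown how they force the finiteness you need.

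The paper's mechanism is quite different and worth knowing. Given $z$ with leaf $\Gamma_z$ joining singularities $q_0,q_1$, pass to the universal cover $\til A$ of the annulus $(\R^2\cup\{\infty\})\sm\{q_0,q_1\}$. There the lift $\til\Gamma_z$ is a genuine Brouwer line for the lifted map $\til f$, and one builds an open neighborhood $\til V$ of $\til\Gamma_z$ with $\til f^2(\til V)\cap\til V=\emptyset$, hence free of $\til f$-fixed points; its boundary is $\til f(\til\Gamma_z)\cup\til f^{-1}(\til\Gamma_z)$. For $y$ in the projection $V=\til\pi(\til V)$, the lifted leaf $\til\Gamma_y$ either has both endpoints in $\{q_0,q_1\}$ (in which case $\Gamma_y$ and $\Gamma_z$ share a singularity and we are done), or has an endpoint projecting to some other singularity, whose lift is a fixed point of $\til f$ and therefore lies outside $\til V$. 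In the latter case $\til\Gamma_y$ must cross $\bd\til V$, so $\Gamma_y$ meets $\hat f(\Gamma_z)$ or $\hat f^{-1}(\Gamma_z)$. Either way $y\sim z$ via at most two leaves and one iterate---no shadowing or recurrence is needed.
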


\begin{proof}
Define a relation on $\R^2\sm \hat{X}$ by $z\sim z'$ if there is an $(\hat{\mc{F}},\hat{f})$-arc joining $z$ to $z'$. Clearly $\sim$ is an equivalence relation. Since $\R^2\sm \hat{X}$ is connected, to prove that there is a unique equivalence class it suffices to show that the equivalence classes are open. Denote by $\mc{W}(z_0)$ the equivalence class of $z_0\in \R^2\sm \hat{X}$.

 Given $z\in \mc{W}(z_0)$, let $\Gamma_z$ be the leaf of $\hat{\mc{F}}$ containing $z$. Note that $\Gamma_z$ must join some point $q_0\in \hat{X}$ to a different point $q_1\in \hat{X}$. The isotopy $\hat{\mc{I}}=(\hat{f}_t)_{t\in [0,1]}$ extends to the one-point compactification $\R^2\sqcup\{\infty\}$ by fixing $\infty$ (we still denote it $\hat{\mc{I}}$), and we may regard as $\hat{\mc{F}}$ as a foliation of $\R^2\sqcup\{\infty\}$ with singularities in $\hat{X}\cup\{\infty\}$, which is still dynamically transverse to $\hat{\mc{I}}$. Let $\til{\pi}\colon \til{A}\to A$ be the universal covering of the topological annulus $A = \R^2\sqcup\{\infty\}\sm \{q_0, q_1\}$. The restriction of the isotopy $\hat{\mc{I}}$ to $A$ lifts to an isotopy $\til{\mc{I}}=(\til{f}_t)_{t\in [0,1]}$ from the identity to $\til{\mc{F}}$ from $\til{f}_0=\id_{\til{A}}$ to some lift $\til{f}:=\til{f}_1$ of $\hat{f}|_{A}$. We also have a lifted foliation $\til{\mc{F}}$ of $\til{A}$ with singularities in $\til{X} = \til{\pi}^{-1}(X\cup\{\infty\}\sm \{q_0, q_1\})$, a set which is fixed pointwise by $\til{\mc{I}}$. The foliation $\til{\mc{F}}$ is also dynamically transverse to $\til{\mc{I}}$. 

Consider $\til{z}\in \til{\pi}^{-1}(z)$ and let $\til{\Gamma}_z$ be the leaf of $\til{\mc{F}}$ containing $\til{z}$ (which is a lift of $\Gamma_z$). Since $\Gamma_z$ joins $q_0$ to $q_1$, it follows that $\til{\Gamma}_z$ is a properly embedded line, so it separates $\til{A}\simeq \R^2$ into exactly two connected components. Furthermore, the fact that $\til{\mc{F}}$ is dynamically transverse implies that $\til{\Gamma}_z$ is a Brouwer line for $\til{f}$ in the traditional sense, \ie $\til{f}(\til{\Gamma}_z)$ and $\til{f}^{-1}(\til{\Gamma}_z)$ lie in different connected components of $\til{A}\sm \til{\Gamma}_z$. Let $H^-$ be the connected component of $\til{A}\sm \til{\Gamma}_z$ containing $\til{f}^{-1}(\til{\Gamma}_z)$ and $H^+$ the remaining component (which contains $\til{f}(\til{\Gamma}_z)$). 
Then $\cl(\til{f}(H^+))\subset H^+$ and $\cl(\til{f}^{-1}(H^-))\subset H^-$. The set $\til{V} = \til{f}(H^-) \cap \til{f}^{-1}(H^+)$ is an open neighborhood of $\til{\Gamma}_z$ and $\til{f}^2(\til{V})\subset \til{f}(H^+)$ which is disjoint from $\til{f}(H^-)$ (hence from $\til{V}$). Thus $\til{f}^2(\til{V})\cap \til{V}=\emptyset$, and $\til{V}$ contains no fixed points of $\til{f}$.

Let $V = \til{\pi}(\til{V})$, which is a neighborhood of $z$. We will show that $V\subset \mc{W}(z_0)$. Fix $y\in V$ and let $\Gamma_y$ be the leaf of $\hat{\mathcal{F}}$ containing $y$. Let $\til{y}\in \til{V}\cap \til{\pi}^{-1}(y)$, and let $\til{\Gamma}_y$ be the leaf of $\til{\mathcal{F}}$ containing $\til{y}$ (so $\til{\Gamma}_y$ projects to $\Gamma_y$). 

Since $\hat{\mc{F}}$ is gradient-like we know that $\Gamma_y$ connects two different elements $p_0$ and $p_1$ of $\hat{X}$. Suppose first that $p_i\notin\{q_0,q_1\}$ for some $i\in \{0,1\}$. Then there is $\til{p}_i\in \til{\pi}^{-1}(p_i)$ such that $\til{\Gamma}_y$ has one endpoint (\ie its $\omega$-limit or $\alpha$-limit) in $\til{p}_i$. But since $\til{V}$ contains no fixed points of $\til{f}$, and $\til{p}_i$ is fixed, it follows that $\til{p}_i\notin \til{V}$. Since $\til{y}\in \til{\Gamma}_y$, we conclude that $\til{\Gamma}_y$ intersects $\bd \til{V}$. This means that $\til{\Gamma}_y$ intersects $\til{f}^{-1}(\til{\Gamma}_z)\cup \til{f}(\til{\Gamma}_z)$, and so $\Gamma_y$ intersects $\hat{f}^{-1}(\Gamma_z)\cup \hat{f}(\Gamma_z)$. But for each $i\in \Z$, the arc $\hat{f}^{i}(\Gamma_z)$ is an $(\hat{f},\hat{\mc{F}})$-arc joining $q_0$ to $q_1$, and one of them (namely $\Gamma_z$) contains a point of $\mc{W}(z_0)$. By concatenation, it follows that $\hat{f}^{i}(\Gamma_z)\subset \mc{W}(z_0)$ for all $i\in \Z$. Since we showed that $\Gamma_y$ intersects $\hat{f}^i(\Gamma_z)$ for some $i\in \{-1,1\}$, we conclude again by concatenation that $y\in \mc{W}(z_0)$, as we wanted to show. See Figure \ref{fig:fF-arc}.

\begin{figure}[ht]
\includegraphics[height=5cm]{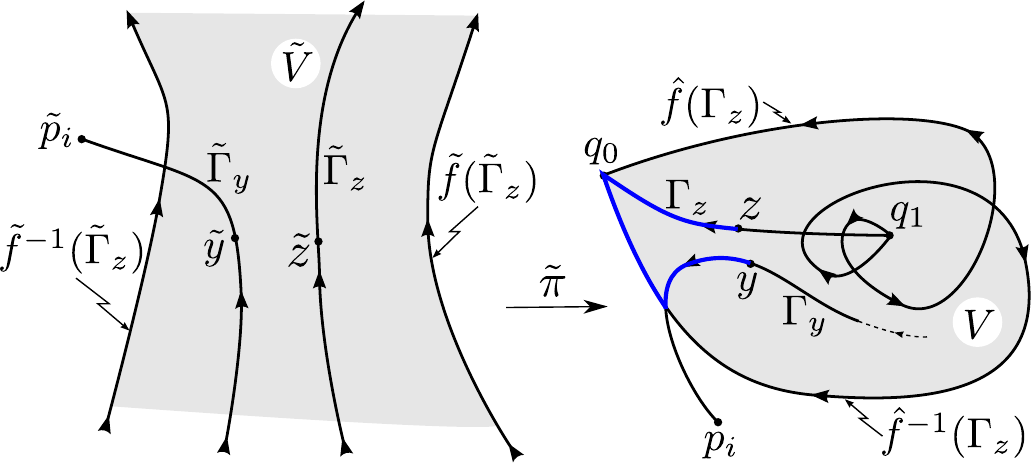}
\caption{Proof of Proposition \ref{pro:fF-arc}.}
\label{fig:fF-arc}
\end{figure}

It remains to consider the case where $\{p_0, p_1\} = \{q_0,q_1\}$. But in this case, $\Gamma_z\cup \Gamma_y$ contains an $(\hat{f},\hat{\mc{F}})$-arc joining $z\in \mc{W}(z_0)$ to $y$, and so by concatenation $y\in \mc{W}(z_0)$ again. This shows that $V\subset \mc{W}(z_0)$, proving that $\mc{W}(z_0)$ is open and concluding the proof.
\end{proof}

%

\begin{proof}[Proof of Proposition \ref{pro:engulfing-finite}]
Choose $x_0\notin \hat{X}$ and an $(\hat{f},\hat{\mc{F}})$-arc  $\gamma_0$ joining $x_0$ to $x_0+(1,0)$. Let $\gamma_1$ be another $(\hat{f},\hat{\mc{F}})$-arc joining $x_0$ to $x_0+(0,1)$. 
If $N\in \N$ is chosen large enough and  
$$C=\bigcup_{n=-N}^{N-1} ([{\gamma_0}]+(n,-N)) \cup ([\gamma_1]+(-N,n)) \cup ([\gamma_0]+(n,N)) \cup ([\gamma_1]+(N,n)),$$
the connected component $Q$ of $\R^2\sm C$ which contains $x_0$ is bounded and contains an arbitrarily large square centered at $x_0$ (see Figure \ref{fig:engulfing}). In particular, if $N$ is chosen large enough we have that $R\subset Q$.

\begin{figure}[ht]
\includegraphics[height=5cm]{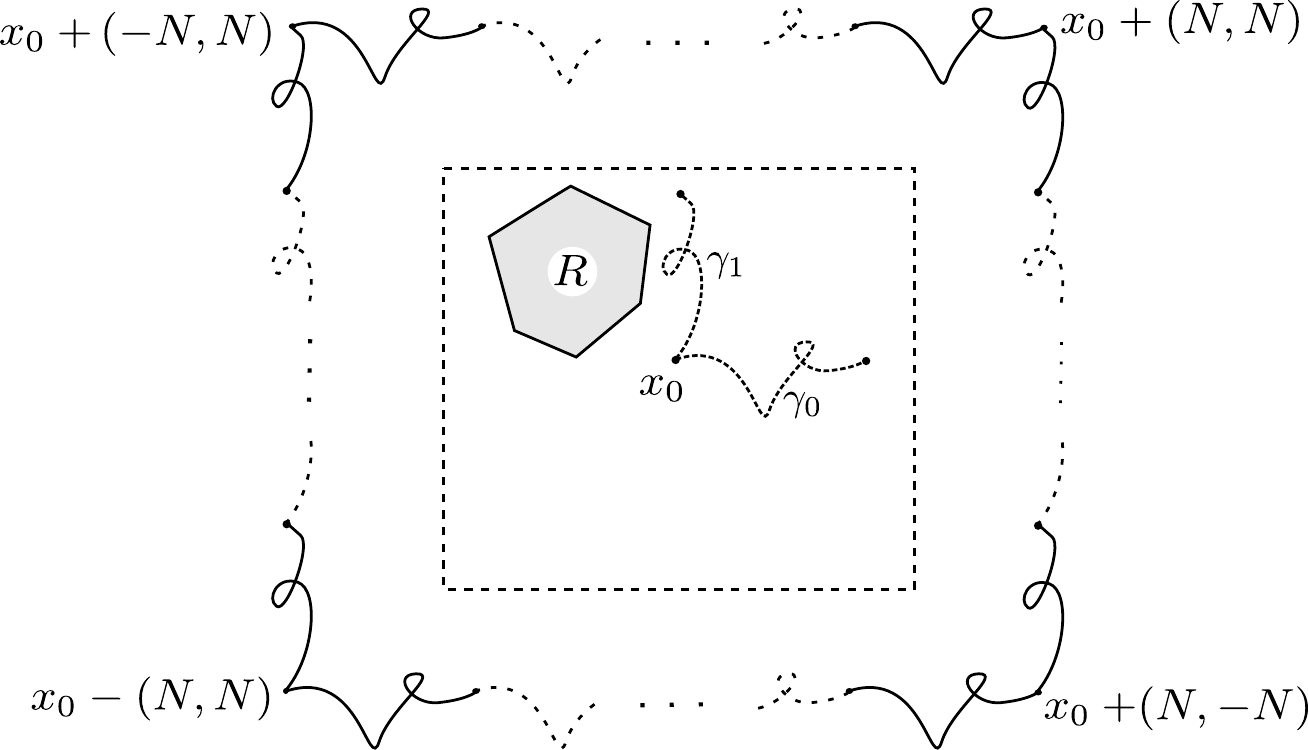}
\caption{Proof of Propoisition \ref{pro:engulfing-finite}.}
\label{fig:engulfing}
\end{figure}

Moreover, $\bd Q$ is an $(\hat{f},\hat{\mc{F}})$-arc, so there are finitely many leaves $\Gamma_1, \dots, \Gamma_m$ of $\hat{\mc{F}}$ such that $\bd Q$ is contained in the union of a finite set of iterates of these leaves together with some elements of $\hat{X}$. Recall that each $\Gamma_i$ joins two different points of $\hat{X}$. Let $P\subset \hat{X}$ be the (finite) set consisting of all endpoints of the arcs $\Gamma_i$, with $i\in \{1,\dots,m\}$.

Suppose that $U\subset \R^2$ is an open $\hat{f}$-invariant topological disk intersecting $R$. Since $R\subset Q$ and $Q$ is bounded, it follows that $U$ intersects both $Q$ and $\R^2\sm Q$, and therefore $U$ intersects $\bd Q$. Since $U$ is open and $\hat{X}$ is totally disconnected, this means that $U$ intersects some iterate of $\Gamma_i$ for some $i\in \{1,\dots, m\}$, and the fact that $U$ is invariant implies that $U$ intersects $\Gamma_i$. From Proposition \ref{pro:engulfing} we conclude that one of the endpoints of $\Gamma_i$ lies in $U$, hence $U\cap P\neq \emptyset$.
\end{proof}

\section{Proof of Theorem \ref{th:teoremao}}\label{sec:teoremao}

Throughout this section, we will assume that $f\colon \T^2\to \T^2$ is an irrotational homeomorphism preserving a Borel probability measure $\mu$ of full support, and $\hat f\colon \R^2\to \R^2$ its irrotational lift. 
Recall that, by Theorem \ref{th:irrotational-nw}, this implies that $\hat{f}$ is nonwandering. 

We will assume that none of cases (i), (ii) or (iii) from Theorem \ref{th:teoremao} holds, and we will seek a contradiction. Thus we assume from now on that $f$ is not annular, $\fix(f)$ is not fully essential and there exists some point $\hat{x}_0\in \R^2$ with an unbounded $\hat{f}$ orbit. We let $x_0=\pi(\hat{x}_0)$.
\setcounter{claim}{0}

Note that in the case that $\fix(f)$ is neither fully essential nor inessential, then Proposition \ref{pro:annular-annular} implies that $f$ is annular, contradicting our assumption. Hence $\fix(f)$ is in fact inessential.

Let us list the properties that we have so far thanks to our assumptions:
\begin{itemize} 
\item $f$ is non-annular;
\item $\fix(f)$ is inessential;
\item the orbit $O(\hat{x}_0):=  \{\hat f^{n}(\hat{x}_0): n\in \Z\}$ is unbounded; 
\item the nonwandering set of $\hat{f}$ is $\R^2$ (due to Theorem \ref{th:irrotational-nw});
\end{itemize}
As in \S\ref{sec:uepsilon-invariant}, we may use Proposition \ref{pro:collapse} to find a map which satisfies, in addition to all the previous facts,
\begin{itemize}
\item $\fix(f)$ is totally disconnected.
\end{itemize}

Since $O(\hat{x}_0)$ is unbounded, we have that $\bd_\infty O(\hat{x}_0)$ is nonempty. Choose any $\ol{w}\in \bd_\infty O(\hat{x}_0)$, which will remain fixed until the end of the proof. The following claim should be obvious:

\begin{claim}\label{claim:wbar} For any $w\in \R^2_*$ such that $w$ is not perpendicular to $\ol{w}$, the orbit of $\hat{x}_0$ is unbounded in the direction of $w$, \ie
$$\sup_{n\in \Z} \abs{\smash{p_w(\hat{f}^n(\hat{x}_0))}} = \infty.$$
\end{claim}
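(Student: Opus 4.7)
The plan is to simply unpack the definition of the boundary at infinity. By the definition of $\bd_\infty O(\hat{x}_0)$, the hypothesis $\ol{w} \in \bd_\infty O(\hat{x}_0)$ means there is a sequence $(n_k)_{k\in\N}$ of integers such that, writing $z_k = \hat{f}^{n_k}(\hat{x}_0)$, one has $\norm{z_k}\to\infty$ and $z_k/\norm{z_k}\to \ol{w}$.

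Given any $w\in \R^2_*$ not perpendicular to $\ol{w}$, write
$$p_w(z_k) = \norm{z_k}\,\Big\langle \frac{z_k}{\norm{z_k}}\,;\, \frac{w}{\norm{w}}\Big\rangle.$$
The inner product on the right converges to $\langle \ol{w}\,;\, w/\norm{w}\rangle$, which is nonzero because $\ol{w}$ and $w$ are not perpendicular. Since $\norm{z_k}\to\infty$, it follows that $\abs{p_w(z_k)}\to\infty$, so the supremum over $n\in \Z$ is infinite. This is the whole argument; there is no obstacle and no appeal to any result beyond the definitions.
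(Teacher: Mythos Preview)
Your argument is correct and is exactly the elementary unpacking the paper has in mind; the paper does not even write out a proof, saying only that the claim ``should be obvious.'' There is nothing to add.
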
 

\begin{claim} $f^n$ is non-annular for any $n\in \N$.
\end{claim}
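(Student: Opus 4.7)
The plan is to verify the hypotheses of part (2) of Proposition \ref{pro:annular-annular} and invoke it directly. That proposition tells us that if $f$ is non-annular and has a fixed point, then every iterate $f^n$ is non-annular.

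The non-annular hypothesis is already part of our standing assumptions. It remains only to exhibit a fixed point of $f$. Since $\hat{f}$ is the irrotational lift, $(0,0) = \rho_\mu(\hat{f})$ belongs to $\rho(\hat{f})$, and in particular $\mu$ is an $f$-invariant Borel probability measure whose rotation vector is $(0,0)$. By the result of Franks cited in the paper (\cite{franks-ergodic}), the existence of an invariant probability measure whose rotation vector is realized by an interior or boundary rotation value of the form $(0,0)$ guarantees that $\hat{f}$ has a fixed point; in particular $f$ has a fixed point. (Indeed this fact is already used in the proof of Theorem \ref{th:poincare-omega}.)

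With $f$ non-annular and $\fix(f)\neq \emptyset$ in hand, part (2) of Proposition \ref{pro:annular-annular} yields that $f^n$ is non-annular for every $n\in \N$, which is the claim.

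I don't expect any obstacle here: the statement is essentially a one-line application of a previously recorded fact, once one notes that the irrotational hypothesis together with the existence of an invariant probability measure forces a fixed point via Franks' theorem.
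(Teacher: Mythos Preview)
Your proposal is correct and follows exactly the paper's approach: the paper's proof reads ``This follows from Proposition \ref{pro:annular-annular}, noting that $f$ has a fixed point (since it is irrotational).'' You have simply expanded the parenthetical remark by explaining why irrotationality forces a fixed point via Franks' result \cite{franks-ergodic}, which is indeed the justification used elsewhere in the paper (in the proof of Theorem \ref{th:poincare-omega}).
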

\begin{proof} 
This follows from Proposition \ref{pro:annular-annular}, noting that $f$ has a fixed point (since it is irrotational).
\end{proof}

Fix any $v\in \Z^2_*$, and recall the definition of the sets $\omega_v$ and $\omega_{-v}$ from \S\ref{sec:omegas}. 
\begin{claim} $\omega_v$ and $\omega_{-v}$ are nonempty.
\end{claim}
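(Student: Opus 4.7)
The plan is to reduce this immediately to Proposition \ref{pr:omeganaovazio}. Since $f$ is irrotational, by definition we have $\rho(\hat{f}) = \{(0,0)\}$, so in particular $(0,0)\in \rho(\hat{f})$. For any $v\in \Z^2_*$ (and in fact any $v\in \R^2_*$), the origin satisfies $\langle (0,0);v\rangle = 0 \geq 0$, so $(0,0)\in H^+_v$, and consequently
\[
\rho(\hat{f}) = \{(0,0)\} \subset H^+_v.
\]
Thus both hypotheses of Proposition \ref{pr:omeganaovazio} hold for $v$ (and similarly for $-v$, though the conclusion for $-v$ already follows from the version applied to $v$). Applying that proposition yields that $\omega_v$ and $\omega_{-v}$ are both nonempty, as required.

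There is no real obstacle here; the claim is just an instance of the general nonemptiness statement from Proposition \ref{pr:omeganaovazio}, since the irrotational hypothesis forces $\rho(\hat{f})$ to be the single-point set $\{(0,0)\}$, which lies in every closed half-plane $H^+_v$ through the origin.
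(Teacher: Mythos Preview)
Your proof is correct and follows the same approach as the paper, which simply states that the claim follows directly from Proposition~\ref{pr:omeganaovazio}. You have just made explicit the verification that the hypotheses of that proposition are satisfied under the irrotational assumption.
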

\begin{proof} It follows directly from Proposition \ref{pr:omeganaovazio}.
\end{proof}

\begin{claim} $x_0\in \ol{\pi(\omega_v)}\cap \ol{\pi(\omega_{-v})}$
\end{claim}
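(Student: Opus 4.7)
The plan is to argue by contradiction: suppose $x_0\notin\overline{\pi(\omega_v)}$ (the case of $\omega_{-v}$ is symmetric) and choose $\epsilon>0$ with $B_\epsilon(x_0)\cap\overline{\pi(\omega_v)}=\emptyset$. Since $\mu$ has full support, $\mu(B_\epsilon(x_0))>0$, and because $(0,0)$ is trivially an extremal point of $\rho(\hat f)=\{(0,0)\}$, Proposition~\ref{pro:temmedidaergodica} supplies an ergodic $\nu\in\mathcal{M}^e_{(0,0)}(\hat f)$ with $\nu(B_\epsilon(x_0))>0$. Then $\supp(\nu)$ meets $B_\epsilon(x_0)$ and is therefore not contained in $\overline{\pi(\omega_v)}$, so case (1) of Lemma~\ref{lem:tresopcoes} fails for this $\nu$ and $v$.

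Consequently case (2) of Lemma~\ref{lem:tresopcoes} must hold: for $\nu$-a.e.\ $x$, any lift is $\hat f$-recurrent and lies in an $\hat f$-periodic open connected set $\hat U$ whose projection $\pi(\hat U)$ is not fully essential. Let $n$ denote the period of $\hat U$. Since $\pi(\hat U)$ is $f^n$-invariant, open and connected, and since $f^n$ is non-annular for every $n\in\N$ by the preceding claim, Proposition~\ref{pro:annular-annular}(1) forces $\pi(\hat U)$ to be inessential. Then $\fil(\pi(\hat U))$ is an $f^n$-invariant open topological disk, and an application of Theorem~\ref{th:essine} to $f^n$ (its hypotheses can be checked using the irrotationality of $\hat f$, which forces $\fix(f^n)=\pi(\fix(\hat f^n))$ to remain inessential after our initial reductions) shows that every connected component of $\pi^{-1}(\fil(\pi(\hat U)))$ is bounded. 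In particular the $\hat f$-orbit of any such recurrent lift stays bounded in $\R^2$.

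To close the contradiction I would invoke the assumed unbounded orbit of $\hat x_0$. Since that orbit is unbounded the set $U_\epsilon(\hat x_0)$ is unbounded for every $\epsilon>0$, and Corollary~\ref{coro:uepsilon-invariant} makes it $\hat f$-invariant; Theorem~\ref{th:essine} together with non-annularity of $f$ then forces $\pi(U_\epsilon(\hat x_0))$ to be fully essential. Ergodicity of $\nu$ combined with $x_0\in\pi(U_\epsilon(\hat x_0))$ and $\supp(\nu)\cap B_\epsilon(x_0)\neq\emptyset$ yields $\nu(\pi(U_\epsilon(\hat x_0)))=1$, so $\nu$-a.e.\ $x$ has a lift inside the unbounded $\hat f$-invariant disk $U_\epsilon(\hat x_0)$ whose $\hat f$-orbit is trapped in some bounded $\fil(\hat U(x))$. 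The main obstacle will be turning this local trapping into a genuine contradiction with the unbounded orbit of $\hat x_0$ living inside the same invariant disk: I would complete this by invoking Proposition~\ref{pro:engulfing-finite} to locate finitely many fixed points of $\hat f$ inside $U_\epsilon(\hat x_0)$ and then exploiting the non-separating character of $\omega_v$ and simple connectedness of its complement (Proposition~\ref{pro:omegapro}(3)) to show that $U_\epsilon(\hat x_0)$ is forced to intersect $\omega_v$, which after shrinking $\epsilon$ contradicts $B_\epsilon(x_0)\cap\overline{\pi(\omega_v)}=\emptyset$.
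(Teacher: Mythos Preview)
Your proposal contains the decisive observation but buries it under unnecessary machinery and then fails to close the argument.

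The entire measure-theoretic detour through Proposition~\ref{pro:temmedidaergodica} and Lemma~\ref{lem:tresopcoes} is irrelevant here: the paper never touches ergodic measures for this claim. What matters is precisely what you establish in your third paragraph: $U_\epsilon(\hat x_0)$ is unbounded and $\hat f$-invariant, hence $\pi(U_\epsilon(\hat x_0))$ is an $f$-invariant open connected set which, by non-annularity and Theorem~\ref{th:essine}, must be fully essential. At this point you are done, but you do not see why. Since $B_\epsilon(x_0)\cap\pi(\omega_v)=\emptyset$ and $\pi(\omega_v)$ is $f$-invariant, the whole set $\pi(U_\epsilon(\hat x_0))$ is disjoint from $\pi(\omega_v)$; full essentiality then forces, via Proposition~\ref{pro:compact-ine}, every component of $\R^2\sm\pi^{-1}(\pi(U_\epsilon(\hat x_0)))$ to be bounded, while $\omega_v$ sits inside this complement and has only unbounded components. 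That is the contradiction. The paper runs exactly this argument (using $U'_\epsilon(x_0,f)$ on $\T^2$ rather than $U_\epsilon(\hat x_0,\hat f)$ on $\R^2$, which is a cosmetic difference).

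Your proposed completion, by contrast, is wrong: you want to force $U_\epsilon(\hat x_0)$ to intersect $\omega_v$, but it never does. Indeed $B_\epsilon(\hat x_0)$ is disjoint from the $\hat f$-invariant set $\pi^{-1}(\pi(\omega_v))\supset\omega_v$, so $U'_\epsilon(\hat x_0)$ is disjoint from $\omega_v$, and since every component of $\omega_v$ is unbounded it cannot lie in a bounded complementary component, so $U_\epsilon(\hat x_0)=\fil(U'_\epsilon(\hat x_0))$ is also disjoint from $\omega_v$. Proposition~\ref{pro:engulfing-finite} and the non-separating property of $\omega_v$ do not help you here. There is also a gap in your second paragraph: applying Theorem~\ref{th:essine} to $f^n$ requires $\fix(f^n)$ not fully essential, which does not follow from the standing assumption on $\fix(f)$.
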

\begin{proof}
We show that $x_0\in \ol{\pi(\omega_v)}$; the other part is analogous. Suppose for contradiction that $x_0\notin \ol{\pi(\omega_v)}$, and fix $\epsilon>0$ such that $B_\epsilon(x_0)$ is disjoint from $\pi(\omega_v)$. Since the latter set is invariant, it follows that $U'_\epsilon(x_0,f)$ is also disjoint from $\pi(\omega_v)$, where we use the notation from \S\ref{sec:uepsilon}.

We claim that $U'_\epsilon(x_0,f)$ is essential. Suppose on the contrary that it is inessential. Then $U=\fil(U'_{\epsilon}(x_0,f))$ is a topological disk which is either invariant or periodic and disjoint from its image. Choose $\hat{U}$ as the connected component of $\pi^{-1}(U)$ containing $\hat{x}_0$. Since $\hat{f}$ is nonwandering and the components of $\pi^{-1}(U)$ are permuted, it follows that $\hat{U}$ is either invariant or periodic and disjoint from its image. Since $\hat{x}_0\in \hat{U}$, it follows that $\hat{U}$ is unbounded. But in the case that $\hat{U}$ is invariant, this contradicts Theorem \ref{th:essine} (as $f$ is non-annular and $\fix(f)$ is not fully essential), and in the case that $\hat{U}$ is periodic and disjoint from its image it contradicts Corollary \ref{coro:uepsilon-invariant}.

Thus $U'_\epsilon(x_0,f)$ is essential, and since it is a periodic open set and $f^n$ is not annular for any $n$, Proposition \ref{pro:annular-annular} implies that $U'_\epsilon(x_0,f)$ is in fact fully essential.
But then Proposition \ref{pro:compact-ine} says that all the connected components of $\R^2\sm \pi^{-1}(U'_\epsilon(x_0,f))$ are bounded. Since $\omega_v$ is contained in the latter set, and all the connected components of $\omega_v$ are unbounded, we have a contradiction, proving the claim.
\end{proof}

\begin{claim}\label{claim:omegasetocam} $\pi(\omega_v)\cap \pi(\omega_{-v})\neq \emptyset$
\end{claim}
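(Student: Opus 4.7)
The plan is to argue by contradiction: assume $\pi(\omega_v)\cap \pi(\omega_{-v})=\emptyset$, then build an explicit arc $\ell$ near $\hat{x}_0$ to which Proposition~\ref{pro:omeganaotoca} applies, producing a wandering open set. This contradicts the already-established fact that $\hat{f}$ is nonwandering.

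First I would exploit that $\hat{x}_0$ has unbounded $\hat{f}$-orbit, so in particular $\hat{f}(\hat{x}_0)\neq \hat{x}_0$, and choose $\epsilon>0$ small enough that $\hat{f}(B_\epsilon(\hat{x}_0))\cap B_\epsilon(\hat{x}_0)=\emptyset$ and $2\epsilon<\norm{v^\perp}$ (so that the $T_{v^\perp}^k$-translates of $B_\epsilon(\hat{x}_0)$ are pairwise disjoint for $k\in \Z_*$). Next, using the previous claim $x_0\in \ol{\pi(\omega_v)}\cap \ol{\pi(\omega_{-v})}$, I would select lifts
$$\hat{y}_1\in B_\epsilon(\hat{x}_0)\cap (\omega_v+u_1), \qquad \hat{y}_2\in B_\epsilon(\hat{x}_0)\cap (\omega_{-v}+u_2)$$
for some $u_1,u_2\in \Z^2$. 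The contradiction hypothesis gives that $\omega_v+u_1$ and $\omega_{-v}+u_2$ are disjoint in $\R^2$.

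Then I would join $\hat{y}_2$ to $\hat{y}_1$ by a straight segment $\sigma$ and trim it: set $t_1=\sup\{t:\sigma(t)\in \omega_{-v}+u_2\}$ and $t_2=\inf\{t>t_1:\sigma(t)\in \omega_v+u_1\}$, and let $\ol{\ell}=\sigma|_{[t_1,t_2]}$, $\ell=\sigma|_{(t_1,t_2)}$. Then $\ol{\ell}\subset B_\epsilon(\hat{x}_0)$ joins a point of $\omega_{-v}+u_2$ to a point of $\omega_v+u_1$, with $\ell$ disjoint from both sets by construction, disjoint from its $T_{v^\perp}^k$-translates for $k\in \Z_*$ by the diameter choice, and disjoint from $\hat{f}(\ell)$ by the non-fixing choice. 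Since $f$ is irrotational, $\hat{f}$ has a fixed point (by Franks's theorem, \cite{franks-ergodic}). Applying Proposition~\ref{pro:omeganaotoca}, part~(6) produces a wandering open set $W\supset \ell$, contradicting that the nonwandering set of $\hat{f}$ is all of $\R^2$.

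The one wrinkle I expect to be the main obstacle is that Proposition~\ref{pro:omeganaotoca} is literally stated for translates of the form $\omega_{-v}+p_1 v$ and $\omega_v+p_2 v$ with $p_1,p_2\in \Z$, while the lifts supplied by the approximation step yield translates $\omega_{\pm v}+u$ for general $u\in \Z^2$ (and for a primitive $v$ with $\norm{v}^2>1$, these are genuinely more general, as $\Z v+\Z v^\perp$ is a proper sublattice of $\Z^2$). However, the proof of Proposition~\ref{pro:omeganaotoca} uses only that each of $\omega_\pm$ is closed, $\hat{f}$-invariant, non-separating and invariant under $T_{v^\perp}^k$ for all $k\in \Z$; all four properties hold for $\omega_{\pm v}+u$ with arbitrary $u\in \Z^2$ (using that $\hat{f}$ commutes with covering transformations and that $\langle v^\perp;v\rangle=0$). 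So the proof goes through after observing this harmless generalization, and the contradiction is achieved.
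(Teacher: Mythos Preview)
Your proof is correct and follows essentially the same route as the paper: assume disjointness, use the previous claim to find translates of $\omega_{\pm v}$ meeting a small ball around $\hat{x}_0$, draw a short segment $\ell$ inside that ball, and apply part~(6) of Proposition~\ref{pro:omeganaotoca} to produce a wandering set, contradicting Theorem~\ref{th:irrotational-nw}. Your observation about the ``wrinkle'' is well taken: the paper in fact writes the translates as $\omega_v+p_1v$ and $\omega_{-v}+p_2v$, which is only literally justified when $\{v,v^\perp\}$ generate $\Z^2$; your remark that the proof of Proposition~\ref{pro:omeganaotoca} uses only closedness, $\hat f$-invariance, non-separation and $T_{v^\perp}$-invariance (all of which persist under any integer translation) is exactly the right way to close this gap, and is consistent with the paper's own reduction to $v=(1,0)$ via $\SL(2,\Z)$.
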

\begin{proof} Suppose the contrary. Since $\hat{x}_0$ is not fixed, there is $\epsilon>0$ such that $B_\epsilon(\hat{x}_0)$ is disjoint from $\hat{f}(B_\epsilon(\hat{x}_0))$. Since $x_0=\pi(\hat{x}_0)$ belongs to $\ol{\pi(\omega_v)}\cap \ol{\pi(\omega_{-v})}$, there are integers $p_1,\, p_2$ such that $\omega_v+p_1v$ and $\omega_{-v}+p_2v$ both intersect $B_\epsilon(\hat{x}_0)$. Let $\ell$ be a straight line segment (without its endpoints) contained in $B_\epsilon(\hat{x}_0)$ joining a point of $\omega_v+p_1v$ to a point of $\omega_{-v}+p_2v$. We may assume that $\ell$ is disjoint from $\omega_v+p_1v$ and $\omega_{-v}+p_2v$, by replacing it by an appropriate connected component of $\ell\sm (\omega_v+p_1v)\cup(\omega_{-v}+p_2v)$. Since $\hat{f}(\ell)$ is disjoint from $\ell$, we are under the hypotheses of Proposition \ref{pro:omeganaotoca} (note that $\hat{f}$ has a fixed point for being the irrotational lift of $f$).
But part (6) of said proposition implies that $\hat{f}$ has a wandering open set. This is a contradiction, since $\hat{f}$ is nonwandering under our current assumptions.
\end{proof}

\begin{claim}\label{claim:notinomega} If $v$ is not perpendicular to $\ol{w}$, then $x_0\notin \pi(\omega_v)\cap \pi(\omega_{-v})$
\end{claim}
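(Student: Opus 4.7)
The plan is a short proof by contradiction that extracts uniform bounds on $p_v$ along $O(\hat{x}_0)$ directly from the definitions of $\omega_v$ and $\omega_{-v}$, and then appeals to Claim~\ref{claim:wbar} to derive the contradiction.

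Suppose, for contradiction, that $x_0 \in \pi(\omega_v)\cap \pi(\omega_{-v})$. Then there exist $w_1,w_2\in \Z^2$ with $\hat{x}_0+w_1 \in \omega_v$ and $\hat{x}_0+w_2\in \omega_{-v}$. From the definition, $\omega_v\subset \bigcap_{i\in \Z}\hat{f}^i(H_v^+)$, so $\hat{f}^n(\hat{x}_0+w_1)\in H_v^+$ for every $n\in \Z$. Since $\hat{f}$ commutes with integer translations, this translates into $\hat{f}^n(\hat{x}_0)+w_1\in H_v^+$, hence
$$p_v\bigl(\hat{f}^n(\hat{x}_0)\bigr)\geq -p_v(w_1) \quad \text{for all } n\in \Z.$$
Applying the same argument to $\hat{x}_0+w_2\in \omega_{-v}\subset \bigcap_{i\in \Z}\hat{f}^i(H_{-v}^+)=\bigcap_{i\in \Z}\hat{f}^i(H_v^-)$ yields the matching upper bound
$$p_v\bigl(\hat{f}^n(\hat{x}_0)\bigr)\leq -p_v(w_2) \quad \text{for all } n\in \Z.$$
Together these say $\sup_{n\in \Z}\bigl|p_v(\hat{f}^n(\hat{x}_0))\bigr|<\infty$, i.e.\ the orbit $O(\hat{x}_0)$ is uniformly bounded in the direction $v$.

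On the other hand, the hypothesis that $v$ is not perpendicular to $\ol{w}$ is precisely the hypothesis of Claim~\ref{claim:wbar}, which asserts $\sup_{n\in \Z}|p_v(\hat{f}^n(\hat{x}_0))|=\infty$. This is the required contradiction.

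There is no real obstacle in this step: once one unpacks the invariance $\hat{f}(\omega_{\pm v})=\omega_{\pm v}$ together with the half-plane containment, the two integer translates $w_1,w_2$ automatically produce two-sided bounds on $p_v$ along the orbit, and the conclusion is immediate from Claim~\ref{claim:wbar}. The only small care required is to note that having $x_0\in \pi(\omega_v)$ gives a lift of $x_0$ in $\omega_v$, which may be a different integer translate of $\hat{x}_0$ than the lift witnessing $x_0\in \pi(\omega_{-v})$; this is harmless since the two translations only shift the constants $-p_v(w_1)$ and $-p_v(w_2)$.
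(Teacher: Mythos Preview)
Your proof is correct and follows essentially the same approach as the paper: both argue by contradiction, pick integer translates of $\hat{x}_0$ lying in $\omega_v$ and $\omega_{-v}$, extract two-sided bounds on $p_v(\hat{f}^n(\hat{x}_0))$ from the half-plane containments, and contradict Claim~\ref{claim:wbar}. Your version simply spells out the derivation of the bounds more explicitly.
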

\begin{proof} If $x_0\in \pi(\omega_v)\cap \pi(\omega_{-v})$, then there exist $v_1$ and $v_2\in \Z^2$ such that $\hat{x}_0 \in (\omega_v+v_1)\cap (\omega_{-v}+ v_2)$. By the definition of $\omega_v$ and $\omega_{-v}$ this implies that the orbit of $\hat{x}_0$ is bounded in the direction of $v$, \ie that $p_v(O(\hat{x}_0))$ is a bounded set. This contradicts Claim \ref{claim:wbar}.
\end{proof}

For each $n\in \N$, let $O_n = U_{1/n}(\hat{x}_0, \hat{f})$ (using the notation from \S\ref{sec:uepsilon}). Note that $O_{n+1}\subset O_n$ for all $n\in \N$.

\begin{claim}\label{claim:cortadir} If $w\in \Z^2_*$ is not parallel to $\ol{w}$, then there is $n$ such that $O_n\cap T_w(O_n)=\emptyset$.
\end{claim}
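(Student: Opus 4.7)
The plan is to reduce this claim to Proposition \ref{pro:omegasetocam}(3) applied with $v=w^{\perp}$, using Claim \ref{claim:notinomega} to verify the relevant hypothesis.

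First I would set $v=w^{\perp}\in \Z^2_*$. The condition that $w$ is not parallel to $\ol{w}$ is equivalent to saying that $v=w^{\perp}$ is not perpendicular to $\ol{w}$, since $\langle w^{\perp},\ol{w}\rangle =0$ exactly when $w$ is parallel to $\ol{w}$. Hence Claim \ref{claim:notinomega} applies with this choice of $v$ and yields
$$x_0\notin \pi(\omega_{v})\cap \pi(\omega_{-v}).$$
On the other hand, Claim \ref{claim:omegasetocam} guarantees that $\pi(\omega_v)\cap \pi(\omega_{-v})\neq \emptyset$, so the standing hypothesis of Proposition \ref{pro:omegasetocam} (existence of $p_1,p_2\in \Z$ with $(\omega_v+p_1v)\cap(\omega_{-v}+p_2v)\neq \emptyset$) is satisfied.

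Next, recall that by Theorem \ref{th:irrotational-nw} the map $\hat{f}$ is nonwandering, so in particular $\hat{x}_0$ is $\hat{f}$-nonwandering. Since $\pi(\hat{x}_0)=x_0$ is not in $\pi(\omega_v)\cap \pi(\omega_{-v})$, part (3) of Proposition \ref{pro:omegasetocam} provides an $\epsilon>0$ such that
$$U_\epsilon(\hat{x}_0,\hat{f})\cap T_{v^\perp}^{k}\bigl(U_\epsilon(\hat{x}_0,\hat{f})\bigr)=\emptyset \quad \text{for every }k\in \Z_*.$$
Since $v^\perp = (w^\perp)^\perp = -w$, taking $k=-1$ gives $U_\epsilon(\hat{x}_0,\hat{f})\cap T_w(U_\epsilon(\hat{x}_0,\hat{f}))=\emptyset$.

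Finally, I choose $n\in \N$ with $1/n\leq \epsilon$. By the monotonicity property $U_{\epsilon'}(\hat{x}_0,\hat{f})\subset U_{\epsilon}(\hat{x}_0,\hat{f})$ for $\epsilon'<\epsilon$ recalled in \S\ref{sec:uepsilon}, we have $O_n=U_{1/n}(\hat{x}_0,\hat{f})\subset U_\epsilon(\hat{x}_0,\hat{f})$, so that $O_n\cap T_w(O_n)=\emptyset$, as desired. There is no real obstacle here; the claim is essentially a direct application of the machinery that has already been established, and the only subtlety is keeping track of the perpendicular/parallel passage between $w$ and $v=w^\perp$ so that Claim \ref{claim:notinomega} and the conclusion of Proposition \ref{pro:omegasetocam}(3) fit together.
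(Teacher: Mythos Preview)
Your proof is correct and follows essentially the same approach as the paper: set $v=w^{\perp}$, invoke Claims \ref{claim:omegasetocam} and \ref{claim:notinomega}, and then apply Proposition \ref{pro:omegasetocam}(3) to obtain $\epsilon$ and choose $n$ with $1/n\leq \epsilon$. The only difference is that you spell out explicitly the nonwandering hypothesis on $\hat{x}_0$ and the computation $v^{\perp}=-w$, which the paper leaves implicit.
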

\begin{proof} Let $v=w^\perp$, so that $v$ is not perpendicular to $\ol{w}$. By Claim \ref{claim:omegasetocam} we know that $\pi(\omega_v)\cap \pi(\omega_{-v})\neq \emptyset$, so the hypotheses of Proposition \ref{pro:omegasetocam} hold. The previous claim implies that $x_0\notin \pi(\omega_v)\cap \pi(\omega_{-v})$, so if $\epsilon$ is the number from  part (3) of Proposition \ref{pro:omegasetocam}, our claim follows choosing $n>1/\epsilon$.
\end{proof}

\begin{claim}\label{claim:uepsilon-inv} For any $n\in \N$, the set $O_n$ is $\hat{f}$-invariant, unbounded, and $\ol{w}\in \bd_\infty O_n$.
\end{claim}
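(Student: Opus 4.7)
The plan is to verify the three properties in sequence, leveraging the setup that $\hat{f}$ is nonwandering and Corollary \ref{coro:uepsilon-invariant} applies.

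First I would show that the set $U_{1/n}'(\hat{x}_0,\hat{f})$ is unbounded. Since $\hat{f}$ is nonwandering (by our assumptions together with Theorem \ref{th:irrotational-nw}), the ball $B_{1/n}(\hat{x}_0)$ is not wandering, so by the discussion in \S\ref{sec:uepsilon} there is a minimal $m\in \N$ with $\hat{f}^m(U_{1/n}'(\hat{x}_0,\hat{f}))=U_{1/n}'(\hat{x}_0,\hat{f})$, and the translates $\hat{f}^j(U_{1/n}'(\hat{x}_0,\hat{f}))$ for $0<j<m$ are disjoint from $U_{1/n}'(\hat{x}_0,\hat{f})$. The orbit of $\hat{x}_0$ decomposes as the union of the $m$ sub-orbits $S_j=\{\hat{f}^{km+j}(\hat{x}_0):k\in \Z\}$ for $0\leq j<m$. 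Since $O(\hat{x}_0)$ is unbounded and a finite union of bounded sets is bounded, at least one $S_j$ is unbounded. But $S_j\subset \hat{f}^j(U_{1/n}'(\hat{x}_0,\hat{f}))$, and since a homeomorphism of $\R^2$ sends bounded sets to bounded sets, $U_{1/n}'(\hat{x}_0,\hat{f})$ itself must be unbounded. Consequently $O_n=\fil(U_{1/n}'(\hat{x}_0,\hat{f}))$ is unbounded.

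Next, to obtain $\hat{f}$-invariance I would invoke Corollary \ref{coro:uepsilon-invariant}. Its hypotheses require $\hat{f}$ to be nonwandering and $\pi(\fix(\hat{f}))$ to be inessential; both hold in our setting, since we have already reduced to the case where $\fix(f)$ is (totally disconnected and hence) inessential, and $\pi(\fix(\hat{f}))\subset \fix(f)$ inherits inessentiality. As $O_n=U_{1/n}(\hat{x}_0,\hat{f})$ was shown to be unbounded, the corollary yields $\hat{f}(O_n)=O_n$.

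Finally, for $\ol{w}\in \bd_\infty O_n$, I would note that $\hat{x}_0\in B_{1/n}(\hat{x}_0)\subset U_{1/n}'(\hat{x}_0,\hat{f})\subset O_n$ and, by the previous step, $O_n$ is $\hat{f}$-invariant, so the full orbit $O(\hat{x}_0)$ is contained in $O_n$. This immediately gives $\bd_\infty O(\hat{x}_0)\subset \bd_\infty O_n$, and since $\ol{w}$ was chosen in $\bd_\infty O(\hat{x}_0)$, we conclude $\ol{w}\in \bd_\infty O_n$.

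I don't anticipate any serious obstacle here; the argument is essentially a bookkeeping application of the machinery developed in \S\ref{sec:uepsilon} and \S\ref{sec:uepsilon-invariant}. The one conceptually important observation is that, even if $U_{1/n}'(\hat{x}_0,\hat{f})$ is only $\hat{f}^m$-periodic rather than $\hat{f}$-invariant \emph{a priori}, the fact that homeomorphisms of $\R^2$ preserve boundedness propagates unboundedness from one of the iterates $\hat{f}^j(U_{1/n}'(\hat{x}_0,\hat{f}))$ back to $U_{1/n}'(\hat{x}_0,\hat{f})$ itself, which is what permits us to feed $O_n$ into Corollary \ref{coro:uepsilon-invariant} and upgrade periodicity to full invariance.
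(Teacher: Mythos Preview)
Your proof is correct and follows essentially the same approach as the paper's: establish periodicity and unboundedness of $O_n$, then invoke Corollary \ref{coro:uepsilon-invariant} to upgrade periodicity to invariance, and finally read off $\ol{w}\in\bd_\infty O_n$ from the inclusion $O(\hat{x}_0)\subset O_n$. Your argument simply unpacks in more detail the step the paper compresses into ``the fact that the orbit of $\hat{x}_0$ is unbounded implies that $O_n$ is unbounded,'' by explicitly decomposing the orbit into the $m$ sub-orbits carried by the periodic iterates of $U_{1/n}'$.
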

\begin{proof} 
Since $\hat{f}$ is nonwandering, the definition of $U_\epsilon$ (see \S\ref{sec:uepsilon}) implies that $O_n$ is $\hat{f}$-periodic, and if its least period is not $1$ then it is disjoint from its image. Moreover, the fact that the orbit of $\hat{x}_0$ is unbounded implies that $O_n$ is unbounded. 
Suppose for a contradiction that $O_n$ is not invariant. Then $O_n$ is a periodic unbounded open topological disk disjoint from its image, contradicting Corollary \ref{coro:uepsilon-invariant}.  Thus $O_n$ is invariant, and the remaining claim is obvious from our choice of $\ol{w}$.
\end{proof}

\begin{claim}\label{claim:Uess} $\pi(O_n)$ is essential for each $n\in \N$.
\end{claim}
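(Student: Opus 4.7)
The plan is to argue by contradiction, reducing the claim to a direct application of Theorem \ref{th:essine}. Suppose for some $n\in\N$ that $\pi(O_n)$ is inessential; I aim to derive a contradiction with the unboundedness of $O_n$ established in Claim \ref{claim:uepsilon-inv}.

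First I would observe that $\pi(O_n)$ is open and connected, since $O_n$ is, and then invoke the classification recalled in \S\ref{sec:essential} to conclude that $\fil(\pi(O_n))$ is an open topological disk in $\T^2$. Because $O_n$ is $\hat{f}$-invariant, $\pi(O_n)$ is $f$-invariant, and since fillings of invariant sets are invariant, $\fil(\pi(O_n))$ is an $f$-invariant open topological disk.

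Next I would check that the hypotheses of Theorem \ref{th:essine} hold for $f$: the standing reductions of this section ensure that $f$ is non-annular and that $\fix(f)$ is inessential (hence not fully essential), while $f$ is nonwandering, either from the full-support invariant measure $\mu$ or as a consequence of Theorem \ref{th:irrotational-nw} applied to the lift. Theorem \ref{th:essine} applied to $\fil(\pi(O_n))$ then yields that every connected component of $\pi^{-1}(\fil(\pi(O_n)))$ is bounded. Since $O_n$ is connected and contained in $\pi^{-1}(\pi(O_n))\subset \pi^{-1}(\fil(\pi(O_n)))$, it must lie in a single such bounded component, contradicting Claim \ref{claim:uepsilon-inv}.

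I do not foresee any substantive obstacle in this argument. The one detail worth flagging is that Theorem \ref{th:essine} is stated for invariant open \emph{topological disks}, so it cannot be applied directly to $\pi(O_n)$; the filling operation is precisely the device that bridges this gap, turning an inessential open connected set into a genuine topological disk while preserving $f$-invariance.
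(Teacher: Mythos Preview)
Your argument is correct and follows essentially the same route as the paper's proof. The paper's one-line proof simply asserts that an inessential $\pi(O_n)$ ``would be an $f$-invariant open topological disk which has an unbounded lift, contradicting Theorem \ref{th:essine}''; your version makes explicit the filling step needed to pass from an inessential open connected set to a genuine topological disk, which is exactly the detail the paper glosses over.
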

\begin{proof} If $\pi(O_n)$ were inessential, it would be an $f$-invariant open topological disk which has an unbounded lift, contradicting Theorem \ref{th:essine}. 
\end{proof}

Note that, if $O_n'=U_{1/n}'(\hat{x}_0,\hat{f})$, then by definition $O_n=\fil(O_n')$. Thus Claim \ref{claim:cortadir} implies that $(O_n')_{n\in \N}$ is an eventually $\Z^2\sm (\Z\ol{w})$-free chain of open connected sets, Claim \ref{claim:uepsilon-inv} implies that each $O_n'$ is invariant (note that, as remarked in \S\ref{sec:uepsilon}, $O_n'$ is invariant if and only if $O_n$ is invariant), and Claim \ref{claim:Uess} implies that $O_n'$ is essential for each $n\in \N$ (due to Proposition \ref{pro:fill-ine}). Thus all the hypotheses of Proposition \ref{pro:chain-annular} hold, so we conclude that there exist $M>0$ and $w\in \R^2_*$ such that  
$$\abs{\smash{p_w(\hat{f}^n(z)-z)}}\leq M \quad \text{for all } z\in \R^2,\, n\in \Z.$$
Moreover, since with our assumptions $f$ is not annular, $\R w$ is a line of irrational slope,  
$$E=\bigcap_{n\in \N} \ol{O}_n'\subset S:=p_w^{-1}((-M,M)),$$
and $E$ separates the two boundary components of the strip. This means that $p_{w^\perp}(E)=\R$.
Note that, since $O_n=\fil(O_n')$, which is the union of $O_n'$ with the bounded components of its complement, this easily implies a similar property for the chain $(O_n)_{n\in \N}$; namely,
$$K=\bigcap_{n\in \N} \ol{O}_n\subset S$$
and since $E\subset K$, also $K$ separates the two boundary components of the strip $S$ and $p_{w^\perp}(K)=\R$.

Since $\hat{f}$ is nonwandering, and by our assumption $\fix(f)$ is totally disconnected, if $\mc{F}$ is the foliation with singularities in a set $X\subset \fix(f)$ given by Proposition \ref{pro:kt-brouwer}, and $\hat{\mc{F}}$, $\hat{X}$ are the corresponding lifts, then the hypotheses of Proposition \ref{pro:gradient} hold, so the foliations are gradient-like. 

Let $S' = p_w^{-1}(-M-1,M+1)$ and $R=S'\cap p_{w^\perp}^{-1}(-M-1,M+1)$. If $v\in \Z^2_*$ is such that $\abs{p_w(v)}<1$, then $p_w(K+v)\subset S'$, and since $p_{w^\perp}(K+v) = p_{w^\perp}(K)=\R$, it follows that $K+v$ intersects $R$. 

Let $P\subset\hat{X}$ be the finite set given by Proposition \ref{pro:engulfing-finite}, so that any unbounded $\hat{f}$-invariant open topological disk $U$ intersecting $R$ necessarily contains an element of $P$. Let $m$ be the number of elements of $P$. If $w=(a,b)$, we know that $a/b$ is irrational, and so for any $\kappa>0$ there exist integers $c,d$, with $d\neq 0$ such that $|a/b-c/d|<\kappa/d$. Using this remark with $\kappa$ small enough we may find $v\in \Z^2_*$ such that $|p_w(v)|<1/(m+1)$.

Note that $\hat{x}_0\in K$, so the orbit of $\hat{x}_0$ is bounded in the $w$ direction. Since the orbit of $\hat{x}_0$ is unbounded in the direction of $\ol{w}$ (by our choice of $\ol{w}$ at the beginning of the proof), it follows that $w=\ol{w}^\perp$. In particular, $\ol{w}$ has irrational slope, so $v\in \Z^2_*$ is not parallel to $\ol{w}$.

From our previous observations, when $1\leq j\leq m+1$, the fact that $|p_w(jv)|<j/(m+1)<1$ implies that $p_w(K+jv)$ intersects $R$. Thus $(\ol{O}_n+jv)\cap R\neq \emptyset$ for all $n\in \N$. Since $v$ is not parallel to $\ol{w}$, if we fix $n$ large enough we may assume (by Claim \ref{claim:cortadir}) that $O_n$ is disjoint from $O_n+jv$ for any $j\in \{1,2,\dots,m+1\}$. This implies that the sets $\{O_n+jv:1\leq j\leq m+1\}$ are pairwise disjoint. On the other hand, since $\ol{O}_n+jv$ intersects the open set $R$, so does $O_n+jv$, and since $O_n+jv$ is unbounded and $\hat{f}$-invariant we conclude from Proposition \ref{pro:engulfing-finite} that $O_n+jv$ contains an element of $P$, for each $j\in \{1,2,\dots,m+1\}$. Since $P$ has $m$ elements, it follows that there exist two different elements of $\{O_n+jv:1\leq j\leq m+1\}$ containing the same point of $p$, contradicting their disjointness.
This contradiction shows that $v$ cannot have irrational slope, concluding the proof of Theorem \ref{th:teoremao}.

\section{Proof of Proposition \ref{pro:non-lc}}\label{sec:non-lc}
Suppose that $f$ and $\hat{f}$ satisfy the hypotheses of Theorem \ref{th:teoremao} and case (i) holds, so that $\fix(f)$ is fully essential. In particular, there is a connected component $K_0$ of $\fix(f)$ which is fully essential. Assume that $K_0$ is locally connected. To prove Proposition \ref{pro:non-lc}, we need to show that one of cases (ii) or (iii) holds. We will in fact show that, under these assumptions, case (iii) always holds, \ie $f$ is annular.

Since $K_0$ is compact and locally connected, $\hat{K}_0=\pi^{-1}(K_0)$ is closed and locally connected. 
The relation defined on $\hat{K}_0$ by $z\sim z'$ if there is a compact connected subset of $\hat{K}_0$ containing both $z$ and $z'$ is an equivalence relation. Let $E(z)$ be the equivalence class of $z$, which coincides with the union of all compact connected subsets of $\hat{K}_0$ containing $z$. The local connectedness of $\hat{K}_0$ implies that each $E(z)$ is open in $\hat{K}_0$ (and since $\{E(z):z\in \hat{K}_0\}$ is a partition, $E(z)$ is both open and closed in $\hat{K}_0$). 

Note also that $\{\pi(E(z)): z\in \hat{K}_0\}$ is a partition of $K_0$, since $E(z+v) = E(z)+v$ for each $z\in \hat{K}_0$ and $v\in \Z^2$. Since $\pi$ is a local homeomorphism, the set $\pi(E(z))$ is open in $K_0$ for each $z\in \hat{K}_0$, and again since these sets partition $K_0$ it follows that $\pi(E(z))$ is both open and closed in $K_0$. Since $K_0$ is connected and $\pi(E(z))$ is nonempty, it follows that $\pi(E(z)) = K_0$ for each $z\in \hat{K}_0$. 

Fix $z\in \hat{K}_0$. We claim that there is $v\in \Z^2_*$ such that $E(z)$ intersects $E(z)+v$. Indeed, if this is not the case then $\pi|_{E(z)}$ is an injective map from $E(z)$ to $\pi(E(z))=K_0$. Since $\pi|_{\hat{K}_0}\colon \hat{K}_0\to K_0$ is a local homeomorphism and $E(z)$ is an open subset of $\hat{K}_0$, it follows that $\pi|_{E(z)}$ is an open map onto $K_0$. Being an open continuous injection, it follows that $\pi_{E(z)}$ is a homeomorphism. Thus $E(z)$ is homeomorphic to $K_0$, and in particular $E(z)$ is compact. But since $E(z)$ is compact and disjoint from $E(z)+v$ for all $v\in \Z^2_*$, one may find an open neighborhood $U$ of $E(z)$ such that $U$ is disjoint from $U+v$ for all $v\in \Z^2_*$. This means that $\pi(U)$ is an open inessential set, and therefore $\pi(E(z)) = K_0$ is inessential, a contradiction.

This shows that there exists $v\in \Z^2_*$ such that $E(z)$ intersects $E(z)+v$, and so there is a compact set $C\subset E(z)$ containing $z$ and $z+v$. Letting $\Theta = \bigcup_{n\in \Z} C+nv$ we obtain a closed connected set such that $\Theta = \Theta+v$, and the half-planes $\{z:p_{v^\perp}(z)>M\}$ and $\{z:p_{v^\perp}(z)<-M\}$ lie in different connected components of $\R^2\sm \Theta$ if $M$ is chosen large enough. Since $\pi(\Theta)\subset K_0\subset \fix(f)$ and $\hat{f}$ is irrotational, it follows easily that $\Theta\subset \fix(\hat{f})$. In particular, if $V$ is the connected component of $\R^2\sm \Theta$ containing $\{z:p_{v^\perp}(z)<-M\}$, then $V$ is invariant and 
$$\{z:p_{v^\perp}(z)<-M\} \subset V\subset \{z:p_{v^\perp}(z)\leq M\}$$ 
This easily implies that $f$ is annular (see for instance Proposition 2.5 of \cite{KT2012}). Thus, case (iii) holds, as we wanted to show.\qed

\subsection*{Acknowledgements}
The authors would like to thank the anonymous referee for the very useful suggestions and corrections that helped improve this article.

\bibliographystyle{koro} 
\bibliography{irrotational}

\providecommand{\bysame}{\leavevmode\hbox to3em{\hrulefill}\thinspace}
\providecommand{\MR}{\relax\ifhmode\unskip\space\fi MR }
\providecommand{\MRhref}[2]{%
  \href{http://www.ams.org/mathscinet-getitem?mr=#1}{#2}
}
\providecommand{\href}[2]{#2}
\begin{thebibliography}{BCLR07}

\bibitem[Atk76]{atkinson}
G.~Atkinson, \emph{Recurrence of co-cycles and random walks}, J. London Math.
  Soc. (2) \textbf{13} (1976), no.~3, 486--488.

\bibitem[AZT11]{aneltransitivo}
S.~Addas-Zanata and F.~A. Tal, \emph{Homeomorphisms of the annulus with a
  transitive lift}, Math. Z. \textbf{267} (2011), no.~3-4, 971--980.

\bibitem[BCLR07]{denjoy-rees}
F.~B{\'e}guin, S.~Crovisier, and F.~Le~Roux, \emph{Construction of curious
  minimal uniquely ergodic homeomorphisms on manifolds: the {D}enjoy-{R}ees
  technique}, Ann. Sci. \'Ecole Norm. Sup. (4) \textbf{40} (2007), no.~2,
  251--308.

\bibitem[Bro85]{MortonBrown}
M.~Brown, \emph{Homeomorphisms of two-dimensional manifolds}, Houston J. Math.
  \textbf{11} (1985), no.~4, 455--469.

\bibitem[BT12]{borto-tal}
R.~B. {Bortolatto} and F.~A. {Tal}, \emph{{Ergodicity and annular
  homeomorphisms of the torus}}, Qual. Theory Dyn. Syst. (Publ. online in
  2012).

\bibitem[Fra88]{franks-ergodic}
J.~Franks, \emph{Recurrence and fixed points of surface homeomorphisms},
  Ergodic Theory Dynam. Systems \textbf{8$^*$} (1988), no.~Charles Conley
  Memorial Issue, 99--107.

\bibitem[FS05]{fayad-mixing}
B.~Fayad and M.~Saprykina, \emph{Weak mixing disc and annulus diffeomorphisms
  with arbitrary {L}iouville rotation number on the boundary}, Ann. Sci.
  \'Ecole Norm. Sup. (4) \textbf{38} (2005), no.~3, 339--364.

\bibitem[J{\"a}g09a]{jager-bmm}
T.~J{\"a}ger, \emph{The concept of bounded mean motion for toral
  homeomorphisms}, Dyn. Syst. \textbf{24} (2009), no.~3, 277--297.

\bibitem[J{\"a}g09b]{jager-linearization}
\bysame, \emph{Linearization of conservative toral homeomorphisms}, Invent.
  Math. \textbf{176} (2009), no.~3, 601--616.

\bibitem[Jau13]{jaulent}
O.~Jaulent, \emph{Existence d'un feuilletage positivement transverse \`a un
  hom\'eomorphisme de surface}, to appear in Ann. Inst. Fourier (2013),
  (e-print arXiv:1206.0213).

\bibitem[KK09]{kk-mixing}
A.~Kocsard and A.~Koropecki, \emph{A mixing-like property and inexistence of
  invariant foliations for minimal diffeomorphisms of the 2-torus}, Proc. Amer.
  Math. Soc. \textbf{137} (2009), no.~10, 3379--3386.

\bibitem[KT13]{kt-example}
A.~{Koropecki} and F.~A. {Tal}, \emph{{Area-preserving irrotational
  diffeomorphisms of the torus with sublinear diffusion}}, to appear in Proc.
  Amer. Math. Soc. (2013), (e-print arXiv:1206.2409v1).

\bibitem[KT14]{KT2012}
\bysame, \emph{{Strictly Toral Dynamics}}, Invent. Math. \textbf{196} (2014),
  339--381, doi:10.1007/s00222-013-0470-3.

\bibitem[Kwa03]{kwapisz-combinatorics}
J.~Kwapisz, \emph{Combinatorics of torus diffeomorphisms}, Ergodic Theory
  Dynam. Systems \textbf{23} (2003), no.~2, 559--586.

\bibitem[LC05]{lecalvez-equivariant}
P.~Le~Calvez, \emph{Une version feuillet\'ee \'equivariante du th\'eor\`eme de
  translation de {B}rouwer}, Publ. Math. Inst. Hautes \'Etudes Sci. (2005),
  no.~102, 1--98.

\bibitem[MZ89]{m-z}
M.~Misiurewicz and K.~Ziemian, \emph{Rotation sets for maps of tori}, Journal
  of the London Mathematical Society \textbf{40} (1989), no.~2, 490--506.

\bibitem[Ree81]{rees}
M.~Rees, \emph{A minimal positive entropy homeomorphism of the {$2$}-torus}, J.
  London Math. Soc. (2) \textbf{23} (1981), no.~3, 537--550.

\bibitem[Sie18]{sierpinski}
W.~Sierpi{\'n}ski, \emph{Un th{\'e}or{\`e}me sur les continus}, T{\^o}hoku
  Math. J \textbf{13} (1918), 300--303.

\bibitem[Tal12]{Transitivo}
F.~A. Tal, \emph{Transitivity and rotation sets with nonempty interior for
  homeomorphisms of the $2$-torus}, Proc.~Amer.~Math.~Soc. \textbf{140} (2012),
  3567--3579.

\bibitem[Whi33]{whitney}
H.~Whitney, \emph{Regular families of curves}, Ann. of Math. (2) \textbf{34}
  (1933), no.~2, 244--270.

\bibitem[Whi41]{whitney2}
\bysame, \emph{On regular families of curves}, Bull. Amer. Math. Soc.
  \textbf{47} (1941), 145--147.

\end{thebibliography}

\end{document}